\definecolor{myurlcolor}{rgb}{0.6,0,0}
\definecolor{mycitecolor}{rgb}{0,0,0.8}
\definecolor{myrefcolor}{rgb}{0,0,0.8}
\newcounter{invisibleink}
\newenvironment{invisiblelabel}[1][]{\refstepcounter{invisibleink}\noindent%
   {}}{}
\newcommand*\pgfdeclareanchoralias[3]{%
  \expandafter\def\csname pgf@anchor@#1@#3\expandafter\endcsname
     \expandafter{\csname pgf@anchor@#1@#2\endcsname}}
\tikzset{
integral/.style={
  regular polygon, regular polygon sides=3, shape border rotate=180, draw=black, very thick,
  outer sep=0.025em, inner sep=0, minimum size=2em, fill=blue!5, text centered},
multiply/.style={
  regular polygon, regular polygon sides=3, shape border rotate=180, draw=black, very thick,
  outer sep=0.025em, inner sep=0, minimum size=2em, fill=blue!5, text centered},
upmultiply/.style={
  regular polygon, regular polygon sides=3, draw=black, very thick,
  outer sep=0.025em, inner sep=0, minimum size=2em, fill=blue!5, text centered},
zero/.style={
  circle, draw=black, very thick, minimum size=0.15cm, fill=black,
  inner sep=0, outer sep=0},
hole/.style={
  circle, draw=white, very thick, minimum size=0.25cm, fill=white,
  inner sep=0, outer sep=0},
bang/.style={
  circle, draw=black, very thick, minimum size=0.15cm, fill=green!10,
  inner sep=0, outer sep=0},
delta/.style={
  regular polygon, regular polygon sides=3, minimum size=0.4cm, inner
  sep=0, outer sep=0.025em, draw=black, very thick, fill=green!10},
codelta/.style={
  regular polygon, regular polygon sides=3, shape border rotate=180, minimum size=0.4cm,
  inner sep=0, outer sep=0.025em, draw=black, very thick, fill=green!10},
plus/.style={
  regular polygon, regular polygon sides=3, shape border rotate=180, minimum size=0.4cm,
  inner sep = 0, outer sep=0.025em, draw=black, very thick, fill=black},
coplus/.style={
  regular polygon, regular polygon sides=3, minimum size=0.4cm,
  inner sep = 0, outer sep=0.025em, draw=black, very thick, fill=black},
oao/.style={
  circle, draw=black, very thick, minimum size=0.15cm, fill=black!60!yellow!50,
  inner sep=0, outer sep=0},
ab/.style={
  regular polygon, regular polygon sides=3, minimum size=0.4cm, inner
  sep=0, outer sep=0.025em, draw=black, very thick, fill=black!60!yellow!50},
ba/.style={
  regular polygon, regular polygon sides=3, shape border rotate=180, minimum size=0.4cm,
  inner sep=0, outer sep=0.025em, draw=black, very thick, fill=black!60!yellow!50},
sqnode/.style={
  regular polygon, regular polygon sides=4, minimum size=2.6em,
  draw=black, very thick, inner sep=0.2em, outer sep=0.025em,
  fill=yellow!10, text centered},
blackbox/.style={
  regular polygon, regular polygon sides=4, minimum size=2.6em,
  draw=black, very thick, inner sep=0.2em, outer sep=0.025em, fill=black},
 }
\newcommand{\Cob}{\mathrm{Cob}}
\newcommand{\Hilb}{\mathrm{Hilb}}
\newcommand{\Fin}{\mathrm{Fin}}
\newcommand{\Set}{\mathrm{Set}}
\newcommand{\R}{{\mathbb R}}
\newcommand{\C}{{\mathbb C}}
\newcommand{\N}{{\mathbb N}}
\newcommand{\B}{{\mathbb B}}
\newcommand{\Z}{{\mathbb Z}}
\newcommand{\maps}{\colon} 
\newcommand{\iso}{\cong}
\renewcommand{\hom}{{\rm hom}}
\newcommand{\tensor}{\otimes}
\newcommand{\asrelto}{\nrightarrow}
\newcommand{\of}{\circ}
\newcommand{\To}{\Rightarrow}
\newcommand{\End}{\mathrm{End}}
\newcommand{\bbox}{\blacksquare}
\newcommand{\dBox}{\blacklozenge}
\newcommand{\dbox}{\lozenge}
\renewcommand{\P}{\mathtt{P}}
\newcommand{\F}{\mathtt{F}}
\newcommand{\U}{\mathtt{U}}
\newcommand{\sige}{\mathcal{E}}
\newcommand{\relks}{\mathtt{FinRel}_{k(s)}}
\newcommand{\relk}{\mathtt{FinRel}_k}
\newcommand{\relS}{\mathtt{FinRel}_S}
\newcommand{\Relk}{\mathrm{FinRel}_k}
\newcommand{\Relks}{\mathrm{FinRel}_{k(s)}}
\newcommand{\finrel}{\mathtt{FinRel}}
\newcommand{\prop}{{\mathsf{PROP}}}
\newcommand{\porp}{\mathtt{StFlow}_k}
\newcommand{\propQ}{\mathtt{Q}}
\newcommand{\catC}{\mathcal{C}}
\newcommand{\D}{\mathcal{D}}
\newcommand{\boxful}{\Box\catC}
\newcommand{\vectks}{\mathtt{FinVect}_{k(s)}}
\newcommand{\Vectks}{\mathrm{FinVect}_{k(s)}}
\newcommand{\vectk}{\mathtt{FinVect}_k}
\newcommand{\Vectk}{\mathrm{FinVect}_k}
\newcommand{\boxv}{\Box(\vectk)}
\newcommand{\boxvs}{\Box(\vectks)}
\newcommand{\boxr}{\Box(\relk)}
\newcommand{\sigflow}{\mathtt{SigFlow}}
\newcommand{\goodflow}{\mathtt{ContFlow}}
\newcommand{\looseflow}{\mathtt{CtrlFlow}}
\newcommand{\st}{\mathtt{Stateful}}
\newcommand{\St}{\mathrm{Stateful}}
\newcommand{\Mat}{\mathtt{Mat}}
\newcommand{\SV}{{\mathbb {SV}}}
\newcommand{\Ecirc}{\mathrm{Circ}}
\newcommand{\ecirc}{\mathtt{Circ}}
\newcommand{\Lagr}{\mathrm{LagrRel}_{k(s)}}
\newcommand{\lagr}{\mathtt{LagrRel}_{k(s)}}
\newcommand{\eval}{\mathrm{eval}}
\newcommand{\feed}{\mathrm{feed}}
\newcommand{\G}{\mathrm{Box}}
\newcommand{\Obj}{\mathrm{Ob}}
\newcommand{\Mor}{\mathrm{Mor}}
\newcommand{\Define}[1]{{\bf \boldmath{#1}}}
\def\smc{symmetric monoidal category}
\def\smcs{symmetric monoidal categories}
\def\smf{symmetric monoidal functor}
\def\smdf{symmetric monoidal dagger functor}
\def\smt{symmetric monoidal theory}
\def\smts{symmetric monoidal theories}
\def\zero{0}
\begin{document}


\title{Categories in Control:  Applied PROPs}
\author{Jason Michael Erbele}
\degreemonth{December}
\degreeyear{2016}
\degree{Doctor of Philosophy}
\chair{Dr. John C. Baez}
\othermembers{Dr. Julie Bergner\\
Dr. Kevin Costello}
\numberofmembers{3}
\field{Mathematics}
\campus{Riverside}

\maketitle
\copyrightpage{}
\approvalpage{}

\degreesemester{Fall}

\begin{frontmatter}

\begin{acknowledgements}
Large parts of Chapters 1 and 3 of the current work appeared in 2015 in \textsl{Theories and
Applications of Categories}, Volume 30.  This prior incarnation of \emph{Categories in control} has
been expanded to the present corpus.  I am grateful for the direction and supervision of John Baez,
whose clear exposition and advice have been incredibly useful.  I could not have completed this
dissertation without his guidance, prodding, and helpful meddling.  Also valuable were the many
conversations with Brendan Fong, which helped to crystallize several ideas.

Most of all, I owe an enormous debt of gratitude to the late Dr. Gene Scott.  The scope of that debt
is too large to fit in this section.  The least of which, his leadership and tenacity inspired me to
pursue higher education and to persevere when the path looked impossible.
\end{acknowledgements}

\begin{dedication}
\null\vfil
{\large
\vfill
\begin{flushright}
To the young at heart,
\end{flushright}
\vfill
\begin{center}
to the curious in mind,
\end{center}
\vfill
to the kindred soul.
\vfill
}
\vfil\null
\end{dedication}

\begin{abstract}
Control theory uses `signal-flow diagrams' to describe processes where real-valued functions of 
time are added, multiplied by scalars, differentiated and integrated, duplicated and deleted.  
These diagrams can be seen as string diagrams for the PROP \(\relk\), the strict version of the 
category of finite-dimensional vector spaces over the field of rational functions \(k = \R(s)\) and 
linear relations, where the variable \(s\) acts as differentiation and the monoidal structure is 
direct sum rather than the usual tensor product of vector spaces.  Control processes are also 
described by controllability and observability---whether the input can drive the process to any 
state, and whether any state can be determined from later outputs.  For any field \(k\) we give a 
presentation of \(\relk\) in terms of generators of the free PROP of signal-flow diagrams together 
with the equations that give \(\relk\) its structure.  The `cap' and `cup' generators, missing when 
the morphisms are linear maps, make it possible to model feedback.  The relations say, among other 
things, that the 1-dimensional vector space \(k\) has two special commutative \(\dagger\)-Frobenius 
structures, such that the multiplication and unit of either one and the comultiplication and counit 
of the other fit together to form a bimonoid.  This sort of structure, but with tensor product 
replacing direct sum, is familiar from the `ZX-calculus' obeyed by a finite-dimensional Hilbert 
space with two mutually unbiased bases.  In order to address controllability and observability, we 
construct the PROP \(\st_k\) and relate it back to the PROP of signal-flow diagrams.  This provides 
a way to graphically express controllability and observability for linear time-invariant processes.
\end{abstract}

\tableofcontents
\listoffigures
\end{frontmatter}

\chapter{Introduction\label{intro}}
\section{Outline}
\label{intro:outline}
Control theory is the branch of engineering that focuses on manipulating `open systems'---systems
with inputs and outputs---to achieve desired goals.  In control theory, several graphical
models---\emph{e.g.} `signal-flow graphs' and `box diagrams'---have been used to describe linear
ways of manipulating signals, which we will take here to be smooth real-valued functions of time
\cite{Friedland}.  For a category theorist, at least, it is natural to treat these graphical models
as string diagrams in a symmetric monoidal category \cite{JS1,JS2}.  Here we use the term
\Define{signal-flow diagram} to refer to these string diagrams.  This forces some small changes
of perspective, which we discuss below, but more important is the question: \emph{which symmetric
monoidal category?}

We shall argue that a first approximation to the answer is: the category \(\Relk\) of
finite-dimensional vector spaces over a certain field \(k\), but with \emph{linear relations} rather
than linear maps as morphisms, and \emph{direct sum} rather than tensor product providing the
symmetric monoidal structure.  We use the field \(k = \R(s)\) consisting of rational functions in
one real variable \(s\).  This variable has the meaning of differentation.  A linear relation from
\(k^m\) to \(k^n\) is thus a system of linear constant-coefficient ordinary differential equations
relating \(m\) `input' signals and \(n\) `output' signals.

A second approximation to the answer is: the category \(\St_k\) of finite-dimensional vector spaces
over a certain field \(k(s)\) with `stateful' morphisms which, roughly speaking, distinguish the
paths that involve \(s\) from those that do not involve \(s\).  Now there are \(m\) `inputs', \(n\)
`states', and \(p\) `outputs'.  When \(k = \R\), we are again back to the situation of rational
functions in one real variable \(s\).  This category is developed and discussed in
Chapter~\ref{stateful}.  The key advantage to \(\St_k\) over \(\Relks\) is the ability to extract
the control theoretic concepts of controllability and observability from a stateful morphism.  The
key disadvantage is stateful morphisms evaluate to linear maps rather than linear relations.  So
while every signal-flow diagram has a linear relation associated to it, not every signal-flow
diagram has a stateful morphism associated to it.

Our main goal for the first approximation is to provide a complete `generators and equations'
picture of this symmetric monoidal category, with the generators being familiar components of
the graphical models used by control theorists.  It turns out that the answer has an intriguing but
mysterious connection to ideas that are familiar in the diagrammatic approach to quantum theory.
Quantum theory also involves linear algebra, but it uses linear maps between Hilbert spaces as
morphisms, and the tensor product of Hilbert spaces provides the symmetric monoidal structure.

For the second approximation, our main goal is to identify which signal-flow diagrams describe
controllable (\emph{resp.} observable) systems.  It turns out that not all signal-flow diagrams
admit as `stateful' description, so part of this goal is the question, for which signal-flow
diagrams can we ask about controllability and observability?  We hope that the category-theoretic
viewpoint on signal-flow diagrams will shed new light on control theory.  However, in this
dissertation we only lay the groundwork.  

Briefly, the plan is as follows:  Chapter~\ref{PROPs} introduces the the machinery of PROPs,
explaining how to describe a PROP using generators and equations and how to work with PROPs using
this description.  PROPs form a particularly simple class of symmetric monoidal categories that
includes \(\relk\) and \(\st_k\).  By Mac~Lane's coherence theorem \cite{MacLane}, the PROPs
\(\st_k\) and \(\relk\) are equivalent to the categories \(\St_k\) and \(\Relk\) described above.
This leads to Chapter~\ref{vectrel}, which gives a presentation of \(\relk\), introduces signal-flow
diagrams, and summarizes the main results of our first approximation.  To get to the second
approximation, Chapter~\ref{stateful} introduce a new PROP, \(\st_k\) and describes how it relates
to \(\relks\).  Chapter~\ref{stateful} also describes how to determine controllability and
observability from a stateful morphism.  The main result of the second approximation appears in
Chapter~\ref{goodflow}, where we consider signal-flow diagrams as mathematical entities in their own
right and describe the subcategory of signal-flow diagrams that admit a stateful description.  For
a signal-flow diagram that admit such a description, the description provides a path to determining
controllability and observability for the signal-flow diagram.  Finally, Chapter~\ref{conclusions}
deals with future work: we describe ways in which the stateful description can be extended to larger
subcategories of signal-flow diagrams and how the category \(\ecirc\) of open passive electric
circuits with linear circuit elements can be viewed as a category of signal-flow diagrams.  This
second direction for future work would connect the present work with that of Baez and Fong
\cite{BF}.  In the following sections we sketch some of the main ideas of this plan.
\section{PROPs, linear relations, and signal-flow diagrams}
\label{sigflow}

In his famous thesis, Lawvere \cite{Lawvere} introduced `functorial semantics'.  In this idea, a
functor \(F \maps \catC \to \D\) sends formal expressions, which are morphisms in \(\catC\), to
their `meanings', which are morphisms in \(\D\).  One says that \(\catC\) provides the `syntax' and
\(\D\) the `semantics'.  Here we apply this idea to control theory.  For example, we may take
\(\catC\) to be a category where morphisms are signal-flow diagrams, and \(\D\) to be \(\relk\):
then we shall construct a `black-boxing functor' sending any signal-flow diagram to the linear
relation it stands for.

To apply Lawvere's ideas one wants categories equipped with extra structure: in our work we use
\Define{PROPs}, which are strict symmetric monoidal categories whose objects are natural numbers,
the tensor product of objects being given by addition.  In Chapter~\ref{PROPs} we explain how to
describe PROPs using generators and equations.  This follows the work of Baez, Coya and Rebro
\cite{BCR}, which is based on the work of Trimble \cite{Trimble}.  Chapter~\ref{PROPs} also has
parallels in Zanasi's Ph.D. dissertation \cite[Chap. 2.2]{Za}.

Of key importance in the present work is the existence and uniqueness of a functor from the free
PROP on some generators to a PROP presented by those generators and some equations.  We continue in
Chapter~\ref{vectrel} with a generators and equations description of \(\relk\).  This chapter can
also be found in \cite{BE}, with some minor changes made in the present work to streamline its
connections to the other chapters.  This begins the formalization into signal-flow diagrams of what
control theorists do with their graphical models.  When \(k = \R(s)\), a morphisms in \(\relk\)
describes the relation between some input signals and output signals, corresponding to what control
theorists call `transfer functions'.\footnote{Control theorists generally only deal with linear
\emph{maps} rather than linear \emph{relations} in this context, so a pedant may argue for the
invention of a new jargon term here, `transfer relation'.}  The generators of \(\relk\) correspond
to some of the most basic operations one might want to perform when manipulating signals.  The
simplest operation is amplification, or `scaling': multiplying a signal by a scalar.  A signal can
be scaled by a constant factor:
\[  f \mapsto cf,  \]
where \(c \in \R\).  We can write this as a signal-flow diagram:
  \begin{center}
   \begin{tikzpicture}[thick]
   \node[coordinate] (in) [label=\(f\)] {};
   \node [multiply] (mult) [below of=in] {\(c\)};
   \node[coordinate] (out) [below of=mult, label={[shift={(0,-0.8)}]\(cf\)}] {};

   \draw (in) -- (mult) -- (out);
   \end{tikzpicture}.
\end{center}
Here the labels \(f\) and \(cf\) on top and bottom are just for explanatory purposes and not really
part of the diagram.  Control theorists often draw arrows on the wires, but this is unnecessary from
the string diagram perspective.  Arrows on wires are useful to distinguish objects from their
duals, but ultimately we will obtain a compact closed category where each object is its own dual, so
the arrows can be dropped.  What we really need is for the box denoting scalar multiplication to
have a clearly defined input and output.  This is why we draw it as a triangle.  Control theorists
often use a rectangle or circle, using arrows on wires to indicate which carries the input \(f\) and
which the output \(c f\).

A signal can also be integrated with respect to the time variable:
\[ f \mapsto \int f .\]
Mathematicians typically take differentiation as fundamental, but engineers sometimes prefer 
integration, because it is more robust against small perturbations.  In the end it will not matter 
much here.  We can again draw integration as a signal-flow diagram:
\begin{center}
   \begin{tikzpicture}[thick]
   \node[coordinate] (in) [label=\(f\)] {};
   \node [integral] (int) [below of=in] {\(\int\)};
   \node[coordinate] (out) [below of=int, label={[shift={(0.0,-0.9)}]\(\int f\)}] {};

   \draw (in) -- (int) -- (out);
   \end{tikzpicture}.
  \end{center}
Since this looks like the diagram for scaling, it is natural to extend \(\R\) to \(\R(s)\), the
field of rational functions of a variable \(s\) which stands for differentiation.  Then
differentiation becomes a special case of scalar multiplication, namely multiplication by \(s\), and
integration becomes multiplication by \(1/s\).  Engineers accomplish the same effect with Laplace
transforms, since differentiating a signal \(f\) is equivalent to multiplying its Laplace transform 
\[   (\mathcal{L}f)(s) = \int_0^\infty f(t) e^{-st} \,dt  \]
by the variable \(s\).  Another option is to use the Fourier transform: differentiating \(f\) is
equivalent to multiplying its Fourier transform 
\[   (\mathcal{F}f)(\omega) = \int_{-\infty}^\infty f(t) e^{-i\omega t}\, dt  \]
by \(-i\omega\).   Of course, the function \(f\) needs to be sufficiently well-behaved to justify
calculations involving its Laplace or Fourier transform.  At a more basic level, it also requires
some work to treat integration as the two-sided inverse of differentiation.  Engineers do this by
considering signals that vanish for \(t < 0\), and choosing the antiderivative that vanishes under
the same condition.  Luckily all these issues can be side-stepped in a formal treatment of
signal-flow diagrams: we can simply treat signals as living in an unspecified vector space over the
field \(\R(s)\).  The field \(\C(s)\) would work just as well, and control theory relies heavily on
complex analysis.  In most of this paper we work over an arbitrary field \(k\).

The simplest possible signal processor is a rock, which takes the `input' given by the force \(F\)
on the rock and produces as `output' the rock's position \(q\).  Thanks to Newton's second law
\(F=ma\), we can describe this using a signal-flow diagram:
  \begin{center}
   \begin{tikzpicture}[thick]
   \node[coordinate] (q) [label={[shift={(0,-0.6)}]\(q\)}] {};
   \node [integral] (diff) [above of=q] {\(\int\)};
   \node (v) [above of=diff, label={[shift={(0.4,-0.5)}]\(v\)}] {};
   \node [integral] (dot) [above of=v] {\(\int\)};
   \node (a) [above of=dot, label={[shift={(0.4,-0.5)}]\(a\)}] {};
   \node [multiply] (m) [above of=a] {\(\frac{1}{m}\)};
   \node[coordinate] (F) [above of=m, label={[shift={(0,0)}]\(F\)}] {};

   \draw (F) -- (m) -- (dot) -- (diff) -- (q);
   \end{tikzpicture}.
  \end{center}
Here composition of morphisms is drawn in the usual way, by attaching the output wire of one
morphism to the input wire of the next.

To build more interesting machines we need more building blocks, such as addition:
\[ + \maps (f,g) \mapsto f + g   \]
and duplication:
\[ \Delta \maps  f \mapsto (f,f).  \]
When these linear maps are written as matrices, their matrices are transposes of each other.  This
is reflected in the signal-flow diagrams for addition and duplication:
  \begin{center}
   \begin{tikzpicture}[thick]
   \node[plus] (adder) {};
   \node (f) at (-0.5,1.35) {\(f\)};
   \node (g) at (0.5,1.35) {\(g\)};
   \node (out) [below of=adder] {\(f+g\)};

   \draw (f) .. controls +(-90:0.6) and +(120:0.6) .. (adder.left in);
   \draw (g) .. controls +(-90:0.6) and +(60:0.6) .. (adder.right in);
   \draw (adder) -- (out);
   \end{tikzpicture}
     \hspace{2cm}
   \begin{tikzpicture}[thick]
   \node[delta] (dupe){};
   \node (o1) at (-0.5,-1.35) {\(f\)};
   \node (o2) at (0.5,-1.35) {\(f\)};
   \node (in) [above of=dupe] {\(f\)};

   \draw (o1) .. controls +(90:0.6) and +(-120:0.6) .. (dupe.left out);
   \draw (o2) .. controls +(90:0.6) and +(-60:0.6) .. (dupe.right out);
   \draw (in) -- (dupe);
   \end{tikzpicture}.
  \end{center}
The second is essentially an upside-down version of the first.  However, we draw addition as a dark
triangle and duplication as a light one because we will later want another way to `turn addition
upside-down' that does \emph{not} give duplication.  As an added bonus, a light upside-down triangle
resembles the Greek letter \(\Delta\), the usual symbol for duplication.  

While they are typically not considered worthy of mention in control theory, for completeness we
must include two other building blocks.  One is the zero map from \(\{0\}\) to our field \(k\),
which we denote as \(0\) and draw its signal-flow diagram as follows:
  \begin{center}
  \begin{tikzpicture}[thick]
   \node (out1) {\(0\)};
   \node [zero] (ins1) at (0,1) {};

   \draw (out1) -- (ins1);
   \end{tikzpicture}.
  \end{center}
The other is the zero map from \(k\) to \(\{0\}\), sometimes called `deletion', which we denote as
\(!\) and draw thus:
  \begin{center}
   \begin{tikzpicture}[thick]
   \node (in1) {\(f\)};
   \node [bang] (del1) at (0,-1) {};

   \draw (in1) -- (del1);
   \end{tikzpicture}.
  \end{center}

Just as the matrices for addition and duplication are transposes of each other, so are the matrices
for zero and deletion, though they are rather degenerate, being \(1 \times 0\) and \(0 \times 1\)
matrices, respectively.  Addition and zero make \(k\) into a commutative monoid, meaning that the
following equations hold:
 \begin{center}
    \scalebox{0.80}{
   \begin{tikzpicture}[-, thick, node distance=0.74cm]
   \node [plus] (summer) {};
   \node [coordinate] (sum) [below of=summer] {};
   \node [coordinate] (Lsum) [above left of=summer] {};
   \node [zero] (insert) [above of=Lsum, shift={(0,-0.35)}] {};
   \node [coordinate] (Rsum) [above right of=summer] {};
   \node [coordinate] (sumin) [above of=Rsum] {};
   \node (equal) [right of=Rsum, shift={(0,-0.26)}] {\(=\)};
   \node [coordinate] (in) [right of=equal, shift={(0,1)}] {};
   \node [coordinate] (out) [right of=equal, shift={(0,-1)}] {};

   \draw (insert) .. controls +(270:0.3) and +(120:0.3) .. (summer.left in)
         (summer.right in) .. controls +(60:0.6) and +(270:0.6) .. (sumin)
         (summer) -- (sum)    (in) -- (out);
   \end{tikzpicture}
        \hspace{1.0cm}
   \begin{tikzpicture}[-, thick, node distance=0.7cm]
   \node [plus] (uradder) {};
   \node [plus] (adder) [below of=uradder, shift={(-0.35,0)}] {};
   \node [coordinate] (urm) [above of=uradder, shift={(-0.35,0)}] {};
   \node [coordinate] (urr) [above of=uradder, shift={(0.35,0)}] {};
   \node [coordinate] (left) [left of=urm] {};

   \draw (adder.right in) .. controls +(60:0.2) and +(270:0.1) .. (uradder.io)
         (uradder.right in) .. controls +(60:0.35) and +(270:0.3) .. (urr)
         (uradder.left in) .. controls +(120:0.35) and +(270:0.3) .. (urm)
         (adder.left in) .. controls +(120:0.75) and +(270:0.75) .. (left)
         (adder.io) -- +(270:0.5);

   \node (eq) [right of=uradder, shift={(0,-0.25)}] {\(=\)};

   \node [plus] (ulsummer) [right of=eq, shift={(0,0.25)}] {};
   \node [plus] (summer) [below of=ulsummer, shift={(0.35,0)}] {};
   \node [coordinate] (ulm) [above of=ulsummer, shift={(0.35,0)}] {};
   \node [coordinate] (ull) [above of=ulsummer, shift={(-0.35,0)}] {};
   \node [coordinate] (right) [right of=ulm] {};

   \draw (summer.left in) .. controls +(120:0.2) and +(270:0.1) .. (ulsummer.io)
         (ulsummer.left in) .. controls +(120:0.35) and +(270:0.3) .. (ull)
         (ulsummer.right in) .. controls +(60:0.35) and +(270:0.3) .. (ulm)
         (summer.right in) .. controls +(60:0.75) and +(270:0.75) .. (right)
         (summer.io) -- +(270:0.5);
   \end{tikzpicture}
        \hspace{1.0cm}
   \begin{tikzpicture}[-, thick, node distance=0.7cm]
   \node [plus] (twadder) {};
   \node [coordinate] (twout) [below of=twadder] {};
   \node [coordinate] (twR) [above right of=twadder, shift={(-0.2,0)}] {};
   \node (cross) [above of=twadder] {};
   \node [coordinate] (twRIn) [above left of=cross, shift={(0,0.3)}] {};
   \node [coordinate] (twLIn) [above right of=cross, shift={(0,0.3)}] {};

   \draw (twadder.right in) .. controls +(60:0.35) and +(-45:0.25) .. (cross)
                            .. controls +(135:0.2) and +(270:0.4) .. (twRIn);
   \draw (twadder.left in) .. controls +(120:0.35) and +(-135:0.25) .. (cross.center)
                           .. controls +(45:0.2) and +(270:0.4) .. (twLIn);
   \draw (twout) -- (twadder);

   \node (eq) [right of=twR] {\(=\)};

   \node [coordinate] (L) [right of=eq] {};
   \node [plus] (adder) [below right of=L] {};
   \node [coordinate] (out) [below of=adder] {};
   \node [coordinate] (R) [above right of=adder] {};
   \node (cross) [above left of=R] {};
   \node [coordinate] (LIn) [above left of=cross] {};
   \node [coordinate] (RIn) [above right of=cross] {};

   \draw (adder.left in) .. controls +(120:0.7) and +(270:0.7) .. (LIn)
         (adder.right in) .. controls +(60:0.7) and +(270:0.7) .. (RIn)
         (out) -- (adder);
   \end{tikzpicture}
    }.
\end{center}
The equation at right is the commutative law, and the crossing of strands is the `braiding'
\[       B \maps (f,g) \mapsto (g,f)  \]
by which we switch two signals.   In fact this braiding is a `symmetry', so it does not matter which
strand goes over which:
 \begin{center}
   \begin{tikzpicture}[thick, node distance=0.5cm]
   \node (fstart) {\(f\)};
   \node [coordinate] (ftop) [below of=fstart] {};
   \node (center) [below right of=ftop] {};
   \node [coordinate] (fout) [below right of=center] {};
   \node (fend) [below of=fout] {\(f\)};
   \node [coordinate] (gtop) [above right of=center] {};
   \node (gstart) [above of=gtop] {\(g\)};
   \node [coordinate] (gout) [below left of=center] {};
   \node (gend) [below of=gout] {\(g\)};

   \draw [rounded corners] (fstart) -- (ftop) -- (center) --
   (fout) -- (fend) (gstart) -- (gtop) -- (gout) -- (gend);
   \end{tikzpicture}
\hspace{1em} 
\raisebox{3em}{=} 
\hspace{1em}
   \begin{tikzpicture}[thick, node distance=0.5cm]
   \node (fstart) {\(f\)};
   \node [coordinate] (ftop) [below of=fstart] {};
   \node (center) [below right of=ftop] {};
   \node [coordinate] (fout) [below right of=center] {};
   \node (fend) [below of=fout] {\(f\)};
   \node [coordinate] (gtop) [above right of=center] {};
   \node (gstart) [above of=gtop] {\(g\)};
   \node [coordinate] (gout) [below left of=center] {};
   \node (gend) [below of=gout] {\(g\)};

   \draw [rounded corners] (fstart) -- (ftop) -- (fout) -- (fend)
   (gstart) -- (gtop) -- (center) -- (gout) -- (gend);
   \end{tikzpicture}.
  \end{center}

Dually, duplication and deletion make \(k\) into a cocommutative comonoid.  This means that if we
reflect the equations obeyed by addition and zero across the horizontal axis and turn dark
operations into light ones, we obtain another set of valid equations:
\begin{center}
    \scalebox{0.80}{
   \begin{tikzpicture}[-, thick, node distance=0.74cm]
   \node [delta] (dupe) {};
   \node [coordinate] (top) [above of=dupe] {};
   \node [coordinate] (Ldub) [below left of=dupe] {};
   \node [bang] (delete) [below of=Ldub, shift={(0,0.35)}] {};
   \node [coordinate] (Rdub) [below right of=dupe] {};
   \node [coordinate] (dubout) [below of=Rdub] {};
   \node (equal) [right of=Rdub, shift={(0,0.26)}] {\(=\)};
   \node [coordinate] (in) [right of=equal, shift={(0,1)}] {};
   \node [coordinate] (out) [right of=equal, shift={(0,-1)}] {};

   \draw (delete) .. controls +(90:0.3) and +(240:0.3) .. (dupe.left out)
         (dupe.right out) .. controls +(300:0.6) and +(90:0.6) .. (dubout)
         (dupe) -- (top)    (in) -- (out);
   \end{tikzpicture}
       \hspace{1.0cm}
   \begin{tikzpicture}[-, thick, node distance=0.7cm]
   \node [delta] (lrduper) {};
   \node [delta] (duper) [above of=lrduper, shift={(-0.35,0)}] {};
   \node [coordinate](lrm) [below of=lrduper, shift={(-0.35,0)}] {};
   \node [coordinate](lrr) [below of=lrduper, shift={(0.35,0)}] {};
   \node [coordinate](left) [left of=lrm] {};

   \draw (duper.right out) .. controls +(300:0.2) and +(90:0.1) .. (lrduper.io)
         (lrduper.right out) .. controls +(300:0.35) and +(90:0.3) .. (lrr)
         (lrduper.left out) .. controls +(240:0.35) and +(90:0.3) .. (lrm)
         (duper.left out) .. controls +(240:0.75) and +(90:0.75) .. (left)
         (duper.io) -- +(90:0.5);

   \node (eq) [right of=lrduper, shift={(0,0.25)}] {\(=\)};

   \node [delta] (lldubber) [right of=eq, shift={(0,-0.25)}] {};
   \node [delta] (dubber) [above of=lldubber, shift={(0.35,0)}] {};
   \node [coordinate] (llm) [below of=lldubber, shift={(0.35,0)}] {};
   \node [coordinate] (lll) [below of=lldubber, shift={(-0.35,0)}] {};
   \node [coordinate] (right) [right of=llm] {};

   \draw (dubber.left out) .. controls +(240:0.2) and +(90:0.1) .. (lldubber.io)
         (lldubber.left out) .. controls +(240:0.35) and +(90:0.3) .. (lll)
         (lldubber.right out) .. controls +(300:0.35) and +(90:0.3) .. (llm)
         (dubber.right out) .. controls +(300:0.75) and +(90:0.75) .. (right)
         (dubber.io) -- +(90:0.5);
   \end{tikzpicture}
       \hspace{1.0cm}
   \begin{tikzpicture}[-, thick, node distance=0.7cm]
   \node [coordinate] (twtop) {};
   \node [delta] (twdupe) [below of=twtop] {};
   \node [coordinate] (twR) [below right of=twdupe, shift={(-0.2,0)}] {};
   \node (cross) [below of=twdupe] {};
   \node [coordinate] (twROut) [below left of=cross, shift={(0,-0.3)}] {};
   \node [coordinate] (twLOut) [below right of=cross, shift={(0,-0.3)}] {};

   \draw (twdupe.left out) .. controls +(240:0.35) and +(135:0.25) .. (cross)
                           .. controls +(-45:0.2) and +(90:0.4) .. (twLOut)
         (twdupe.right out) .. controls +(300:0.35) and +(45:0.25) .. (cross.center)
                            .. controls +(-135:0.2) and +(90:0.4) .. (twROut)
         (twtop) -- (twdupe);

   \node (eq) [right of=twR] {\(=\)};

   \node [coordinate] (L) [right of=eq] {};
   \node [delta] (dupe) [above right of=L] {};
   \node [coordinate] (top) [above of=dupe] {};
   \node [coordinate] (R) [below right of=dupe] {};
   \node (uncross) [below left of=R] {};
   \node [coordinate] (LOut) [below left of=uncross] {};
   \node [coordinate] (ROut) [below right of=uncross] {};

   \draw (dupe.left out) .. controls +(240:0.7) and +(90:0.7) .. (LOut)
         (dupe.right out) .. controls +(300:0.7) and +(90:0.7) .. (ROut)
         (top) -- (dupe);
   \end{tikzpicture}
    }.
\end{center}
There are also equations between the monoid and comonoid operations.  For example, adding two
signals and then duplicating the result gives the same output as duplicating each signal and then
adding the results:
  \begin{center}
   \begin{tikzpicture}[thick]
   \node[plus] (adder) {};
   \node [coordinate] (f) [above of=adder, shift={(-0.4,-0.325)}, label={\(f\)}] {\(f\)};
   \node [coordinate] (g) [above of=adder, shift={(0.4,-0.325)}, label={\(g\)}] {\(g\)};
   \node[delta] (dupe) [below of=adder, shift={(0,0.25)}] {};
   \node [coordinate] (outL) [below of=dupe, shift={(-0.4,0.325)}, label={[shift={(-0.2,-0.6)}]\(f+g\)}] {};
   \node [coordinate] (outR) [below of=dupe, shift={(0.4,0.325)}, label={[shift={(0.15,-0.6)}]\(f+g\)}] {};

   \draw (adder.io) -- (dupe.io)
         (f) .. controls +(270:0.4) and +(120:0.25) .. (adder.left in)
         (adder.right in) .. controls +(60:0.25) and +(270:0.4) .. (g)
         (dupe.left out) .. controls +(240:0.25) and +(90:0.4) .. (outL)
         (dupe.right out) .. controls +(300:0.25) and +(90:0.4) .. (outR);
   \end{tikzpicture}
      \raisebox{4em}{=}
      \hspace{1em}
   \begin{tikzpicture}[-, thick, node distance=0.7cm]
   \node [plus] (addL) {};
   \node (cross) [above right of=addL, shift={(-0.1,-0.0435)}] {};
   \node [plus] (addR) [below right of=cross, shift={(-0.1,0.0435)}] {};
   \node [delta] (dupeL) [above left of=cross, shift={(0.1,-0.0435)}] {};
   \node [delta] (dupeR) [above right of=cross, shift={(-0.1,-0.0435)}] {};
   \node [coordinate] (f) [above of=dupeL, label={\(f\)}] {};
   \node [coordinate] (g) [above of=dupeR, label={\(g\)}] {};
   \node [coordinate] (sum1) [below of=addL, shift={(0,0.2)}, label={[shift={(-0.2,-0.6)}]\(f+g\)}] {};
   \node [coordinate] (sum2) [below of=addR, shift={(0,0.2)}, label={[shift={(0.15,-0.6)}]\(f+g\)}] {};

   \path
   (addL) edge (sum1) (addL.right in) edge (dupeR.left out) (addL.left in) edge [bend left=30] (dupeL.left out)
   (addR) edge (sum2) (addR.left in) edge (cross) (addR.right in) edge [bend right=30] (dupeR.right out)
   (dupeL) edge (f)
   (dupeL.right out) edge (cross)
   (dupeR) edge (g);
   \end{tikzpicture}.
  \end{center} 
This diagram is familiar in the theory of Hopf algebras, or more generally bialgebras.  Here it is
an example of the fact that the monoid operations on \(k\) are comonoid homomorphisms---or
equivalently, the comonoid operations are monoid homomorphisms.  We summarize this situation by
saying that \(k\) is a \Define{bimonoid}.

So far all our string diagrams denote linear maps.  We can treat these as morphisms in the category
\( \Vectk \), where objects are finite-dimensional vector spaces over a field \(k\) and morphisms
are linear maps.  This category is equivalent to a skeleton where the only objects are vector spaces
\(k^n\) for \(n \ge 0\), and then morphisms can be seen as \(n \times m\) matrices.  This skeleton
is actually a PROP.  The space of signals is a vector space \(V\) over \(k\) which may not be
finite-dimensional, but this does not cause a problem: an \(n \times m\) matrix with entries in
\(k\) still defines a linear map from \(V^n\) to \(V^m\) in a functorial way.

In applications of string diagrams to quantum theory \cite{BS,CP}, we make \(\Vectk\) into a
symmetric monoidal category using the tensor product of vector spaces.  In control theory, we
instead make \(\Vectk\) into a symmetric monoidal category using the \emph{direct sum} of vector
spaces.  In Lemma~\ref{gensvk} we prove that for any field \(k\), \(\Vectk\) with direct sum
is generated as a symmetric monoidal category by the one object \(k\) together with these morphisms:
\begin{center}
\scalebox{1}{
 \begin{tikzpicture}[thick]
   \node[coordinate] (in) at (0,2) {};
   \node [multiply] (mult) at (0,1) {\(c\)};
   \node[coordinate] (out) at (0,0) {};
   \draw (in) -- (mult) -- (out);
   \end{tikzpicture}
\hspace{3 em}
\begin{tikzpicture}[thick]
   \node[plus] (adder) at (0,0.85) {};
   \node[coordinate] (f) at (-0.5,1.5) {}; 
   \node[coordinate] (g) at (0.5,1.5) {};
   \node[coordinate] (out) at (0,0) {};
   \node [coordinate] (pref) at (-0.5,2) {};
   \node [coordinate] (preg) at (0.5,2) {};
   \draw[rounded corners] (pref) -- (f) -- (adder.left in);
   \draw[rounded corners] (preg) -- (g) -- (adder.right in);
   \draw (adder) -- (out);
   \end{tikzpicture} 
\hspace{2em}
  \begin{tikzpicture}[thick]
   \node[delta] (dupe) at (0,1.15) {};
   \node[coordinate] (o1) at (-0.5,0.5) {};
   \node[coordinate] (o2) at (0.5,0.5) {};
   \node[coordinate] (in) at (0,2) {};
   \node [coordinate] (posto1) at (-0.5,0) {};
   \node [coordinate] (posto2) at (0.5,0) {};

   \draw[rounded corners] (posto1) -- (o1) -- (dupe.left out);
   \draw[rounded corners] (posto2) -- (o2) -- (dupe.right out);
   \draw (in) -- (dupe);
\end{tikzpicture}
\hspace{3em}
   \begin{tikzpicture}[thick]
   \node[hole] (heightHolder) at (0,2) {};
   \node [coordinate] (out) at (0,0) {};
   \node [zero] (del) at (0,1) {};
   \draw (del) -- (out);
   \end{tikzpicture}
\hspace{2em}
  \begin{tikzpicture}[thick]
   \node[coordinate] (in) at (0,2) {};
   \node [bang] (mult) at (0,1) {};
   \node [hole] (heightHolder) at (0,0) {};
   \draw (in) -- (mult);
   \end{tikzpicture}
},
\end{center}
where \(c \in k\) is arbitrary.  

However, these generating morphisms obey some unexpected equations!  For example, we have:
  \begin{center}
   \begin{tikzpicture}[-, thick, node distance=0.85cm]
   \node (UpUpLeft) at (-0.4,-0.1) {};
   \node [coordinate] (UpLeft) at (-0.4,-0.6) {};
   \node (mid) at (0,-1) {};
   \node [coordinate] (DownRight) at (0.4,-1.4) {};
   \node (DownDownRight) at (0.4,-1.9) {};
   \node [coordinate] (UpRight) at (0.4,-0.6) {};
   \node (UpUpRight) at (0.4,-0.1) {};
   \node [coordinate] (DownLeft) at (-0.4,-1.4) {};
   \node (DownDownLeft) at (-0.4,-1.9) {};

   \draw [rounded corners=2mm] (UpUpLeft) -- (UpLeft) -- (mid) --
   (DownRight) -- (DownDownRight) (UpUpRight) -- (UpRight) -- (DownLeft) -- (DownDownLeft);

\begin{scope}[font=\fontsize{20}{20}\selectfont]
   \node (equals) at (1.75,-1) {\scalebox{0.65}{\(=\)}};
\end{scope}

   \node [coordinate] (sum2L) at (3.5,0) {};
   \node [multiply] (neg) [above of=sum2L] {\(\scriptstyle{-1}\)};
   \node [coordinate] (dupe1L) [above of=neg] {};
   \node [delta] (dupe1) [above right of=dupe1L, shift={(-0.1,0)}] {};
   \node (in1) [above of=dupe1] {};
   \node [plus] (sum1) [below right of=dupe1] {};
   \node [coordinate] (sum1R) [above right of=sum1, shift={(-0.1,0)}] {};
   \node (in2) [above of=sum1R] {};
   \node [delta] (dupe2) [below of=sum1, shift={(0,-0.85)}] {};
   \node [plus] (sum2) [below right of=sum2L, shift={(-0.1,0)}] {};
   \node [coordinate] (dupe2R) [below right of=dupe2, shift={(0.4,-0.6)}] {};
   \node [delta] (dupe3) [below of=sum2] {};
   \node [coordinate] (dupe3L) [below left of=dupe3, shift={(0.1,0)}] {};
   \node [multiply] (neg1) [below right of=dupe3, shift={(-0.28,-0.5)}] {\(\scriptstyle{-1}\)};
   \node [plus] (sum3) [below right of=neg1, shift={(-0.2,-0.65)}] {};
   \node [coordinate] (sum3R) [above right of=sum3, shift={(0,0.2)}] {};
   \node (out2) [below of=sum3] {};
   \node (out1) [below of=dupe3L, shift={(0,-1.75)}] {};

   \draw (in1) -- (dupe1);
   \draw (dupe1.left out) .. controls +(240:0.5) and +(90:0.5) .. (neg.90);
   \draw (dupe1.right out) -- (sum1.left in);
   \draw (in2) .. controls +(270:0.5) and +(60:0.5) .. (sum1.right in);
   \draw (sum1) -- (dupe2);
   \draw (dupe2.right out) .. controls (dupe2R) and (sum3R) .. (sum3.right in);
   \draw (dupe2.left out) -- (sum2.right in);
   \draw (neg.io) .. controls +(270:0.3) and +(120:0.3) .. (sum2.left in);
   \draw (sum2) -- (dupe3);
   \draw (dupe3.left out) .. controls +(240:0.7) and +(90:1) .. (out1);
   \draw (dupe3.right out) .. controls +(300:0.3) and +(90:0.3) .. (neg1.90);
   \draw (neg1.io) .. controls +(270:0.2) and +(120:0.2) .. (sum3.left in);
   \draw (sum3) -- (out2);
   \end{tikzpicture}.
  \end{center}
Thus, it is important to find a complete set of equations obeyed by these generating morphisms, thus
obtaining a presentation of \(\vectk\) as a PROP.  We do this in Theorem~\ref{presvk}.  In brief,
these equations say:
\begin{enumerate}
\item \( (k, +, 0, \Delta, !) \) is a bicommutative bimonoid;
\item the rig operations of \(k\) can be recovered from the generating morphisms;
\item all the generating morphisms commute with scaling.
\end{enumerate}
Here item (2) means that \(+\), \(\cdot\), \(0\) and \(1\) in the field \(k\) can be expressed in 
terms of signal-flow diagrams as follows:
\begin{center}
    \scalebox{0.80}{
   \begin{tikzpicture}[-, thick, node distance=0.85cm]
   \node (bctop) {};
   \node [multiply] (bc) [below of=bctop, shift={(0,-0.59)}] {\(\scriptstyle{b+c}\)};
   \node (bcbottom) [below of=bc, shift={(0,-0.59)}] {};

   \draw (bctop) -- (bc) -- (bcbottom);

   \node (eq) [right of=bc, shift={(0.15,0)}] {\(=\)};

   \node [multiply] (b) [right of=eq, shift={(0,0.1)}] {\(\scriptstyle{b}\)};
   \node [delta] (dupe) [above right of=b, shift={(-0.2,0.1)}] {};
   \node (top) [above of=dupe, shift={(0,-0.2)}] {};
   \node [multiply] (c) [below right of=dupe, shift={(-0.2,-0.1)}] {\(\scriptstyle{c}\)};
   \node [plus] (adder) [below right of=b, shift={(-0.2,-0.3)}] {};
   \node (out) [below of=adder, shift={(0,0.2)}] {};

   \draw
   (dupe.left out) .. controls +(240:0.15) and +(90:0.15) .. (b.90)
   (dupe.right out) .. controls +(300:0.15) and +(90:0.15) .. (c.90)
   (top) -- (dupe.io)
   (adder.io) -- (out)
   (adder.left in) .. controls +(120:0.15) and +(270:0.15) .. (b.io)
   (adder.right in) .. controls +(60:0.15) and +(270:0.15) .. (c.io);
   \end{tikzpicture}
        \hspace{0.8cm}
   \begin{tikzpicture}[-, thick]
   \node (top) {};
   \node [multiply] (c) [below of=top] {\(c\)};
   \node [multiply] (b) [below of=c] {\(b\)};
   \node (bottom) [below of=b] {};

   \draw (top) -- (c) -- (b) -- (bottom);

   \node (eq) [left of=b, shift={(0.2,0.5)}] {\(=\)};

   \node (bctop) [left of=top, shift={(-0.6,0)}] {};
   \node [multiply] (bc) [left of=eq, shift={(0.2,0)}] {\(bc\)};
   \node (bcbottom) [left of=bottom, shift={(-0.6,0)}] {};

   \draw (bctop) -- (bc) -- (bcbottom);
   \end{tikzpicture}
        \hspace{0.8cm}
\raisebox{2em}{
   \begin{tikzpicture}[-, thick, node distance=0.85cm]
   \node (top) {};
   \node [multiply] (one) [below of=top] {1};
   \node (bottom) [below of=one] {};

   \draw (top) -- (one) -- (bottom);

   \node (eq) [right of=one] {\(=\)};
   \node (topid) [right of=top, shift={(0.6,0)}] {};
   \node (botid) [right of=bottom, shift={(0.6,0)}] {};

   \draw (topid) -- (botid);
   \end{tikzpicture}
}
        \hspace{0.6cm}
\raisebox{2em}{
   \begin{tikzpicture}[-, thick, node distance=0.85cm]
   \node [multiply] (prod) {\(0\)};
   \node (in0) [above of=prod] {};
   \node (out0) [below of=prod] {};
   \node (eq) [right of=prod] {\(=\)};
   \node [bang] (del) [right of=eq, shift={(-0.2,0.2)}] {};
   \node [zero] (ins) [right of=eq, shift={(-0.2,-0.2)}] {};
   \node (in1) [above of=del, shift={(0,-0.2)}] {};
   \node (out1) [below of=ins, shift={(0,0.2)}] {};

   \draw (in0) -- (prod) -- (out0);
   \draw (in1) -- (del);
   \draw (ins) -- (out1);
   \end{tikzpicture}
    }
}.
\end{center}
Multiplicative inverses cannot be so expressed, so our signal-flow diagrams so far do not know that
\(k\) is a field.  Additive inverses also cannot be expressed in this way.  And indeed, a version of
Theorem~\ref{presvk} holds whenever \(k\) is a commutative rig: that is, a commutative `ring without
negatives', such as \(\N\).  The case of a commutative rig \(k\) was examined by Wadsley and Woods
\cite{WW}: see Section \ref{vectrelated} for details.  The idea of finding a presentation for the
category \(\Vectk\) is not new.  Indeed, Lafont \cite{Lafont} gave a presentation of \(\Vectk\) as a
monoidal category, with especial interest in the field of two elements, using generators and
equations similar to the ones given here.

While Theorem~\ref{presvk} is a step towards understanding the category-theoretic underpinnings of
control theory, it does not treat signal-flow diagrams that include `feedback'.  Feedback is one of
the most fundamental concepts in control theory because a control system without feedback may be
highly sensitive to disturbances or unmodeled behavior.  Feedback allows these disturbances to be
mollified (or exacerbated!).  As an annotated string diagram, a basic feedback system might look
like this:
  \begin{center}
    \scalebox{0.80}{
   \begin{tikzpicture}[thick]
   \node (in) {};
   \node [coordinate] (inplus) [below of=in, label={[shift={(2.2em,0)}]reference}] {};
   \node [plus] (plus) [below of=inplus, shift={(-0.7,0)}]{};
   \node [multiply] (controller) [below of=plus, label={[shift={(3.0em,-0.5)}]controller},
   label={[shift={(3.5em,0.15)}]measured error}, label={[shift={(3.1em,-1.5)}]system input}] {\(a\)};
   \node [multiply] (system) [below of=controller, shift={(0,-1)},
   label={[shift={(2.4em,-0.6)}]system}] {\(b\)};
   \node [delta] (split) [below of=system, label={[shift={(5.2em,-1.7)}]system output}] {};
   \node [coordinate] (outsplit) [below of=split, shift={(0.7,0)}] {};
   \node (out) [below of=outsplit] {};
   \node [coordinate] (rcup) [below of=split, shift={(-0.7,0)}] {};
   \node [coordinate] (lcup) [left of=rcup, shift={(0.4,0)}] {};
   \node [upmultiply] (sensor) [above of=lcup, shift={(0,0.5)}, label={[shift={(-2.2em,-0.5)}]sensor},
   label={[shift={(-4.1em,1)}]measured output}] {\(c\)};
   \node [upmultiply] (minus) [above of=sensor, shift={(0,1.8)}] {\(\scriptstyle{-1}\)};
   \node [coordinate] (rcap) [above of=plus, shift={(-0.7,0)}] {};
   \node [coordinate] (lcap) [left of=rcap, shift={(0.4,0)}] {};

   \draw[rounded corners=8pt] (in) -- (inplus) -- (plus.right in) (plus) --
   (controller) -- (system) -- (split) (split.right out) --
   (outsplit) -- (out) (split.left out) -- (rcup) -- (lcup) --
   (sensor) -- (minus) -- (lcap) -- (rcap) -- (plus.left in);
   \end{tikzpicture}}.
  \end{center}
The user inputs a `reference' signal, which is fed into a controller, whose output is fed into a
system, or `plant', which in turn produces its own output.  But then the system's output is
duplicated, and one copy is fed into a sensor, whose output is added\footnote{More typically this
output is subtracted in controlled systems, since disturbances are frequently unwanted.} to the
reference signal.

In string diagrams---unlike in the usual thinking on control theory---it is essential to be 
able to read any diagram from top to bottom as a composite of tensor products of generating 
morphisms.  Thus, to incorporate the idea of feedback, we need two more generating morphisms.  
These are the `cup':
\begin{center}
   \begin{tikzpicture}[thick]
   \node (0) [label={[shift={(0,-1.6)}]}] {\(f=g\)};
   \node[coordinate] (3) [left of=0] {};
   \node[coordinate] (4) [right of=0] {};
   \node (1) [above of=3] {\(f\)};
   \node (2) [above of=4] {\(g\)};

   \path
   (1) edge (3)
   (2) edge (4)
   (3) edge [-, bend right=90] (4);
   \end{tikzpicture}
\end{center}
and `cap':
\begin{center}
   \begin{tikzpicture}[thick]
   \node (0) {\(f=g\)};
   \node[coordinate] (3) [left of=0] {};
   \node[coordinate] (4) [right of=0] {};
   \node (1) [below of=3] {\(f\)};
   \node (2) [below of=4] {\(g\)};

   \path
   (3) edge (1)
   (4) edge (2)
   (3) edge [bend left=90] (4);
   \end{tikzpicture}.
\end{center}
These are not maps; they are relations.  The cup imposes the relation that its two inputs be equal,
while the cap does the same for its two outputs.  This is a way of describing how a signal flows
around a bend in a wire.

To make this precise, we use a category called \(\Relk\).  An object of this category is a
finite-dimensional vector space over \(k\), while a morphism from \(U\) to \(V\), denoted \(L \maps
U \asrelto V\), is a \Define{linear relation}, meaning a linear subspace
\[         L \subseteq U \oplus V  .\]
In particular, when \(k = \R(s)\), a linear relation \(L \maps k^m \to k^n\) is just an arbitrary 
system of constant-coefficient linear ordinary differential equations relating \(m\) input 
variables and \(n\) output variables.

Since the direct sum \(U \oplus V\) is also the cartesian product of \(U\) and \(V\), a linear 
relation is indeed a relation in the usual sense, but with the property that if \(u \in U\) is 
related to \(v \in V\) and \(u' \in U\) is related to \(v' \in V\) then \(cu + c'u'\) is related to
\(cv + c'v'\) whenever \(c,c' \in k\).  We compose linear relations \(L \maps U \asrelto V\) and 
\(L' \maps V \asrelto W\) as follows:
\[         L'L = \{(u,w) \colon \; \exists\; v \in V \;\; (u,v) \in L \textrm{ and } 
(v,w) \in L'\} .\]
Any linear map \(f \maps U \to V\) gives a linear relation \(F \maps U \asrelto V\), namely the
graph of that map:
\[                  F = \{ (u,f(u)) : u \in U \}. \]
Composing linear maps thus becomes a special case of composing linear relations, so \(\Vectk\)
becomes a subcategory of \(\Relk\).  Furthermore, we can make \(\Relk\) into a monoidal category
using direct sums, and it becomes symmetric monoidal using the braiding already present in
\(\Vectk\).

In these terms, the \Define{cup} is the linear relation
\[                 \cup \maps k^2 \asrelto \{0\}   \]
given by
\[            \cup \; = \; \{ (x,x,0) : x \in k   \} \; \subseteq \; k^2 \oplus \{0\},   \]
while the \Define{cap} is the linear relation 
\[                 \cap \maps \{0\} \asrelto k^2  \]
given by
\[            \cap \; = \; \{ (0,x,x) : x \in k   \} \; \subseteq \; \{0\} \oplus k^2  .\]
These obey the \Define{zigzag equations}:
\begin{center}
   \begin{tikzpicture}[-, thick, node distance=1cm]
   \node (zigtop) {};
   \node [coordinate] (zigincup) [below of=zigtop] {};
   \node [coordinate] (zigcupcap) [right of=zigincup] {};
   \node [coordinate] (zigoutcap) [right of=zigcupcap] {};
   \node (zigbot) [below of=zigoutcap] {};
   \node (equal) [right of=zigoutcap] {\(=\)};
   \node (mid) [right of=equal] {};
   \node (vtop) [above of=mid] {};
   \node (vbot) [below of=mid] {};
   \node (equals) [right of=mid] {\(=\)};
   \node [coordinate] (zagoutcap) [right of=equals] {};
   \node (zagbot) [below of=zagoutcap] {};
   \node [coordinate] (zagcupcap) [right of=zagoutcap] {};
   \node [coordinate] (zagincup) [right of=zagcupcap] {};
   \node (zagtop) [above of=zagincup] {};
   \path
   (zigincup) edge (zigtop) edge [bend right=90] (zigcupcap)
   (zigoutcap) edge (zigbot) edge [bend right=90] (zigcupcap)
   (vtop) edge (vbot)
   (zagincup) edge (zagtop) edge [bend left=90] (zagcupcap)
   (zagoutcap) edge (zagbot) edge [bend left=90] (zagcupcap);
   \end{tikzpicture}.
    \end{center}
Thus, they make \(\Relk\) into a compact closed category where \(k\), and thus every object, is its
own dual.  As with \(\Vectk\), we will focus on a skeleton \(\relk\) of \(\Relk\), which is a PROP.

Besides feedback, one of the things that make the cap and cup useful is that they allow any 
morphism \(L \maps U \asrelto V \) to be `plugged in backwards' and thus `turned around'.  For 
instance, turning around integration:
  \begin{center}
   \begin{tikzpicture}[thick]
   \node [integral] (dot) {\(\int\)};
   \node [coordinate] (cupout) [below of=dot, shift={(0,0.2)}] {};
   \node [coordinate] (cupin) [left of=cupout] {};
   \node [coordinate] (capin) [above of=dot, shift={(0,-0.4)}] {};
   \node [coordinate] (in) [left of=capin, shift={(0,0.5)}] {};
   \node [coordinate] (capout) [right of=capin] {};
   \node [coordinate] (out) [right of=cupout, shift={(0,-0.7)}] {};

   \draw (capin) -- (dot) -- (cupout);
   \path
   (in) edge (cupin)
   (capout) edge (out)
   (cupin) edge [bend right=90] (cupout)
   (capin) edge [bend left=90] (capout);

   \node (eq) [left of=dot, shift={(-1,0)}] {\(:=\)};
   \node [upmultiply] (diff) [left of=eq, shift={(-0.3,-0.2)}] {\(\int\)};
   \node [coordinate] (diffin) [above of=diff, shift={(0,0.3)}] {};
   \node [coordinate] (diffout) [below of=diff, shift={(0,-0.3)}] {};

   \draw (diffin) -- (diff) -- (diffout);
   \end{tikzpicture}
  \end{center}
we (essentially) obtain differentiation.  In general, using caps and cups we can turn around any
linear relation \(L \maps U \asrelto V\) and obtain a linear relation \(L^\dagger \maps V \asrelto
U\), called the \Define{adjoint} of \(L\), which turns out to given by
\[            L^\dagger = \{(v,u) : (u,v) \in L \}  .\]
For example, if \(c \in k\) is nonzero, the adjoint of scalar multiplication by \(c\) is
multiplication by \(c^{-1}\):
 \begin{center}
   \begin{tikzpicture}[thick]
   \node [multiply] (c) {\(c\)};
   \node [coordinate] (cupout) [below of=c, shift={(0,0.4)}] {};
   \node [coordinate] (cupin) [left of=cupout] {};
   \node [coordinate] (capin) [above of=c, shift={(0,-0.5)}] {};
   \node [coordinate] (in) [left of=capin, shift={(0,0.7)}] {};
   \node [coordinate] (capout) [right of=capin] {};
   \node [coordinate] (out) [right of=cupout, shift={(0,-0.7)}] {};

   \draw (capin) -- (c) -- (cupout);
   \path
   (in) edge (cupin)
   (capout) edge (out)
   (cupin) edge [bend right=90] (cupout)
   (capin) edge [bend left=90] (capout);

   \node (eq) [right of=c, shift={(1.1,0)}] {\(=\)};

   \node [multiply] (mult) [right of=eq, shift={(0.5,0)}] {\(c^{-1}\!\!\)};
   \node [coordinate] (min) [above of=mult, shift={(0,0.2)}] {};
   \node [coordinate] (mout) [below of=mult, shift={(0,-0.3)}] {};

   \draw (min) -- (mult) -- (mout);

   \node (colon) [left of=c, shift={(-1.1,0)}] {\(:=\)};

   \node [upmultiply] (adj) [left of=colon, shift={(-0.25,0)}] {\(c\)};
   \node [coordinate] (adin) [above of=adj, shift={(0,0.2)}] {};
   \node [coordinate] (adout) [below of=adj, shift={(0,-0.3)}] {};

   \draw (adin) -- (adj) -- (adout);
   \end{tikzpicture}.
  \end{center}
Thus, caps and cups allow us to express multiplicative inverses in terms of signal-flow diagrams!
One might think that a problem arises when when \(c = 0\), but no: the adjoint of scaling by \(0\)
is the linear relation
\[          \{(0,x) : x \in k \} \subseteq k \oplus k .\]

In Lemma~\ref{gensrk} we show that \(\relk\) is generated, as a symmetric monoidal category, by
these morphisms:
\begin{center}
\scalebox{0.9}{
 \begin{tikzpicture}[thick]
   \node [coordinate] (in) at (0,2) {};
   \node [multiply] (mult) at (0,1) {\(c\)};
   \node [coordinate] (out) at (0,0) {};
   \draw (in) -- (mult) -- (out);
   \end{tikzpicture}
\hspace{3 em}
\begin{tikzpicture}[thick]
   \node [plus] (adder) at (0,0.85) {};
   \node [coordinate] (f) at (-0.5,1.5) {}; 
   \node [coordinate] (g) at (0.5,1.5) {};
   \node [coordinate] (out) at (0,0) {};
   \node [coordinate] (pref) at (-0.5,2) {};
   \node [coordinate] (preg) at (0.5,2) {};

   \draw [rounded corners] (pref) -- (f) -- (adder.left in);
   \draw [rounded corners] (preg) -- (g) -- (adder.right in);
   \draw (adder) -- (out);
   \end{tikzpicture} 
\hspace{2em}
  \begin{tikzpicture}[thick]
   \node[delta] (dupe) at (0,1.15) {};
   \node[coordinate] (o1) at (-0.5,0.5) {};
   \node[coordinate] (o2) at (0.5,0.5) {};
   \node[coordinate] (in) at (0,2) {};
   \node [coordinate] (posto1) at (-0.5,0) {};
   \node [coordinate] (posto2) at (0.5,0) {};

   \draw[rounded corners] (posto1) -- (o1) -- (dupe.left out);
   \draw[rounded corners] (posto2) -- (o2) -- (dupe.right out);
   \draw (in) -- (dupe);
\end{tikzpicture}
\hspace{3em}
\begin{tikzpicture}[thick]
   \node[coordinate] (in) at (0,2) {};
   \node [bang] (mult) at (0,1) {};
   \node [hole] (heightHolder) at (0,0) {};
   \draw (in) -- (mult);
   \end{tikzpicture}
\hspace{2em}
  \begin{tikzpicture}[thick]
   \node[hole] (heightHolder) at (0,2) {};
   \node [coordinate] (out) at (0,0) {};
   \node [zero] (del) at (0,1) {};
   \draw (del) -- (out);
\end{tikzpicture}
\hspace{3em}
 \begin{tikzpicture}[thick]
   \node [coordinate] (3) at (0,0.375) {};
   \node [coordinate] (4) at (1.3,0.375) {};
   \node [coordinate] (1) at (0,2) {};
   \node [coordinate] (2) at (1.3,2) {};
   \path
   (1) edge (3)
   (2) edge (4)
   (3) edge [-, bend right=90] (4);
   \end{tikzpicture}
        \hspace{3em}
   \begin{tikzpicture}[thick]
   \node [coordinate] (3) at (0,1.625) {};
   \node [coordinate] (4) at (1.3,1.625) {};
   \node [coordinate] (1) at (0,0) {};
   \node [coordinate] (2) at (1.3,0) {};
   \path
   (3) edge (1)
   (4) edge (2)
   (3) edge [bend left=90] (4);
   \end{tikzpicture}
},
\end{center}
where \(c \in k\) is arbitrary.  

In Theorem~\ref{presrk} we find a complete set of equations obeyed by these generating morphisms,
thus giving a presentation of \(\relk\) as a PROP.  To describe these equations, it is useful to
work with adjoints of the generating morphisms.  We have already seen that the adjoint of scaling by
\(c\) is scaling by \(c^{-1}\), except when \(c = 0\).  Taking adjoints of the other four generating
morphisms of \(\vectk\), we obtain four important but perhaps unfamiliar linear relations.  We draw
these as `turned around' versions of the original generating morphisms:

\begin{itemize}
\item \Define{Coaddition} is a linear relation from \(k\) to \(k^2\) that holds when the two 
outputs sum to the input:
\[           +^\dagger \maps k \asrelto k^2 \]
\[           +^\dagger = \{(x,y,z)  : \; x = y + z  \} \subseteq k \oplus k^2 \]
  \begin{center}
   \begin{tikzpicture}[thick]
   \node [plus]       (adder)     at (0,0)       {};
   \node [coordinate] (sum)       at (0,-0.5)    {};
   \node [coordinate] (sumup)     at (0.9,-0.5)  {};
   \node [coordinate] (input)     at (0.9,1.3)   {};
   \node [coordinate] (highcap)   at (-0.6,1.05) {};
   \node [coordinate] (outerloop) at (-1.7,0.1)  {};
   \node [coordinate] (outerout)  at (-1.7,-1)   {};
   \node [coordinate] (innerloop) at (-0.9,-0.2) {};
   \node [coordinate] (innerout)  at (-0.9,-1)   {};

   \draw (innerloop) -- (innerout);
   \path (sum) edge [bend right=90] (sumup);
   \draw (adder.io) -- (sum)
   (sumup) -- (input)
   (outerloop) -- (outerout)
   (adder.left in) .. controls +(120:0.5) and +(0,1) .. (innerloop)
   (adder.right in) .. controls +(60:0.75) and +(0.6,0) .. (highcap)
   (highcap) .. controls +(-0.6,0) and +(0,0.6) .. (outerloop);

   \node              (eq)       at (-2.75,0.15) {\(:=\)};
   \node [coplus]     (coadder)  at (-4,0.3)     {};
   \node [coordinate] (topco)    at (-4,1.3)     {};
   \node [coordinate] (leftout)  at (-4.5,-1)    {};
   \node [coordinate] (rightout) at (-3.5,-1)    {};

   \draw
   (coadder.left out) .. controls +(240:0.7) and +(90:0.7) .. (leftout)
   (coadder.right out) .. controls +(300:0.7) and +(90:0.7) .. (rightout)
   (coadder.io) -- (topco);
   \end{tikzpicture}
  \end{center}
\item \Define{Cozero} is a linear relation from \(k\) to \(\{0\}\) that holds 
when the input is zero:
\[           0^\dagger \maps k \asrelto \{0\}   \]
\[           0^\dagger = \{ (0,0)\} \subseteq k \oplus \{0\}   \]
\begin{center}
\begin{tikzpicture}[thick]
   \node [zero] (Ze) at (0,0)    {};
   \node        (eq) at (-1,0)    {\(:=\)};
   \node [zero] (Ro) at (-1.8,-0.5) {};

   \draw[rounded corners=7pt]
   (Ze) -- (0,-0.5) -- (0.5,-0.5) -- (0.5,0.5);
   \draw (Ro) -- (-1.8,0.5);
\end{tikzpicture}
\end{center}
\item \Define{Coduplication} is a linear relation from \(k^2\) to \(k\) that holds when the two
inputs both equal the output:
\[           \Delta^\dagger \maps k^2 \asrelto k \]
\[           \Delta^\dagger = \{(x,y,z)  : \; x = y = z \} \subseteq k^2 \oplus k \]
  \begin{center}
   \begin{tikzpicture}[thick]
   \node [delta]      (copier)    at (0,0)        {};
   \node [coordinate] (original)  at (0,0.5)      {};
   \node [coordinate] (origdown)  at (0.9,0.5)    {};
   \node [coordinate] (output)    at (0.9,-1.3)   {};
   \node [coordinate] (lowcup)    at (-0.6,-1.05) {};
   \node [coordinate] (outerloop) at (-1.7,-0.1)  {};
   \node [coordinate] (outerin)   at (-1.7,1)     {};
   \node [coordinate] (innerloop) at (-0.9,0.2)   {};
   \node [coordinate] (innerout)  at (-0.9,1)     {};

   \draw (innerloop) -- (innerout);
   \path (original) edge [bend left=90] (origdown);
   \draw (copier.io) -- (original)
   (origdown) -- (output)
   (outerloop) -- (outerin)
   (copier.left out) .. controls +(240:0.5) and +(0,-1) .. (innerloop)
   (copier.right out) .. controls +(300:0.75) and +(0.6,0) .. (lowcup)
   (lowcup) .. controls +(-0.6,0) and +(0,-0.6) .. (outerloop);

   \node              (eq)       at (-2.75,-0.15) {\(:=\)};
   \node [codelta]    (pier)     at (-4,-0.3)     {};
   \node [coordinate] (bottomco) at (-4,-1.3)     {};
   \node [coordinate] (leftin)  at (-4.5,1)       {};
   \node [coordinate] (rightin) at (-3.5,1)       {};

   \draw
   (pier.left in) .. controls +(120:0.7) and +(270:0.7) .. (leftin)
   (pier.right in) .. controls +(60:0.7) and +(270:0.7) .. (rightin)
   (pier.io) -- (bottomco);
   \end{tikzpicture}
  \end{center}
\item \Define{Codeletion} is a linear relation from \(\{0\}\) to \(k\) that holds always:
\[          !^\dagger \maps \{0\} \asrelto k \]
\[          !^\dagger = \{(0,x) \} \subseteq \{0\} \oplus k \]
\begin{center}
\begin{tikzpicture}[thick]
   \node [bang] (Ba) at (0,0)    {};
   \node        (eq) at (-1,0)    {\(:=\)};
   \node [bang] (ng) at (-1.8,0.5) {};

   \draw[rounded corners=7pt]
   (Ba) -- (0,0.5) -- (0.5,0.5) -- (0.5,-0.5);
   \draw (ng) -- (-1.8,-0.5);
\end{tikzpicture}.
\end{center}
\end{itemize}
Since \(+^\dagger,0^\dagger,\Delta^\dagger\) and \(!^\dagger\) automatically obey turned-around 
versions of the equations obeyed  by \(+,0,\Delta\) and \(!\), we see that \(k\) acquires a 
\emph{second} bicommutative bimonoid structure when considered as an object in \(\Relk\).  

Moreover, the four dark operations make \(k\) into a \Define{Frobenius monoid}.  This means that
\((k,+,0)\) is a monoid, \((k,+^\dagger, 0^\dagger)\) is a comonoid, and the \Define{Frobenius 
relation} holds:
\begin{center}
 \scalebox{1}{
   \begin{tikzpicture}[thick]
   \node [plus] (sum1) at (0.5,-0.216) {};
   \node [coplus] (cosum1) at (1,0.216) {};
   \node [coordinate] (sum1corner) at (0,0.434) {};
   \node [coordinate] (cosum1corner) at (1.5,-0.434) {};
   \node [coordinate] (sum1out) at (0.5,-0.975) {};
   \node [coordinate] (cosum1in) at (1,0.975) {};
   \node [coordinate] (1cornerin) at (0,0.975) {};
   \node [coordinate] (1cornerout) at (1.5,-0.975) {};

   \draw[rounded corners] (1cornerin) -- (sum1corner) -- (sum1.left in)
   (1cornerout) -- (cosum1corner) -- (cosum1.right out);
   \draw (sum1.right in) -- (cosum1.left out)
   (sum1.io) -- (sum1out)
   (cosum1.io) -- (cosum1in);

   \node (eq1) at (2,0) {\(=\)};
   \node [plus] (sum2) at (3,0.325) {};
   \node [coplus] (cosum2) at (3,-0.325) {};
   \node [coordinate] (sum2inleft) at (2.5,0.975) {};
   \node [coordinate] (sum2inright) at (3.5,0.975) {};
   \node [coordinate] (cosum2outleft) at (2.5,-0.975) {};
   \node [coordinate] (cosum2outright) at (3.5,-0.975) {};

   \draw (sum2inleft) .. controls +(270:0.3) and +(120:0.15) .. (sum2.left in)
   (sum2inright) .. controls +(270:0.3) and +(60:0.15) .. (sum2.right in)
   (cosum2outleft) .. controls +(90:0.3) and +(240:0.15) .. (cosum2.left out)
   (cosum2outright) .. controls +(90:0.3) and +(300:0.15) .. (cosum2.right out)
   (sum2.io) -- (cosum2.io);

   \node (eq2) at (4,0) {\(=\)};
   \node [plus] (sum3) at (5.5,-0.216) {};
   \node [coplus] (cosum3) at (5,0.216) {};
   \node [coordinate] (sum3corner) at (6,0.434) {};
   \node [coordinate] (cosum3corner) at (4.5,-0.434) {};
   \node [coordinate] (sum3out) at (5.5,-0.975) {};
   \node [coordinate] (cosum3in) at (5,0.975) {};
   \node [coordinate] (3cornerin) at (6,0.975) {};
   \node [coordinate] (3cornerout) at (4.5,-0.975) {};

   \draw[rounded corners] (3cornerin) -- (sum3corner) -- (sum3.right in)
   (3cornerout) -- (cosum3corner) -- (cosum3.left out);
   \draw (sum3.left in) -- (cosum3.right out)
   (sum3.io) -- (sum3out)
   (cosum3.io) -- (cosum3in);
   \end{tikzpicture}

}.
\end{center}
All three expressions in this equation are linear relations saying that the sum of the two inputs
equal the sum of the two outputs.  

The operation sending each linear relation to its adjoint extends to a contravariant functor 
\[ \dagger \maps \Relk\ \to \Relk ,\]
which obeys a list of properties that are summarized by saying that \(\Relk\) is a
`\(\dagger\)-compact' category \cite{AC,Selinger}.  Because two of the operations in the Frobenius
monoid \((k, +,0,+^\dagger,0^\dagger)\) are adjoints of the other two, it is a
\Define{\(\dagger\)-Frobenius monoid}.  This Frobenius monoid is also \Define{special}, meaning that
comultiplication (in this case \(+^\dagger\)) followed by multiplication (in this case \(+\)) equals
the identity on \(k\):
\begin{center}
   \begin{tikzpicture}[thick]
   \node [plus] (sum) at (0.4,-0.5) {};
   \node [coplus] (cosum) at (0.4,0.5) {};
   \node [coordinate] (in) at (0.4,1) {};
   \node [coordinate] (out) at (0.4,-1) {};
   \node (eq) at (1.3,0) {\(=\)};
   \node [coordinate] (top) at (2,1) {};
   \node [coordinate] (bottom) at (2,-1) {};

   \path (sum.left in) edge[bend left=30] (cosum.left out)
   (sum.right in) edge[bend right=30] (cosum.right out);
   \draw (top) -- (bottom)
   (sum.io) -- (out)
   (cosum.io) -- (in);
   \end{tikzpicture}.
  \end{center}
This Frobenius monoid is also commutative---and cocommutative, but for Frobenius monoids this
follows from commutativity.

Starting around 2008, commutative special \(\dagger\)-Frobenius monoids have become important in the
categorical foundations of quantum theory, where they can be understood as `classical structures'
for quantum systems \cite{CPV,Vicary}.  The category \(\Fin\Hilb\) of finite-dimensional Hilbert
spaces and linear maps is a \(\dagger\)-compact category, where any linear map \(f \maps H \to K\)
has an adjoint \(f^\dagger \maps K \to H\) given by
\[         \langle f^\dagger \phi, \psi \rangle = \langle \phi, f \psi \rangle \]
for all \(\psi \in H, \phi \in K \).  A commutative special \(\dagger\)-Frobenius monoid in
\(\Fin\Hilb\) is then the same as a Hilbert space with a chosen orthonormal basis.  The reason is
that given an orthonormal basis \( \psi_i \) for a finite-dimensional Hilbert space \(H\), we can
make \(H\) into a  commutative special \(\dagger\)-Frobenius monoid with multiplication \(m \maps H
\otimes H \to H\) given by
\[     m (\psi_i \otimes \psi_j ) = \left\{ \begin{array}{cl}  \psi_i & i = j \\
                                                                                0 & i \ne j  
\end{array}\right.  \]
and unit \(i \maps \C \to H\) given by
\[   i(1) = \sum_i \psi_i . \]
The comultiplication \(m^\dagger\) duplicates basis states:
\[        m^\dagger(\psi_i) = \psi_i \otimes \psi_i  . \]
Conversely, any commutative special \(\dagger\)-Frobenius monoid in \(\Fin\Hilb\) arises this way.  

Considerably earlier, around 1995, commutative Frobenius monoids were recognized as important in
topological quantum field theory.  The reason, ultimately, is that the free symmetric monoidal
category on a commutative Frobenius monoid is \(2\Cob\), the category with 2-dimensional oriented
cobordisms as morphisms: see Kock's textbook \cite{Kock} and the many references therein.  But the
free symmetric monoidal category on a commutative \emph{special} Frobenius monoid was worked out 
even earlier \cite{CW,Kock2,RSW}: it is the category with finite sets as objects, where a morphism 
\(f \maps X \to Y\) is an isomorphism class of cospans
\[        X \longrightarrow S \longleftarrow Y  .\]
This category can be made into a \(\dagger\)-compact category in an obvious way, and then the
1-element set becomes a commutative special \(\dagger\)-Frobenius monoid.  

For all these reasons, it is interesting to find a commutative special \(\dagger\)-Frobenius monoid
lurking at the heart of control theory!  However, the Frobenius monoid here has yet another
property, which is more unusual.  Namely, the unit \(0 \maps \{0\} \asrelto k\) followed by the
counit \(0^\dagger \maps k \asrelto \{0\} \) is the identity on \(\{0\}\):
\begin{center}
\scalebox{1}{
\begin{tikzpicture}[-, thick, node distance=0.7cm]
   \node [zero] (Bins) {};
   \node [zero] (Tins) [above of=Bins] {};
   \path
   (Tins) edge (Bins);
   \end{tikzpicture}
\quad
\raisebox{1em}{=}
}\qquad.
\end{center}
We call a special Frobenius monoid that also obeys this `extra' law \Define{extra-special}.  One can
check that the free symmetric monoidal category on a commutative extra-special Frobenius monoid is
the category with finite sets as objects, where a morphism \(f \maps X \to Y\) is an equivalence
relation on the disjoint union \(X \sqcup Y\), and we compose \(f \maps X \to Y\) and \(g \maps Y
\to Z\) by letting \(f\) and \(g\) generate an equivalence relation on \(X \sqcup Y \sqcup Z\) and
then restricting this to \(X \sqcup Z\).

As if this were not enough, the light operations share many properties with the dark ones.  In
particular, these operations make \(k\) into a commutative extra-special \(\dagger\)-Frobenius
monoid in a second way.  In summary:
\begin{itemize}
\item \((k, +, 0, \Delta, !)\) is a bicommutative bimonoid;
\item \((k, \Delta^\dagger, !^\dagger, +^\dagger, 0^\dagger)\) is a bicommutative bimonoid;
\item \((k, +, 0, +^\dagger, 0^\dagger)\) is a commutative extra-special 
\(\dagger\)-Frobenius monoid;
\item \((k, \Delta^\dagger, !^\dagger, \Delta, !)\) is a commutative extra-special 
\(\dagger\)-Frobenius monoid.
\end{itemize}

It should be no surprise that with all these structures built in, signal-flow diagrams are a
powerful method of designing processes.  However, it is surprising that most of these structures 
are present in a seemingly very different context: the so-called `ZX calculus', a diagrammatic 
formalism for working with complementary observables in quantum theory \cite{CD}.  This arises 
naturally when one has an \(n\)-dimensional Hilbert space \(H\) with two orthonormal bases 
\(\psi_i, \phi_i \) that are `mutually unbiased', meaning that
\[           |\langle \psi_i, \phi_j\rangle|^2 = \displaystyle{\frac{1}{n}}  \]
for all \(1 \le i, j \le n\).  Each orthonormal basis makes \(H\) into commutative special
\(\dagger\)-Frobenius monoid in \(\Fin\Hilb\).  Moreover, the multiplication and unit of either one
of these Frobenius monoids fits together with the comultiplication and counit of the other to form 
a bicommutative bimonoid.  So, we have all the structure present in the list above---except
that these Frobenius monoids are only \emph{extra}-special if \(H\) is 1-dimensional.  

The field \(k\) is also a 1-dimensional vector space, but this is a red herring: in \(\relk\)
\emph{every} finite-dimensional vector space naturally acquires all four structures listed above,
since addition, zero, duplication and deletion are well-defined and obey all the equations we have
discussed.  We focus on \(k\) in this paper simply because it generates all the objects \(\relk\)
via direct sum.

Finally, in \(\relk\) the cap and cup are related to the light and dark operations as follows:
\begin{center}
\scalebox{1}{
   \begin{tikzpicture}[thick]
   \node (eq) at (0.2,-0.1) {\(=\)};
   \node [coordinate] (lcap) at (-1.5,0.5) {};
   \node [coordinate] (rcap) at (-0.5,0.5) {};
   \node [coordinate] (lcapbot) at (-1.5,-1) {};
   \node [coordinate] (rcapbot) at (-0.5,-1) {};
   \node [delta] (dub) at (1.25,0) {};
   \node [bang] (boom) at (1.25,0.65) {};
   \node [coordinate] (Leftout) at (0.75,-1) {};
   \node [coordinate] (Rightout) at (1.75,-1) {};

   \draw (dub.left out) .. controls +(240:0.5) and +(90:0.5) .. (Leftout)
      (dub.right out) .. controls +(300:0.5) and +(90:0.5) .. (Rightout);
   \draw (boom) -- (dub) (lcapbot) -- (lcap) (rcap) -- (rcapbot);
   \path (lcap) edge[bend left=90] (rcap);
   \end{tikzpicture}
\qquad \qquad
   \begin{tikzpicture}[thick]
   \node [multiply] (neg) at (0,0.1) {\(\scriptstyle{-1}\)};
   \node [coordinate] (cupInLeft) at (0,1) {};
   \node [coordinate] (Lcup) at (0,-0.5) {};
   \node [coordinate] (Rcup) at (1,-0.5) {};
   \node [coordinate] (cupInRight) at (1,1) {};
   \node (eq) at (1.7,0.1) {\(=\)};
   \node [coordinate] (SumLeftIn) at (2.25,1) {};
   \node [coordinate] (SumRightIn) at (3.25,1) {};
   \node [plus] (Sum) at (2.75,0) {};
   \node [zero] (coZero) at (2.75,-0.65) {};

   \draw (SumRightIn) .. controls +(270:0.5) and +(60:0.5) .. (Sum.right in)
      (SumLeftIn) .. controls +(270:0.5) and +(120:0.5) .. (Sum.left in);
   \draw (cupInLeft) -- (neg) -- (Lcup)
      (Rcup) -- (cupInRight)
      (Sum) -- (coZero);
   \path (Lcup) edge[bend right=90] (Rcup);
   \end{tikzpicture}
}.
\end{center}
Note the curious factor of \(-1\) in the second equation, which breaks some of the symmetry we have
seen so far.  This equation says that two elements \(x, y \in k\) sum to zero if and only if \(-x =
y\).  Using the zigzag equations, the two equations above give the antipode
\begin{center}
 \begin{tikzpicture}[thick]
   \node (eq) {\(=\)};
   \node[delta] (Lup) at (-1,0.216) {};
   \node[plus] (Ldn) at (-1.5,-0.216) {};
   \node[coordinate] (Lupo) at (-0.5,-0.434) {};
   \node[coordinate] (Ldni) at (-2,0.434) {};
   \node[bang] (Lupi) at (-1,0.866) {};
   \node[zero] (Ldno) at (-1.5,-0.866) {};
   \node (Lo) at (-0.5,-1.082) {};
   \node (Li) at (-2,1.082) {};
   \node [multiply] (neg1) [right of=eq] {\(\scriptstyle{-1}\)};
   \node[coordinate] (inR) [above of=neg1] {};
   \node[coordinate] (outR) [below of=neg1] {};

   \draw[rounded corners] (Li) -- (Ldni) -- (Ldn.left in) (Lo) -- (Lupo) -- (Lup.right out);
   \draw (Ldn) -- (Ldno) (Lup) -- (Lupi) (Ldn.right in) -- (Lup.left out);
   \path (neg1) edge (inR) edge (outR);
   \end{tikzpicture}.
\end{center}
We thus see that in \(\relk\), both additive and multiplicative inverses can be expressed in terms
of the generating morphisms used in signal-flow diagrams.

The break in symmetry at this point can be explained by yet another second way of doing something.
We have seen one contravariant functor on \(\relk\), \(\dagger\), but there is a second
contravariant functor on \(\relk\), \(*\).  This one extends a contravariant functor on \(\vectk\)
that was already lurking in the background.  The functor 
\[  * \maps \relk \to \relk  \]
extends the notion of \emph{transposition} of linear maps, and these two equations relating the cap
and cup to light and dark operations show how to consistently extend transposition to cap and cup,
and thus to linear relations.  Thus we have
\begin{itemize}
\item \(+^*=\Delta\),
\item \(\Delta^*=+\),
\item \(\zero^*=!\),
\item \(!^*=\zero\),
\item \(\cap^*=\cup\of(1 \oplus s_{-1})\),
\item \(\cup^*=(1 \oplus s_{-1})\of\cap\).
\end{itemize}
Graphically,

 \begin{tikzpicture}[thick,>=stealth']
   \node[plus] (plus) at (-1,0) {};
   \node[delta] (delta) at (1,0) {};

   \draw (plus.io) -- (-1,-0.75) (delta.io) -- (1,0.75)
         (plus.left in) .. controls +(120:0.3) and +(270:0.3) .. (-1.5,0.75)
         (plus.right in) .. controls +(60:0.3) and +(270:0.3) .. (-0.5,0.75)
         (delta.left out) .. controls +(240:0.3) and +(90:0.3) .. (0.5,-0.75)
         (delta.right out) .. controls +(300:0.3) and +(90:0.3) .. (1.5,-0.75);
   \draw[very thick,<->] (-0.4,0) to node [above] {\(*\)} (0.4,0);
 \end{tikzpicture}
\hfill
 \begin{tikzpicture}[thick,>=stealth']
   \node[zero] (zero) at (-0.6,0.25) {};
   \node[bang] (bang) at (0.6,-0.25) {};

   \draw (zero) -- +(0,-1) (bang) -- +(0,1);
   \draw[very thick,<->] (-0.4,0) to node [above] {\(*\)} (0.4,0);
 \end{tikzpicture}
\hfill
 \begin{tikzpicture}[thick,>=stealth']
   \node [coordinate] (lcap) at (-1.6,0.7) {};
   \node [coordinate] (rcap) at (-0.6,0.7) {};
   \node [coordinate] (lcapbot) at (-1.6,-0.6) {};
   \node [coordinate] (rcapbot) at (-0.6,-0.6) {};
   \node [multiply] (neg) at (1.6,0.2) {\(\scriptstyle{-1}\)};
   \node [coordinate] (cupInLeft) at (1.6,0.9) {};
   \node [coordinate] (Lcup) at (1.6,-0.4) {};
   \node [coordinate] (Rcup) at (0.6,-0.4) {};
   \node [coordinate] (cupInRight) at (0.6,0.9) {};

   \draw (cupInLeft) -- (neg) -- (Lcup)
         (Rcup) -- (cupInRight);
   \path (Lcup) edge[bend left=90] (Rcup);
   \draw (lcapbot) -- (lcap) (rcap) -- (rcapbot);
   \path (lcap) edge[bend left=90] (rcap);
   \draw[very thick,<->] (-0.4,0) to node [above] {\(*\)} (0.4,0);
 \end{tikzpicture}.
\qquad \qquad

Theorem~\ref{presrk} gives a presentation of \(\relk\) based on some of the ideas just discussed.
Briefly, it says that \(\relk\) is the PROP generated by these morphisms:
\begin{enumerate}
\item addition \(+\maps k^2 \asrelto k\)
\item zero \(0 \maps \{0\} \asrelto k \)
\item duplication \(\Delta\maps k\asrelto k^2 \)
\item deletion \(! \maps k \asrelto 0\)
\item scaling \(s_c\maps k\asrelto k\) for any \(c\in k\)
\item cup \(\cup \maps k^2 \asrelto \{0\} \)
\item cap \(\cap \maps \{0\} \asrelto k^2 \)
\end{enumerate}
obeying these equations:
\begin{enumerate}
\item \((k, +, 0, \Delta, !)\) is a bicommutative bimonoid;
\item \(\cap\) and \(\cup\) obey the zigzag equations;
\item \((k, +, 0, +^\dagger, 0^\dagger)\) is a commutative extra-special 
\(\dagger\)-Frobenius monoid;
\item \((k, \Delta^\dagger, !^\dagger, \Delta, !)\) is a commutative extra-special 
\(\dagger\)-Frobenius monoid;
\item the field operations of \(k\) can be recovered from the generating morphisms; 
\item the generating morphisms (1)--(4) commute with scaling.
\end{enumerate}
Note that item (2) makes \(\relk\) into a \(\dagger\)-compact category, allowing us to mention the
adjoints of generating morphisms in the subsequent equations.  Item (5) means that \(+,\cdot, 0,
1\) and also additive and multiplicative inverses in the field \(k\) can be expressed in terms of 
signal-flow diagrams in the manner we have explained.
\section{State space}
\label{intro:statespace}
Control theory underwent a paradigm shift in the 1960s with the advent of the state-space approach.
Chapter~\ref{stateful} introduces the basic ideas of this approach and builds up to the PROP
\(\st_k\), which we designed to describe this approach more closely than \(\relk\) can.

The state-space approach to control theory was born around 1960 with Kalman's paper \cite{Kalman60}
that introduced to the world the concepts of controllability and observability.  This approach
addresses some of the limitations of the frequency analysis approach, which had enjoyed significant
early success.  Kalman noticed any linear time-invariant (LTI) control system can be partitioned into
four subsystems\footnote{This partitioning can also be done for nonlinear or time-varying systems,
but the four parts are no longer necessarily control systems in their own right.}, only one of which
is accounted for by the transfer function of the frequency analysis approach.  The other three
subsystems lack inputs, lack outputs, or lack both, thus are best studied by looking at the internal
states of a system.  The continuous time version of the state-space approach uses matrix
differential equations that involve the input and output of a system, mediated by the internal state
of the system.  In a linear time-invariant system, which is the only kind we consider, these equations are 
\begin{equation}\label{stateeq}
\dot{x}(t) = A x(t) + B u(t)
\end{equation}
\begin{equation}\label{outputeq}
y(t) = C x(t) + D u(t),
\end{equation}
where \(u(t)\) is the input vector, \(y(t)\) is the output vector, and \(x(t)\) is the state
vector.  These equations can also be discretized to matrix difference equations for a discrete time
approach.  Unless otherwise stated, we will use the convention that \(\mathrm{dim}(u) = m\), 
\(\mathrm{dim}(x) = n\), and \(\mathrm{dim}(y) = p\).

Equations \ref{stateeq} and \ref{outputeq} can be found lurking in the following signal-flow
diagram:
\begin{invisiblelabel}
\label{signalflowstateeqns}
\end{invisiblelabel}
\begin{center}
\scalebox{1}{
 \begin{tikzpicture}[thick]
   \node [delta] (usplit) at (2.5,3) {};
   \node [multiply] (A) at (1,2) {\(A\)};
   \node [multiply] (B) at (2,2) {\(B\)};
   \node [plus] (xdotsum) at (1.5,1) {};
   \node [multiply] (int) at (1.5,0) {\(\int\)};
   \node [delta] (xsplit) at (1.5,-1) {};
   \node [multiply] (C) at (2,-2) {\(C\)};
   \node [multiply] (D) at (3,-2) {\(D\)};
   \node [plus] (ysum) at (2.5,-3) {};

   \node [coordinate] (capend) [left of=A] {};
   \node [coordinate] (cupend) [below of=capend, shift={(0,-2)}] {};
   \node [coordinate] (ubend) [right of=B] {};

   \node at (1.8,-0.6) {\(x\)};
   \node at (1.8,0.6) {\(\dot{x}\)};
   \node at (2.5,-4) {\(y\)};
   \node at (2.5,4) {\(u\)};

   \node (yeqn) at (4.5,-4) {\(y = Cx + Du\)};
   \node (xeqn) at (4.5,1) {\(\dot{x} = Ax + Bu\)};
   \draw[very thick,green!70!blue!50,->] (xeqn) -- (xdotsum);
   \draw[very thick,green!70!blue!50,->] (yeqn) -- (ysum);

   \draw (usplit) -- +(0,0.75)
         (ysum) -- +(0,-0.75)
         (xsplit.left out) .. controls +(240:1) and +(270:1) .. (cupend)
         (A.90) .. controls +(90:0.8) and +(90:1.2) .. (capend)
         (capend) -- (cupend)
         (xsplit.right out) .. controls +(300:0.2) and +(90:0.2) .. (C.90)
         (C.270) .. controls +(270:0.2) and +(120:0.2) .. (ysum.left in)
         (D.270) .. controls +(270:0.2) and +(60:0.2) .. (ysum.right in)
         (D.90) -- (ubend)
         (usplit.right out) .. controls +(300:0.5) and +(90:0.5) .. (ubend)
         (usplit.left out) .. controls +(240:0.2) and +(90:0.2) .. (B.90)
         (A.270) .. controls +(270:0.2) and +(120:0.2) .. (xdotsum.left in)
         (B.270) .. controls +(270:0.2) and +(60:0.2) .. (xdotsum.right in)
         (xdotsum) -- (int) -- (xsplit)
;
 \end{tikzpicture}
}
\end{center}
where we have used the shorthand of drawing a single generating morphism where there are zero or
more parallel generating morphisms of the same kind and scaling representing matrix multiplication.
Note that taking integration to be scaling by \(\frac{1}{s}\), as when taking Laplace transforms,
the linear relation this signal-flow diagram depicts is the linear map \(D+C(sI-A)^{-1}B\).

A system is \Define{controllable} if for each state \(x\) and time \(t_0\) there is an input
function \(u(t)\) such that the state can be set to the equilibrium state, \emph{i.e.}\ the zero
vector, in a finite amount of time.  For the linear time-invariant systems we are interested in,
there is a simple characterization of controllability involving the row rank of the block matrix
\(M_c = [B, AB, \dotsc, A^{n-1}B]\).  This controllability matrix \(M_c\) is an \(n \times mn\)
matrix, and a system is controllable when its row rank is \(n\):
\[\mathrm{rank}(M_c) = n.\]

A system is \Define{observable} if for each state \(x\) and time \(t_0\), and with the input
function \(u(t)\) identically zero, measurements of the output function \(y(t)\) over a finite
duration can be used to determine the state \(x(t_0)\).  For the systems we are concerned with,
there is a characterization of observability in terms of the column rank of the block matrix \(M_o =
[C, CA, \dotsc, CA^{n-1}]^\top\).  This observability matrix \(M_o\) is an \(np \times n\) matrix,
and a linear time-invariant system is observable when its column rank is \(n\):
\[\mathrm{rank}(M_o) = n.\]

There are clear parallels in these descriptions of controllability and observability, but there is a
seeming fly in the ointment with observability depending on the input signal being zero and
controllability being independent of the output signal.  Despite this oddity, it is not difficult
to guess there might be some kind of duality relating controllability and observability.  Indeed,
Kalman defined observability in \cite{Kalman60} as a dual notion to controllability, and only
defined it as a separate concept later.  The action of Kalman's duality reverses the direction of
time, swaps the roles of the matrices \(B\) and \(C\), and transposes all the matrices \(A\), \(B\),
\(C\), and \(D\).  Even in the time-varying case, this process transforms a controllable system into
an observable system, and an observable system into a controllable system.

It is curious to see what happens when Kalman's duality is applied to the signal-flow diagram
{\hyperref[signalflowstateeqns]{above}} that encodes the state-space equations.
\begin{center}
\scalebox{0.80}{
 \begin{tikzpicture}[thick]
   \node [delta] (usplit) at (-0.5,4) {};
   \node [upmultiply] (A) at (-2.6,-0.15) {\(A\)};
   \node [multiply] (B) at (-1,2.7) {\(B\)};
   \node [plus] (xdotsum) at (-1.5,1) {};
   \node [multiply] (int) at (-1.5,0) {\(\int\)};
   \node [delta] (xsplit) at (-1.5,-1) {};
   \node [multiply] (C) at (-1,-2.3) {\(C\)};
   \node [multiply] (D) at (0,0) {\(D\)};
   \node [plus] (ysum) at (-0.5,-4) {};

   \node [coordinate] (capend) [above of=A] {};
   \node [coordinate] (cupend) [below of=A, shift={(0,0.2)}] {};
   \node [coordinate] (ubend) [right of=B] {};
   \node [coordinate] (ybend) [right of=C, shift={(0,-0.2)}] {};

   \draw (usplit) -- +(0,0.75)
         (ysum) -- +(0,-0.75)
         (xsplit.left out) .. controls +(240:0.7) and +(270:0.5) .. (cupend)
         (xdotsum.left in) .. controls +(120:0.7) and +(90:0.5) .. (capend)
         (capend) -- (A) -- (cupend)
         (xsplit.right out) .. controls +(300:0.2) and +(90:0.2) .. (C.90)
         (C.270) .. controls +(270:0.3) and +(120:0.4) .. (ysum.left in)
         (ybend) .. controls +(270:0.3) and +(60:0.3) .. (ysum.right in)
         (ybend) -- (D) -- (ubend)
         (usplit.right out) .. controls +(300:0.5) and +(90:0.5) .. (ubend)
         (usplit.left out) .. controls +(240:0.2) and +(90:0.2) .. (B.90)
         (B.270) .. controls +(270:0.3) and +(60:0.4) .. (xdotsum.right in)
         (xdotsum) -- (int) -- (xsplit)
   ;

   \node [delta] (usplit) at (7.5,4) {};
   \node [upmultiply] (A) at (5,-0.3) {\(A^{\top}\)};
   \node [multiply] (B) at (7,2.7) {\(C^{\top}\)};
   \node [plus] (xdotsum) at (6.5,1) {};
   \node [multiply] (int) at (6.5,0) {\(\int\)};
   \node [delta] (xsplit) at (6.5,-1) {};
   \node [multiply] (C) at (7,-2.3) {\(B^{\top}\)};
   \node [multiply] (D) at (8,0) {\(D^{\top}\)};
   \node [plus] (ysum) at (7.5,-4) {};

   \node [coordinate] (capend) [above of=A] {};
   \node [coordinate] (cupend) [below of=A, shift={(0,0.2)}] {};
   \node [coordinate] (ubend) [right of=B] {};
   \node [coordinate] (ybend) [right of=C, shift={(0,-0.4)}] {};

   \draw (usplit) -- +(0,0.75)
         (ysum) -- +(0,-0.75)
         (xsplit.left out) .. controls +(240:1) and +(270:0.7) .. (cupend)
         (xdotsum.left in) .. controls +(120:1) and +(90:0.7) .. (capend)
         (capend) -- (A) -- (cupend)
         (xsplit.right out) .. controls +(300:0.2) and +(90:0.2) .. (C.90)
         (C.270) .. controls +(270:0.2) and +(120:0.2) .. (ysum.left in)
         (ybend) .. controls +(270:0.2) and +(60:0.2) .. (ysum.right in)
         (ybend) -- (D) -- (ubend)
         (usplit.right out) .. controls +(300:0.5) and +(90:0.5) .. (ubend)
         (usplit.left out) .. controls +(240:0.2) and +(90:0.2) .. (B.90)
         (B.270) .. controls +(270:0.2) and +(60:0.2) .. (xdotsum.right in)
         (xdotsum) -- (int) -- (xsplit)
   ;

   \draw[very thick,<->] (1.1,0) -- (3.9,0);
 \end{tikzpicture}
}
\end{center}
Recalling that the transposition duality \(* \maps \relk \to \relk\) vertically flips signal-flow
diagrams and reverses the colors of the generators, Kalman's duality bears remarkable resemblance to
the transposition duality.  The similarity to the transposition duality can even be used to explain
the oddity of controllability ignoring (deleting) the output signal and observability setting the
input signal to zero:  \(!^* = \zero\).

While it is clear something connects Kalman's work on controllability and observability to the PROP
\(\relk\), taking the signal-flow diagrams above to be linear relations hides the evidence of the
connection: it is impossible to reconstruct \(A\), \(B\), \(C\), and \(D\) from a given linear
relation.  To deal with this shortcoming, we form a new PROP, \(\boxv\), as a stepping stone towards
finding the PROP \(\st_k\).  The objects of \(\boxv\) are the vector spaces \(k^n\) just as with
\(\vectk\), but the morphisms from \(V_1\) to \(V_2\) are now 4-tuples of linear maps, which can be
conveniently organized as non-commutative squares:
\begin{center}
    \begin{tikzpicture}[->]
     \node (A) at (0,0) {\(V_1\)};
     \node (S) at (0,1.5) {\(S\)};
     \node (T) at (1.5,1.5) {\(T\)};
     \node (B) at (1.5,0) {\(V_2\)};

     \path
      (A) edge node[below] {\(d\)} (B)
          edge node[left] {\(b\)} (S)
      (S) edge node[above] {\(a\)} (T)
      (T) edge node[right] {\(c\)} (B)
;
    \end{tikzpicture}.
\end{center}
For compactness of notation, this square can also be written \((d,c,a,b)\).

In Theorem~\ref{boxvectfunctors} we show there is an evaluation functor \(\eval \maps \boxv \to
\vectk\) that takes \((d,c,a,b)\) to \(d+cab\).  Even better, \(\eval\) is a \(\prop\) morphism.  As
noted above, the signal-flow diagram that encodes the state-space equations (Equations~\ref{stateeq}
and \ref{outputeq}) gives a linear map, \(D+C(sI-A)^{-1}B\).  The maps \(D\), \(C\), \(A\), and \(B\)
are all morphisms in \(\vectk\) in the linear time-invariant case, so this looks very similar to the
evaluation of a \(\boxv\) morphism.

To get them to match, we define \(\st_k\) as a subPROP of \(\boxvs\), where \(d=D\), \(c=C\),
\(a=(sI-A)^{-1}\), and \(b=B\) for some linear maps \(A\), \(B\), \(C\), and \(D\).  In
Proposition~\ref{st_k:def} we show \(\st_k\) is a PROP.  Given a stateful morphism \((d,c,a,b)\), it
is possible to find the linear maps \(A\), \(B\), \(C\), and \(D\) used in the state-space
equations.  Because \(\eval(d,c,a,b) = D+C(sI-A)^{-1}B\) for stateful morphisms, it is reasonable to
allow the signal-flow diagram
\begin{center}
 \begin{tikzpicture}[thick]
   \node [delta] (usplit) at (-0.5,3) {};
   \node [upmultiply] (A) at (-2.6,-0.15) {\(A\)};
   \node [multiply] (B) at (-1,2) {\(B\)};
   \node [plus] (xdotsum) at (-1.5,1) {};
   \node [multiply] (int) at (-1.5,0) {\(\int\)};
   \node [delta] (xsplit) at (-1.5,-1) {};
   \node [multiply] (C) at (-1,-2) {\(C\)};
   \node [multiply] (D) at (0,0) {\(D\)};
   \node [plus] (ysum) at (-0.5,-3) {};

   \node [coordinate] (capend) [above of=A] {};
   \node [coordinate] (cupend) [below of=A, shift={(0,0.2)}] {};
   \node [coordinate] (ubend) [right of=B] {};
   \node [coordinate] (ybend) [right of=C, shift={(0,-0.2)}] {};

   \draw (usplit) -- +(0,0.6)
         (ysum) -- +(0,-0.6)
         (xsplit.left out) .. controls +(240:0.7) and +(270:0.5) .. (cupend)
         (xdotsum.left in) .. controls +(120:0.7) and +(90:0.5) .. (capend)
         (capend) -- (A) -- (cupend)
         (xsplit.right out) .. controls +(300:0.2) and +(90:0.2) .. (C.90)
         (C.270) .. controls +(270:0.2) and +(120:0.2) .. (ysum.left in)
         (ybend) .. controls +(270:0.3) and +(60:0.3) .. (ysum.right in)
         (ybend) -- (D) -- (ubend)
         (usplit.right out) .. controls +(300:0.5) and +(90:0.5) .. (ubend)
         (usplit.left out) .. controls +(240:0.2) and +(90:0.2) .. (B.90)
         (B.270) .. controls +(270:0.2) and +(60:0.2) .. (xdotsum.right in)
         (xdotsum) -- (int) -- (xsplit)
   ;
 \end{tikzpicture}
\end{center}
to depict a stateful morphism, not just a linear relation.  Furthermore, because it is possible to
find the linear maps \(A\), \(B\), \(C\), and \(D\) used in the state-space equations,
controllability and observability are well-defined for stateful morphisms.  This gives a sense in
which \(\st_k\) is a more detailed picture of a signal processing apparatus which captures not only
the linear relation between inputs and outputs, but how the apparatus implements this relation.

In category theoretic terms, a linear map having full row rank means it is an epimorphism, and
having full column rank means it is a monomorphism.  We can therefore translate the linear
time-invariant conditions for controllability and observability into signal-flow diagram form as follows:

A stateful morphism \((D,C,(sI-A)^{-1},B)\) is controllable when
\begin{center}
\scalebox{0.8}{ \begin{tikzpicture}[thick]
   \node [multiply] (B1) {\(B\)};
   \node [multiply, right of=B1] (B2) {\(B\)};
   \node [multiply, right of=B2] (B3) {\(B\)};
   \node [multiply, below of=B2] (A21) {\(A\)};
   \node [multiply, below of=B3] (A31) {\(A\)};
   \node [multiply, below of=A31] (A32) {\(A\)};
   \node [right of=A31] (dots) {\(\dots\)};
   \node [multiply, right of=dots] (An1) {\(A\)};
   \node [multiply, above of=An1] (Bn) {\(B\)};
   \node [multiply, below of=An1, shift={(0,-1)}] (An) {\(A\)};

   \node [plus, below of=A21, shift={(-0.5,-0.5)}] (12sum) {};
   \node [plus, below of=A32, shift={(-0.5,-1)}] (123sum) {};
   \node [plus, below of=A32, shift={(1,-2.5)}] (12nsum) {};

   \draw (B1.90) -- +(90:0.5);
   \draw (B2.90) -- +(90:0.5);
   \draw (B3.90) -- +(90:0.5);
   \draw (Bn.90) -- +(90:0.5);
   \draw (12nsum.io) -- +(270:0.5);
   \draw (B2) -- (A21);
   \draw (B3) -- (A31) -- (A32);
   \draw (Bn) -- (An1);
   \draw[dotted] (An1) -- (An)
         (123sum.io) .. controls +(270:0.5) and +(120:0.5) .. (12nsum.left in);
   \draw (A21.270) .. controls +(270:0.5) and +(60:0.5) .. (12sum.right in)
         (A32.270) .. controls +(270:0.5) and +(60:0.5) .. (123sum.right in)
         (An.270) .. controls +(270:0.5) and +(60:1) .. (12nsum.right in)
         (B1.270) .. controls +(270:1) and +(120:0.5) .. (12sum.left in)
         (12sum.io) .. controls +(270:0.5) and +(120:0.5) .. (123sum.left in);

\draw [decorate,decoration={brace,amplitude=10pt},xshift=-4pt,yshift=0pt]
(4.7,-0.6) -- (4.7,-3.4)node [midway,xshift=3em] {\(n-1\)};

\end{tikzpicture} }
\end{center}
is an epimorphism in \(\vectk\), and it is observable when
\begin{center}
\scalebox{0.8}{ \begin{tikzpicture}[thick]
   \node [multiply] (B1) {\(C\)};
   \node [multiply, right of=B1] (B2) {\(C\)};
   \node [multiply, right of=B2] (B3) {\(C\)};
   \node [multiply, above of=B2] (A21) {\(A\)};
   \node [multiply, above of=B3] (A31) {\(A\)};
   \node [multiply, above of=A31] (A32) {\(A\)};
   \node [right of=A31] (dots) {\(\dots\)};
   \node [multiply, right of=dots] (An1) {\(A\)};
   \node [multiply, below of=An1] (Bn) {\(C\)};
   \node [multiply, above of=An1, shift={(0,1)}] (An) {\(A\)};

   \node [delta, above of=A21, shift={(-0.5,0.5)}] (12sum) {};
   \node [delta, above of=A32, shift={(-0.5,1)}] (123sum) {};
   \node [delta, above of=A32, shift={(1,2.5)}] (12nsum) {};

   \draw (B1.270) -- +(270:0.5);
   \draw (B2.270) -- +(270:0.5);
   \draw (B3.270) -- +(270:0.5);
   \draw (Bn.270) -- +(270:0.5);
   \draw (12nsum.io) -- +(90:0.5);
   \draw (B2) -- (A21);
   \draw (B3) -- (A31) -- (A32);
   \draw (Bn) -- (An1);
   \draw[dotted] (An1) -- (An)
         (123sum.io) .. controls +(90:0.5) and +(240:0.5) .. (12nsum.left out);
   \draw (A21.90) .. controls +(90:0.5) and +(300:0.5) .. (12sum.right out)
         (A32.90) .. controls +(90:0.5) and +(300:0.5) .. (123sum.right out)
         (An.90) .. controls +(90:0.5) and +(300:1) .. (12nsum.right out)
         (B1.90) .. controls +(90:1) and +(240:0.5) .. (12sum.left out)
         (12sum.io) .. controls +(90:0.5) and +(240:0.5) .. (123sum.left out);

\draw [decorate,decoration={brace,amplitude=10pt},xshift=-4pt,yshift=0pt]
(4.7,3.4) -- (4.7,0.6)node [midway,xshift=3em] {\(n-1\)};

\end{tikzpicture} }
\end{center}
is a monomorphism in \(\vectk\).

Much of what has been discussed to this point has parallels in other contemporary work.  Bonchi,
Soboci\'nski and Zanasi \cite{BSZ1,BSZ2} built up a similar generators and equations picture of
\(\SV_k\), a PROP which is identical to our \(\relk\), using Lack's idea \cite{Lack} of composing
PROPs.  Soboci\'nski also continued by considering controllability, but again from a different
perspective:  About 30 years after Kalman gave his definitions of controllability and observability,
Willems \cite{Wi} proposed alternative definitions for controllability and observability that are
based on the behavior of a system.  However, the duality between controllability and observability
is less apparent in Willems' definition than in Kalman's definition.  Nevertheless, Willems'
behavioral approach is very fruitful, and Fong, Rapisarda and Soboci\'nski \cite{FRS} use this
alternative definition to give a categorical characterization of behavioral controllability.
\section{Controllability and observability in signal-flow diagrams}
\label{intro:goodflow}
Signal-flow diagrams can do much more than depict linear relations.  In Chapter~\ref{goodflow} our
goal is to define a PROP where the morphisms are the signal-flow diagrams used by control theorists,
for which the all-important notions of controllability and observability, which we saw in the
previous section, can be defined.  We begin by defining a preliminary free PROP \(\sigflow_k\),
where morphisms are all diagrams that can be built up by these generators:

\begin{center}
\scalebox{0.9}{
 \begin{tikzpicture}[thick]
   \node [coordinate] (in) at (0,2) {};
   \node [multiply] (mult) at (0,1) {\(c\)};
   \node [coordinate] (out) at (0,0) {};
   \draw (in) -- (mult) -- (out);
   \end{tikzpicture}
\hspace{3 em}
\begin{tikzpicture}[thick]
   \node [plus] (adder) at (0,0.85) {};
   \node [coordinate] (f) at (-0.5,1.5) {}; 
   \node [coordinate] (g) at (0.5,1.5) {};
   \node [coordinate] (out) at (0,0) {};
   \node [coordinate] (pref) at (-0.5,2) {};
   \node [coordinate] (preg) at (0.5,2) {};

   \draw [rounded corners] (pref) -- (f) -- (adder.left in);
   \draw [rounded corners] (preg) -- (g) -- (adder.right in);
   \draw (adder) -- (out);
   \end{tikzpicture} 
\hspace{2em}
  \begin{tikzpicture}[thick]
   \node[delta] (dupe) at (0,1.15) {};
   \node[coordinate] (o1) at (-0.5,0.5) {};
   \node[coordinate] (o2) at (0.5,0.5) {};
   \node[coordinate] (in) at (0,2) {};
   \node [coordinate] (posto1) at (-0.5,0) {};
   \node [coordinate] (posto2) at (0.5,0) {};

   \draw[rounded corners] (posto1) -- (o1) -- (dupe.left out);
   \draw[rounded corners] (posto2) -- (o2) -- (dupe.right out);
   \draw (in) -- (dupe);
\end{tikzpicture}
\hspace{3em}
\begin{tikzpicture}[thick]
   \node[coordinate] (in) at (0,2) {};
   \node [bang] (mult) at (0,1) {};
   \node [hole] (heightHolder) at (0,0) {};
   \draw (in) -- (mult);
   \end{tikzpicture}
\hspace{2em}
  \begin{tikzpicture}[thick]
   \node[hole] (heightHolder) at (0,2) {};
   \node [coordinate] (out) at (0,0) {};
   \node [zero] (del) at (0,1) {};
   \draw (del) -- (out);
\end{tikzpicture}
\hspace{3em}
 \begin{tikzpicture}[thick]
   \node [coordinate] (3) at (0,0.375) {};
   \node [coordinate] (4) at (1.3,0.375) {};
   \node [coordinate] (1) at (0,2) {};
   \node [coordinate] (2) at (1.3,2) {};
   \path
   (1) edge (3)
   (2) edge (4)
   (3) edge [-, bend right=90] (4);
   \end{tikzpicture}
        \hspace{3em}
   \begin{tikzpicture}[thick]
   \node [coordinate] (3) at (0,1.625) {};
   \node [coordinate] (4) at (1.3,1.625) {};
   \node [coordinate] (1) at (0,0) {};
   \node [coordinate] (2) at (1.3,0) {};
   \path
   (3) edge (1)
   (4) edge (2)
   (3) edge [bend left=90] (4);
   \end{tikzpicture}
},
\end{center}%
where \(c \in k\) is arbitrary.  Appending one more generator for integration
\begin{center}
\scalebox{0.9}{
\begin{tikzpicture}[thick]
 \node [integral] (int) {\(\int\)};
 \draw (int.90) -- +(0,0.4)
       (int.270) -- +(0,-0.4);
\end{tikzpicture}}
\end{center}
extends \(\sigflow_k\) to a larger free PROP, \(\sigflow_{k,s}\).  All together, these are the
generators of \(\relks\) with the element \(s\) treated separately, since integrators play a special
role in in control theory.  \(\sigflow_{k,s}\) is simply the free prop on these generators.  There
is thus a morphism of props
\[        \bbox \maps \sigflow_{k,s} \to \relks\]
sending each signal flow diagram to the linear relation between inputs and outputs that it
determines.  We call this the `black-boxing' functor.

However, many morphisms in \(\sigflow_{k,s}\) are not signal-flow diagrams of the sort used in
control theory; for example, one never sees the `cup' or `cap' above all by itself in a textbook
on control theory.   The challenge, then, is to pick out a subPROP \(\goodflow_k\) which consist of
`reasonable' signal-flow diagrams, for which controllability and observability can be defined.

We already have a category \(\st_k\) for which controllability and observability of morphisms can be
defined, and in Section~\ref{stsection} we constructed a functor \(\eval \maps \st_k \to \vectks\).
Composing with the inclusion \(i \maps \vectks \to \relks\) gives us a \(\prop\) morphism
\[        i \of \eval \maps \st_k \to \relks.\]
We would thus like \(\goodflow_k\) to be a PROP equipped with an inclusion \(j \maps \goodflow_k \to
\sigflow_{k,s}\) making this square commute:
\begin{center}
    \begin{tikzpicture}
   \node (SF) {\(\sigflow_{k,s}\)};
   \node (P) [above of=SF, shift={(0,1)}] {\(\goodflow_k\vphantom{\vectks}\)};
   \node (FR) [right of=SF, shift={(2,0)}] {\(\relks\)};
   \node (ST) [above of=FR, shift={(0,1)}] {\(\st_k\vphantom{\vectks}\)};

   \draw [->] (P) to node [above] {\(\dbox\)} (ST);
   \draw [right hook->] (P) to node [left] {\(j\)} (SF);
   \draw [->] (ST) to node [right] {\(i \of \eval\)} (FR);
   \draw [->] (SF) to node [below] {\(\bbox\)} (FR);
\end{tikzpicture}.
\end{center}
In fact, this desire will lead us directly to the definition of the PROP \(\goodflow_k\) in
Definition~\ref{gooddef}.  We conclude by showing some of the duality properties of \(\goodflow_k\)
and how they are related to Kalman's duality between controllability and observability.
\section{The `Box' construction}
\label{intro:appendix}
In Appendix~\ref{derivedeqns} we offer diagrammatic proofs of some derived equations used in the
proof of Theorem~\ref{presrk}.  Some other diagrammatic proofs with miscellaneous connections are
also included to indicate a portion of the richness of the connection between Frobenius bimonoids
and bicommutative bimonoids.  In Appendix~\ref{generalbox} we expand on the `Box' construction that
led us to \(\st_k\) in Chapter~\ref{stateful}.

When we first examine the Box construction in Chapter~\ref{stateful}, we only apply it to the PROP
\(\vectk\).  The idea behind the Box construction of breaking up a morphism into the direct and
indirect influences of the input on the output generalizes to a broader class of categories.  It is
straightforward to extend the Box construction to apply to the category \(\Vectk\), or any category
that has biproducts.  What is exciting for the purposes of future work is that the Box construction
can also be extended to apply to \(\relk\) and \(\Relk\).  The key property of \(\Relk\) that makes
it work is that \(\Vectk\) is an \Define{essentially wide} subcategory of \(\Relk\).  That is,
\(\Vectk\) `essentially' contains all the objects of \(\Relk\).  More precisely, the inclusion
functor \(i \maps \Vectk \to \Relk\) is essentially surjective.

Since \(\Vectk\) has biproducts, every object in \(\Vectk\) is a bicommutative bimonoid and every
morphism is a bimonoid homomorphism.  Thus every object in \(\Relk\) is a bicommutative bimonoid as
well.  In the Box construction in Chapter~\ref{stateful} we took advantage of the other fact, that
all morphisms of \(\vectk\) are bimonoid homomorphisms.  This is no longer the case in \(\Relk\),
but not all the arrows in the Box of a category need to be bimonoid homomorphisms.  This opens up
the possibility for a more general version of \(\st_k\), where a stateful morphism \((d,c,a,b)\) is
made up of linear relations \(a\), \(b\), \(c\), and \(d\), instead of simply linear maps.  Using
the same string diagram criteria for controllability and observability on the more general version
of \(\st_k\) could potentially generalize the notions of controllability and observability in a way
that has not been capitalized on in control theory.

\chapter{Generators and equations for PROPs}
\label{PROPs}

The formalism developed in this chapter gives us a way to present PROPs in an analogous way
to the presentation of groups, where elements in a group are the analog to morphisms in a PROP.  The
signal-flow diagrams of control theory that appear throughout this dissertation fit into the
convenient framework formed by PROPs for formalizing such diagrammatic techniques.  Whereas a group
is presented by a set of generators and a set of relations, a PROP is presented by a
\emph{distinguished object}, together with a \emph{signature} which can be thought of as a
collection of morphisms that generate the homsets, and a set of \emph{equations} between elements of
the same homset.  Stated slightly differently, a PROP is presented by a distinguished object, a
collection of generating morphisms, and a collection of equations.  Before we do anything with
PROPs, it would be good to say what a PROP is.

\begin{definition}
A \Define{PROP} is a strict \smc{} for which objects are natural numbers and the monoidal product is
addition.  A \Define{\(\prop\) morphism} is a strict \smf{} that maps the object 1 to the object 1.
\end{definition}
Stated this way, the distinguished object is the natural number 1.  We note by Mac~Lane's coherence
theorem \cite{MacLane} that any \smc{} is equivalent to a strict \smc{}.  We make the convention of
using roman typeface (as in \(\Relk\)) for names of \smcs{} that may not be strict and typewriter
typeface (as in \(\relk\)) for names of strict \smcs{}.  Other typefaces are used for categories
which, for the purposes of discussion, need not be \smcs{}.
In subsequent chapters it will be convenient to think of the objects of a PROP as tensor powers of a
distinguished object, \(X\), using the one-to-one correspondence \(X^{\tensor n} \mapsto n \in \N\).
For example, we will be concerned with PROPs that have the vector spaces \(k^n\) over some field
\(k\) as their objects, direct sum as tensor, and the one-dimensional vector space \(k\) as the
distinguished object.  There is a category \(\prop\) of PROPs and \(\prop\) morphisms.  

\begin{definition}
A \Define{\smt} \(T=(\Sigma,E)\) is a signature \(\Sigma\) together with a set \(E\) of equations.
A \Define{signature} is a set of formal symbols \(\sigma \maps m \to n\), where \(m, n \in \N\).
From a signature \(\Sigma\) we may formally construct the set of \(\Sigma\)-terms.  Defined
inductively, a \Define{\(\Sigma\)-term} takes one of the following forms:
\begin{itemize}
\item the unit \(\mathrm{id} \maps 1 \to 1\), the braiding \(\mathrm{b} \maps 2 \to 2\), or the
  formal symbols \(\sigma \maps m \to n\) in \(\Sigma\);
\item \(\beta \of \alpha \maps m \to p\), where \(\alpha \maps m \to n\) and \(\beta \maps n \to p\)
  are \(\Sigma\)-terms; or
\item \(\alpha + \gamma \maps m+p \to n+q\), where \(\alpha \maps m \to n\) and \(\gamma \maps p \to
  q\) are \(\Sigma\)-terms.
\end{itemize}
We call \((m,n)\) the \Define{type} of a \(\Sigma\)-term \(\alpha \maps m \to n\).  An
\Define{equation} is an ordered pair of \(\Sigma\)-terms with the same type.
\end{definition}
We can think of the type as an object in the discrete category \(\N \times \N\).  Then a signature
is a functor from \(\N \times \N\) to \(\Set\); to each type \((m,n) \in \N \times \N\), a
signature assigns a set of formal symbols of that type.  Note that each PROP \(\P\)
has an \Define{underlying signature}, given by the functor \(\hom_{\P}(\cdot, \cdot) \maps \N \times
\N \to \Set\).  The following result of Baez, Coya, and Rebro \cite{BCR}, building on the work of
Trimble \cite{Trimble}, allows us to understand the category \(\prop\).

\begin{proposition}
The underlying signature functor \(\U \maps \prop \to \Set^{\N \times \N}\) is monadic.
\end{proposition}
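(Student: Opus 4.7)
The plan is to verify the hypotheses of Beck's monadicity theorem: that $\U$ has a left adjoint, reflects isomorphisms, and that $\prop$ has and $\U$ preserves coequalizers of $\U$-split pairs (equivalently, using the crude monadicity theorem, reflexive coequalizers).

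First I would construct the left adjoint $\F \maps \Set^{\N\times\N} \to \prop$ as the ``free PROP on a signature.'' Given a signature $\Sigma$, let $\F\Sigma$ be the PROP whose morphisms from $m$ to $n$ are equivalence classes of $\Sigma$-terms of type $(m,n)$ (as inductively defined in the excerpt, with generators from $\Sigma$, the identity, and the braiding, closed under $\circ$ and $+$) modulo the smallest congruence making $\F\Sigma$ a strict symmetric monoidal category with monoidal product $+$ on objects. The universal property --- that $\prop$-morphisms $\F\Sigma \to \P$ correspond naturally to signature maps $\Sigma \to \U\P$ --- follows from the inductive construction: a signature map determines the value on generators, and the PROP axioms uniquely determine the extension to all $\Sigma$-terms, with well-definedness guaranteed by the fact that $\P$ already satisfies every equation imposed in forming $\F\Sigma$.

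Next I would show $\U$ reflects isomorphisms. A PROP morphism $\phi \maps \P \to \P'$ is the identity on objects (since $\prop$-morphisms send $1$ to $1$, hence $n = 1+\cdots+1$ to $n$), so $\phi$ is an isomorphism of PROPs iff it is bijective on every homset, i.e., iff $\U\phi$ is an isomorphism in $\Set^{\N\times\N}$. Conservativity is thus immediate from the definition of $\prop$.

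The remaining and main obstacle is the coequalizer step. Given a parallel pair $\phi, \psi \maps \P \rightrightarrows \propQ$ in $\prop$, I would form the coequalizer hom-set by hom-set: for each $(m,n)$, let $\sige_{m,n}$ be the equivalence relation on $\hom_\propQ(m,n)$ generated by $\phi(f) \sim \psi(f)$ for $f \in \hom_\P(m,n)$, and then close this family under the PROP operations to obtain the smallest PROP-congruence $\sige$ on $\propQ$. The quotient $\propQ/\sige$ is a PROP, the projection is a $\prop$-morphism, and it satisfies the required universal property because any $\prop$-morphism out of $\propQ$ coequalizing $\phi$ and $\psi$ must factor through the congruence closure. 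The key technical verification --- and the genuinely laborious step --- is showing that this congruence closure, which a priori could enlarge each homset further than the naive quotient, does so in a controlled way that is preserved by $\U$. For $\U$-split coequalizers (or more conveniently, reflexive coequalizers, which suffice by crude monadicity), the split / reflexive structure forces the PROP-congruence closure to coincide with the pointwise set-theoretic equivalence relation, so that $\U(\propQ/\sige)_{m,n} = \hom_\propQ(m,n)/\sige_{m,n}$, which is exactly the coequalizer computed in $\Set^{\N\times\N}$. With all three hypotheses verified, Beck's theorem yields that $\U$ is monadic.
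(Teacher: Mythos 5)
The paper does not actually prove this proposition: it is quoted from Baez--Coya--Rebro (building on Trimble), so there is no in-paper argument to compare yours against. Judged on its own terms, your outline follows the standard and correct route. Your three ingredients are the right ones, and the first two are genuinely easy here: the term-model construction of \(\F\Sigma\) gives the left adjoint (note that \(\F\) is a \emph{left} adjoint to \(\U\), as you have it --- the paper's phrase ``right adjoint'' is a slip), and conservativity of \(\U\) is immediate because \(\prop\)-morphisms are identity-on-objects, so a homset-bijective morphism has an inverse that is automatically a strict symmetric monoidal functor.

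The only place where your proposal stops short of a proof is exactly where you say it does: the claim that for a \(\U\)-split (or reflexive) pair the generated PROP-congruence on \(\propQ\) coincides with the pointwise equivalence relation is asserted, not verified, and it is the entire content of the Beck condition. The standard way to discharge it is not to analyze the congruence closure at all, but to work directly with the split coequalizer \(e \maps \U\propQ \to C\) with section \(s\) in \(\Set^{\N\times\N}\): define composition and tensor on \(C\) by \([g]\of[f] := e(s[g]\of s[f])\) and similarly for \(\oplus\), use the splitting identities to check these are well defined and satisfy the PROP axioms, and then verify that \(e\) is a \(\prop\)-morphism and that \(C\) has the universal property. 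All of this is routine because composition and tensor are finitary operations, but it is the step that must actually be written out; as it stands your argument names the obstacle and then waves it away with ``forces.'' If you supply that computation (or the analogous sifted-colimit argument for reflexive coequalizers), the proof is complete and agrees with how the cited reference establishes the result.
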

By saying \(\U\) is \Define{monadic} we mean \(\U\) has a right adjoint \(\F \maps \Set^{\N \times
\N} \to \prop\), and the resulting functor from \(\prop\) to the category of algebras of the monad
\(\F\U\) is an equivalence of categories \cite{Borc}.  We call \(\F\Sigma\) the free PROP on
the signature \(\Sigma\).  In fact, any \(\Sigma\)-term determines a morphism in \(\F\Sigma\), and
all morphisms in \(\F\Sigma\) arise this way.  For a \(\Sigma\)-term \(\alpha \maps m \to n\), we
abuse notation and refer to the corresponding morphism as \(\alpha \in \hom(X^{\tensor m},X^{\tensor
n})\).  For each formal symbol \(\sigma \maps m \to n\) in \(\Sigma\), we refer to its corresponding
morphism \(\sigma\) as a \Define{generator} for the free PROP on \(\Sigma\).

Another important consequence of this proposition is that \(\prop\) is cocomplete.  This guarantees
the existence of coequalizers, which we use to construct a PROP for a \smt.

Let \((\Sigma,E)\) be a \smt.  Then \(E\) determines a signature \(\sige\), where each ordered pair
in \(E\) determines a formal symbol in \(\sige\) whose type is the same as the type of the pair.  We
can define \(\prop\) morphisms \(\lambda, \rho \maps \F\sige \to \F\Sigma\) mapping the \(\F\)-image
of each equation to the \(\F\)-images of the first element and second element of the pair,
respectively.
\begin{definition}
The PROP presented by a \smt{} \((\Sigma,E)\), denoted \(\P(\Sigma,E)\), is the coequalizer of the
diagram \[\F E \mathrel{\mathop{\rightrightarrows}^{\lambda}_{\rho}} \F\Sigma.\]
\end{definition}
The intuition is that the coequalizer is the freest PROP subject to the constraints that the
`left-hand side' of each equation \((\alpha,\beta)\), given by \(\lambda\), is equal to the
`right-hand side', given by \(\rho\).

\begin{definition}
A \Define{subPROP} \(\P'\) of a given PROP \(\P\) is the source of a monomorphism in \(\prop\),
\(i \maps \P' \to \P\).  A \Define{quotient PROP} \(\propQ\) of a given PROP \(\P\) is the target of 
a regular epimorphism in \(\prop\), \(\phi \maps \P \to \propQ\).
\end{definition}

We are often interested in comparing PROPs that have similar generators and equations.  The next
proposition can be phrased as the slogan, ``Adding generators and removing equations both result in
bigger PROPs.''  Once again, a proof of this proposition will appear in \cite{BCR}.

\begin{proposition}
Given a \smt{} \((\Sigma, E)\), a signature \(\Sigma'\) such that \(\Sigma \subseteq \Sigma'\), and
equations \(E' \subseteq E\), the following are true:
 \begin{itemize}
  \item \(\P(\Sigma, E)\) is a subPROP of \(\P(\Sigma', E)\), and
  \item \(\P(\Sigma, E)\) is a quotient PROP of \(\P(\Sigma, E')\).
 \end{itemize}
\end{proposition}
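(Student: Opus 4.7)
The plan is to prove each half of the proposition separately, invoking in each case the universal property of the coequalizer that defines \(\P(\Sigma, E)\).

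For the quotient claim, given \(E' \subseteq E\), there is a corresponding signature inclusion \(\sige' \hookrightarrow \sige\). Applying \(\F\) yields a \(\prop\) morphism \(\F\sige' \to \F\sige\), and precomposing \(\lambda, \rho \maps \F\sige \to \F\Sigma\) with this map produces the parallel pair \(\lambda', \rho' \maps \F\sige' \to \F\Sigma\) for the smaller theory. Since the canonical map \(\F\Sigma \to \P(\Sigma, E)\) coequalizes \(\lambda, \rho\), it also coequalizes \(\lambda', \rho'\), so the universal property of \(\P(\Sigma, E')\) produces a unique induced \(\prop\) morphism \(\psi \maps \P(\Sigma, E') \to \P(\Sigma, E)\). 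To witness \(\psi\) as a regular epimorphism, rewrite \(\P(\Sigma, E)\) as the coequalizer in \(\prop\) of the parallel pair from \(\F\sige''\) to \(\P(\Sigma, E')\) induced by restricting \(\lambda, \rho\) to the additional equations (where \(\sige''\) is the signature corresponding to \(E \setminus E'\)).

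For the subPROP claim, the signature inclusion \(\Sigma \hookrightarrow \Sigma'\) gives a \(\prop\) morphism \(\F\Sigma \to \F\Sigma'\), and since the equations are the same, the universal property of the coequalizer defining \(\P(\Sigma, E)\) yields an induced \(\prop\) morphism \(\bar\iota \maps \P(\Sigma, E) \to \P(\Sigma', E)\). The content of the claim is that \(\bar\iota\) is a monomorphism. Since \(\U \maps \prop \to \Set^{\N \times \N}\) is monadic, it creates limits and in particular reflects monomorphisms, so it suffices to check that \(\bar\iota\) is injective on each homset. Morphisms in \(\P(\Sigma, E)\) and \(\P(\Sigma', E)\) are equivalence classes of \(\Sigma\)-terms and \(\Sigma'\)-terms, respectively, under the congruences generated by \(E\); the task thus reduces to showing that the smallest congruence on \(\F\Sigma'\) containing \(E\) restricts on \(\F\Sigma\) to the smallest congruence on \(\F\Sigma\) containing \(E\).

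The nontrivial direction of this last reduction is the main obstacle. The key observation is that every equation in \(E\) is a pair of \(\Sigma\)-terms, so an elementary contextual rewrite performed on a \(\Sigma\)-term preserves membership in \(\F\Sigma\): the surrounding context is untouched, and the subterm being rewritten is replaced by another \(\Sigma\)-term. The strategy is to describe the congruence concretely as generated by single-step contextual rewrites on representatives, and then to argue by induction on the length of a rewriting chain that every intermediate term stays in \(\F\Sigma\). Setting up this normal form for proofs of congruence is where care is needed, since one must verify that generators in \(\Sigma' \setminus \Sigma\) can never be introduced as ``scratch work'' needed to connect two \(\Sigma\)-terms. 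This may be done explicitly via the string-diagram presentation of free PROPs (as developed in \cite{BCR}), where generators in \(\Sigma' \setminus \Sigma\) appear as specific boxes in a diagram that no axiom in \(E\) can create or destroy, so any rewriting chain between two \(\Sigma\)-diagrams may be pruned to stay within \(\Sigma\)-diagrams.
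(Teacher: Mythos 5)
The paper itself offers no proof of this proposition---it defers to the forthcoming \cite{BCR}---so there is nothing internal to compare your argument against; judged on its own terms, your proposal is sound and both halves are argued along the lines one would expect. For the quotient claim, the factorization \(\psi \maps \P(\Sigma,E') \to \P(\Sigma,E)\) and the identification of \(\P(\Sigma,E)\) as the coequalizer, over \(\P(\Sigma,E')\), of the pair induced by \(E \setminus E'\) is the standard ``iterated coequalizers'' argument (using that \(\F\sige \cong \F\sige' + \F(\sige\setminus\sige')\) and that a map coequalizes a coproduct of pairs iff it coequalizes each), and it does exhibit \(\psi\) as a regular epimorphism. For the subPROP claim, your reduction via faithfulness of \(\U\) to homset injectivity is correct, and your ``key observation'' is exactly the crux; the only thing to add is that the worry about ``scratch work'' dissolves completely, so no pruning of derivation chains is ever needed. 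Concretely: a morphism of \(\P(\Sigma',E)\) is an equivalence class of \(\Sigma'\)-terms under the smallest relation closed under the structural axioms of a free PROP and under contextual instances \(C[\alpha] \sim C[\beta]\) of equations \((\alpha,\beta) \in E\). Every single elementary step of either kind preserves the multiset of generators occurring in a term---the structural axioms only rearrange identities, braidings, compositions and tensors, and a contextual instance replaces the \(\Sigma\)-term \(\alpha\) by the \(\Sigma\)-term \(\beta\) inside an untouched context \(C\). Hence any derivation chain beginning at a \(\Sigma\)-term consists entirely of \(\Sigma\)-terms, every step is already a legal step in \(\F\Sigma\), and the chain witnesses the same identification in \(\P(\Sigma,E)\); this gives injectivity of \(\bar\iota\) on homsets and completes your sketch.
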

It immediately follows that \(\P(\Sigma', E)\) is a quotient PROP of, and \(\P(\Sigma, E')\) is a
subPROP of \(\P(\Sigma', E')\).  Another immediate corollary is that \(\P(\Sigma, E)\) is a quotient
PROP of the free PROP \(\F\Sigma\).

In later chapters we will show a PROP is the PROP for a \smt{} \((\Sigma, E)\) by finding `standard
forms' for the morphisms.

\begin{definition}
Given a \smt{} \((\Sigma, E)\) and a PROP \(\P\) such that \(\phi \maps \F\Sigma \to \P\) is an
epimorphism in \(\prop\), a \Define{standard form} for a morphism \(p\) in \(\P\) is a particular
morphism \(\tilde p\) in \(\F\Sigma\) such that \(\phi \tilde p = p\).
\end{definition}

The requirement that \(\phi\) is an epimorphism in \(\prop\) means \(\phi\) is surjective on
morphisms.  Thus every morphism in \(\P\) has a standard form.  There is no requirement that
standard forms respect composition, so we do not get a functor \(\P \to \F\Sigma\) that satisfies
\(p \mapsto \tilde p\).  However, there is a functor \(\nu \maps \U\P \to \U\F\Sigma\) satisfying
\(\nu(\U p) = \U \tilde p\) since signatures are discrete.

\begin{proposition} \label{normform}
Given a PROP \(\P\) and a \smt{} \((\Sigma, E)\), let \(\pi \maps \F\Sigma \to \propQ\) be the
coequalizer of \(\F\sige \rightrightarrows \F\Sigma\).  Given any epimorphism \(\phi \maps \F\Sigma \to
\P\), such that \(\phi \lambda = \phi \rho\), and \(\pi (f) = \pi \widetilde{\phi(f)}\) for all
morphisms \(f\) in \(\F\Sigma\), then \(\P\) and \(\propQ\) are isomorphic PROPs.
\end{proposition}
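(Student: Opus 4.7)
The plan is to construct an explicit inverse to a canonical comparison morphism $\bar\phi\maps \propQ \to \P$ and verify that it is a two-sided inverse. The setup already has all the hooks we need: we have the presenting quotient $\pi\maps \F\Sigma \to \propQ$, we have a $\prop$-morphism $\phi\maps \F\Sigma \to \P$ satisfying $\phi\lambda = \phi\rho$, and we have the hypothesis that standard forms $\widetilde{\phi(f)}$ are compatible with $\pi$.

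First I would use the universal property of the coequalizer $\pi$: since $\phi\lambda = \phi\rho$, there is a unique $\prop$-morphism $\bar\phi\maps \propQ \to \P$ with $\bar\phi\of\pi = \phi$. Both $\pi$ and $\phi$ are identity-on-objects (any $\prop$-morphism is), so $\bar\phi$ is as well; hence $\bar\phi$ is automatically a bijection on objects and it suffices to prove it is a bijection on each homset. Surjectivity on morphisms is immediate: $\phi$ is assumed to be an epimorphism in $\prop$, which (since $\U$ is monadic and therefore preserves and reflects the relevant structure) means $\phi$ is surjective on morphisms, so $\bar\phi = \phi\of\pi^{-1}$ (set-theoretically, on each homset) is surjective too.

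The heart of the argument is injectivity of $\bar\phi$ on morphisms, and this is exactly where the standard-form hypothesis is used. Suppose $a,b$ are parallel morphisms of $\propQ$ with $\bar\phi(a) = \bar\phi(b)$. Since $\pi$ is surjective on morphisms, pick $f,g$ in $\F\Sigma$ with $\pi(f)=a$ and $\pi(g)=b$, and set $p := \phi(f) = \bar\phi\pi(f) = \bar\phi(a) = \bar\phi(b) = \phi(g)$. Now apply the hypothesis twice:
\[
\pi(f) \;=\; \pi\bigl(\widetilde{\phi(f)}\bigr) \;=\; \pi(\widetilde p) \;=\; \pi\bigl(\widetilde{\phi(g)}\bigr) \;=\; \pi(g),
\]
so $a=b$. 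Thus $\bar\phi$ is a bijection on each homset, and because it is already a $\prop$-morphism it is an isomorphism in $\prop$.

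I do not expect a serious obstacle: the work has been front-loaded into the hypotheses. The only thing worth handling carefully is the reduction from $\phi$ being an epimorphism in $\prop$ to $\phi$ being surjective on morphisms; here one cites that $\U\maps \prop \to \Set^{\N\times\N}$ is monadic (hence faithful and reflects epimorphisms onto the underlying surjections of homsets), so regular epimorphisms in $\prop$ are precisely the surjective-on-morphisms, identity-on-objects $\prop$-morphisms. Once that is noted, the rest of the proof is a direct chase through the universal property of the coequalizer and the standard-form compatibility condition.
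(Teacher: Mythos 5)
Your proof is correct, and it reaches the paper's conclusion by a noticeably more elementary route. Both arguments begin identically: the condition $\phi\lambda = \phi\rho$ plus the universal property of the coequalizer produces the comparison morphism (your $\bar\phi$, the paper's $\alpha$) with $\bar\phi\of\pi = \phi$, and both reduce to showing it is bijective on homsets since $\prop$ morphisms are identity on objects. Where you diverge is in how that bijection is established. The paper packages the standard-form assignment as a map of signatures $\nu\maps\U\P\to\U\F\Sigma$, shows $\U\pi\of\nu$ is a one-sided inverse to $\U\alpha$ from $\U\phi\of\nu = 1$, then upgrades it to a two-sided inverse by writing $\U\pi = \U\pi\of\nu\of\U\alpha\of\U\pi$ and cancelling $\U\pi$ on the right --- which requires knowing $\U\pi$ is an epimorphism (regular epis, monadicity, and the fact that epis split in $\Set^{\N\times\N}$) --- and finally invokes reflection of isomorphisms by the monadic $\U$. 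You instead prove injectivity directly by an element chase: lift $a,b$ along $\pi$, observe $\phi(f)=\phi(g)=p$, and apply the hypothesis $\pi(f)=\pi\widetilde{\phi(f)}$ twice to conclude $\pi(f)=\pi(\widetilde p)=\pi(g)$. This is the same underlying content as the paper's cancellation step but avoids the detour through $\nu$, the splitting of epimorphisms in the presheaf topos, and the reflection-of-isomorphisms argument; what you give up is only that the paper's version exhibits an explicit inverse functor $\U\pi\of\nu$, which some readers may find more constructive. One small point worth stating explicitly in your writeup: the inverse of an identity-on-objects strict symmetric monoidal bijection-on-homsets is again a strict symmetric monoidal functor sending $1$ to $1$, so the categorical isomorphism you produce really is an isomorphism in $\prop$ and not merely an isomorphism of underlying categories.
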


This theorem says that if \(\P\) `respects the equations' of the \smt{} and every morphism in the
free PROP can be connected to a standard form using the equations in \(E\), then \(\P\) is
\(\P(\Sigma, E)\).

\begin{proof}
As noted above, there is a functor \(\nu \maps \U\P \to \U\F\Sigma\) that sends morphisms in \(\P\)
to their standard forms, on the level of signatures.  That is, \(\nu f = \tilde f\).  The condition
\(\phi \lambda = \phi \rho\) means there is a unique morphism \(\alpha \maps \propQ \to \P\) such
that \(\phi = \alpha \pi\).  We will show \(\alpha\) is an isomorphism by showing \(\U\alpha\) is an
isomorphism and lifting this isomorphism of signatures to an isomorphism of PROPs.  It is
immediately evident that \(\U\phi \of \nu = 1_{\U\P}\), since the image of the standard form of a
morphism is the same as the original morphism.  Thus \(\U\alpha \of \U\pi \of \nu = 1\).  It remains
to show \(\U\pi \of \nu\) is a two-sided inverse.

Since \(\pi (f) = \pi \widetilde{\phi(f)}\), applying the functor \(\U\) gives \(\U\pi (f) = \U\pi
\of \widetilde{\U\phi(f)} = \U\pi \of \nu(\U\phi(f))\).  Thus \(\U\pi = \U\pi \of \nu \of \U\alpha
\of \U\pi\).  Now \(\pi\) is a regular epimorphism and \(\U\) is a monadic functor over
\(\Set^{\N\times\N}\), a topos in which epimorphisms split (\emph{i.e.}\ the Axiom of Choice holds),
so \(\U\pi\) is an epimorphism \cite[Thm. 4.4.4]{Borc}.  This means \(\U\alpha\) can be cancelled on
the right, giving \(1 = \U\pi \of \nu \of \U\alpha\).  This shows \(\U\pi \of \nu\) is a two-sided
inverse to \(\U\alpha\).  Because \(\U\) is monadic, \(\U\) reflects isomorphisms
\cite[\emph{loc. cit.}]{Borc}, which means \(\alpha\) is an isomorphism, so \(\P \cong \propQ\).
\end{proof}

\chapter{Generators and equations description of $\relk$}
\label{vectrel}
Now that we have the proper tools for presenting PROPs in terms of generators and equations, we turn
our attention to the PROP \(\relk\), which we will use as the target for several \(\prop\)
morphisms.  In what follows we fix a field \(k\), and all vector spaces will be over this field.

\begin{definition}
Given vector spaces \(U\) and \(V\), a \Define{linear relation} \(L \maps U \asrelto V\), is a
linear subspace
\[ L \subseteq U \oplus V.\]
\end{definition}
In particular, a linear relation \(L \maps k^m \asrelto k^n\) is just an arbitrary system of linear
equations relating \(m\) input variables to \(n\) output variables.  This is why linear relations
are fundamental to control theory.

Since the direct sum \(U \oplus V\) is also the cartesian product of \(U\) and \(V\), a linear
relation is indeed a relation in the usual sense, but with the property that if \(u \in U\) is
related to \(v \in V\) and \(u' \in U\) is related to \(v' \in V\) then \(cu + c'u'\) is related to
\(cv + c'v'\) whenever \(c, c' \in k\).  We compose linear relations \(L \maps U \asrelto V\) and
\(L' \maps V \asrelto W\) in the usual way of composing relations:
\[ L'L = \{(u,w) \colon \; \exists \; v \in V \;\; (u,v) \in L \textrm{ and }
(v,w) \in L'\} .\]
There is thus a category \(\Relk\) whose objects are finite-dimensional vector spaces over \(k\),
and whose morphisms are linear relations.   

Moreover, \(\Relk\) becomes symmetric monoidal, with the direct sum of vector spaces providing the 
symmetric monoidal structure.  In particular, given linear relations \(L \maps U \asrelto V\) and
\(L' \maps U' \asrelto V'\), the linear relation \(L \oplus L' \maps U \oplus U' \asrelto V \oplus
V'\), is given by
\[  L \oplus L' = \{ (u,u',v,v') \colon \; (u,v) \in L \textrm{ and } (u',v') \in L' \}. \]

Any linear map \(f \maps U \to V\) gives a linear relation \(F \maps U \asrelto V\), namely the
graph of that map:
\[ F = \{(u,f(u)) \colon u \in U\}.\]
Composing linear maps thus becomes a special case of composing linear relations.  Thus, the category
\(\Vectk\) of finite-dimensional vector spaces and linear maps is a subcategory of \(\Relk\).  If we
make \(\Vectk\) into a symmetric monoidal category using direct sum, the inclusion of \(\Vectk\) in
\(\Relk\) is a symmetric monoidal functor.

To work with \(\Relk\) using the machinery of PROPs, we make the following definition:

\begin{definition}
\label{linearrelation}
For any field \(k\), let \(\relk\) be the PROP where a morphism from \(m\) to \(n\) is a linear
relation from \(k^m\) to \(k^n\), with the usual composition of relations, with direct sum providing
the tensor product.
\end{definition}

One can check that \(\relk\) is equivalent, as a symmetric monoidal category, to \(\Relk\).   It is
a skeleton of \(\Relk\), so it is clearly equivalent as a category.  However, note that \(\relk\)
has trivial associators and unitors (being a PROP), while \(\Relk\) does not, so the inclusion of
\(\relk\) in \(\Relk\) is not a \emph{strict} symmetric monoidal functor.

Our generators for \(\relk\) are logically organized into three pairs together with one `scaling'
morphism for each element of \(k\).  We make use of string diagrams to elucidate various
compositions.

The first pair is duplication and deletion:

\begin{center}
     \begin{tikzpicture}[thick]
   \node[delta] (dupe){};
   \node[coordinate] (o1) at (-0.5,-1.35) {};
   \node[coordinate] (o2) at (0.5,-1.35) {};
   \node[coordinate] (in) [above of=dupe] {};

   \draw (o1) .. controls +(90:0.6) and +(-120:0.6) .. (dupe.left out);
   \draw (o2) .. controls +(90:0.6) and +(-60:0.6) .. (dupe.right out);
   \draw (in) -- (dupe);
   \end{tikzpicture}

     \hspace{3 em}
     \begin{tikzpicture}[thick]
   \node[coordinate] (in) at (0,2.15) {};
   \node [bang] (mult) at (0,1.15) {};
   \node [hole] (heightHolder) at (0,0) {};
   \draw (in) -- (mult);
   \end{tikzpicture}
.
\end{center}

\noindent
Duplication is the linear relation \(\Delta \maps k \asrelto k^2\) given by
\[                    \Delta = \{(x,x,x) : x \in k\} \subseteq k \oplus k^2.\]
That is, \(\Delta\) outputs two copies of its input.  Deletion is the linear relation \(!{} \maps k
\asrelto \{0\}\) given by
\[                    !{} = \{(x,0) : x \in k\} \subseteq k \oplus \{0\},\]
where \(\{0\}\) is the zero-dimensional vector space.  Thus \(!\) `eats up' its
input, yielding no output.  Both of these linear relations are also linear maps:  \(\Delta\) is the
diagonal map, while \(!\) is the unique (linear) map to \(\{0\}\).

Our next pair is addition and zero:

\begin{center}
     \begin{tikzpicture}[thick]
   \node[plus] (adder) {};
   \node[coordinate] (x) at (-0.5,1.35) {};
   \node[coordinate] (y) at (0.5,1.35) {};
   \node[coordinate] (out) [below of=adder] {};

   \draw (x) .. controls +(-90:0.6) and +(120:0.6) .. (adder.left in);
   \draw (y) .. controls +(-90:0.6) and +(60:0.6) .. (adder.right in);
   \draw (adder) -- (out);
   \end{tikzpicture}

     \hspace{3em}
     \begin{tikzpicture}[thick]
   \node[hole] (heightHolder) at (0,2) {};
   \node [coordinate] (out) at (0,0) {};
   \node [zero] (del) at (0,1) {};
   \draw (del) -- (out);
   \end{tikzpicture}
.
\end{center}

\noindent
Addition is the linear relation \(+{} \maps k^2 \asrelto k\) given by
\[                    +{} = \{(x,y,x+y) : x,y \in k\} \subseteq k^2{} \oplus k.\]
That is, its output is the sum of its two inputs.  Zero is the linear relation \(\zero \maps \{0\}
\asrelto k\) given by
\[                    \zero = \{(0,0)\} \subseteq \{0\} \oplus k.\]
Thus \(\zero\) takes no input and outputs the \emph{number} \(0\).  As with the first pair, these linear
relations are also linear maps, where \(\zero\) is the unique linear map from \(\{0\}\).

For any \(c \in k\), the scaling morphism \(s_c\), depicted

\begin{center}
   \begin{tikzpicture}[thick]
   \node[coordinate] (in) at (0,2.35) {};
   \node [multiply] (mult) at (0,1.175) {\(c\)};
   \node[coordinate] (out) at (0,0) {};
   \draw (in) -- (mult) -- (out);
   \end{tikzpicture}
,
\end{center}

\noindent
is the linear relation \(s_c \maps k \asrelto k\) given by
\[                    s_c = \{(x,cx) : x \in k\} \subseteq k \oplus k.\]
Thus \(s_c\) scales its input by a factor of \(c\).  That each \(s_c\) is a linear map is a direct
consequence of the closure of multiplication in any field.

The final pair is cup and cap:

\begin{center}
     \begin{tikzpicture}[thick]
   \node[coordinate] (3) at (0,0.375) {};
   \node[coordinate] (4) at (1.3,0.375) {};
   \node[coordinate] (1) at (0,1.5) {};
   \node[coordinate] (2) at (1.3,1.5) {};
   \path
   (1) edge (3)
   (2) edge (4)
   (3) edge [-, bend right=90] (4);
   \end{tikzpicture}

     \hspace{3em}
     \begin{tikzpicture}[thick]
   \node[coordinate] (3) at (0,1.625) {};
   \node[coordinate] (4) at (1.3,1.625) {};
   \node [coordinate] (1) at (0,0.5) {};
   \node [coordinate] (2) at (1.3,0.5) {};
   \path
   (3) edge (1)
   (4) edge (2)
   (3) edge [bend left=90] (4);
   \end{tikzpicture}
.
\end{center}

\noindent
Cup is the linear relation \(\cup \maps k^2 \asrelto \{0\}\) given by
\[                    \cup = \{(x,x,0) : x \in k\} \subseteq k^2{} \oplus \{0\}.\]
Thus \(\cup\) is a partial function and not a linear map.  Cap is the linear relation \(\cap \maps
\{0\} \asrelto k^2\) given by
\[                    \cap = \{(0,x,x) : x \in k\} \subseteq \{0\} \oplus k^2.\]
Thus \(\cap\) is a multi-valued function and not a linear map.  Informally, both \(\cup\) and
\(\cap\) can be thought of as `bent identity morphisms', where the two inputs (\emph{resp.} outputs)
are identified.  Bending a string twice allows the output of a morphism to affect its own input,
which gives us a way to model feedback in a control system.

While other choices can be made for the generators, this choice has the advantage that all the
generators are linear maps, with the exception of \(\cup\) and \(\cap\).  Omitting these generators
and the equations that include them leaves us with a presentation for the subPROP \(\vectk \subseteq
\relk\) of finite-dimensional vector spaces over \(k\) and linear maps, which is another important
category.

While the list of equations in our presentation of \(\relk\) is lengthy, they can be summarized as
those necessary for several nice properties to hold:
\begin{enumerate}
\item \((k, +, 0, \Delta, !{})\) is a bicommutative bimonoid;
\item \(+,\, 0,\, \Delta\) and \(!\) commute with scaling;
\item \(\cup\) and \(\cap\) obey the zigzag equations;
\item \((k, +, 0, +^\dagger, 0^\dagger)\) is a commutative extra-special \(\dagger\)-Frobenius monoid;
\item \((k, \Delta^\dagger, !^\dagger, \Delta, !{})\) is a commutative extra-special \(\dagger\)-Frobenius monoid;
\item the field operations of \(k\) can be recovered from the generators.
\end{enumerate}
Note that item (3) makes \(\relk\) into a \(\dagger\)-compact category, allowing us to mention the
adjoints of generating morphisms in the subsequent properties.
 
The word `separable' is sometimes used as a synonym for `special' here \cite{RSW}.  A Frobenius
monoid is \Define{special} if the comultiplication followed by the multiplication is equal to the
identity.  An \Define{extra-special} Frobenius monoid has an additional, less common property: the
unit followed by the counit of the monoid is also equal to the identity (but now the identity on the
unit object for the tensor product).  This `extra' equation is one of two from the four bimonoid
equations that can be added to a \(\dagger\)-Frobenius monoid without making the monoid trivial
\cite{HV}.  The other bimonoid equation that can be added is a consequence of the `special'
equation.  See Appendix~\ref{A:connections} for a demonstration.  Because of its graphical
depiction, the extra equation has been called the `bone' equation by others \cite{FRS,GLA}.

We have placed some emphasis on the fact that cup and cap obey the zigzag equations, which allows
for a duality functor, \(\dagger\maps \relk \to \relk\), which `turns morphisms around'.  There is
another duality on \(\relk\) that is somewhat subtler.  The functor \(*\maps \relk \to \relk\)
`turns morphisms around' \emph{and} `swaps the color' of morphisms.  To wit, \(+^* = \Delta\), \(0^*
={} !\), \(\cap^* = (-1 \oplus 1) \of \cup\), and \(s_c^* = s_c\):
\begin{center}
\scalebox{0.8}{} \raisebox{8mm}{\(\mapsto\)} \scalebox{0.8}{} \qquad
\scalebox{0.8}{} \raisebox{8mm}{\(\mapsto\)} \scalebox{0.8}{} \qquad
\scalebox{0.8}{} \raisebox{8mm}{\(\mapsto\)} \raisebox{5mm}{\scalebox{0.8}{   \begin{tikzpicture}[thick]
   \node[coordinate] (3) at (0,0.375) {};
   \node[coordinate] (4) at (1.3,0.375) {};
   \node[coordinate] (1) at (0,1.5) {};
   \node[coordinate] (2) at (1.3,1.5) {};
   \node[multiply] (pode) at (0,0.9375) {\tiny \(-1\)};
   \path
   (1) edge (pode.90)
   (pode.270) edge (3)
   (2) edge (4)
   (3) edge [-, bend right=90] (4);
   \end{tikzpicture}
}} \qquad
\scalebox{0.8}{} \raisebox{8mm}{\(\mapsto\)} \scalebox{0.8}{}.
\end{center}
The extra factor of \(-1\) in \(\cap^*\) may seem surprising, given that cap and cup do not appear
to have any colors to swap, and turning the morphism around just alternates between cap and cup.  We
shall later see equations {\hyperref[eqn29]{\textbf{(29)}}} and {\hyperref[eqn30]{\textbf{(30)}}},
which show the cap and cup do have an implicit color that is swapped here.  As with \(\dagger\),
\(f^{**} = f\) for any morphism \(f\).  Other authors, such as Soboci\'nski \cite{GLA-bizarro}, have
compared this second duality to the Bizarro World in the Superman universe, where `good' and `evil'
are swapped, leading them to refer to this duality as `bizarro' duality.  Unlike the \(-^{\dagger}\)
duality, the \(-^*\) duality can be restricted to a duality on \(\vectk\), in which case it is
identifiable with transposition.\footnote{It is also possible to encode complex numbers so that
\(f^*\) is the \emph{conjugate} transpose of \(f\), as in \cite{GLA-i}.}

\section{Presenting $\vectk$}
\label{finvect}
As a warmup for our presentation of \(\relk\), in this section we give a presentation of a simpler PROP called
\(\vectk\), in which the morphisms are linear \emph{maps}, rather than fully general linear \emph{relations}.  Our generators for \(\vectk\) are a subset of our generators for \(\relk\): we simply leave out the cup and cap, and keep the rest.  The equations amount to saying:
\begin{enumerate}
\item \((k, +, 0, \Delta, !{})\) is a bicommutative bimonoid;
\item \(+,\, 0,\, \Delta\) and \(!\) commute with scaling;
\item the rig operations of \(k\) can be recovered from the generators.
\end{enumerate}
Here a \Define{rig} is a `ring without negatives', so the rig operations of \(k\) are the binary operations of addition and multiplication, together with the nullary operations (or constants) \(0\) and \(1\).  

In Definition~\ref{linearrelation}, we said that in the PROP \(\relk\) the morphisms from \(m\) to \(n\) are linear relations \(L \maps k^m \asrelto k^n\).  We have seen that linear maps are a special case of linear relations.  Thus we make the following definition:
\begin{definition}
Let \(\vectk\) be the subPROP of \(\relk\) whose morphisms are linear maps.
\end{definition}

One can check that \(\vectk\) is equivalent as a symmetric monoidal category to \(\Vectk\), where the objects are \emph{all} finite-dimensional vector spaces over \(k\) and where the morphisms are linear maps between these.  

\begin{lemma} \label{gensvk} 
For any field \(k\), the PROP \(\vectk\) is generated by these morphisms:
\begin{enumerate}
\item scaling \(s_c \maps k \to k\) for any \(c \in k\)
\item addition \(+ \maps k \oplus k \to k\)
\item zero \(0 \maps \{0\} \to k\)
\item duplication \(\Delta \maps k \to k \oplus k\)
\item deletion \(! \maps k \to \{0\}\)
\end{enumerate}
\end{lemma}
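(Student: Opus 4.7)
My plan is to show that every linear map $f\maps k^m \to k^n$ can be built as a composite of the five listed generators, together with the identity, braidings, and tensor products that come for free from the symmetric monoidal (PROP) structure. Since every morphism in $\vectk$ is represented by an $n\times m$ matrix $A=(a_{ij})$ with entries in $k$, acting by $(Ax)_i = \sum_{j=1}^m a_{ij} x_j$, it suffices to realise this matrix-vector product as a string diagram in the free PROP on the given generators.

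First I would introduce the iterated versions of the comonoid and monoid operations. Define $\Delta^{(n)}\maps k \to k^n$ inductively by $\Delta^{(0)} = {!}$, $\Delta^{(1)} = \mathrm{id}_k$, and $\Delta^{(n+1)} = (\Delta \oplus \mathrm{id}_{k^{n-1}}) \of \Delta^{(n)}$, and dually $+^{(n)}\maps k^n \to k$ with $+^{(0)} = 0$, $+^{(1)} = \mathrm{id}_k$, $+^{(n+1)} = +^{(n)} \of (+\oplus \mathrm{id}_{k^{n-1}})$. Taking direct sums gives $\Delta^{(n)\oplus m}\maps k^m \to k^{mn}$ and $+^{(m)\oplus n}\maps k^{nm} \to k^n$. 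Next, using the braiding inherent to any PROP, construct the permutation morphism $\sigma\maps k^{mn} \to k^{nm}$ that rearranges wires indexed by pairs $(j,i)$ (input index, then copy index) into wires indexed by pairs $(i,j)$ (output index, then summand index).

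With these pieces in hand, for any matrix $A=(a_{ij})$ with $1\le i \le n$, $1\le j \le m$, I claim the linear map $f$ is equal to the composite
\[
  f \;=\; \bigl(+^{(m)\oplus n}\bigr) \;\of\; \bigl(\textstyle\bigoplus_{i,j} s_{a_{ij}}\bigr) \;\of\; \sigma \;\of\; \bigl(\Delta^{(n)\oplus m}\bigr) \maps k^m \to k^n.
\]
To verify this, trace an arbitrary input vector $(x_1,\ldots,x_m)\in k^m$ through the composite: the duplication stage produces $n$ copies of each $x_j$, the braiding $\sigma$ groups the $j$-th copy destined for output $i$ into the $(i,j)$ slot, the scaling block multiplies slot $(i,j)$ by $a_{ij}$, and the summation block adds the $m$ scaled values in each output block to produce $\sum_j a_{ij} x_j$, which is exactly $(Ax)_i$. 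The cases $m=0$ or $n=0$ are handled automatically since $\Delta^{(0)}={!}$ and $+^{(0)}=0$ are among the generators.

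The main obstacle is not conceptual but bookkeeping: one needs to be precise about the permutation $\sigma$ and to confirm that the displayed composite really equals the prescribed linear map on underlying sets. Since a linear map is determined by its values on basis vectors and the composite acts $k$-linearly (being a composite of $k$-linear maps), this reduces to the calculation I just sketched. I would also note that, because $\vectk$ is a skeleton of $\Vectk$ and a morphism $k^m \to k^n$ is \emph{determined} by its matrix, no further uniqueness argument is needed: once every matrix has been realised, generation is established.
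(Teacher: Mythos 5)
Your proof is correct, and it reaches the conclusion by a somewhat different route than the paper's own argument for this lemma. The paper proceeds by induction: it first shows how to build an arbitrary single $k$-linear combination $S_r \maps k^r \to k$ by repeatedly composing $+\of(S_{r-1}\oplus s_{a_r})$, then uses duplication to define maps $T_j \maps k^m \to k^{m+1}$ that carry all the inputs forward while appending one new linear combination, and finally realises $T$ as the composite $(S_1\oplus 1^{n-1})(T_2\oplus 1^{n-2})\cdots(T_{n-1}\oplus 1^1)T_n$, treating the $m=0$ and $n=0$ cases separately as $0^{\oplus n}$ and $!^{\oplus m}$. You instead give a single closed-form three-layer decomposition, $f = +^{(m)\oplus n}\of\bigl(\bigoplus_{i,j}s_{a_{ij}}\bigr)\of\sigma\of\Delta^{(n)\oplus m}$, verified by tracing a basis vector through the layers. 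Your decomposition is in fact exactly the \emph{standard form} that the paper introduces later, in the proof of Theorem~\ref{presvk}, so your route has the advantage of setting up that standard form at the outset; the cost is the extra bookkeeping around the permutation $\sigma$ of the $mn$ intermediate wires, which the paper's one-row-at-a-time induction avoids. Both arguments handle the degenerate cases correctly, and your observation that $\Delta^{(0)}={!}$ and $+^{(0)}=0$ absorb them automatically is a small tidiness gain over the paper's separate treatment.
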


\begin{proof}
By this we mean that every morphism in \(\vectk\) can be obtained from these morphisms using
composition, tensor product, identity morphisms and the braiding.  A linear map in \(\vectk\), \(T
\maps k^m \to k^n\) can be expressed as \(n\) \(k\)-linear combinations of \(m\) elements of \(k\).
That is, \(T(k_1, \ldots, k_m) = (\sum_j{a_{1j} k_j}, \ldots, \sum_j{a_{nj} k_j})\), \(a_{ij} \in
k\).  Any \(k\)-linear combination of \(r\) elements can be constructed with only addition,
multiplication, and zero, with zero only necessary when providing the unique \(k\)-linear
combination for \(r=0\).  When \(r=1\), \(a_1 (k_1)\) is an arbitrary \(k\)-linear combination.  For
\(r>1\), \(+ (S_{r-1}, a_r (k_r))\) yields an arbitrary \(k\)-linear combination on \(r\) elements,
where \(S_{r-1}\) is an arbitrary \(k\)-linear combination of \(r-1\) elements.  The inclusion of
duplication allows the process of forming \(k\)-linear combinations to be repeated an arbitrary
(finite) positive number of times, and deletion allows the process to be repeated zero times.  When
\(n\) \(k\)-linear combinations are needed, each input may be duplicated \(n-1\) times.  Because
\(\vectk\) is being generated as a PROP, the \(mn\) outputs can then be permuted into \(n\)
collections of \(m\) outputs: one output from each input for each collection.  Each collection can
then form a \(k\)-linear combination, as above.  The following diagrams illustrate the pieces that
form this inductive argument.
  \begin{center}
   \begin{tikzpicture}[thick]
   \node (top) at (0,4) {\(k_1\)};
   \node [multiply] (times) at (0,2) {\(a_1\)};
   \node (bottom) at (0,0) {\(a_1 k_1\)};

   \draw (top) -- (times) -- (bottom);
   \end{tikzpicture}
       \hspace{0.7cm}
   \begin{tikzpicture}[thick]
   \node (sum) {\(\sum\limits_{j=1}^{r-1} a_j k_j\)};
   \node [coordinate] (UL) [below of=sum, shift={(0.1,-0.9)}] {};
   \node [plus] (adder) [below right of=UL, shift={(0,-0.1)}] {};
   \node [coordinate] (UR) [above right of=adder] {};
   \node [multiply] (mult) [above of=UR, shift={(-0.1,-0.1)}] {\(a_r\)};
   \node (next) [above of=mult] {\(k_r\)};
   \node (combo) [below of=adder, shift={(0,-0.2)}] {\(\sum\limits_{j=1}^{r} a_j k_j\)};

   \draw (next) -- (mult) (mult.io) .. controls +(270:0.5) and +(60:0.5) .. (adder.right in);
   \draw (sum.270) .. controls +(270:1.2) and +(120:0.5) .. (adder.left in);
   \draw (adder) -- (combo);
   \end{tikzpicture}
        \hspace{0.7cm}
   \begin{tikzpicture}[thick, node distance=1.1cm]
   \node (top) {\(k_1\)};
   \node [delta] (dupe) [below of=top, shift={(0,-0.125)}] {};
   \node [coordinate] (L) [below left of=dupe] {};
   \node [coordinate] (R) [below right of=dupe] {};
   \node [multiply] (mult) [below of=R, shift={(-0.2,0.3)}] {\(a_{i1}\)};
   \node (prod) [below of=mult, shift={(0,-0.3)}] {\(a_{i1} k_1\)};
   \node (id) [below of=L, shift={(0.2,-1.1)}] {\(k_1\)};

   \draw (dupe.right out) .. controls +(300:0.5) and +(90:0.5) .. (mult.90) (mult.io) -- (prod);
   \draw (dupe.left out) .. controls +(240:0.8) and +(90:2) .. (id);
   \draw (top) -- (dupe);
   \end{tikzpicture}
        \hspace{0.7cm}
   \begin{tikzpicture}[thick, node distance=1.1cm]
   \node (core) {\(\sum\limits_{j=1}^{r-1} a_{ij} k_j\)};
   \node [coordinate] (subcore) [below of=core, shift={(-0.2,0)}] {};
   \node (cross) [below right of=subcore, shift={(-0.08,0.08)}] {};
   \node [coordinate] (out) [below left of=cross, shift={(0.08,-0.08)}] {};
   \node (id) [below of=out, shift={(0.2,-0.2)}] {\(k_r\)};
   \node [plus] (adder) [below right of=cross, shift={(0.08,-0.38)}] {};
   \node (sum) [below of=adder] {\(\sum\limits_{j=1}^{r} a_{ij} k_j\)};
   \node [multiply] (mult) [above right of=adder, shift={(-0.4,0.45)}] {\(a_{ir}\)};
   \node [delta] (dupe) [above left of=mult, shift={(0.4,0.1)}] {};
   \node (in) [above of=dupe, shift={(0,-0.25)}] {\(k_r\)};

   \draw (core.270) .. controls +(270:0.5) and +(120:0.5) .. (cross)
   (cross) .. controls +(300:0.5) and +(120:0.5) .. (adder.left in)
   (dupe.left out) .. controls +(240:0.7) and +(90:1.8) .. (id.90);
   \draw (dupe.right out) .. controls +(300:0.2) and +(90:0.1) .. (mult.90)
   (mult.io) .. controls +(270:0.1) and +(60:0.2) .. (adder.right in)
   (adder) -- (sum) (in) -- (dupe);
   \end{tikzpicture}
  \end{center}
Since scaling provides the map \(k_1 \mapsto a_1 k_1\), as in the far left diagram, the
middle-left diagram can be used inductively to form a \(k\)-linear combination of any number of
inputs.  In particular, we have any linear map \(S_r \maps k^m \to k\) given by \((k_1, \ldots, k_m)
\mapsto (\sum_j a_{rj} k_j)\).  Using duplication as in the middle-right diagram, one can produce
the map \(k_1 \mapsto (k_1, a_{i1} k_1)\), to which the right diagram can be inductively applied.
Thus we can build any linear map, \(T_j \in \vectk\), \(T_j \maps k^m \to k^{m+1}\) given by \((k_1,
\ldots, k_m) \mapsto (k_1, \ldots, k_m, \sum_j a_{ij} k_j)\).  If we represent the identity map on
\(k^r\) as \(1^r\), the \(r\)-fold monoidal product of the identity map on \(k\), any linear map \(T
\maps k^m \to k^n\) can be given by \((k_1, \ldots, k_m) \mapsto (\sum_j a_{1j} k_j, \ldots, \sum_j
a_{nj} k_j)\), which can be expressed as \(T = (S_1 \oplus 1^{n-1}) (T_2 \oplus 1^{n-2}) \cdots
(T_{n-1} \oplus 1^1) T_n\).  The above works as long as \(m,n \neq 0\).  Otherwise, \(f \maps k^m
\to \{0\}\) can be written as an \(m\)-fold tensor product of deletion, \(!^m\), and \(f \maps \{0\}
\to k^n\) can be written as an \(n\)-fold tensor product of zero, \(0^n\).  Since \(f \maps \{0\}
\to \{0\}\) is the monoidal unit, this has an empty diagram for its string diagram. 
\end{proof}

It is easy to see that the morphisms given in Lemma~\ref{gensvk} obey the following 18 equations:

\vskip 1em \noindent
\textbf{(1)--(3)}  Addition and zero make \(k\) into a commutative monoid:
\begin{invisiblelabel}
\label{eqn123}
\end{invisiblelabel}
  \begin{center}
    \scalebox{0.80}{
   \begin{tikzpicture}[-, thick, node distance=0.74cm]
   \node [plus] (summer) {};
   \node [coordinate] (sum) [below of=summer] {};
   \node [coordinate] (Lsum) [above left of=summer] {};
   \node [zero] (insert) [above of=Lsum, shift={(0,-0.35)}] {};
   \node [coordinate] (Rsum) [above right of=summer] {};
   \node [coordinate] (sumin) [above of=Rsum] {};
   \node (equal) [right of=Rsum, shift={(0,-0.26)}] {\(=\)};
   \node [coordinate] (in) [right of=equal, shift={(0,1)}] {};
   \node [coordinate] (out) [right of=equal, shift={(0,-1)}] {};

   \draw (insert) .. controls +(270:0.3) and +(120:0.3) .. (summer.left in)
         (summer.right in) .. controls +(60:0.6) and +(270:0.6) .. (sumin)
         (summer) -- (sum)    (in) -- (out);
   \end{tikzpicture}
        \hspace{1.0cm}
   \begin{tikzpicture}[-, thick, node distance=0.7cm]
   \node [plus] (uradder) {};
   \node [plus] (adder) [below of=uradder, shift={(-0.35,0)}] {};
   \node [coordinate] (urm) [above of=uradder, shift={(-0.35,0)}] {};
   \node [coordinate] (urr) [above of=uradder, shift={(0.35,0)}] {};
   \node [coordinate] (left) [left of=urm] {};

   \draw (adder.right in) .. controls +(60:0.2) and +(270:0.1) .. (uradder.io)
         (uradder.right in) .. controls +(60:0.35) and +(270:0.3) .. (urr)
         (uradder.left in) .. controls +(120:0.35) and +(270:0.3) .. (urm)
         (adder.left in) .. controls +(120:0.75) and +(270:0.75) .. (left)
         (adder.io) -- +(270:0.5);

   \node (eq) [right of=uradder, shift={(0,-0.25)}] {\(=\)};

   \node [plus] (ulsummer) [right of=eq, shift={(0,0.25)}] {};
   \node [plus] (summer) [below of=ulsummer, shift={(0.35,0)}] {};
   \node [coordinate] (ulm) [above of=ulsummer, shift={(0.35,0)}] {};
   \node [coordinate] (ull) [above of=ulsummer, shift={(-0.35,0)}] {};
   \node [coordinate] (right) [right of=ulm] {};

   \draw (summer.left in) .. controls +(120:0.2) and +(270:0.1) .. (ulsummer.io)
         (ulsummer.left in) .. controls +(120:0.35) and +(270:0.3) .. (ull)
         (ulsummer.right in) .. controls +(60:0.35) and +(270:0.3) .. (ulm)
         (summer.right in) .. controls +(60:0.75) and +(270:0.75) .. (right)
         (summer.io) -- +(270:0.5);
   \end{tikzpicture}
        \hspace{1.0cm}
   \begin{tikzpicture}[-, thick, node distance=0.7cm]
   \node [plus] (twadder) {};
   \node [coordinate] (twout) [below of=twadder] {};
   \node [coordinate] (twR) [above right of=twadder, shift={(-0.2,0)}] {};
   \node (cross) [above of=twadder] {};
   \node [coordinate] (twRIn) [above left of=cross, shift={(0,0.3)}] {};
   \node [coordinate] (twLIn) [above right of=cross, shift={(0,0.3)}] {};

   \draw (twadder.right in) .. controls +(60:0.35) and +(-45:0.25) .. (cross)
                            .. controls +(135:0.2) and +(270:0.4) .. (twRIn);
   \draw (twadder.left in) .. controls +(120:0.35) and +(-135:0.25) .. (cross.center)
                           .. controls +(45:0.2) and +(270:0.4) .. (twLIn);
   \draw (twout) -- (twadder);

   \node (eq) [right of=twR] {\(=\)};

   \node [coordinate] (L) [right of=eq] {};
   \node [plus] (adder) [below right of=L] {};
   \node [coordinate] (out) [below of=adder] {};
   \node [coordinate] (R) [above right of=adder] {};
   \node (cross) [above left of=R] {};
   \node [coordinate] (LIn) [above left of=cross] {};
   \node [coordinate] (RIn) [above right of=cross] {};

   \draw (adder.left in) .. controls +(120:0.7) and +(270:0.7) .. (LIn)
         (adder.right in) .. controls +(60:0.7) and +(270:0.7) .. (RIn)
         (out) -- (adder);
   \end{tikzpicture}
    }
\end{center}

\vskip 1em \noindent
\textbf{(4)--(6)}  Duplication and deletion make \(k\) into a cocommutative comonoid:
\begin{invisiblelabel}
\label{eqn456}
\end{invisiblelabel}

\begin{center}
    \scalebox{0.80}{
   \begin{tikzpicture}[-, thick, node distance=0.74cm]
   \node [delta] (dupe) {};
   \node [coordinate] (top) [above of=dupe] {};
   \node [coordinate] (Ldub) [below left of=dupe] {};
   \node [bang] (delete) [below of=Ldub, shift={(0,0.35)}] {};
   \node [coordinate] (Rdub) [below right of=dupe] {};
   \node [coordinate] (dubout) [below of=Rdub] {};
   \node (equal) [right of=Rdub, shift={(0,0.26)}] {\(=\)};
   \node [coordinate] (in) [right of=equal, shift={(0,1)}] {};
   \node [coordinate] (out) [right of=equal, shift={(0,-1)}] {};

   \draw (delete) .. controls +(90:0.3) and +(240:0.3) .. (dupe.left out)
         (dupe.right out) .. controls +(300:0.6) and +(90:0.6) .. (dubout)
         (dupe) -- (top)    (in) -- (out);
   \end{tikzpicture}
       \hspace{1.0cm}
   \begin{tikzpicture}[-, thick, node distance=0.7cm]
   \node [delta] (lrduper) {};
   \node [delta] (duper) [above of=lrduper, shift={(-0.35,0)}] {};
   \node [coordinate](lrm) [below of=lrduper, shift={(-0.35,0)}] {};
   \node [coordinate](lrr) [below of=lrduper, shift={(0.35,0)}] {};
   \node [coordinate](left) [left of=lrm] {};

   \draw (duper.right out) .. controls +(300:0.2) and +(90:0.1) .. (lrduper.io)
         (lrduper.right out) .. controls +(300:0.35) and +(90:0.3) .. (lrr)
         (lrduper.left out) .. controls +(240:0.35) and +(90:0.3) .. (lrm)
         (duper.left out) .. controls +(240:0.75) and +(90:0.75) .. (left)
         (duper.io) -- +(90:0.5);

   \node (eq) [right of=lrduper, shift={(0,0.25)}] {\(=\)};

   \node [delta] (lldubber) [right of=eq, shift={(0,-0.25)}] {};
   \node [delta] (dubber) [above of=lldubber, shift={(0.35,0)}] {};
   \node [coordinate] (llm) [below of=lldubber, shift={(0.35,0)}] {};
   \node [coordinate] (lll) [below of=lldubber, shift={(-0.35,0)}] {};
   \node [coordinate] (right) [right of=llm] {};

   \draw (dubber.left out) .. controls +(240:0.2) and +(90:0.1) .. (lldubber.io)
         (lldubber.left out) .. controls +(240:0.35) and +(90:0.3) .. (lll)
         (lldubber.right out) .. controls +(300:0.35) and +(90:0.3) .. (llm)
         (dubber.right out) .. controls +(300:0.75) and +(90:0.75) .. (right)
         (dubber.io) -- +(90:0.5);
   \end{tikzpicture}
       \hspace{1.0cm}
   \begin{tikzpicture}[-, thick, node distance=0.7cm]
   \node [coordinate] (twtop) {};
   \node [delta] (twdupe) [below of=twtop] {};
   \node [coordinate] (twR) [below right of=twdupe, shift={(-0.2,0)}] {};
   \node (cross) [below of=twdupe] {};
   \node [coordinate] (twROut) [below left of=cross, shift={(0,-0.3)}] {};
   \node [coordinate] (twLOut) [below right of=cross, shift={(0,-0.3)}] {};

   \draw (twdupe.left out) .. controls +(240:0.35) and +(135:0.25) .. (cross)
                           .. controls +(-45:0.2) and +(90:0.4) .. (twLOut)
         (twdupe.right out) .. controls +(300:0.35) and +(45:0.25) .. (cross.center)
                            .. controls +(-135:0.2) and +(90:0.4) .. (twROut)
         (twtop) -- (twdupe);

   \node (eq) [right of=twR] {\(=\)};

   \node [coordinate] (L) [right of=eq] {};
   \node [delta] (dupe) [above right of=L] {};
   \node [coordinate] (top) [above of=dupe] {};
   \node [coordinate] (R) [below right of=dupe] {};
   \node (uncross) [below left of=R] {};
   \node [coordinate] (LOut) [below left of=uncross] {};
   \node [coordinate] (ROut) [below right of=uncross] {};

   \draw (dupe.left out) .. controls +(240:0.7) and +(90:0.7) .. (LOut)
         (dupe.right out) .. controls +(300:0.7) and +(90:0.7) .. (ROut)
         (top) -- (dupe);
   \end{tikzpicture}
    }
\end{center}

\vskip 1em \noindent
\textbf{(7)--(10)}  The monoid and comonoid structures on \(k\) fit together to form a bimonoid:
\begin{invisiblelabel}
\label{eqn78910}
\end{invisiblelabel}

\begin{center}
    \scalebox{0.80}{
   \begin{tikzpicture}[thick]
   \node [plus] (adder) {};
   \node [coordinate] (f) [above of=adder, shift={(-0.4,-0.325)}] {};
   \node [coordinate] (g) [above of=adder, shift={(0.4,-0.325)}] {};
   \node [delta] (dupe) [below of=adder, shift={(0,0.25)}] {};
   \node [coordinate] (outL) [below of=dupe, shift={(-0.4,0.325)}] {};
   \node [coordinate] (outR) [below of=dupe, shift={(0.4,0.325)}] {};

   \draw (adder.io) -- (dupe.io)
         (f) .. controls +(270:0.4) and +(120:0.25) .. (adder.left in)
         (adder.right in) .. controls +(60:0.25) and +(270:0.4) .. (g)
         (dupe.left out) .. controls +(240:0.25) and +(90:0.4) .. (outL)
         (dupe.right out) .. controls +(300:0.25) and +(90:0.4) .. (outR);
   \end{tikzpicture}
      \raisebox{2.49em}{=}
      \hspace{1em}
   \begin{tikzpicture}[-, thick, node distance=0.7cm]
   \node [plus] (addL) {};
   \node (cross) [above right of=addL, shift={(-0.1,-0.0435)}] {};
   \node [plus] (addR) [below right of=cross, shift={(-0.1,0.0435)}] {};
   \node [delta] (dupeL) [above left of=cross, shift={(0.1,-0.0435)}] {};
   \node [delta] (dupeR) [above right of=cross, shift={(-0.1,-0.0435)}] {};
   \node [coordinate] (f) [above of=dupeL] {};
   \node [coordinate] (g) [above of=dupeR] {};
   \node [coordinate] (sum1) [below of=addL, shift={(0,0.2)}] {};
   \node [coordinate] (sum2) [below of=addR, shift={(0,0.2)}] {};

   \path
   (addL) edge (sum1) (addL.right in) edge (dupeR.left out) (addL.left in) edge [bend left=30] (dupeL.left out)
   (addR) edge (sum2) (addR.left in) edge (cross) (addR.right in) edge [bend right=30] (dupeR.right out)
   (dupeL) edge (f)
   (dupeL.right out) edge (cross)
   (dupeR) edge (g);
   \end{tikzpicture}
        \hspace{1.0cm}
   \begin{tikzpicture}[thick]
   \node [zero] (z) at (0,1) {};
   \node [delta] (dub) at (0,0.2) {};
   \node [coordinate] (oL) at (-0.35,-0.6) {};
   \node [coordinate] (oR) at (0.35,-0.6) {};

   \node (eq) at (1,0.42) {=};

   \node [zero] (zleft) at (2,1) {};
   \node [zero] (zright) at (2.7,1) {};
   \node [coordinate] (Lo) at (2,-0.6) {};
   \node [coordinate] (Ro) at (2.7,-0.6) {};

   \draw (z) -- (dub)
         (dub.left out) .. controls +(240:0.3) and +(90:0.5) .. (oL)
         (dub.right out) .. controls +(300:0.3) and +(90:0.5) .. (oR)
         (zleft) -- (Lo)
         (zright) -- (Ro);
   \end{tikzpicture}
        \hspace{1.0cm}
   \begin{tikzpicture}[thick]
   \node [bang] (b) at (0,-1) {};
   \node [plus] (sum) at (0,-0.2) {};
   \node [coordinate] (oL) at (-0.35,0.6) {};
   \node [coordinate] (oR) at (0.35,0.6) {};
   \node (spacemaker) at (0,-1.38) {};

   \node (eq) at (1,-0.47) {=};

   \node [bang] (bleft) at (2,-1) {};
   \node [bang] (bright) at (2.7,-1) {};
   \node [coordinate] (Lo) at (2,0.6) {};
   \node [coordinate] (Ro) at (2.7,0.6) {};

   \draw (b) -- (sum)
         (sum.left in) .. controls +(120:0.3) and +(270:0.5) .. (oL)
         (sum.right in) .. controls +(60:0.3) and +(270:0.5) .. (oR)
         (bleft) -- (Lo)
         (bright) -- (Ro);
   \end{tikzpicture}
       \hspace{1.0cm}
   \begin{tikzpicture}[thick]
   \node [zero] (z) at (0,0.11) {};
   \node [bang] (b) at (0,-1) {};
   \node (spacemaker) at (0,-1.38) {};

   \node (eq) at (0.7,-0.47) {=};

   \draw (z) -- (b);
   \end{tikzpicture}
    }
\end{center}

\vskip 1em \noindent
\textbf{(11)--(14)}  The rig structure of \(k\) can be recovered from the generating morphisms:
\begin{invisiblelabel}
\label{eqn11121314}
\end{invisiblelabel}

\begin{center}
    \scalebox{0.80}{
   \begin{tikzpicture}[-, thick]
   \node (top) {};
   \node [multiply] (c) [below of=top] {\(c\)};
   \node [multiply] (b) [below of=c] {\(b\)};
   \node (bottom) [below of=b] {};

   \draw (top) -- (c) -- (b) -- (bottom);

   \node (eq) [left of=b, shift={(0.2,0.5)}] {\(=\)};

   \node (bctop) [left of=top, shift={(-0.6,0)}] {};
   \node [multiply] (bc) [left of=eq, shift={(0.2,0)}] {\(bc\)};
   \node (bcbottom) [left of=bottom, shift={(-0.6,0)}] {};

   \draw (bctop) -- (bc) -- (bcbottom);
   \end{tikzpicture}
        \hspace{1.0cm}
   \begin{tikzpicture}[-, thick, node distance=0.85cm]
   \node (bctop) {};
   \node [multiply] (bc) [below of=bctop, shift={(0,-0.59)}] {\(\scriptstyle{b+c}\)};
   \node (bcbottom) [below of=bc, shift={(0,-0.59)}] {};

   \draw (bctop) -- (bc) -- (bcbottom);

   \node (eq) [right of=bc, shift={(0.15,0)}] {\(=\)};

   \node [multiply] (b) [right of=eq, shift={(0,0.1)}] {\(\scriptstyle{b}\)};
   \node [delta] (dupe) [above right of=b, shift={(-0.2,0.1)}] {};
   \node (top) [above of=dupe, shift={(0,-0.2)}] {};
   \node [multiply] (c) [below right of=dupe, shift={(-0.2,-0.1)}] {\(\scriptstyle{c}\)};
   \node [plus] (adder) [below right of=b, shift={(-0.2,-0.3)}] {};
   \node (out) [below of=adder, shift={(0,0.2)}] {};

   \draw
   (dupe.left out) .. controls +(240:0.15) and +(90:0.15) .. (b.90)
   (dupe.right out) .. controls +(300:0.15) and +(90:0.15) .. (c.90)
   (top) -- (dupe.io)
   (adder.io) -- (out)
   (adder.left in) .. controls +(120:0.15) and +(270:0.15) .. (b.io)
   (adder.right in) .. controls +(60:0.15) and +(270:0.15) .. (c.io);
   \end{tikzpicture}
        \hspace{1.0cm}
\raisebox{1.6em}{
   \begin{tikzpicture}[-, thick, node distance=0.85cm]
   \node (top) {};
   \node [multiply] (one) [below of=top] {1};
   \node (bottom) [below of=one] {};

   \draw (top) -- (one) -- (bottom);

   \node (eq) [right of=one] {\(=\)};
   \node (topid) [right of=top, shift={(0.6,0)}] {};
   \node (botid) [right of=bottom, shift={(0.6,0)}] {};

   \draw (topid) -- (botid);
   \end{tikzpicture}
}
        \hspace{1.0cm}
\raisebox{1.6em}{
   \begin{tikzpicture}[-, thick, node distance=0.85cm]
   \node [multiply] (prod) {\(0\)};
   \node (in0) [above of=prod] {};
   \node (out0) [below of=prod] {};
   \node (eq) [right of=prod] {\(=\)};
   \node [bang] (del) [right of=eq, shift={(-0.2,0.2)}] {};
   \node [zero] (ins) [right of=eq, shift={(-0.2,-0.2)}] {};
   \node (in1) [above of=del, shift={(0,-0.2)}] {};
   \node (out1) [below of=ins, shift={(0,0.2)}] {};

   \draw (in0) -- (prod) -- (out0);
   \draw (in1) -- (del);
   \draw (ins) -- (out1);
   \end{tikzpicture}
}
    }
\end{center}

\vskip 1em \noindent
\textbf{(15)--(16)}  Scaling commutes with addition and zero:
\begin{invisiblelabel}
\label{eqn1516}
\end{invisiblelabel}

\begin{center}
    \scalebox{0.80}{
   \begin{tikzpicture}[-, thick, node distance=0.85cm]
   \node [plus] (adder) {};
   \node (out) [below of=adder, shift={(0,0.2)}] {};
   \node [multiply] (L) [above left of=adder, shift={(0.15,0.45)}] {\(c\)};
   \node [multiply] (R) [above right of=adder, shift={(-0.15,0.45)}] {\(c\)};
   \node [coordinate] (RIn) [above of=R] {};
   \node [coordinate] (LIn) [above of=L] {};

   \draw (adder.right in) .. controls +(60:0.2) and +(270:0.2) .. (R.io) (R) -- (RIn);
   \draw (adder.left in) .. controls +(120:0.2) and +(270:0.2) .. (L.io) (L) -- (LIn);
   \draw (out) -- (adder);
   \end{tikzpicture}
        \hspace{0.1cm}
   \begin{tikzpicture}[node distance=1.15cm]
   \node (eq){\(=\)};
   \node [below of=eq] {};
   \end{tikzpicture}
   \begin{tikzpicture}[-, thick, node distance=0.85cm]
   \node (out) {};
   \node [multiply] (c) [above of=out] {\(c\)};
   \node [plus] (adder) [above of=c] {};
   \node [coordinate] (L) [above left of=adder, shift={(0.15,0.25)}] {};
   \node [coordinate] (R) [above right of=adder, shift={(-0.15,0.25)}] {};

   \draw (R) .. controls +(270:0.2) and +(60:0.4) .. (adder.right in) (adder) -- (c) -- (out);
   \draw (L) .. controls +(270:0.2) and +(120:0.4) .. (adder.left in);
   \end{tikzpicture}
        \hspace{0.7cm}
   \begin{tikzpicture}[-, thick, node distance=0.85cm]
   \node [multiply] (prod) {\(c\)};
   \node (out0) [below of=prod] {};
   \node [zero] (ins0) [above of=prod] {};
   \node (eq) [right of=prod] {\(=\)};
   \node (out1) [below right of=eq] {};
   \node [zero] (ins1) [above of=out1, shift={(0,0.2)}] {};

   \draw (out0) -- (prod) -- (ins0);
   \draw (out1) -- (ins1);
   \end{tikzpicture}
       \hspace{0.7cm}
    }
\end{center}

\vskip 1em \noindent
\textbf{(17)--(18)}  Scaling commutes with duplication and deletion:
\begin{invisiblelabel}
\label{eqn1718}
\end{invisiblelabel}

\begin{center}
    \scalebox{0.80}{
   \begin{tikzpicture}[-, thick, node distance=0.85cm]
   \node (top) {};
   \node [delta] (dupe) [below of=top, shift={(0,0.2)}] {};
   \node [multiply] (L) [below left of=dupe, shift={(0.15,-0.45)}] {\(c\)};
   \node [multiply] (R) [below right of=dupe, shift={(-0.15,-0.45)}] {\(c\)};
   \node [coordinate] (ROut) [below of=R] {};
   \node [coordinate] (LOut) [below of=L] {};

   \draw (dupe.left out) .. controls +(240:0.2) and +(90:0.2) .. (L.90) (L) -- (LOut);
   \draw (dupe.right out) .. controls +(300:0.2) and +(90:0.2) .. (R.90) (R) -- (ROut);
   \draw (top) -- (dupe);
   \end{tikzpicture}
        \hspace{0.1cm}
   \begin{tikzpicture}[node distance=1.15cm]
   \node (eq){\(=\)};
   \node [below of=eq] {};
   \end{tikzpicture}
   \begin{tikzpicture}[-, thick, node distance=0.85cm]
   \node (top) {};
   \node [multiply] (c) [below of=top] {\(c\)};
   \node [delta] (dupe) [below of=c] {};
   \node [coordinate] (L) [below left of=dupe, shift={(0.15,-0.25)}] {};
   \node [coordinate] (R) [below right of=dupe, shift={(-0.15,-0.25)}] {};

   \draw (dupe.left out) .. controls +(240:0.2) and +(90:0.2) .. (L);
   \draw (dupe.right out) .. controls +(300:0.2) and +(90:0.2) .. (R);
   \draw (top) -- (c) -- (dupe);
   \end{tikzpicture}
       \hspace{0.7cm}
   \begin{tikzpicture}[-, thick, node distance=0.85cm]
   \node [multiply] (prod) {\(c\)};
   \node (in0) [above of=prod] {};
   \node [bang] (del0) [below of=prod] {};
   \node (eq) [right of=prod] {\(=\)};
   \node (in1) [above right of=eq] {};
   \node [bang] (del1) [below of=in1, shift={(0,-0.2)}] {};
   \node [below of=del1, shift={(0,-0.2)}] {};

   \draw (in0) -- (prod) -- (del0);
   \draw (in1) -- (del1);
   \end{tikzpicture}
    }.
  \end{center}


In fact, these equations are enough: any two ways of drawing a linear map as a signal-flow diagram
can be connected using these equations.  That is, together with the generators, they give a
presentation of \(\vectk\) as a PROP.  More precisely, the generating morphisms are the
\(\F\)-images of a signature \(\Sigma_{\vectk}\), these 18 equations are the \(2|k|^2 + 4|k| + 12\)
pairs\footnote{Equations {\bf (11)} and {\bf (12)} listed above are each \(|k|^2\) pairs of
\(\Sigma\)-terms, and equations {\bf (15)--(18)} are each \(|k|\) pairs of \(\Sigma\)-terms.} of
\(\Sigma\)-terms \(E_{\vectk}\), and \(\vectk\) is the coequalizer of \(\F E_{\vectk}
\rightrightarrows \F\Sigma_{\vectk}\).  Thus \(\vectk\) is the PROP presented by the symmetric
monoidal theory \((\Sigma_{\vectk},E_{\vectk})\).

\begin{theorem} \label{presvk} The PROP \(\vectk\) is presented by the generators given in
Lemma~\ref{gensvk}, and equations {\bf (1)--(18)} as listed above.
\end{theorem}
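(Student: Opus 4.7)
The plan is to invoke Proposition~\ref{normform} with $\P = \vectk$ and the symmetric monoidal theory $(\Sigma_{\vectk}, E_{\vectk})$. The epimorphism $\phi \maps \F\Sigma_{\vectk} \to \vectk$ exists and is surjective on morphisms by Lemma~\ref{gensvk}. A routine direct check confirms that $\phi \lambda = \phi \rho$, since each of the 18 equations holds as an identity of linear maps: (1)--(3) and (4)--(6) follow from the fact that $(+,0)$ and $(\Delta,!)$ are the standard commutative monoid and comonoid on $k$; (7)--(10) from the distributive and unit laws in $k$; (11)--(14) from the field arithmetic; and (15)--(18) from $k$-linearity of the operations in question. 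What remains is to produce a standard form $\tilde{p}$ for every morphism $p$ in $\vectk$ and to show that every $\Sigma$-term $f$ in $\F\Sigma_{\vectk}$ satisfies $\pi(f) = \pi(\widetilde{\phi(f)})$.

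I would define the standard form as follows. A morphism $p \maps m \to n$ in $\vectk$ is an $n \times m$ matrix $A = (a_{ji})$. Let $\tilde{A}$ be the signal-flow diagram obtained by: (i) duplicating each of the $m$ inputs into $n$ wires using a left-leaning binary tree of $\Delta$'s, using $!$ to discard when $n = 0$; (ii) applying a canonical permutation (built from identities and braidings) to group the wires destined for each output; (iii) scaling the wire carrying the $j$-th copy of input $i$ by $s_{a_{ji}}$; and (iv) summing the $m$ wires feeding output $j$ with a left-leaning tree of $+$'s, with $0$ inserted for the case $m = 0$. This specification is well-defined on the level of signatures, giving the function $\nu \maps \U \vectk \to \U \F \Sigma_{\vectk}$ required by the proposition.

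The central step is the inductive verification that $\pi(f) = \pi(\widetilde{\phi(f)})$ for every term $f$, which proceeds by induction on the structure of $\Sigma$-terms. Each generator, the identity, and the braiding is manifestly equal modulo (1)--(18) to its own standard form. For $f \oplus g$, stacking the two standard forms side by side and sorting the middle braiding yields $\widetilde{\phi(f) \oplus \phi(g)}$; bicommutativity of the bimonoid (equations (3) and (6), combined with symmetric monoidal coherence) absorbs the sorting. The real work is the composition case $g \of f$: stacking $\tilde{\phi(f)}$ above $\tilde{\phi(g)}$ produces a diagram whose middle layer is an ``add tree followed by copy tree'' with scalings in between. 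Using the bimonoid distribution law (7), iterated over the heights of the trees (and supported by the unit laws (8)--(10) for boundary cases), one pushes every $\Delta$ above every $+$, so the middle becomes a copy tree followed by a grid of scalings followed by an add tree. Equations (15)--(18) then slide each scaling through the surrounding trees to merge with the outer scalings, and equations (11) and (12) contract compositions and parallel sums of scalars according to matrix multiplication. The result is precisely $\widetilde{\phi(g) \phi(f)}$.

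The main obstacle is organizing this bimonoid normalization cleanly enough to be convincing without descending into an unmanageable combinatorial induction. The cleanest approach is to first establish, as a lemma, that the morphisms of $\F\Sigma_{\vectk}$ modulo just the bicommutative-bimonoid equations (1)--(10) are in bijection with matrices over the free commutative rig on the formal symbols $\{s_c : c \in k\}$, with composition realized as matrix multiplication; this reduces the combinatorics to the familiar bialgebra calculation and isolates a single normal-form theorem for bimonoids (essentially the one due to Lafont). Equations (11)--(14) then collapse these formal scalar expressions onto elements of $k$ (using (14) to handle the $0$ scalar by $! \of 0$ at the appropriate boundary cases), and (15)--(18) are precisely what is needed for the scalings to propagate past the bimonoid structure during the reduction. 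Edge cases at $m = 0$ or $n = 0$ are handled by (8)--(10) together with (13)--(14). Once the composition case is closed, Proposition~\ref{normform} delivers $\vectk \cong \P(\Sigma_{\vectk}, E_{\vectk})$.
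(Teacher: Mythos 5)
Your proposal is correct and shares the paper's overall skeleton --- the same standard form (copy layer, scaling grid, add layer, read off from the matrix of the linear map), the same appeal to Proposition~\ref{normform}, and the same routine check that $\phi\lambda = \phi\rho$ --- but the induction is organized genuinely differently. The paper inducts on the \emph{number} of basic morphisms: it observes that tensoring with a generator can be traded for tensoring with an identity wire followed by a composition, and then works through eleven local cases ($\oplus 1$, $+\of$, $\of\Delta$, $\Delta\of$, $\of +$, $\of s_c$, $s_c\of$, $\of 0$, $!\of$, $B\of$, $\of B$), each showing that attaching a single generator to a diagram already in standard form can be re-normalized. You instead do structural induction on $\Sigma$-terms and confront the composite of two whole standard forms at once, normalizing the middle ``add tree then copy tree'' by iterating the bimonoid law \textbf{(7)} and then sliding and contracting scalings with \textbf{(15)}--\textbf{(18)} and \textbf{(11)}--\textbf{(12)}. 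Your version concentrates all the combinatorial work into one global bialgebra calculation (closer in spirit to the Lack/Bonchi--Soboci\'nski--Zanasi route the paper discusses in Section~\ref{vectrelated}), whereas the paper's one-generator-at-a-time decomposition keeps each rewriting step small at the cost of a longer case analysis. Either organization closes the argument.

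One statement in your final paragraph is wrong as written: the morphisms of $\F\Sigma_{\vectk}$ modulo \emph{only} the bimonoid equations \textbf{(1)}--\textbf{(10)} are not in bijection with matrices over a rig generated by the $s_c$. Without \textbf{(15)}--\textbf{(18)} the scalings are not bimonoid endomorphisms, so $\Delta \of s_c$ and $(s_c \oplus s_c)\of\Delta$ remain distinct morphisms and no copy--scale--add normal form exists; and even once those equations are imposed, the scalar entries live in the free \emph{noncommutative} rig $\N\langle s_c : c \in k\rangle$, since nothing prior to \textbf{(11)} identifies $s_b \of s_c$ with $s_c \of s_b$. Your auxiliary lemma should therefore be stated modulo \textbf{(1)}--\textbf{(10)} together with \textbf{(15)}--\textbf{(18)} (this is essentially Wadsley and Woods' description of the PROP for bicommutative bimonoids over a rig). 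Since your main argument already invokes \textbf{(15)}--\textbf{(18)} exactly where they are needed, this is a misstatement of the intended reduction rather than a gap in the proof itself.
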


\begin{proof}
To prove this, we find a standard form for morphisms in \(\vectk\) and use Theorem~\ref{normform}.
That is, it suffices to show that any string diagram built from generating morphisms and the
braiding can be put into a standard form using topological equivalences and equations
\textbf{(1)--(18)}.

A qualitative description of this standard form will be helpful for understanding how an arbitrary
string diagram can be rewritten in this form.  By way of example, consider the linear
transformation \(T \maps \R^3 \to \R^2\) given by 
\[ (x_1, x_2, x_3) \mapsto (y_1, y_2) = (3x_1 + 7x_2 +
2x_3, 9x_1 + x_2).\]
Its standard form looks like this:
  \begin{center}
    \scalebox{0.80}{
   \begin{tikzpicture}[thick]
   \node (i1) at (-2.5,3.4) {\(x_1\)};
   \node (i2) at (-0.5,3.4) {\(x_2\)};
   \node (i3) at (1.5,3.4) {\(x_3\)};
   \node[multiply] (a11) at (-3,1.732) {\(3\)};
   \node[multiply] (a21) at (-2,1.732) {\(9\)};
   \node[multiply] (a12) at (-1,1.732) {\(7\)};
   \node[multiply] (a22) at (0,1.732) {\(1\)};
   \node[multiply] (a13) at (1,1.732) {\(2\)};
   \node[multiply] (a23) at (2,1.732) {\(0\)};
   \node[delta] (D1) at (-2.5,2.6) {};
   \node[delta] (D2) at (-0.5,2.6) {};
   \node[delta] (D3) at (1.5,2.6) {};
   \node[plus] (S1T) at (-2,0.31) {};
   \node[plus] (S1B) at (-1,-0.556) {};
   \node[plus] (S2T) at (-1,0.31) {};
   \node[plus] (S2B) at (0,-0.556) {};
   \node (o1) at (-1,-1.24) {\(y_1\)};
   \node (o2) at (0,-1.24) {\(y_2\)};

   \draw
   (a11.io) .. controls +(270:0.2) and +(120:0.32) .. (S1T.left in)
   (a21.io) .. controls +(270:0.2) and +(120:0.32) .. (S2T.left in)
   (a22.io) .. controls +(270:0.2) and +(60:0.32) .. (S2T.right in)
   (S1T.io) .. controls +(270:0.2) and +(120:0.2) .. (S1B.left in)
   (i1) -- (D1) (i2) -- (D2) (i3) -- (D3)
   (S1B) -- (o1) (S2B) -- (o2)
   (S2T.io) .. controls +(270:0.2) and +(120:0.2) .. (S2B.left in)
   (a23.io) .. controls +(270:0.5) and +(60:0.5) .. (S2B.right in);
   \node [hole] (cross2) at (-0.59,-0.2) {};
   \node [hole] (cross1) at (-1.5,0.75) {};
   \draw
   (a12.io) .. controls +(270:0.2) and +(60:0.32) .. (S1T.right in)
   (a13.io) .. controls +(270:0.5) and +(60:0.5) .. (S1B.right in)
   (D1.left out) .. controls +(240:0.2) and +(90:0.2) .. (a11.90)
   (D1.right out) .. controls +(300:0.2) and +(90:0.2) .. (a21.90)
   (D2.left out) .. controls +(240:0.2) and +(90:0.2) .. (a12.90)
   (D2.right out) .. controls +(300:0.2) and +(90:0.2) .. (a22.90)
   (D3.left out) .. controls +(240:0.2) and +(90:0.2) .. (a13.90)
   (D3.right out) .. controls +(300:0.2) and +(90:0.2) .. (a23.90);
   \end{tikzpicture}
    }.
\end{center}
This is a string diagram picture of the following equation:
   \[ T x =
   \left[ \begin{array}{ccc}
   3 & 7 & 2 \\
   9 & 1 & 0 \end{array} \right]
   \left[ \begin{array}{c} x_1 \\ x_2 \\ x_3 \end{array} \right] =
   \left[ \begin{array}{c} y_1 \\ y_2 \end{array} \right].
  \]

In general, given a \(k\)-linear transformation \(T \maps k^m \to k^n\), we can describe it using an
\(n \times m\) matrix with entries in \(k\).  The case where \(m\) and/or \(n\) is zero gives a
matrix with no entries, so their standard form will be treated separately.  For positive values of
\(m\) and \(n\), the standard form has three distinct layers.  The top layer consists of \(m\)
clusters of \(n-1\) instances of \(\Delta\).  The middle layer is \(mn\) scalings.  The \(n\)
outputs of the \(j\)th cluster connect to the inputs of the scalings by \(\{a_{1j}, \dotsc,
a_{nj}\}\), where \(a_{ij}\) is the \(ij\) entry of \(A\), the matrix for \(T\).  The bottom layer
consists of \(n\) clusters of \(m-1\) instances of \(+\).  There will generally be braiding in this
layer as well, but since the category is being generated as symmetric monoidal, the locations of the
braidings doesn't matter so long as the topology of the string diagram is preserved.  The topology
of the sum layer is that the \(i\)th sum cluster gets its \(m\) inputs from the outputs of the
scalings by \(\{a_{i1}, \dotsc, a_{im}\}\).  The arrangement of the instances of \(\Delta\) and
\(+\) within their respective clusters does not matter, due to the associativity of \(+\) via
equation \textbf{(2)} and coassociativity of \(\Delta\) via equation \textbf{(5)}.  For the sake of
making the standard form explicit with respect to these equations, we may assume the right output of
a \(\Delta\) is always connected to a scaling input, and the right input of a \(+\) is always
connected to a scaling output.  This gives a prescription for drawing the standard form of a string
diagram with a corresponding matrix \(A\).

The standard form for \(T \maps k^0 \to k^n\) is \(n\) zeros (\(0 \oplus \dotsb \oplus 0\)), and the
standard form for \(T \maps k^m \to k^0\) is \(m\) deletions (\(! \oplus \dotsb \oplus\, !\)).

Each of the generating morphisms can easily be put into standard form: the string diagrams for zero,
deletion, and scaling are already in standard form.  The string diagram for duplication
(\emph{resp.} addition) can be put into standard form by attaching the scaling \(s_1\), equation
\textbf{(13)}, to each of the outputs (\emph{resp.} inputs).
  \begin{center}
    \scalebox{0.80}{
   \begin{tikzpicture}[thick]
   \node[delta] (dupe) {};
   \node[multiply] (o1) at (-0.5,-0.65) {\(1\)};
   \node[multiply] (o2) at (0.5,-0.65) {\(1\)};
   \node (in) [above of=dupe] {};
   \node (posto1) [below of=o1] {};
   \node (posto2) [below of=o2] {};

   \draw[rounded corners] (posto1) -- (o1) -- (dupe.left out);
   \draw[rounded corners] (posto2) -- (o2) -- (dupe.right out);
   \draw (in) -- (dupe);

   \node (equals) at (-1,0) {\(=\)};
   \node[delta] (dub) at (-2,0) {};
   \node (dubin) [above of=dub] {};
   \node[coordinate] (L) at (-2.5,-0.65) {};
   \node[coordinate] (R) at (-1.5,-0.65) {};
   \node (Ld) [below of=L] {};
   \node (Rd) [below of=R] {};

   \draw[rounded corners] (Ld) -- (L) -- (dub.left out) (Rd) -- (R) -- (dub.right out) (dubin) -- (dub);
   \end{tikzpicture}
\hspace{1cm}
   \begin{tikzpicture}[thick]
   \node[plus] (summer) {};
   \node[multiply] (o1) at (-0.5,1.02) {\(1\)};
   \node[multiply] (o2) at (0.5,1.02) {\(1\)};
   \node (out) [below of=summer] {};
   \node (posto1) [above of=o1] {};
   \node (posto2) [above of=o2] {};

   \draw (posto1) -- (o1) -- (o1.io) -- (summer.left in);
   \draw (posto2) -- (o2) -- (o2.io) -- (summer.right in);
   \draw (out) -- (summer);

   \node (equals) at (-1,0) {\(=\)};
   \node[plus] (adder) at (-2,0) {};
   \node (addout) [below of=adder] {};
   \node[coordinate] (L) at (-2.5,0.65) {};
   \node[coordinate] (R) at (-1.5,0.65) {};
   \node (Lu) [above of=L] {};
   \node (Ru) [above of=R] {};

   \draw[rounded corners] (Lu) -- (L) -- (adder.left in) (Ru) -- (R) -- (adder.right in) (addout) -- (adder);
   \end{tikzpicture}
    }
  \end{center}
The braiding morphism is just as basic to our argument as the generating morphisms, so we will need
to write the string diagram for \(B\) in standard form as well.  The matrix corresponding to
braiding is
\[\left[ \begin{array}{cc}
   0 & 1 \\
   1 & 0 \end{array} \right],\]
so its standard form is as follows:
  \begin{center}
    \scalebox{0.80}{
   \begin{tikzpicture}[thick]
   \node (UpUpLeft) at (-3.7,2.2) {};
   \node [coordinate] (UpLeft) at (-3.7,1.7) {};
   \node (mid) at (-3.3,1.3) {};
   \node [coordinate] (DownRight) at (-2.9,0.9) {};
   \node (DownDownRight) at (-2.9,0.4) {};
   \node [coordinate] (UpRight) at (-2.9,1.7) {};
   \node (UpUpRight) at (-2.9,2.2) {};
   \node [coordinate] (DownLeft) at (-3.7,0.9) {};
   \node (DownDownLeft) at (-3.7,0.4) {};

   \draw [rounded corners=2mm] (UpUpLeft) -- (UpLeft) -- (mid) --
   (DownRight) -- (DownDownRight) (UpUpRight) -- (UpRight) -- (DownLeft) -- (DownDownLeft);

   \node (eq) at (-2.2,1.3) {\(=\)};
   \node[coordinate] (i1) at (-1,3) {};
   \node[coordinate] (i2) at (1,3) {};
   \node[multiply] (a11) at (-1.5,1.832) {\(0\)};
   \node[multiply] (a21) at (-0.5,1.832) {\(1\)};
   \node[multiply] (a12) at (0.5,1.832) {\(1\)};
   \node[multiply] (a22) at (1.5,1.832) {\(0\)};
   \node[delta] (D1) at (-1,2.4) {};
   \node[delta] (D2) at (1,2.4) {};
   \node (cross) at (0,0.75) {};
   \node[plus] (S1) at (-0.5,0.2) {};
   \node[plus] (S2) at (0.5,0.2) {};
   \node[coordinate] (o1) at (-0.5,-0.4) {};
   \node[coordinate] (o2) at (0.5,-0.4) {};

   \draw
   (i1) -- (D1) (i2) -- (D2)
   (S1.io) -- (o1) (S2.io) -- (o2)
   (a11.io) -- (S1.left in) (a22.io) -- (S2.right in)
   (a21.io) -- (cross) -- (S2.left in) (a12.io) -- (S1.right in)
   (D1.left out) -- (a11) (D1.right out) -- (a21)
   (D2.left out) -- (a12) (D2.right out) -- (a22);
   \end{tikzpicture}
    }.
\end{center}

For \(n > 1\), any morphism built from \(n\) copies of the \Define{basic morphisms}---that is,
generating morphisms and the braiding---can be built up from a morphism built from \(n-1\) copies by
composing or tensoring with one more basic morphism.  Thus, to prove that any string diagram built
from basic morphisms can be put into its standard form, we can proceed by induction on the number of
basic morphisms.

Furthermore, because strings can be extended using the identity morphism, equation \textbf{(13)} can
be used to show tensoring with any generating morphism is equivalent to tensoring with \(1\),
followed by a composition:  \(\Delta = \Delta \of 1\), \(+ = 1 \of +\), \(c = 1 \of c\), \(! =\, !
\of 1\), \(0 = 1 \of 0\).  In the case of braiding, the step of tensoring with \(1\) is repeated
once before making the composition:  \(B = (1 \oplus 1) \of B\).
  \begin{center}
    \scalebox{0.80}{
   \begin{tikzpicture}[thick]
   \node[blackbox] (bb1) {};
   \node (tensor1) [right of=bb1] {\(\oplus\)};
   \node[sqnode] (gen1) [right of=tensor1] {G};
   \node (eq1) [right of=gen1] {\(=\)};
   \node[blackbox] (bb2) [right of=eq1] {};
   \node (tensor2) [right of=bb2] {\(\oplus\)};
   \node[sqnode] (gen2) [right of=tensor2, shift={(0,1)}] {G};
   \node (eq2) [right of=gen2, shift={(0,-1)}] {\(=\)};
   \node[blackbox] (bb3) [right of=eq2] {};
   \node (tensor3) [right of=bb3] {\(\oplus\)};
   \node[sqnode] (gen3) [right of=tensor3, shift={(0,-1)}] {G};

   \node[multiply] (times2) [below of=gen2] {\(1\)};
   \node[multiply] (times3) [above of=gen3] {\(1\)};

   \node (bb1in) [above of=bb1] {};
   \node (bb1out) [below of=bb1] {};
   \node (bb2in) [above of=bb2] {};
   \node (bb2out) [below of=bb2] {};
   \node (bb3in) [above of=bb3] {};
   \node (bb3out) [below of=bb3] {};
   \node (gen1in) [above of=gen1] {};
   \node (gen1out) [below of=gen1] {};
   \node (gen2in) [above of=gen2] {};
   \node (gen2out) [below of=times2] {};
   \node (gen3in) [above of=times3] {};
   \node (gen3out) [below of=gen3] {};

   \draw (bb1in) -- (bb1) -- (bb1out) (gen1in) -- (gen1) -- (gen1out)
   (bb2in) -- (bb2) -- (bb2out) (gen2in) -- (gen2) -- (times2) -- (gen2out)
   (bb3in) -- (bb3) -- (bb3out) (gen3in) -- (times3) -- (gen3) -- (gen3out);
   \end{tikzpicture}
    }
  \end{center}
Thus there are 11 cases to consider for this induction:  \(\oplus 1\), \(+ \of\), \(\of \Delta\),
\(\Delta \of\), \(\of +\), \(\of c\), \(c \of\), \(\of 0\), \(! \of\), \(B \of\), \(\of B\).
Without loss of generality, the string diagram \(S\) to which a generating morphism is added will be
assumed to be in standard form already.  Labels \(ij\) on diagrams illustrating these cases
correspond to strings incident to the scalings by \(a_{ij}\).
     \begin{itemize}[leftmargin=1em]
\item \Define{\(\oplus 1\)}\\
When tensoring morphisms together, the matrix corresponding to \(C \oplus D\) is the block diagonal
matrix
\[\left[ \begin{array}{cc}
   C & 0 \\
   0 & D \end{array} \right],\]
where, by abuse of notation, the block \(C\) is the matrix corresponding to morphism \(C\), and
respectively \(D\) with \(D\).  Thus, when tensoring \(S\) by \(1\), we write the matrix for \(S\)
with one extra row and one extra column.  Each of these new entries will be \(0\) with the exception
of a \(1\) at the bottom of the extra column.  The string diagram corresponding to the new matrix
can be drawn in standard form as prescribed above.  Using equations \textbf{(14)}, \textbf{(4)}, and
\textbf{(1)}, the standard form reduces to \(S \oplus 1\).  The process is reversible, so if the
string diagram \(S\) can be drawn in standard form, the string diagram \(S \oplus 1\) can be drawn
in standard form, too.  The diagrams below show the relevant strings before they are reduced.
  \begin{center}
    \scalebox{0.80}{
   \begin{tikzpicture}[thick]
   \node[delta] (D1) {};
   \node[delta] (D2) at (-0.5,-0.866) {};
   \node[delta] (D3) at (-1,-1.732) {};
   \node[multiply] (zero) at (0.5,-.65) {\(0\)};
   \node (i1) [above of=D1] {};
   \node (o1) at (0.5,-1.516) {\(\quad\scriptstyle{n+1,j}\)};
   \node (o2) at (0,-1.516) {\(\scriptstyle{nj}\)};
   \node (o3) at (-0.5,-2.382) {\(\scriptstyle{2j}\)};
   \node (o4) at (-1.5,-2.382) {\(\scriptstyle{1j}\)};

   \draw (i1) -- (D1) (D1.left out) -- (D2.io) (D1.right out) -- (zero)
   -- (o1) (D2.right out) -- (o2) (D3.right out) -- (o3) (D3.left out) -- (o4);
   \draw[dotted] (D2.left out) -- (D3.io);
   \end{tikzpicture}
       \hspace{1cm}
   \begin{tikzpicture}[thick]
   \node[delta] (D1) {};
   \node[delta] (D2) at (-0.5,-0.866) {};
   \node[delta] (D3) at (-1,-1.732) {};
   \node[multiply] (one) at (0.5,-.65) {\(1\)};
   \node (o1) at (0.5,-1.516) {\(\quad\qquad\scriptstyle{n+1,m+1}\)};
   \node (i1) [above of=D1] {};
   \node[multiply] (z2) at (0,-1.516) {\(0\)};
   \node (o2) at (0,-2.382) {\(\qquad\scriptstyle{n,m+1}\)};
   \node[multiply] (z3) at (-0.5,-2.382) {\(0\)};
   \node (o3) at (-0.5,-3.248) {\(\quad\scriptstyle{2,m+1}\)};
   \node[multiply] (z4) at (-1.5,-2.382) {\(0\)};
   \node (o4) at (-1.5,-3.248) {\(\scriptstyle{1,m+1}\)};

   \draw (i1) -- (D1) (D1.left out) -- (D2.io) (D1.right out) -- (one)
   -- (o1) (D2.right out) -- (z2) -- (o2) (D3.right out) -- (z3) --
   (o3) (D3.left out) -- (z4) -- (o4);
   \draw[dotted] (D2.left out) -- (D3.io);
   \end{tikzpicture}
       \hspace{1cm}
   \begin{tikzpicture}[thick]
   \node[plus] (P1) {};
   \node[plus] (P2) at (-0.5,0.866) {};
   \node[plus] (P3) at (-1,1.732) {};
   \node (zero) at (0.5,.65) {\(\quad\scriptstyle{i,m+1}\)};
   \node (o1) [below of=P1] {};
   \node (i2) at (0,1.516) {\(\scriptstyle{im}\)};
   \node (i3) at (-0.5,2.382) {\(\scriptstyle{i2}\)};
   \node (i4) at (-1.5,2.382) {\(\scriptstyle{i1}\)};

   \draw (o1) -- (P1) (P1.left in) -- (P2.io) (P1.right in) -- (zero)
   (P2.right in) -- (i2) (P3.right in) -- (i3) (P3.left in) -- (i4);
   \draw[dotted] (P2.left in) -- (P3.io);
   \end{tikzpicture}
    }
  \end{center}
Note that for \(i = n+1\), \(a_{i2} = \dotsb = a_{im} = 0\), so the scalings going to the sum
cluster will be \(s_0\), and \(a_{i,m+1} = 1\).  Otherwise \(a_{i,m+1} = 0\), and the rest of the
\(a_{ij}\) depend on the matrix corresponding to \(S\).  When \(S = (! \oplus \dotsb \oplus{} !)\),
the matrix corresponding to \(S \oplus 1\) has a single row, \((0 \cdots 0 \; 1)\), and the standard
form generated is just the middle diagram above.  When the same simplifications are applied, no sum
cluster exists to eliminate the zeros, so the standard form still simplifies to \(S \oplus 1\).
Dually, when \(S = (0 \oplus \dotsb \oplus 0)\), the matrix representation of \(S \oplus 1\) is a
column matrix.  No duplication cluster exists in the standard form for this matrix, so the same
simplifications again reduce to \(S \oplus 1\).
\item \Define{\(+ \of\)}\\
If we compose the string diagram for addition with \(S\), first consider only the affected clusters
of additions: two clusters are combined into a larger cluster.  Without loss of generality we can
assume these are the first two clusters, or formally, \((+ \oplus 1^{n-2})(S)\).  We can rearrange
the sums using the associative law, equation \textbf{(2)}, and permute the inputs of this large
cluster using the commutative law, equation \textbf{(3)}.  After several iterations of these two
equations, the desired result is obtained:
  \begin{center}
    \scalebox{0.80}{
   \begin{tikzpicture}[thick]
   \node[plus] (base)                {};
   \node (out) [below of=base]       {};
   \node (Laim)     at (-0.25,0.217) {};
   \node (Lbase)    at (-0.87,0.433) {};
   \node[plus] (L1) at (-1.0,0.866)  {};
   \node[plus] (L2) at (-1.5,1.732)  {};
   \node[plus] (L3) at (-2.0,2.598)  {};
   \node (Raim)     at (0.25,0.217)  {};
   \node (Rbase)    at (0.87,0.433)  {};
   \node[plus] (R1) at (1.0,0.866)   {};
   \node[plus] (R2) at (0.5,1.732)   {};
   \node[plus] (R3) at (0.0,2.598)   {};
   \node (iL1)      at (-0.5,1.516)  {\(\scriptstyle{1m}\)};
   \node (iL2)      at (-1.0,2.382)  {};
   \node (iL3)      at (-1.5,3.248)  {\(\scriptstyle{12}\)};
   \node (iL4)      at (-2.5,3.248)  {\(\scriptstyle{11}\)};
   \node (iR1)      at (1.5,1.516)   {\(\scriptstyle{2m}\)};
   \node (iR2)      at (1.0,2.382)   {};
   \node (iR3)      at (0.5,3.248)   {\(\scriptstyle{22}\)};
   \node (iR4)      at (-0.5,3.248)  {\(\scriptstyle{21}\)};

   \draw (base.left in) .. controls (Laim) and (Lbase) .. (L1.io);
   \draw (base.right in) .. controls (Raim) and (Rbase) .. (R1.io);
   \draw (R1.left in) -- (R2.io) (L1.left in) -- (L2.io) (out) -- (base)
   (L1.right in) -- (iL1) (L2.right in) -- (iL2) (L3.right in) -- (iL3) (L3.left in) -- (iL4)
   (R1.right in) -- (iR1) (R2.right in) -- (iR2) (R3.right in) -- (iR3) (R3.left in) -- (iR4);
   \draw[dotted] (R2.left in) -- (R3.io) (L2.left in) -- (L3.io);

   \node (eq) at (2.5,1.3) {\(=\)};

   \node[plus] (base2) at (6,0)       {};
   \node (basement) [below of=base2]  {};
   \node[plus] (R)     at (7,0.866)   {};
   \node[plus] (L)     at (5,0.866)   {};
   \node[plus] (LR)    at (6,1.732)   {};
   \node[plus] (LL)    at (4,1.732)   {};
   \node[plus] (LLL)   at (3,2.598)   {};
   \node[plus] (LLR)   at (5,2.598)   {};
   \node (iRl)         at (6.5,1.516) {\(\scriptstyle{1m}\)};
   \node (iRr)         at (7.5,1.516) {\(\scriptstyle{2m}\)};
   \node (iLRl)        at (5.5,2.382) {};
   \node (iLRr)        at (6.5,2.382) {};
   \node (iLLRl)       at (4.5,3.248) {\(\scriptstyle{12}\)};
   \node (iLLRr)       at (5.5,3.248) {\(\scriptstyle{22}\)};
   \node (iLLLl)       at (2.5,3.248) {\(\scriptstyle{11}\)};
   \node (iLLLr)       at (3.5,3.248) {\(\scriptstyle{21}\)};

   \draw (base2.left in) .. controls (5.75,0.217) and (5.13,0.433) .. (L.io)
   (base2.right in) .. controls (6.25,0.217) and (6.87,0.433) .. (R.io)
   (base2) -- (basement) (R.right in) -- (iRr) (R.left in) -- (iRl)
   (LR.right in) -- (iLRr) (LR.left in) -- (iLRl)
   (LLR.right in) -- (iLLRr) (LLR.left in) -- (iLLRl)
   (LLL.right in) -- (iLLLr) (LLL.left in) -- (iLLLl)
   (L.right in) .. controls (5.25,1.083) and (5.87,1.299) .. (LR.io)
   (LL.left in) .. controls (3.75,1.949) and (3.13,2.165) .. (LLL.io)
   (LL.right in) .. controls (4.25,1.949) and (4.87,2.165) .. (LLR.io);
   \draw[dotted] (L.left in) .. controls (4.75,1.083) and (4.13,1.299) .. (LL.io);
   \end{tikzpicture}
    }.
  \end{center}
Now the right side of equation \textbf{(12)} appears in the diagram \(m\) times with \(a_{1j}\) and
\(a_{2j}\) in place of \(b\) and \(c\).  Equation \textbf{(12)} can therefore be used to simplify
to the scalings of \(a_{1j}+a_{2j}\).
  \begin{center}
    \scalebox{0.80}{
   \begin{tikzpicture}[thick, node distance=0.85cm]
   \node (bctop) {};
   \node [multiply] (bc) [below of=bctop, shift={(0,-0.59)}] {\(\!\!\!{}^{a_{1j}+a_{2j}}\!\!\!\)};
   \node (bcbottom) [below of=bc, shift={(0,-0.59)}] {};

   \draw (bctop) -- (bc) -- (bcbottom);

   \node (eq) [left of=bc, shift={(-0.4,0)}] {\(=\)};

   \node [multiply] (b) [left of=eq, shift={(0,0.1)}] {\(\!\scriptstyle{a_{2j}}\!\)};
   \node [delta] (dupe) [above left of=b, shift={(0.2,0.1)}] {};
   \node (top) [above of=dupe, shift={(0,-0.2)}] {};
   \node [multiply] (c) [below left of=dupe, shift={(0.2,-0.1)}] {\(\!\scriptstyle{a_{1j}}\!\)};
   \node [plus] (adder) [below left of=b, shift={(0.2,-0.3)}] {};
   \node (out) [below of=adder, shift={(0,0.2)}] {};

   \draw
   (dupe.right out) .. controls +(300:0.15) and +(90:0.15) .. (b.90)
   (dupe.left out) .. controls +(240:0.15) and +(90:0.15) .. (c.90)
   (top) -- (dupe.io)
   (adder.io) -- (out)
   (adder.right in) .. controls +(60:0.15) and +(270:0.15) .. (b.io)
   (adder.left in) .. controls +(120:0.15) and +(270:0.15) .. (c.io);
   \end{tikzpicture}
    }
  \end{center}
The simplification removes one instance of \(\Delta\) from each of the \(m\) clusters of \(\Delta\)
and \(m\) instances of \(+\) from the large addition cluster.  There will remain \((m-1) + (m-1) +
(1) - (m) = m-1\) instances of \(+\), which is the correct number for the cluster.  \emph{I.e.}\ the
composition has been reduced to standard form.
\\
The argument is vastly simpler if \(S = (0 \oplus \dotsb \oplus 0)\).  In that case equation
\textbf{(1)} deletes the addition and one of the \(0\) morphisms, and \(S\) is still in the same
form.
  \begin{center}
    \scalebox{0.80}{
   \begin{tikzpicture}[thick]
   \node [plus] (sum) at (0,0) {};
   \node [zero] (zero1) at (-0.35,0.65) {};
   \node [zero] (zero2) at (0.35,0.65) {};
   \node (eq) at (1,0.15) {\(=\)};
   \node [zero] (zero3) at (1.5,0.65) {};
   \node (sumout) at (0,-0.65) {};
   \node (zeroout) at (1.5,-0.65) {};

   \draw (zero1) .. controls +(270:0.2) and +(120:0.2) .. (sum.left in)
         (sum.right in) .. controls +(60:0.2) and +(270:0.2) .. (zero2)
         (sum.io) -- (sumout) (zero3) -- (zeroout);
   \end{tikzpicture}
    }
  \end{center}
\item \Define{\(\of \Delta\)}\\
The argument for \(S \of (\Delta \oplus 1^{m-2})\) is dual to the above argument, using the light
equations \textbf{(4)}, \textbf{(5)} and \textbf{(6)} instead of the dark equations \textbf{(1)},
\textbf{(2)} and \textbf{(3)}.
\item \Define{\(\Delta \of\)}\\
For \((\Delta \oplus 1^{n-1}) \of S\), equation \textbf{(7)} can be used iteratively to `float'
the \(\Delta\) layer above each of the two \(+\) clusters formed by the first iteration.
  \begin{center}
    \scalebox{0.80}{
   \begin{tikzpicture}[thick]
   \node[plus] (base) {};
   \node[delta] (dub) at (0,-0.866) {};
   \node (oLa) at (-0.5,-1.516) {};
   \node (oRa) at (0.5,-1.516) {};
   \node (1ma) at (0.5,0.65) {\(\scriptstyle{1m}\)};
   \node[plus] (top) at (-0.5,0.866) {};
   \node (11a) at (-1,1.516) {\(\scriptstyle{11}\)};
   \node (12a) at (0,1.516) {\(\scriptstyle{12}\)};

   \draw[dotted] (base.left in) -- (top.io);
   \draw (oLa) -- (dub.left out) (oRa) -- (dub.right out) (dub) -- (base)
   (base.right in) -- (1ma) (top.right in) -- (12a) (top.left in) -- (11a);

   \node (eq1) at (0.875,0) {\(=\)};
   \node (oLb) at (1.625,-1.516) {};
   \node[plus] (baseLb) at (1.625,-0.866) {};
   \node[delta] (dubLb) at (1.625,0) {};
   \node[plus] (topb) at (1.625,0.866) {};
   \node (11b) at (1.125,1.516) {\(\scriptstyle{11}\)};
   \node (12b) at (2.125,1.516) {\(\scriptstyle{12}\)};
   \node (oRb) at (2.5,-1.516) {};
   \node[plus] (baseRb) at (2.5,-0.866) {};
   \node[delta] (dubRb) at (2.5,0) {};
   \node (1mb) at (2.5,0.65) {\(\scriptstyle{1m}\)};
   \node (crossb) at (2.0625,-0.433) {};

   \draw[dotted] (dubLb) -- (topb);
   \draw (11b) -- (topb.left in) (topb.right in) -- (12b) (dubRb) -- (1mb)
   (dubLb.right out) -- (crossb) -- (baseRb.left in) 
   (dubRb.left out) -- (baseLb.right in)
   (baseRb) -- (oRb) (baseLb) -- (oLb);
   \path (baseLb.left in) edge [bend left=30] (dubLb.left out)
   (baseRb.right in) edge [bend right=30] (dubRb.right out);

   \node (eq2) at (3.25,0) {\(=\)};
   \node (11c) at (4.125,1.516) {\(\scriptstyle{11}\)};
   \node[delta] (dubLc) at (4.125,0.866) {};
   \node[plus] (topLc) at (4.125,0) {};
   \node (ucross) at (4.5625,0.433) {};
   \node (12c) at (5,1.516) {\(\scriptstyle{12}\)};
   \node[delta] (dubRc) at (5,0.866) {};
   \node[plus] (topRc) at (5,0) {};
   \node[plus] (baseLc) at (4.625,-0.866) {};
   \node (oLc) at (4.625,-1.516) {};
   \node (1mc) at (6,1.516) {\(\scriptstyle{1m}\)};
   \node (dots) at (5.5,1.516) {\(\cdots\)};
   \node[delta] (dubmc) at (6,0.866) {};
   \node[plus] (baseRc) at (5.5,-0.866) {};
   \node (oRc) at (5.5,-1.516) {};

   \draw[dotted] (baseLc.left in) -- (topLc.io) (baseRc.left in) -- (topRc.io);
   \draw (oRc) -- (baseRc) (oLc) -- (baseLc) (dubmc) -- (1mc) (dubLc) -- (11c) (dubRc) -- (12c)
   (topLc.right in) -- (dubRc.left out) 
   (topRc.left in) -- (ucross) -- (dubLc.right out) 
   (baseLc.right in) -- (dubmc.left out)
   (baseRc.right in) .. controls (5.875,-0.433) and (6.625,0) .. (dubmc.right out);
   \path (topLc.left in) edge [bend left=30] (dubLc.left out)
   (topRc.right in) edge [bend right=30] (dubRc.right out);
   \end{tikzpicture}
    }
  \end{center}
Each of these instances of \(\Delta\) can pass through the scaling layer to \(\Delta\) clusters
using equation \textbf{(17)}.
\\
As before, we consider the subcase \(S = (0 \oplus \dotsb \oplus 0)\) separately.  Equation
\textbf{(8)} removes the duplication and creates a new zero, so \(S\) remains in the same form.
\item \Define{\(\of +\)}\\
For \(S(+ \oplus 1^{m-1})\), the argument is dual to the previous one: equation \textbf{(7)} is used
to `float' the additions down, equation \textbf{(15)} sends the additions through the
scalings, and equation \textbf{(9)} removes the addition and creates a new deletion in the
subcase \(S = (! \oplus \dotsb \oplus !)\).
\item \Define{\(\of s_c\)}\\
We can iterate equation \textbf{(17)} when a scaling is composed on top, as in \(S(s_c \oplus 1^{m-1})\).
  \begin{center}
    \scalebox{0.80}{

   \begin{tikzpicture}[thick]
   \node[multiply] (topc) {\(c\)};
   \node (top1) at (0,0.866) {};
   \node[delta] (dub1) [below of=topc] {};
   \node[delta] (dub2) at (-0.5,-1.866) {};
   \node (11a) at (-1,-2.516) {\(\scriptstyle{11}\)};
   \node (21a) at (0,-2.516) {\(\scriptstyle{21}\)};
   \node (n1a) at (0.5,-1.65) {\(\scriptstyle{n1}\)};

   \draw[dotted] (dub1.left out) -- (dub2.io);
   \draw (top1) -- (topc) -- (dub1) (dub1.right out) -- (n1a)
   (dub2.left out) -- (11a) (dub2.right out) -- (21a);

   \node (eq) at (1,-1) {\(=\)};
   \node (top2) at (2.5,0.866) {};
   \node[delta] (delt1) at (2.5,-0.134) {};
   \node[delta] (delt2) at (2,-1) {};
   \node[multiply] (c1) at (1.5,-1.65) {\(c\)};
   \node[multiply] (c2) at (2.5,-1.65) {\(c\)};
   \node[multiply] (c3) at (3.5,-1.65) {\(c\)};
   \node (11b) at (1.5,-2.516) {\(\scriptstyle{11}\)};
   \node (21b) at (2.5,-2.516) {\(\scriptstyle{21}\)};
   \node (dots) at (3,-2.516) {\(\cdots\)};
   \node (n1b) at (3.5,-2.516) {\(\scriptstyle{n1}\)};

   \draw[dotted] (delt1.left out) -- (delt2.io);
   \draw (delt2.left out) -- (c1) -- (11b) (delt2.right out) -- (c2) -- (21b)
   (delt1.right out) -- (c3) -- (n1b) (top2) -- (delt1);
   \end{tikzpicture}
    }
  \end{center}
The double scalings in the scaling layer reduce to a single scaling via equation \textbf{(11)}, \(s_c
\of a_{ij} = ca_{ij}\), which leaves the diagram in standard form.  The composition does nothing
when \(S = (! \oplus \dotsb \oplus{} !)\), due to equation \textbf{(18)}.
\item \Define{\(s_c \of\)}\\
A dual argument can be made for \((s_c \oplus 1^{n-1}) \of S\) using equations \textbf{(15)},
\textbf{(11)} and \textbf{(16)}.
\item \Define{\(\of 0\)}\\
For \(S(0 \oplus 1^{m-1})\), equations \textbf{(8)} and \textbf{(16)} eradicate the first \(\Delta\)
cluster and all the scalings incident to it, leaving behind \(n\) zeros.  Equation \textbf{(1)}
erases each of these zeros along with one addition per addition cluster, leaving a diagram that is
in standard form.
  \begin{center}
    \scalebox{0.80}{
   \begin{tikzpicture}[thick]
   \node[zero] (top) {};
   \node[delta] (apex) at (0,-0.866) {};
   \node[delta] (dub) at (-0.5,-1.732) {};
   \node (11a) at (-1,-2.382) {\(\scriptstyle{11}\)};
   \node (21a) at (0,-2.382) {\(\scriptstyle{21}\)};
   \node (n1a) at (0.5,-1.516) {\(\scriptstyle{n1}\)};

   \draw[dotted] (apex.left out) -- (dub.io);
   \draw (top) -- (apex) (apex.right out) -- (n1a)
   (dub.left out) -- (11a) (dub.right out) -- (21a);

   \node (eq) at (1,-1.3) {\(=\)};
   \node[zero] (z1) at (1.5,-0.866) {};
   \node[zero] (z2) at (2,-0.866) {};
   \node[zero] (z3) at (3,-0.866) {};
   \node (dots) at (2.5,-1.3) {\(\cdots\)};
   \node (11b) at (1.5,-1.732) {\(\scriptstyle{11}\)};
   \node (21b) at (2,-1.732) {\(\scriptstyle{21}\)};
   \node (n1b) at (3,-1.732) {\(\scriptstyle{n1}\)};

   \draw (z1) -- (11b) (z2) -- (21b) (z3) -- (n1b);
   \end{tikzpicture}
       \hspace{1cm}
   \begin{tikzpicture}[thick]
   \node[zero] (zip) {};
   \node[multiply] (c) [below of=zip] {\(\scriptstyle{a_{i1}}\)};
   \node (out) [below of=c] {};
   \node (eq) [right of=c] {\(=\)};
   \node[zero] (nada) [above right of=eq] {};
   \node (bot) [below of=nada] {};

   \draw (zip) -- (c) -- (out) (nada) -- (bot);
   \end{tikzpicture}
       \hspace{1cm}
   \begin{tikzpicture}[thick]
   \node[zero] (zip) at (-0.5,0.65) {};
   \node[plus] (topa) {};
   \node[plus] (mida) at (0.5,-0.866) {};
   \node[plus] (bota) at (1,-1.732) {};
   \node (outa) at (1,-2.382) {};
   \node (i2a) at (0.5,0.65) {\(\scriptstyle{i2}\)};
   \node (i3a) at (1,-0.216) {\(\scriptstyle{i3}\)};
   \node (ima) at (1.5,-1.082) {\(\scriptstyle{im}\)};

   \draw[dotted] (mida.io) -- (bota.left in);
   \draw (bota) -- (outa) (bota.right in) -- (ima)
   (mida.right in) -- (i3a) (mida.left in) -- (topa.io)
   (topa.left in) -- (zip) (topa.right in) -- (i2a);

   \node (eq) at (2,-0.95) {\(=\)};
   \node[plus] (top) at (2.75,-0.866) {};
   \node[plus] (bot) at (3.25,-1.732) {};
   \node (out) at (3.25,-2.382) {};
   \node (i2) at (2.25,-0.216) {\(\scriptstyle{i2}\)};
   \node (i3) at (3.25,-0.216) {\(\scriptstyle{i3}\)};
   \node (im) at (3.75,-1.082) {\(\scriptstyle{im}\)};

   \draw[dotted] (top.io) -- (bot.left in);
   \draw (bot.right in) -- (im) (top.right in) -- (i3)
   (top.left in) -- (i2) (bot) -- (out);
   \end{tikzpicture}
    }
  \end{center}
When \(S = (! \oplus \dotsb \oplus{} !)\), the zero annihilates one of the deletions via equation
\textbf{(10)}.
\item \Define{\(! \of\)}\\
A dual argument erases the indicated output for the composition \((! \oplus 1^{n-1}) \of S\) using
equations \textbf{(9)}, \textbf{(18)}, and \textbf{(4)}.  Again, equation \textbf{(10)} annihilates
the deletion and one of the zeros if \(S = (0 \oplus \dotsb \oplus 0)\).
\item \Define{\(B \of\)}\\
Since this category of string diagrams is symmetric monoidal, an appended braiding will naturally
commute with the addition cluster morphisms.  The principle that only the topology matters means the
composition \((B \oplus 1^{n-2}) \of S\) is in standard form.  Braiding will similarly commute with
deletion morphisms.
  \begin{center}
    \scalebox{0.80}{
   \begin{tikzpicture}[thick,node distance=0.5cm]
   \node [coordinate] (fstart) {};
   \node [coordinate] (ftop) [below of=fstart] {};
   \node (center) [below right of=ftop] {};
   \node [coordinate] (fout) [below right of=center] {};
   \node [bang] (fend) [below of=fout] {};
   \node [coordinate] (gtop) [above right of=center] {};
   \node [coordinate] (gstart) [above of=gtop] {};
   \node [coordinate] (gout) [below left of=center] {};
   \node [bang] (gend) [below of=gout] {};

   \draw [rounded corners=0.25cm] (fstart) -- (ftop) -- (center) --
   (fout) -- (fend) (gstart) -- (gtop) -- (gout) -- (gend);

   \node (eq1) [below right of=gtop, shift={(0.5,0)}] {\(=\)};
   \node [bang] (ftop1) [above right of=eq1, shift={(0.5,0)}] {};
   \node [coordinate] (fstart1) [above of=ftop1] {};
   \node (center1) [below right of=ftop1] {};
   \node [coordinate] (gtop1) [above right of=center1] {};
   \node [coordinate] (gstart1) [above of=gtop1] {};
   \node [coordinate] (gout1) [below left of=center1] {};
   \node [bang] (gend1) [below of=gout1] {};

   \draw [rounded corners=0.25cm] (fstart1) -- (ftop1)
   (gstart1) -- (gtop1) -- (gout1) -- (gend1);

   \node (eq2) [below right of=gtop1, shift={(0.35,0)}] {\(=\)};
   \node [bang] (big) [right of=eq2, shift={(0.2,-0.5)}] {};
   \node [bang] (bang) [right of=big] {};

   \draw (big) -- +(0,1) (bang) -- +(0,1);
   \end{tikzpicture}
    }
  \end{center}
\item \Define{\(\of B\)}\\
Composing with \(B\) on the top, braiding commutes with duplication, scaling and zero, so \(S \of (B
\oplus 1^{m-2})\) almost trivially comes into standard form.
\end{itemize}
Having exhausted the ways basic morphisms can be attached to a given morphism, this completes the
induction.
\end{proof}

An interesting exercise is to use these equations to derive an equation that expresses the braiding
in terms of other basic morphisms.  One example of such a relation appeared in the introduction,
Section~\ref{sigflow}.  Here is another:
\begin{center}
 \scalebox{0.75}{
   \begin{tikzpicture}[-, thick]
   \node (Lin) {};
   \node [delta] (Ldub) [below of=Lin] {};
   \node [coordinate] (ULang) [below of=Ldub, shift={(-0.5,0)}] {};
   \node [multiply] (Lmin) [below of=ULang, shift={(0,0.65)}] {\(\scriptstyle{-1}\)};
   \node [coordinate] (BLang) [below of=Lmin, shift={(0,0.35)}] {};
   \node [plus] (Lsum) [below of=BLang, shift={(0.5,0)}] {};
   \node [plus] (Usum) [below of=Ldub, shift={(0.9,0)}] {};
   \node [delta] (Bdub) [below of=Usum] {};
   \node [delta] (Rdub) [above of=Usum, shift={(0.9,0)}] {};
   \node (Rin) [above of=Rdub] {};
   \node [coordinate] (URang) [below of=Rdub, shift={(0.5,0)}] {};
   \node [multiply] (Rmin) [below of=URang, shift={(0,0.65)}] {\(\scriptstyle{-1}\)};
   \node [coordinate] (BRang) [below of=Rmin, shift={(0,0.35)}] {};
   \node [plus] (Rsum) [below of=BRang, shift={(-0.5,0)}] {};
   \node (Lout) [below of=Lsum] {};
   \node (Rout) [below of=Rsum] {};

   \draw (Lin) -- (Ldub) (Ldub.right out) -- (Usum.left in) (Usum) --
   (Bdub) (Bdub.left out) -- (Lsum.right in) (Lsum) -- (Lout)
   (Rin) -- (Rdub) (Rdub.left out) -- (Usum.right in) (Bdub.right out)
   -- (Rsum.left in) (Rsum) -- (Rout);
   \draw (Ldub.left out) .. controls +(240:0.5) and +(90:0.3) .. (Lmin.90)
   (Lmin.io) .. controls +(270:0.3) and +(120:0.5) .. (Lsum.left in)
   (Rdub.right out) .. controls +(300:0.5) and +(90:0.3) .. (Rmin.90)
   (Rmin.270) .. controls +(270:0.3) and +(60:0.5) .. (Rsum.right in);

   \node (eq) [left of=Lmin, shift={(-0.3,0)}] {\(=\)};

   \node (center) [left of=eq, shift={(-0.2,0)}] {};
   \node [coordinate] (ftop) [above left of=center, shift={(0.35,-0.35)}] {};
   \node (fstart) [above of=ftop, shift={(0,-0.5)}] {};
   \node [coordinate] (fout) [below right of=center, shift={(-0.35,0.35)}] {};
   \node (fend) [below of=fout, shift={(0,0.5)}] {};
   \node [coordinate] (gtop) [above right of=center, shift={(-0.35,-0.35)}] {};
   \node (gstart) [above of=gtop, shift={(0,-0.5)}] {};
   \node [coordinate] (gout) [below left of=center, shift={(0.35,0.35)}] {};
   \node (gend) [below of=gout, shift={(0,0.5)}] {};

   \draw [rounded corners=7pt] (fstart) -- (ftop) -- (center) --
   (fout) -- (fend) (gstart) -- (gtop) -- (gout) -- (gend);
   \end{tikzpicture}
}.
  \end{center}
With a few more equations, \(\Vectk\) can be presented as merely a monoidal category.  Lafont
\cite{Lafont} mentioned this fact, and gave a full proof in the special case where \(k\) is the
field with two elements.

\section{Presenting \Define{$\relk$}}
\label{presrksection}
\begin{lemma} \label{gensrk} 
For any field \(k\), the PROP \(\relk\) is generated by these morphisms:
\begin{itemize}
\item addition \(+ \maps k \oplus k \asrelto k\)
\item zero \(0 \maps \{0\} \asrelto k\)
\item duplication \(\Delta \maps k \asrelto k \oplus k\)
\item deletion \(! \maps k \asrelto \{0\}\)
\item scaling \(s_c \maps k \asrelto k\) for any \(c \in k\)
\item cup \(\cup \maps k \oplus k \asrelto \{0\} \)
\item cap \(\cap \maps \{0\} \asrelto k \oplus k \)
\end{itemize}
\end{lemma}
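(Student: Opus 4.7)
The plan is to bootstrap from Lemma~\ref{gensvk}, which already places every linear \emph{map} in the subPROP of \(\relk\) generated by addition, zero, duplication, deletion, and scaling.  It therefore suffices to exhibit each linear \emph{relation} as a composite of linear maps together with the cup and cap.

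First I would build the codeletion \(!^\dagger \maps \{0\} \asrelto k\), whose graph is all of \(\{0\} \oplus k\), as the composite \((1 \oplus !{}) \of \cap\); a direct computation of relational composition confirms this.  This yields a ``universal'' state morphism to \(k\) built from the generators, which tensors to give a morphism \((!^\dagger)^{\oplus r} \maps \{0\} \asrelto k^r\) whose graph is all of \(k^r\).

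Next, given an arbitrary linear relation \(L \maps k^m \asrelto k^n\), regarded as a subspace \(L \subseteq k^m \oplus k^n\), I would choose a linear map \(\phi \maps k^r \to k^m \oplus k^n\) whose image is \(L\); taking \(r = \dim L\) together with any basis of \(L\) suffices.  By Lemma~\ref{gensvk}, \(\phi\) is expressible from the generators.  Then the composite
\[ \tilde L = \phi \of (!^\dagger)^{\oplus r} \maps \{0\} \asrelto k^{m+n} \]
has graph \(\{(0, \phi(x)) : x \in k^r\}\), which under the identification \(\{0\} \oplus k^{m+n} \cong k^{m+n}\) is precisely \(L\) in ``state'' form.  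To recover \(L\) itself I would bend the first \(k^m\) leg down using a cup:
\[ L = (\cup_m \oplus 1^n) \of (1^m \oplus \tilde L), \]
where \(\cup_m \maps k^m \oplus k^m \asrelto \{0\}\) is the \(m\)-fold cup obtained by interleaving \(m\) copies of \(\cup\) with braidings.  A direct relational calculation verifies that the right-hand side has graph \(\{(u, b) : (u, b) \in L\}\), so equals \(L\); the degenerate cases \(m = 0\) or \(n = 0\) collapse the formula without incident.

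The main obstacle is purely bookkeeping: one must track summand orderings carefully when assembling \(\cup_m\) from individual cups and when pairing the legs of \(\tilde L\) against the input copy of \(k^m\).  Symmetry of the PROP handles all such rearrangements via braidings, so no extra structure is required.  Conceptually the argument is elementary linear algebra---every subspace is the image of a linear map---combined with the compact-closed flavour supplied by the cup and cap, which together suffice to convert linear maps into arbitrary linear relations.
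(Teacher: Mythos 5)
Your argument is correct, but it is the dual of the one the paper gives. The paper presents an arbitrary linear relation \(L \subseteq k^m \oplus k^n\) as a \emph{kernel}: it places caps on the \(n\) outputs so that all \(m+n\) wires feed into a single \(\vectk\) block \(T \maps k^{m+n} \to k^r\) (available by Lemma~\ref{gensvk}), and then composes with \(r\) copies of the cozero \(0^\dagger\) to impose the equations \(T(x)=0\) cutting out \(L\). You instead present \(L\) as an \emph{image}: you manufacture the ``totally unconstrained'' relation \(!^\dagger = (1 \oplus {!}) \of \cap\), tensor \(r\) copies of it to sweep out \(k^r\), push forward along a map \(\phi\) with image \(L\), and then use cups to fold the first \(m\) legs back into inputs; your relational computation of \((\cup_m \oplus 1^n)\of(1^m \oplus \tilde L)\) checks out. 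Both routes rest on exactly the same two ingredients --- Lemma~\ref{gensvk} for linear maps plus cup and cap --- so neither is more economical, but they are genuinely different decompositions: yours is the \(*\)-dual of the paper's, corresponding to the span versus cospan presentations of \(\relk\) discussed in Section~\ref{vectrelated}. The paper's kernel form has the practical advantage that it feeds directly into the ``prestandard form'' (a \(\vectk\) block followed by cozeros) used throughout the proof of Theorem~\ref{presrk}; your image form would instead lead to the dual standard form the paper alludes to when remarking that equation \textbf{(D9)} would replace \textbf{(D8)} under a different choice. As a purely existential generation statement, either one does the job.
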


\begin{proof}
A morphism of \(\Relk\), \(R\maps k^m \asrelto k^n\) is a subspace of \(k^m \oplus k^n \iso
k^{m+n}\).  In \(\relk\) this isomorphism is an equality.  This subspace can be expressed as a
system of \(k\)-linear equations in \(k^{m+n}\).  Theorem~\ref{presvk} tells us any number of
arbitrary \(k\)-linear combinations of the inputs may be generated.  Any \(k\)-linear equation of
those inputs can be formed by setting such a \(k\)-linear combination equal to zero.  In particular,
if caps are placed on each of the outputs to make them inputs and all the \(k\)-linear combinations
are set equal to zero, any \(k\)-linear system of equations of the inputs and outputs can be
formed.  Expressed in terms of string diagrams,
  \begin{center}
   \begin{tikzpicture}[-, thick, node distance=0.708cm]
   \node (mn) {\(k_{m+n}\)};
   \node (dots) [right of=mn] {\(\dots\)};
   \node (m1) [right of=dots] {\(k_{m+1}\)};
   \node [coordinate] (inbend1) [above of=m1] {};
   \node [coordinate] (inbend2) [above of=dots] {};
   \node [coordinate] (inbend3) [above of=mn] {};
   \node (blank) [left of=inbend3] {};
   \node [coordinate] (midbend3) [above of=blank] {};
   \node [coordinate] (midbend2) [above of=midbend3] {};
   \node [coordinate] (midbend1) [above of=midbend2] {};
   \node [coordinate] (outbend3) [left of=blank] {};
   \node [coordinate] (outbend2) [left of=outbend3] {};
   \node [coordinate] (outbend1) [left of=outbend2] {};

   \node (dota) [right of=mn, shift={(0.16,0)}] {};
   \node [coordinate] (inbenda2) [above of=dota] {};
   \node [coordinate] (midbenda2) [above of=midbend3, shift={(0,0.16)}] {};
   \node [coordinate] (outbenda2) [left of=outbend3, shift={(-0.16,0)}] {};
   \node (dotd) [right of=mn, shift={(-0.16,0)}] {};
   \node [coordinate] (inbendd2) [above of=dotd] {};
   \node [coordinate] (midbendd2) [above of=midbend3, shift={(0,-0.16)}] {};
   \node [coordinate] (outbendd2) [left of=outbend3, shift={(0.16,0)}] {};

   \draw[loosely dotted,out=-45,in=-135,relative]
   (dota) -- (inbenda2) to (midbenda2) to (outbenda2)
   (dotd) -- (inbendd2) to (midbendd2) to (outbendd2)
   (dots) -- (inbend2) to (midbend2) to (outbend2);

   \draw[out=-45,in=-135,relative]
   (m1) -- (inbend1) to (midbend1) to (outbend1)
   (mn) -- (inbend3) to (midbend3) to (outbend3);
   \end{tikzpicture}
\hspace{1cm}
   \begin{tikzpicture}[-, thick, node distance=1cm]
   \node (f_i) at (0,1.5) {\(f_i\)};
   \node [zero] (zero) at (0,0) {};
   \draw (f_i) -- (zero);
   \end{tikzpicture}.
  \end{center}
The left diagram turns the \(n\) outputs into inputs by placing caps on all of them.  The morphism
zero gives the \(k\)-linear combination zero, so an arbitrary \(k\)-linear combination in
\(k^{m+n}\) is set equal to zero (\(f_i=0\)) via the cozero morphism.  These elements can be
combined with Theorem~\ref{presvk} to express any system of \(k\)-linear equations in \(k^{m+n}\).
\end{proof}

Putting these elements together, taking the \(\vectk\) portion as a black box and drawing a single
string for zero or more copies of \(k\), the picture is fairly simple:
  \begin{center}
   \begin{tikzpicture}[thick]
   \node [blackbox] (blackbox) {};
   \node [zero] (zilch) at (0,-0.7) {};
   \node (outs) at (0.7,-1) {};
   \node [coordinate] (capR) at (0.7,0.5) {};
   \node [coordinate] (capL) at (0.15,0.5) {};
   \node (ins) at (-0.15,1) {};

   \path (capL) edge[bend left=90] (capR);
   \draw (ins) -- (-0.15,0) (blackbox) -- (zilch) (capL) -- (0.15,0)
   (capR) -- (outs);
   \end{tikzpicture}.
  \end{center}

To obtain a presentation of \(\relk\) as a PROP, we need to find enough
equations obeyed by the generating morphisms listed in Lemma~\ref{gensrk}.  Equations
{\hyperref[eqn123]{\textbf{(1)--(18)}}} from Theorem~\ref{presvk} still apply, but we need more.  

For convenience, in the list below we draw the adjoint of any generating morphism by rotating it by
\(180^\circ\).  It will follow from equations \textbf{(19)} and \textbf{(20)} that the cap is the adjoint of the cup,
so this convenient trick is consistent even in that case, where \emph{a priori} there might have
been an ambiguity.

\vskip 1em \noindent
\textbf{(19)--(20)} \(\cap\) and \(\cup\) obey the zigzag equations, and thus give a
\(\dagger\)-compact category:
\begin{invisiblelabel}
\label{eqn1920}
\end{invisiblelabel}

\begin{center}
   \begin{tikzpicture}[-, thick, node distance=1cm]
   \node (zigtop) {};
   \node [coordinate] (zigincup) [below of=zigtop] {};
   \node [coordinate] (zigcupcap) [right of=zigincup] {};
   \node [coordinate] (zigoutcap) [right of=zigcupcap] {};
   \node (zigbot) [below of=zigoutcap] {};
   \node (equal) [right of=zigoutcap] {\(=\)};
   \node (mid) [right of=equal] {};
   \node (vtop) [above of=mid] {};
   \node (vbot) [below of=mid] {};
   \node (equals) [right of=mid] {\(=\)};
   \node [coordinate] (zagoutcap) [right of=equals] {};
   \node (zagbot) [below of=zagoutcap] {};
   \node [coordinate] (zagcupcap) [right of=zagoutcap] {};
   \node [coordinate] (zagincup) [right of=zagcupcap] {};
   \node (zagtop) [above of=zagincup] {};
   \path
   (zigincup) edge (zigtop) edge [bend right=90] (zigcupcap)
   (zigoutcap) edge (zigbot) edge [bend right=90] (zigcupcap)
   (vtop) edge (vbot)
   (zagincup) edge (zagtop) edge [bend left=90] (zagcupcap)
   (zagoutcap) edge (zagbot) edge [bend left=90] (zagcupcap);
   \end{tikzpicture}
    \end{center}
   
\vskip 1em \noindent
\textbf{(21)--(22)} \( (k, +, 0, +^\dagger, 0^\dagger)\) is a Frobenius monoid:
\begin{invisiblelabel}
\label{eqn2122}
\end{invisiblelabel}

\begin{center}
 \scalebox{1}{
   \begin{tikzpicture}[thick]
   \node [plus] (sum1) at (0.5,-0.216) {};
   \node [coplus] (cosum1) at (1,0.216) {};
   \node [coordinate] (sum1corner) at (0,0.434) {};
   \node [coordinate] (cosum1corner) at (1.5,-0.434) {};
   \node [coordinate] (sum1out) at (0.5,-0.975) {};
   \node [coordinate] (cosum1in) at (1,0.975) {};
   \node [coordinate] (1cornerin) at (0,0.975) {};
   \node [coordinate] (1cornerout) at (1.5,-0.975) {};

   \draw[rounded corners] (1cornerin) -- (sum1corner) -- (sum1.left in)
   (1cornerout) -- (cosum1corner) -- (cosum1.right out);
   \draw (sum1.right in) -- (cosum1.left out)
   (sum1.io) -- (sum1out)
   (cosum1.io) -- (cosum1in);

   \node (eq1) at (2,0) {\(=\)};
   \node [plus] (sum2) at (3,0.325) {};
   \node [coplus] (cosum2) at (3,-0.325) {};
   \node [coordinate] (sum2inleft) at (2.5,0.975) {};
   \node [coordinate] (sum2inright) at (3.5,0.975) {};
   \node [coordinate] (cosum2outleft) at (2.5,-0.975) {};
   \node [coordinate] (cosum2outright) at (3.5,-0.975) {};

   \draw (sum2inleft) .. controls +(270:0.3) and +(120:0.15) .. (sum2.left in)
   (sum2inright) .. controls +(270:0.3) and +(60:0.15) .. (sum2.right in)
   (cosum2outleft) .. controls +(90:0.3) and +(240:0.15) .. (cosum2.left out)
   (cosum2outright) .. controls +(90:0.3) and +(300:0.15) .. (cosum2.right out)
   (sum2.io) -- (cosum2.io);

   \node (eq2) at (4,0) {\(=\)};
   \node [plus] (sum3) at (5.5,-0.216) {};
   \node [coplus] (cosum3) at (5,0.216) {};
   \node [coordinate] (sum3corner) at (6,0.434) {};
   \node [coordinate] (cosum3corner) at (4.5,-0.434) {};
   \node [coordinate] (sum3out) at (5.5,-0.975) {};
   \node [coordinate] (cosum3in) at (5,0.975) {};
   \node [coordinate] (3cornerin) at (6,0.975) {};
   \node [coordinate] (3cornerout) at (4.5,-0.975) {};

   \draw[rounded corners] (3cornerin) -- (sum3corner) -- (sum3.right in)
   (3cornerout) -- (cosum3corner) -- (cosum3.left out);
   \draw (sum3.left in) -- (cosum3.right out)
   (sum3.io) -- (sum3out)
   (cosum3.io) -- (cosum3in);
   \end{tikzpicture}
}
\end{center}

\vskip 1em \noindent
\textbf{(23)--(24)} \( (k, \Delta^\dagger, !^\dagger, \Delta, !) \) is a Frobenius monoid:
\begin{invisiblelabel}
\label{eqn2324}
\end{invisiblelabel}

\begin{center}
 \scalebox{1}{
\begin{tikzpicture}[thick]
   \node [codelta] (sum1) at (0.5,-0.216) {};
   \node [delta] (cosum1) at (1,0.216) {};
   \node [coordinate] (sum1corner) at (0,0.434) {};
   \node [coordinate] (cosum1corner) at (1.5,-0.434) {};
   \node [coordinate] (sum1out) at (0.5,-0.975) {};
   \node [coordinate] (cosum1in) at (1,0.975) {};
   \node [coordinate] (1cornerin) at (0,0.975) {};
   \node [coordinate] (1cornerout) at (1.5,-0.975) {};

   \draw[rounded corners] (1cornerin) -- (sum1corner) -- (sum1.left in)
   (1cornerout) -- (cosum1corner) -- (cosum1.right out);
   \draw (sum1.right in) -- (cosum1.left out)
   (sum1.io) -- (sum1out)
   (cosum1.io) -- (cosum1in);

   \node (eq1) at (2,0) {\(=\)};
   \node [codelta] (sum2) at (3,0.325) {};
   \node [delta] (cosum2) at (3,-0.325) {};
   \node [coordinate] (sum2inleft) at (2.5,0.975) {};
   \node [coordinate] (sum2inright) at (3.5,0.975) {};
   \node [coordinate] (cosum2outleft) at (2.5,-0.975) {};
   \node [coordinate] (cosum2outright) at (3.5,-0.975) {};

   \draw (sum2inleft) .. controls +(270:0.3) and +(120:0.15) .. (sum2.left in)
   (sum2inright) .. controls +(270:0.3) and +(60:0.15) .. (sum2.right in)
   (cosum2outleft) .. controls +(90:0.3) and +(240:0.15) .. (cosum2.left out)
   (cosum2outright) .. controls +(90:0.3) and +(300:0.15) .. (cosum2.right out)
   (sum2.io) -- (cosum2.io);

   \node (eq2) at (4,0) {\(=\)};
   \node [codelta] (sum3) at (5.5,-0.216) {};
   \node [delta] (cosum3) at (5,0.216) {};
   \node [coordinate] (sum3corner) at (6,0.434) {};
   \node [coordinate] (cosum3corner) at (4.5,-0.434) {};
   \node [coordinate] (sum3out) at (5.5,-0.975) {};
   \node [coordinate] (cosum3in) at (5,0.975) {};
   \node [coordinate] (3cornerin) at (6,0.975) {};
   \node [coordinate] (3cornerout) at (4.5,-0.975) {};

   \draw[rounded corners] (3cornerin) -- (sum3corner) -- (sum3.right in)
   (3cornerout) -- (cosum3corner) -- (cosum3.left out);
   \draw (sum3.left in) -- (cosum3.right out)
   (sum3.io) -- (sum3out)
   (cosum3.io) -- (cosum3in);
\end{tikzpicture}
}
\end{center}

\vskip 1em \noindent
\textbf{(25)--(26)} The Frobenius monoid \( (k, +, 0, +^\dagger, 0^\dagger)\) is extra-special:
\begin{invisiblelabel}
\label{eqn2526}
\end{invisiblelabel}

\begin{center}
\begin{tikzpicture}[thick]
   \node [plus] (sum) at (0.4,-0.5) {};
   \node [coplus] (cosum) at (0.4,0.5) {};
   \node [coordinate] (in) at (0.4,1) {};
   \node [coordinate] (out) at (0.4,-1) {};
   \node (eq) at (1.3,0) {\(=\)};
   \node [coordinate] (top) at (2,1) {};
   \node [coordinate] (bottom) at (2,-1) {};

   \path (sum.left in) edge[bend left=30] (cosum.left out)
   (sum.right in) edge[bend right=30] (cosum.right out);
   \draw (top) -- (bottom)
   (sum.io) -- (out)
   (cosum.io) -- (in);
\end{tikzpicture}
\qquad 
\qquad
\qquad
\begin{tikzpicture}[thick]
   \node [zero] (Bins) at (0,-0.35) {};
   \node [zero] (Tins) at (0,0.35) {};
   \node (eq) at (0.7,0) {\(=\)};
   \node [hole] at (0,-0.865) {};
   \draw (Tins) -- (Bins);
   \end{tikzpicture}
\end{center}

\vskip 1em \noindent
\textbf{(27)--(28)} The Frobenius monoid \( (k, \Delta^\dagger, !^\dagger, \Delta, !)\) is
extra-special:
\begin{invisiblelabel}
\label{eqn2728}
\end{invisiblelabel}

\begin{center}
\begin{tikzpicture}[thick]
   \node [codelta] (sum) at (0.4,-0.5) {};
   \node [delta] (cosum) at (0.4,0.5) {};
   \node [coordinate] (in) at (0.4,1) {};
   \node [coordinate] (out) at (0.4,-1) {};
   \node (eq) at (1.3,0) {\(=\)};
   \node [coordinate] (top) at (2,1) {};
   \node [coordinate] (bottom) at (2,-1) {};

   \path (sum.left in) edge[bend left=30] (cosum.left out)
   (sum.right in) edge[bend right=30] (cosum.right out);
   \draw (top) -- (bottom)
   (sum.io) -- (out)
   (cosum.io) -- (in);
\end{tikzpicture}
\qquad 
\qquad
\qquad
\begin{tikzpicture}[thick]
   \node [bang] (Bins) at (0,-0.35) {};
   \node [bang] (Tins) at (0,0.35) {};
   \node (eq) at (0.7,0) {\(=\)};
   \node [hole] at (0,-0.865) {};
   \draw (Tins) -- (Bins);
   \end{tikzpicture}
\end{center}

\vskip 1em \noindent
\textbf{(29)} \(\cup\) with a scaling of \(-1\) inserted can be expressed in terms of \(+\) and
\(0\):
\begin{invisiblelabel}
\label{eqn29}
\end{invisiblelabel}

\begin{center}
\scalebox{1}{
   \begin{tikzpicture}[thick]
   \node [multiply] (neg) at (0,0.1) {\(\scriptstyle{-1}\)};
   \node [coordinate] (cupInLeft) at (0,1) {};
   \node [coordinate] (Lcup) at (0,-0.5) {};
   \node [coordinate] (Rcup) at (1,-0.5) {};
   \node [coordinate] (cupInRight) at (1,1) {};
   \node (eq) at (1.7,0.1) {\(=\)};
   \node [coordinate] (SumLeftIn) at (2.25,1) {};
   \node [coordinate] (SumRightIn) at (3.25,1) {};
   \node [plus] (Sum) at (2.75,0) {};
   \node [zero] (coZero) at (2.75,-0.65) {};

   \draw (SumRightIn) .. controls +(270:0.5) and +(60:0.5) .. (Sum.right in)
      (SumLeftIn) .. controls +(270:0.5) and +(120:0.5) .. (Sum.left in);
   \draw (cupInLeft) -- (neg) -- (Lcup)
      (Rcup) -- (cupInRight)
      (Sum) -- (coZero);
   \path (Lcup) edge[bend right=90] (Rcup);
   \end{tikzpicture}
}
\end{center}

\vskip 1em \noindent
\textbf{(30)} \(\cap\) can be expressed in terms of \(\Delta\) and \(!\):
\begin{invisiblelabel}
\label{eqn30}
\end{invisiblelabel}

\begin{center}
\scalebox{1}{
   \begin{tikzpicture}[thick]
   \node (eq) at (0.2,-0.1) {\(=\)};
   \node [coordinate] (lcap) at (-1.5,0.5) {};
   \node [coordinate] (rcap) at (-0.5,0.5) {};
   \node [coordinate] (lcapbot) at (-1.5,-1) {};
   \node [coordinate] (rcapbot) at (-0.5,-1) {};
   \node [delta] (dub) at (1.25,0) {};
   \node [bang] (boom) at (1.25,0.65) {};
   \node [coordinate] (Leftout) at (0.75,-1) {};
   \node [coordinate] (Rightout) at (1.75,-1) {};

   \draw (dub.left out) .. controls +(240:0.5) and +(90:0.5) .. (Leftout)
      (dub.right out) .. controls +(300:0.5) and +(90:0.5) .. (Rightout);
   \draw (boom) -- (dub) (lcapbot) -- (lcap) (rcap) -- (rcapbot);
   \path (lcap) edge[bend left=90] (rcap);
   \end{tikzpicture}
}
\end{center}

\vskip 1em \noindent
\textbf{(31)} For any \(c \in k\) with \(c \ne 0\), scaling by \(c^{-1}\) is the adjoint of scaling
by \(c\):
\begin{invisiblelabel}
\label{eqn31}
\end{invisiblelabel}

\begin{center}
   \begin{tikzpicture}[thick]
   \node[upmultiply] (c) {\(c\)};
   \node[coordinate] (in1) [above of=c] {};
   \node[coordinate] (out1) [below of=c] {};

   \draw (in1) -- (c) -- (out1);

   \node (eq) [right of=c] {\(=\)};
   \node[multiply] (mult) [right of=eq, shift={(0.5,0)}] {\(c^{-1}\!\!\)};
   \node[coordinate] (in) [above of=mult] {};
   \node[coordinate] (out) [below of=mult] {};

   \draw (in) -- (mult) -- (out);
   \end{tikzpicture}.
  \end{center}

\vskip 1em
Some curious identities can be derived from equations \textbf{(1)--(31)}, beyond those already
arising from \textbf{(1)--(18)}.  For example:

\vskip 1em \noindent
\textbf{(D1)--(D2)} Deletion and zero can be expressed in terms of other generating morphisms:
\begin{invisiblelabel}
\label{eqnD1D2}
\end{invisiblelabel}
  \begin{center}
   \begin{tikzpicture}[thick]
   \node [bang] (bang) at (0.2,0) {};
   \node (eq1) at (1,1.15) {\(=\)};
   \node (rel1) at (1,0.8) {(27)};
   \node [bang] (buck) at (2,0) {};
   \node [codelta] (codub) at (2,0.65) {};
   \node [delta] (dub) at (2,1.65) {};
   \node (eq2) at (3,1.15) {\(=\)};
   \node (rel2) at (3,0.8) {(30)\({}^\dagger\)};
   \node [delta] (dupe) at (4,1) {};

   \path
   (codub.left in) edge[bend left=30] (dub.left out)
   (codub.right in) edge[bend right=30] (dub.right out);
   \draw
   (bang) -- +(up:2.3) (dub.io) -- (2,2.3) (dupe.io) -- (4,2.3)
   (codub.io) -- (buck)
   (dupe.left out) .. controls +(240:1.2) and +(300:1.2) .. (dupe.right out);
   \end{tikzpicture}
\hskip 2em
   \begin{tikzpicture}[thick]
   \node [zero] (Zero1) at (0,1) {};
   \node (eq1) at (0.8,-0.15) {\(=\)};
   \node (rel1) at (0.8,-0.5) {(28)};
   \node [bang] (cobang) at (1.6,1) {};
   \node [bang] (bang) at (1.6,0.2) {};
   \node [zero] (Zero2) at (1.6,-0.5) {};
   \node (eq2) at (2.4,-0.15) {\(=\)};
   \node (rel2) at (2.4,-0.5) {(14)};
   \node [multiply] (times) at (3.4,-0.15) {\(0\)};
   \node [bang] (cobuck) at (3.4,1) {};
   \node (eq3) at (4.4,-0.15) {\(=\)};
   \node (rel3) at (4.4,-0.5) {(D1)\({}^\dagger\)};
   \node [multiply] (oh) at (5.4,-0.5) {\(0\)};
   \node [codelta] (cod) at (5.4,0.2) {};

   \draw
   (Zero1) -- +(down:2.3) (cobang) -- (bang) (Zero2) -- +(down:0.8)
   (cobuck) -- (times) -- (3.4,-1.3) (cod.io) -- (oh) -- (5.4,-1.3)
   (cod.left in) .. controls +(120:1.2) and +(60:1.2) .. (cod.right in);
   \end{tikzpicture}.
  \end{center}
This does not diminish the role of deletion and zero.  Indeed, regarding these generating morphisms
as superfluous buries some of the structure of \(\relk\).

\vskip 1em \noindent
\textbf{(D3)} Addition can be expressed in terms of coaddition and scaling by \(-1\), and the cup:
\begin{invisiblelabel}
\label{eqnD3}
\end{invisiblelabel}
  \begin{center}
    \scalebox{1}{
   \begin{tikzpicture}[thick]
   \node [multiply] (neg1) at (0,0) {\(\scriptstyle{-1}\)};
   \node [coplus] (cosum1) at (1,-0.21) {};

   \draw
   (neg1) -- (0,1) (cosum1.io) -- (1,1)
   (cosum1.right out) .. controls +(300:0.5) and +(90:0.5) .. +(0.3,-1.3)
   (neg1.io) .. controls +(270:1) and +(240:0.5) .. (cosum1.left out);
   \node (eq2) at (2,-0.216) {\(=\)};
   \node (rel2) at (2,-0.566) {(29)};
   \node [plus] (sum3) at (3,-0.432) {};
   \node [zero] (coz3) at (3,-1) {};
   \node [coplus] (cosum3) at (3.5,0) {};

   \draw
   (sum3.left in) .. controls +(120:0.5) and +(270:0.5) .. +(-0.3,1.3)
   (sum3) -- (coz3) (sum3.right in) -- (cosum3.left out)
   (cosum3.right out) .. controls +(300:0.5) and +(90:0.5) .. +(0.3,-1.3)
   (cosum3.io) -- +(0,0.76);
   \node (eq3) at (4.5,-0.216) {\(=\)};
   \node (rel3) at (4.5,-0.566) {(21)};
   \node [plus] (sum4) at (5.5,0.159) {};
   \node [coplus] (cosum4) at (5.5,-0.591) {};
   \node [zero] (coz4) at (5.1,-1.35) {};

   \draw
   (sum4.io) -- (cosum4.io)
   (cosum4.left out) .. controls +(240:0.2) and +(90:0.25) .. (coz4)
   (sum4.left in) .. controls +(120:0.5) and +(270:0.5) .. +(-0.3,1)
   (sum4.right in) .. controls +(60:0.5) and +(270:0.5) .. +(0.3,1)
   (cosum4.right out) .. controls +(300:0.5) and +(90:0.5) .. +(0.3,-1);
   \node (eq4) at (6.5,-0.216) {\(=\)};
   \node (rel4) at (6.5,-0.566) {(1)\({}^\dagger\)};
   \node [plus] (sum5) at (7.5,-0.4) {};

   \draw
   (sum5.io) -- +(0,-0.5)
   (sum5.left in) .. controls +(120:0.5) and +(270:0.5) .. +(-0.3,1)
   (sum5.right in) .. controls +(60:0.5) and +(270:0.5) .. +(0.3,1);
   \end{tikzpicture}
    }.
  \end{center}

\vskip 1em \noindent
\textbf{(D4)} Duplication can be expressed in terms of coduplication and the cap:
\begin{invisiblelabel}
\label{eqnD4}
\end{invisiblelabel}
  \begin{center}
    \scalebox{1}{
   \begin{tikzpicture}[thick]
   \node [coordinate] (neg1) at (0,0) {};
   \node [codelta] (codub) at (1,0) {};

   \draw
   (neg1) -- (0,-0.91) (codub.io) -- (1,-0.91)
   (codub.right in) .. controls +(60:0.5) and +(270:0.5) .. +(0.3,1)
   (neg1) .. controls +(90:1) and +(120:0.5) .. (codub.left in);

   \node (eq) at (2,0) {\(=\)};
   \node [delta] (dub) at (3,0.2) {};

   \draw
   (dub.io) -- +(0,0.69)
   (dub.left out) .. controls +(240:0.5) and +(90:0.5) .. +(-0.3,-1)
   (dub.right out) .. controls +(300:0.5) and +(90:0.5) .. +(0.3,-1);
   \end{tikzpicture}
    },
  \end{center}
where the proof is similar to that of \textbf{(D3)}.

\vskip 1em
\vbox{
\noindent
\textbf{(D5)--(D7)} We can reformulate the bimonoid equations (7)--(9) using daggers:
\begin{invisiblelabel}
\label{eqnD5D6D7}
\end{invisiblelabel}
  \begin{center}
    \scalebox{1}{
   \begin{tikzpicture}[thick, node distance=0.7cm]
   \node (in1z) {};
   \node (in2z) [right of=in1z, shift={(0.2,0)}] {};
   \node (in3z) [right of=in2z, shift={(0.45,0)}] {};
   \node [codelta] (nabzip) [below right of=in2z, shift={(0.1,-0.3)}] {};
   \node [plus] (add) [below left of=nabzip, shift={(0.05,-0.3)}] {};
   \node (outz) [below of=add] {};
   \node (equal) [below right of=nabzip, shift={(0.2,0)}] {\(=\)};
   \node [plus] (addl) [right of=equal, shift={(0.2,0)}] {};
   \node (cross) [above right of=addl, shift={(-0.1,0)}] {};
   \node [delta] (delta) [above left of=cross, shift={(0.1,0)}] {};
   \node (in1u) [above of=delta] {};
   \node [plus] (addr) [below right of=cross, shift={(-0.1,0)}] {};
   \node [codelta] (nablunzip) [below left of=addr, shift={(0.1,-0.3)}] {};
   \node (outu) [below of=nablunzip] {};
   \node (in2u) [right of=in1u, shift={(0.4,0)}] {};
   \node (in3u) [right of=in2u, shift={(0.1,0)}] {};

   \draw (in1z) -- (add.left in) (add) -- (outz) (in2z) --
   (nabzip.left in) (in3z) -- (nabzip.right in) (nabzip.io) -- (add.right in);
   \path
   (delta.left out) edge [bend right=30] (addl.left in);
   \draw (in1u) -- (delta) (delta.right out) -- (cross) -- (addr.left in);
   \draw (in2u) -- (addl.right in) (in3u) -- (addr.right in);
   \draw (addl.io) -- (nablunzip.left in) (addr.io) -- (nablunzip.right in)
   (nablunzip) -- (outu);
   \end{tikzpicture}
}
\end{center}
\begin{center}
\scalebox{1}{
   \begin{tikzpicture}[thick]
   \node [zero] (coz1) at (0,0) {};
   \node [delta] (dub) at (0.4,0.75) {};

   \draw
   (coz1) .. controls +(90:0.25) and +(240:0.2) .. (dub.left out)
   (dub.right out) .. controls +(300:0.5) and +(90:0.5) .. +(0.3,-1)
   (dub.io) -- +(0,0.5);

   \node (eq) at (1.4,0.552) {\(=\)};
   \node [zero] (coz2) at (2,0.867) {};
   \node [zero] (zero) at (2,0.237) {};

   \draw (coz2) -- +(0,0.6) (zero) -- +(0,-0.6);
   \end{tikzpicture}
\hskip 3em
   \begin{tikzpicture}[thick]
   \node [bang] (cobang1) at (0,0) {};
   \node [plus] (add) at (0.4,-0.75) {};

   \draw
   (cobang1) .. controls +(270:0.25) and +(120:0.2) .. (add.left in)
   (add.right in) .. controls +(60:0.5) and +(270:0.5) .. +(0.3,1)
   (add.io) -- +(0,-0.5);

   \node (eq) at (1.4,-0.552) {\(=\)};
   \node [bang] (cobang2) at (2,-0.867) {};
   \node [bang] (bang) at (2,-0.237) {};

   \draw (bang) -- +(0,0.6) (cobang2) -- +(0,-0.6);
   \end{tikzpicture}
    }.
  \end{center}}

\vskip 1em \noindent
\textbf{(D8)--(D9)} When \(c \ne 1\), we have:
\begin{invisiblelabel}
\label{eqnD8D9}
\end{invisiblelabel}
  \begin{center}
   \begin{tikzpicture}[thick]
   \node [coplus] (cosum) at (0,1.8) {};
   \node [multiply] (times) at (0.38,0.95) {\(c\)};
   \node [plus] (sum) at (0,0) {};

   \draw
   (cosum.io) -- +(0,0.3) (sum.io) -- +(0,-0.3)
   (cosum.left out) .. controls +(240:0.5) and +(120:0.5) .. (sum.left in)
   (cosum.right out) .. controls +(300:0.15) and +(90:0.15) .. (times.90)
   (times.io) .. controls +(270:0.15) and +(60:0.15) .. (sum.right in);

   \node (eq) at (1.2,0.9) {\(=\)};
   \node [bang] (bang) at (1.8,1.3) {};
   \node [bang] (cobang) at (1.8,0.5) {};

   \draw (bang) -- +(0,1.02) (cobang) -- +(0,-1.02);
   \end{tikzpicture}
\hskip 3em
   \begin{tikzpicture}[thick]
   \node [delta] (dub) at (0,1.8) {};
   \node [multiply] (times) at (0.38,0.95) {\(c\)};
   \node [codelta] (codub) at (0,0) {};

   \draw
   (dub.io) -- +(0,0.3) (sum.io) -- +(0,-0.3)
   (dub.left out) .. controls +(240:0.5) and +(120:0.5) .. (codub.left in)
   (dub.right out) .. controls +(300:0.15) and +(90:0.15) .. (times.90)
   (times.io) .. controls +(270:0.15) and +(60:0.15) .. (codub.right in);

   \node (eq) at (1.2,0.9) {\(=\)};
   \node [zero] (coz) at (1.8,1.3) {};
   \node [zero] (zero) at (1.8,0.5) {};

   \draw (coz) -- +(0,1.02) (zero) -- +(0,-1.02);
   \end{tikzpicture}.
  \end{center}
\noindent
Derived equations \textbf{(D5)--(D8)} are used below, and their proofs can be found in
Appendix~\ref{derivedeqns}.  While derived equation \textbf{(D9)} is not used below, it is dual to
equation \textbf{(D8)}.  With a different standard form on \(\relk\), equation \textbf{(D9)} would
be used in the proof of Theorem~\ref{presrk} below instead of equation \textbf{(D8)}.

Next we show that equations \textbf{(1)--(31)} are enough to give a presentation of \(\relk\) as a
PROP.  In terms of \smts{}, this means \(\relk\) is the coequalizer of
\(\F E_{\relk} \rightrightarrows \F\Sigma_{\relk}\), where the generating morphisms are the
\(\F\)-images of the signature \(\Sigma_{\relk}\), and the \(\Sigma\)-terms of \(E_{\relk}\) are
these 31 equations.  As before, we demonstrate the presentation by giving a standard form that any
\(\relk\) morphism can be written in and use induction to show that an arbitrary diagram can be
rewritten in its standard form using the given equations.

\begin{theorem} \label{presrk} The PROP \(\relk\) is presented by the morphisms given in
Lemma~\ref{gensrk}, and equations {\bf (1)--(31)} as listed above.
\end{theorem}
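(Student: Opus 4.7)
My plan is to apply Proposition~\ref{normform}, following the pattern of the proof of Theorem~\ref{presvk}. First I would verify that equations (1)--(31) hold among the linear relations assigned to the generators in Lemma~\ref{gensrk}; this is a routine case-by-case check. The substantive work is to define a standard form for morphisms in $\relk$ and to show that any $\Sigma_{\relk}$-term can be rewritten into the standard form of its image under $\F\Sigma_{\relk}\to\relk$ using only the equations (1)--(31).

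For the standard form, I would use the fact that any morphism $R\colon m\asrelto n$ is a subspace $L\subseteq k^{m+n}$, hence the kernel of some linear map $T\colon k^{m+n}\to k^r$. The standard form of $R$ is the three-layer diagram obtained by (i) placing $n$ caps at the top so that the $n$ output wires are bent upward and behave as extra ``inputs'' entering from above, (ii) feeding the resulting $m+n$ wires into the $\vectk$-standard form of $T$ from Theorem~\ref{presvk}, and (iii) terminating the $r$ outputs of this vect layer with $0^\dagger$ (cozeros). The zigzag equations (19)--(20) then confirm that this diagram evaluates to $L$ in $\relk$.

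The bulk of the proof is an induction on the number of basic morphisms (generators and braidings) in a $\Sigma_{\relk}$-term, showing that composing or tensoring a standard form with one more basic morphism produces something that can be rewritten to standard form using (1)--(31). The cases not involving caps or cups reduce essentially to the corresponding cases of Theorem~\ref{presvk} applied inside the vect layer, since (1)--(18) manipulate only that layer. The genuinely new cases---composing or tensoring with a cap or cup, and composing with $s_{c^{-1}}$ or $s_0$---are handled by first using the Frobenius and zigzag machinery (19)--(24) to slide the new cap or cup into a convenient position adjacent to the existing cap layer or cozero layer, then using (29), (30) and the derived equations (D5)--(D7) to rewrite it in terms of bimonoid generators, and finally appealing to the already-handled $\vectk$ cases to reabsorb those generators back into the vect layer. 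The extra-special equations (25)--(28) are needed to collapse redundant closed loops and duplicated units/counits that such manipulations produce. Equation (31) is used to express $s_{c^{-1}}$ as the adjoint of $s_c$, which can then be bent into the standard form via a cap and cup.

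The main obstacle I expect is the bookkeeping in the cap and cup cases: after absorbing a new cap or cup, the vect layer corresponds to a new linear map $T'$ of different arity than $T$, and one must verify that the $\vectk$-standard form of $T'$ is reachable from the augmented vect layer using only (1)--(18), together with the manipulations that first arrange the new cap or cup appropriately. Once these inductive cases are established, Lemma~\ref{gensrk} shows that the evaluation functor $\F\Sigma_{\relk}\to\relk$ is surjective on morphisms, so Proposition~\ref{normform} yields the isomorphism $\relk\cong\P(\Sigma_{\relk},E_{\relk})$.
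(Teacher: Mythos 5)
Your overall strategy is the paper's own: verify the equations, exhibit a cap--vect-block--cozero form, and induct on the number of basic morphisms, handling the new cap/cup/adjoint cases by sliding them into the $\vectk$ block via the zigzag, Frobenius, and derived equations. The inductive cases you sketch line up with the paper's case analysis (including the ``trickle-down'' arguments for $\Delta^\dagger$ and $+^\dagger$ and the use of \textbf{(31)} to turn $s_c$ below the diagram into $s_{c^{-1}}$ above the block).

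There is, however, a genuine gap in your choice of standard form. The diagram you describe --- $n$ caps, the $\vectk$-standard form of \emph{some} linear map $T\colon k^{m+n}\to k^r$ with $L=\ker T$, and $r$ cozeros --- is what the paper calls the \emph{prestandard} form, and it is not unique: the same linear relation $L$ is the kernel of many different maps $T$ (indeed of maps with different $r$), and these give genuinely different terms in $\F\Sigma_{\relk}$. Proposition~\ref{normform} requires a \emph{particular} chosen representative $\tilde p$ for each morphism $p$ and the verification that \emph{every} term maps to \emph{that} representative under $\pi$; showing only that every term reaches \emph{a} prestandard form does not show that two terms denoting the same relation become equal in the quotient. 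You must (i) fix a canonical $T$ --- the paper takes the matrix of $T$ in row-reduced echelon form --- and (ii) prove that any prestandard form can be carried to the canonical one using equations \textbf{(1)--(31)}. The paper does (ii) by showing each elementary row operation, composed with the cozeros at the bottom of the block, is realized diagrammatically: adding $c$ times one row to another uses \textbf{(D10)}, \textbf{(11)}, \textbf{(D6)}, and \textbf{(16)}; swapping rows is naturality of the braiding; scaling a row by $c\neq 0$ uses \textbf{(31)}$^\dagger$ and \textbf{(16)}$^\dagger$. Without this second stage your argument establishes at most that the evaluation functor is full on a quotient that might still be strictly larger than $\relk$.
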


\begin{proof}
We prove this theorem by using the equations \textbf{(1)--(31)} to put any string diagram built from
the generating morphisms and braiding into a standard form, so that any two string diagrams
corresponding to the same morphism in \(\relk\) have the same standard form.

As before, we induct on the number of \Define{basic morphisms} involved in a string diagram, where
the basic morphisms are the generating morphisms together with the braiding.  If we let \(R \maps
k^m \asrelto k^n\) be a morphism in \(\relk\), we can build a string diagram \(S\) for \(R\) as in
Lemma~\ref{gensrk}.  Each output of \(S\) is capped, and, together with the inputs of \(S\), form
inputs for a \(\vectk\) block, \(T\).  For some \(r \leq m+n\), there are \(r\) outputs of
\(T\)--linear combinations of the \(m+n\) inputs--each set equal to zero via \((0^\dagger)^r\).
When \(T\) is in standard form for \(\vectk\), we say \(S\) is in \Define{prestandard form}, and can
be depicted as follows:
  \begin{center}
    \scalebox{1}{
   \begin{tikzpicture}[thick]
   \node [blackbox] (blackbox) {};
   \node [zero] (zilch) at (0,-0.7) {};
   \node (outs) at (0.7,-1) {};
   \node [coordinate] (capR) at (0.7,0.5) {};
   \node [coordinate] (capL) at (0.15,0.5) {};
   \node (ins) at (-0.15,1) {};

   \path (capL) edge[bend left=90] (capR);
   \draw (ins) -- (-0.15,0) (blackbox) -- (zilch) (capL) -- (0.15,0)
   (capR) -- (outs);
   \end{tikzpicture}
    }.
  \end{center}
While the linear subspace of \(k^{m+n}\) defined by \(R\) is determined by a system of \(r\) linear
equations, the converse is not true, meaning there may be multiple prestandard string diagrams for a
single morphism \(R\).  The second stage of this proof collapses all the prestandard forms into a
standard form using some basic linear algebra.  The standard form will correspond to when the matrix
representation of \(T\) is written in row-reduced echelon form.  For this stage it will suffice to
show all the elementary row operations correspond to equations that hold between diagrams.  By
Theorem \ref{presvk}, an arbitrary \(\vectk\) block can be rewritten in its standard form, so the
\(\vectk\) blocks here need not be demonstrated in their standard form.
\\
There are eight base cases of a string diagram with one basic morphism to consider, one case per
basic morphism.  In each of these basic cases, the block of the diagram equivalent to a morphism in
\(\vectk\) is denoted by a dashed rectangle.  We first consider \(\cup\).

\vskip 1em \noindent
\textbf{(D10)}
  \begin{center}
    \scalebox{1}{
   \begin{tikzpicture}[thick]
   \node (Lin0) at (0,1) {};
   \node [coordinate] (Lcup) at (0,0) {};
   \node [coordinate] (Rcup) at (1,0) {};
   \node (Rin0) at (1,1) {};
   \node (eq1) at (1.75,0.25) {\(=\)};
   \node (rel1) at (1.75,-0.15) {(13)};
   \node (rel2) at (1.75,-0.65) {(11)};
   \node (Lin1) at (2.75,1.7) {};
   \node [multiply] (neg1a) at (2.75,0.9) {\(\scriptstyle{-1}\)};
   \node [multiply] (neg1b) at (2.75,-0.1) {\(\scriptstyle{-1}\)};
   \node (Rin1) at (3.75,1.7) {};
   \node (eq2) at (4.25,0.25) {\(=\)};
   \node (rel3) at (4.25,-0.15) {(29)};
   \node (Lin4) at (5.35,1.7) {};
   \node [multiply] (neg) at (5.35,0.7) {\(\scriptstyle{-1}\)};
   \node (Rin4) at (6.35,1.7) {};
   \node [coordinate] (Rcor4) at (6.35,0.15) {};
   \node [plus] (S4) at (5.85,-0.5) {};
   \node [zero] (coZ4) at (5.85,-1.15) {};

   \draw[rounded corners]
   (Rin4) -- (Rcor4) -- (S4.right in);
   \draw (Lin0) -- (Lcup) (Rcup) -- (Rin0)
   (Lin1) -- (neg1a) -- (neg1b)
   (neg1b.io) .. controls +(270:0.5) and +(270:0.5) .. +(1,0) -- (Rin1)
   (Lin4) -- (neg) (neg.io) -- (S4.left in) (S4) -- (coZ4);
   \draw [color=red, dashed, thin] (4.75,1.2) rectangle (6.75,-0.9);
   \path (Lcup) edge[bend right=90] (Rcup);
   \end{tikzpicture}
    }
  \end{center}
Capping each of the inputs turns this into the standard form of \(\cap\).  Aside from deletion, the
remaining generating morphisms can be formed by introducing a zigzag at each output and rewriting
the resulting cups as above.  The standard forms for \(0\) and \(!\) have simpler expressions.
  \begin{center}
    \scalebox{0.80}{
   \begin{tikzpicture}[thick]
   \node (Lout0) at (-2.75,-0.5) {};
   \node [coordinate] (Lcap0) at (-2.75,0.5) {};
   \node [coordinate] (Rcap0) at (-1.75,0.5) {};
   \node (Rout0) at (-1.75,-0.5) {};
   \node (eq) at (-1,0.2) {\(=\)};
   \node [coordinate] (Lin) at (0,1.4) {};
   \node [multiply] (neg) at (0,0.7) {\(\scriptstyle{-1}\)};
   \node [coordinate] (Rin) at (1,1.4) {};
   \node [coordinate] (Rcor) at (1,0.15) {};
   \node [plus] (S) at (0.5,-0.5) {};
   \node [zero] (coZ) at (0.5,-1.15) {};
   \node [coordinate] (Rcap) at (2.5,1.4) {};
   \node [coordinate] (Lcap) at (1.5,1.4) {};
   \node (Rout) at (2.5,-1.3) {};
   \node (Lout) at (1.5,-1.3) {};

   \draw (Lout0) -- (Lcap0) (Rcap0) -- (Rout0) (Lin) -- (neg) (S) -- (coZ)
   (neg.io) -- (S.left in) (Rcap) -- (Rout) (Lcap) -- (Lout);
   \path (Lcap0) edge[bend left=90] (Rcap0) (Lin) edge[bend left=90]
   (Rcap) (Rin) edge[bend left=90] (Lcap);
   \draw[rounded corners] (Rin) -- (Rcor) -- (S.right in);
   \draw[color=red, dashed, thin] (-0.5,1.2) rectangle (1.25,-0.9);
   \end{tikzpicture}
       \hspace{0.69cm}
   \begin{tikzpicture}[thick]
   \node (in) at (-1.8,1.7) {};
   \node [multiply] (times) at (-1.8,0.2) {\(\scriptstyle{c}\)};
   \node (out) at (-1.8,-1.3) {};
   \node (eq) at (-1,0.2) {\(=\)};
   \node [coordinate] (Lin) at (0,1.7) {};
   \node [multiply] (neg) at (0,0.7) {\(\scriptstyle{-c}\)};
   \node [coordinate] (Rin) at (1,1.4) {};
   \node [coordinate] (Rcor) at (1,0.15) {};
   \node [plus] (S) at (0.5,-0.5) {};
   \node [zero] (coZ) at (0.5,-1.15) {};
   \node [coordinate] (Lcap) at (1.5,1.4) {};
   \node (Lout) at (1.5,-1.3) {};

   \draw (in) -- (times) -- (out) (Lin) -- (neg) (S) -- (coZ)
   (neg.io) -- (S.left in) (Lcap) -- (Lout);
   \path (Rin) edge[bend left=90] (Lcap);
   \draw[rounded corners] (Rin) -- (Rcor) -- (S.right in);
   \draw[color=red, dashed, thin] (-0.5,1.2) rectangle (1.25,-0.9);
   \end{tikzpicture}
       \hspace{0.69cm}
   \begin{tikzpicture}[thick]
   \node (in1) at (-2.3,0.85) {};
   \node (in2) at (-1.3,0.85) {};
   \node [plus] (S) at (-1.8,0.2) {};
   \node (out) at (-1.8,-0.45) {};
   \node (eq) at (-1,0.2) {\(=\)};
   \node [coordinate] (Lin) at (-0.5,1.016) {};
   \node (Linup) at (-0.5,1.7) {};
   \node [coordinate] (Rin) at (0.5,1.016) {};
   \node (Rinup) at (0.5,1.7) {};
   \node [multiply] (neg) at (1,0.7) {\(\scriptstyle{-1}\)};
   \node [coordinate] (Lcap) at (1,1.3) {};
   \node [plus] (Stop) at (0,0.366) {};
   \node [plus] (Sbot) at (0.5,-0.5) {};
   \node [zero] (coZ) at (0.5,-1.15) {};
   \node [coordinate] (Rcap) at (2,1.3) {};
   \node (Lout) at (2,-1.3) {};

   \draw[rounded corners] (Linup) -- (Lin) -- (Stop.left in)
   (Rinup) -- (Rin) -- (Stop.right in);
   \draw (in1) -- (S.left in) (S.right in) -- (in2) (S) -- (out)
   (Stop.io) -- (Sbot.left in)
   (Sbot) -- (coZ) (neg.io) -- (Sbot.right in) (neg) -- (Lcap)
   (Rcap) -- (Lout);
   \path (Lcap) edge[bend left=90] (Rcap);
   \draw[color=red, dashed, thin] (-0.65,1.2) rectangle (1.65,-0.9);
   \end{tikzpicture}
    }

    \scalebox{0.80}{
   \begin{tikzpicture}[thick]
   \node [zero] (zero1) at (-0.2,1) {};
   \node (eq) at (0.4,0) {\(=\)};
   \node [zero] (coz) at (1.55,-0.8) {};
   \node [hole] (placeholder) at (0,-1.9) {};

   \draw[color=red, dashed, thin] (2,0.5) rectangle (1.1,-0.5);
   \draw (zero1) -- +(down:2.2) (coz) -- (1.55,0.5)
   .. controls +(90:0.75) and +(90:0.75) .. (2.5,0.5) -- (2.5,-1.2);
   \end{tikzpicture}
       \hspace{0.7cm}
   \begin{tikzpicture}[thick]
   \node (out1) at (-2.3,0) {};
   \node (out2) at (-1.3,0) {};
   \node [delta] (D) at (-1.8,0.65) {};
   \node (in) at (-1.8,1.3) {};
   \node (eq) at (-1,0.65) {\(=\)};
   \node [coordinate] (in1) at (0,2.5) {};
   \node [multiply] (neg) at (0,1.7) {\(\scriptstyle{-1}\)};
   \node [delta] (D1) at (0,0.65) {};
   \node [plus] (S1) at (0,-0.65) {};
   \node [plus] (S2) at (1,-0.65) {};
   \node [zero] (coZ1) at (0,-1.3) {};
   \node [zero] (coZ2) at (1,-1.3) {};

   \node (cross) at (0.5,0) {};
   \node [coordinate] (Loutup) at (1,0.866) {};
   \node [coordinate] (Routup) at (1.5,0) {};
   \node [coordinate] (oLcap) at (1,2.3) {};
   \node [coordinate] (iLcap) at (1.5,2.3) {};
   \node [coordinate] (iRcap) at (2,2.3) {};
   \node [coordinate] (oRcap) at (2.5,2.3) {};
   \node (Lout) at (2,-1.3) {};
   \node (Rout) at (2.5,-1.3) {};

   \draw[rounded corners=10pt] (S1.right in) -- (Loutup) -- (oLcap)
   (iLcap) -- (Routup) -- (S2.right in);
   \draw (out1) -- (D.left out) (D.right out) -- (out2) (D) -- (in)
   (in1) -- (neg) -- (D1) (D1.right out) -- (cross) -- (S2.left in)
   (S2) -- (coZ2) (S1) -- (coZ1) (iRcap) -- (Lout) (oRcap) -- (Rout);
   \path (oLcap) edge[bend left=90] (oRcap) (iLcap) edge[bend left=90]
   (iRcap) (D1.left out) edge[bend right=30] (S1.left in);
   \draw[color=red, dashed, thin] (-0.65,2.2) rectangle (1.75,-1);
   \end{tikzpicture}
       \hspace{0.7cm}
   \begin{tikzpicture}[thick]
   \node [bang] (del1) at (0,-1) {};
   \node (eq) at (0.7,0) {\(=\)};
   \node [bang] (del2) at (1.7,0.3) {};
   \node [hole] (placeholder) at (0,-1.9) {};

   \draw[color=red, dashed, thin] (1.25,-0.5) rectangle (2.15,0.5);
   \draw (del1) -- +(up:2) (del2) -- +(up:0.7)
   ;
   \end{tikzpicture}
    }
  \end{center}
Braiding is two copies of \(s_1\) (scaling by 1) that have been braided together.
  \begin{center}
    \scalebox{0.80}{
   \begin{tikzpicture}[thick]
   \node (UpUpLeft) at (-3.7,2) {};
   \node [coordinate] (UpLeft) at (-3.7,1.5) {};
   \node (mid) at (-3.3,1.1) {};
   \node [coordinate] (DownRight) at (-2.9,0.7) {};
   \node (DownDownRight) at (-2.9,0.2) {};
   \node [coordinate] (UpRight) at (-2.9,1.5) {};
   \node (UpUpRight) at (-2.9,2) {};
   \node [coordinate] (DownLeft) at (-3.7,0.7) {};
   \node (DownDownLeft) at (-3.7,0.2) {};

   \draw [rounded corners=2mm] (UpUpLeft) -- (UpLeft) -- (mid) --
   (DownRight) -- (DownDownRight) (UpUpRight) -- (UpRight) -- (DownLeft) -- (DownDownLeft);

   \node (eq) at (-2.2,1.1) {\(=\)};
   \node [plus] (sum1) at (0,0) {};
   \node [plus] (sum2) at (0.7,0) {};
   \node [zero] (coz1) at (0,-0.65) {};
   \node [zero] (coz2) at (0.7,-0.65) {};
   \node [multiply] (neg1) at (-1,2.066) {\(\scriptstyle^{-1}\)};
   \node [multiply] (neg2) at (0,2.066) {\(\scriptstyle{-1}\)};

   \draw[color=red, dashed, thin] (-1.6,-0.3) rectangle (1.9,2.5);
   \draw (sum2.left in) .. controls +(120:1) and +(270:0.2) .. (neg2.io);
   \node [hole] (hole) at (0.35,0.39) {};
   \draw (sum1.right in) .. controls +(60:1) and +(270:1) .. (1,2.5)
   .. controls +(90:1) and +(90:1) .. (2.8,2.5) -- (2.8,-1)
   (sum2.right in) .. controls +(60:1) and +(270:1) .. (1.7,2.5)
   .. controls +(90:0.3) and +(90:0.3) .. (2.1,2.5) -- (2.1,-1)
   (sum1.io) -- (coz1) (sum2.io) -- (coz2)
   (sum1.left in) .. controls +(120:1) and +(270:0.2) .. (neg1.io)
   (neg1) -- (-1,3.5) (neg2) -- (0,3.5);
   \end{tikzpicture}
    }
  \end{center}
Assuming any string diagram with \(j\) basic morphisms can be written in prestandard form, we show
an arbitrary diagram with \(j+1\) basic morphisms can be written in prestandard form as well.  Let
\(S\) be a string diagram on \(j\) basic morphisms, rewritten into prestandard form, with a
maximal \(\vectk\) subdiagram \(T\).  Several cases are considered: those putting a basic morphism
above \(S\), beside \(S\), and below \(S\).
     \begin{itemize}[leftmargin=1em]
     \item \Define{\(S \of G\) for a basic morphism \(G \neq \cap\)}\\
If a diagram \(G\) is composed above \(S\), \(G\) can combine with \(T\) to make a larger \(\vectk\)
subdiagram if \(G\) is \(c\), \(\Delta\), \(+\), \(B\), or \(0\), as these are morphisms in
\(\vectk\).  The generating morphisms \(\cap\), \(\cup\) and \(!\) are not on this list, though a
composition with \(\cup\) (\emph{resp.} \(!\)) would be equivalent to tensoring by \(\cup\) (\emph{resp.} \(!\)).
  \begin{center}
    \scalebox{0.80}{
   \begin{tikzpicture}[thick]
   \filldraw[fill=black,draw=black] (-0.1,0.1) rectangle (1.5,-0.8);
   \node (eq) at (2.6,0) {\(=\)};
   \node [sqnode] (G) at (0.7,0.8) {\(G\)};
   \filldraw[fill=black,draw=black] (3.2,0.1) rectangle (4.8,-0.8);
   \node [zero] (Z) at (0.7,-1.2) {};
   \node [zero] (ero) at (4,-1.2) {};
   \draw (0.2,0) -- (0.2,1.5) (0.7,1.5) -- (G) -- (0.7,0) (0.7,-0.7) -- (Z)
   (ero) -- (4,-0.7) (3.5,0) -- (3.5,1.5) (4,0) -- (4,1.5);
   \draw (4.5,0) .. controls +(90:0.6) and +(90:0.6) .. (5.2,0) -- (5.2,-1.7)
         (1.2,0) .. controls +(90:0.6) and +(90:0.6) .. (1.9,0) -- (1.9,-1.7);
   \end{tikzpicture}
    }\\
    \scalebox{0.80}{
for   \begin{tikzpicture}[thick]
   \node [sqnode] (G) at (0,0) {\(G\)};
   \draw (0,0.7) -- (G) -- (0,-0.7);
   \node (eq) at (0.7,0) {\(=\)};
   \end{tikzpicture}
   \begin{tikzpicture}[thick]
   \node [multiply] (c) at (0,0) {\(\scriptstyle{c}\)};
   \draw (0,0.55) -- (c) -- (0,-0.65);
   \end{tikzpicture}, 
   \begin{tikzpicture}[thick]
   \node [delta] (dub) at (0,0) {};
   \draw (0,0.6) -- (dub) (dub.left out) -- (-0.4,-0.5) (dub.right out) -- (0.4,-0.5);
   \end{tikzpicture} , 
   \begin{tikzpicture}[thick]
   \node [plus] (sum) at (0,0) {};
   \draw (0,-0.6) -- (sum) (sum.left in) -- (-0.4,0.5) (sum.right in) -- (0.4,0.5);
   \end{tikzpicture}, 
   \begin{tikzpicture}[thick]
   \draw (0.45,-0.45) -- (-0.45,0.45);
   \node [hole] (hole) at (0,0) {};
   \draw (0.45,0.45) -- (-0.45,-0.45);
   \end{tikzpicture} , or
   \begin{tikzpicture}[thick]
   \node [zero] (zero) at (0,0.4) {};
   \draw (0,-0.4) -- (zero);
   \end{tikzpicture}
    }
  \end{center}
Putting these morphisms on top of \(S\) reduces to performing those compositions on \(T\).  The
maximal \(\vectk\) subdiagram now includes \(T\) and \(G\), with \(S\) unchanged outside the
\(\vectk\) block.
     \item \Define{\(B \of S\)}\\
\(B\) commutes with caps because the category is symmetric monoidal, so capping the braiding is
equivalent to putting the braiding on top of \(T\).  \(B\) is `absorbed' into \(T\), just as in
the \(S \of G\) case.
     \item \Define{\(S \oplus G\) for any basic morphism \(G\)}\\
If any two prestandard string diagrams \(S\) and \(S'\) are tensored together, the result combines
into one prestandard diagram.  This is evident because the category of string diagrams is symmetric
monoidal, and the \(\vectk\) blocks can be placed next to each other as the tensor of two \(\vectk\)
blocks.  These combine into a single \(\vectk\) block, and absorbing all the braidings into this
block as above brings the diagram into prestandard form.  Since each basic morphism can be written
as a prestandard diagram, the tensor \(S \oplus G\) is a special case of this.
  \begin{center}
    \scalebox{0.80}{
   \begin{tikzpicture}[thick]
   \filldraw[fill=black,draw=black] (-0.1,0.1) rectangle (0.8,-0.8);
   \node (oplus) at (1.8,-0.4) {\(\bigoplus\)};
   \filldraw[fill=black,draw=black] (2.4,0.1) rectangle (3.3,-0.8);
   \node (eq) at (4.3,-0.4) {\(=\)};
   \filldraw[fill=black,draw=black] (4.9,0.1) rectangle (6.4,-0.8);
   \node [zero] (Z) at (0.35,-1.2) {};
   \node [zero] (e) at (2.85,-1.2) {};
   \node [zero] (ro) at (5.65,-1.2) {};
   \draw (0.2,0) -- (0.2,1) (0.35,-0.7) -- (Z) (ro) -- (5.65,-0.7)
   (e) -- (2.85,-0.7) (2.7,0) -- (2.7,1) (5.2,0) -- (5.2,1)
   (5.8,0) -- (5.8,1);
   \draw (3,0) .. controls +(90:0.6) and +(90:0.6) .. (3.7,0) -- (3.7,-1.7)
         (0.5,0) .. controls +(90:0.6) and +(90:0.6) .. (1.2,0) -- (1.2,-1.7)
         (6.1,0) .. controls +(90:0.9) and +(90:0.9) .. (7.2,0) -- (7.2,-1.7);
   \node [hole] (ho) at (5.8,0.6) {};
   \node [hole] (le) at (6.46,0.63) {};
   \draw (5.5,-0.1) .. controls +(90:1.1) and +(90:1.1) .. (6.8,-0.1) -- (6.8,-1.7);
   \end{tikzpicture}
    }
  \end{center}
     \item \Define{\(s_c \of S\) for \(c \neq 0\)}\\
Because the outputs of \(S\) are capped, putting any morphism on the bottom of \(S\) is equivalent
(via equations \textbf{(19)} and \textbf{(20)}) to putting its adjoint on top of \(T\).  Putting \(c
\neq 0\) below \(S\) reduces to putting \(c^{-1}\) on top of \(T\) by equation \textbf{(31)}.  The
case of \(s_0\) will be considered below.  The other cases of adjoints of generating morphisms that
need to be considered more carefully are the ones that put \(\Delta^\dagger\), \(+^\dagger\) and
\(\cap = \cup^\dagger\) on top of \(T\).
  \begin{center}
    \scalebox{0.80}{
   \begin{tikzpicture}[thick]
   \filldraw[fill=black,draw=black] (-0.45,0.45) rectangle (0.45,-0.45);
   \node[multiply] (c) at (1.05,0) {\(c\)};
   \node[zero] (coz1) at (0,-0.8) {};

   \draw[out=-90,in=-90,relative,looseness=2]
   (c.io) -- (1.05,-1.85) (c.90) -- (1.05,0.45) to (0.15,0.45)
   (coz1) -- +(0,0.5) (-0.15,0.35) -- +(0,1.5);

   \node (eq) at (1.8,0) {\(=\)};
   \filldraw[fill=black,draw=black] (2.3,-0.05) rectangle (3.7,-0.95);
   \node[zero] (coz2) at (3,-1.3) {};
   \node[multiply] (cinv) at (3.4,0.8) {\(c^{-1}\!\!\)};

   \draw[out=90,in=90,relative,looseness=2]
   (2.6,-0.15) -- +(0,2) (cinv.io) -- +(0,-0.5) (coz2) -- +(0,0.5)
   (cinv.90) to +(0.9,0) -- (4.3,-1.85);
   \end{tikzpicture}
    }
  \end{center}
     \item \Define{\(\Delta \of S\)}\\
When putting \(\Delta^\dagger\) on top of \(T\), the idea is to make it `trickle down.'  If there
is a nonzero scaling incident to the \(\Delta\) cluster, \(\Delta^\dagger\) can slide through the
\(\Delta\)s using equation \textbf{(23)} to the first nonzero scaling, switching to equation
\textbf{(24)}.  When it encounters this \(s_c\), equation \textbf{(31)} turns \(c\) into
\((c^{-1})^\dagger\), equation \textbf{(17)}\({}^\dagger\) allows \(\Delta^\dagger\) to pass through
\((c^{-1})^\dagger\).  Both copies of \((c^{-1})^\dagger\) can return to being \(c\) by another
application of equation \textbf{(31)}, and the \(\Delta^\dagger\) moves on to the next layer.
  \begin{center}
    \scalebox{0.80}{
   \begin{tikzpicture}[thick]
   \node[codelta] (cod1) at (0,0) {};
   \node[delta] (dub1) at (0,-0.734) {};
   \node[multiply] (zip1) at (0.5,-1.5) {\(0\)};

   \draw (cod1.left in) -- +(120:0.5) (cod1.right in) -- + (60:0.5) (cod1.io) -- (dub1.io)
   (dub1.right out) -- (zip1.90) (zip1.io) -- +(0,-0.3) (dub1.left out) -- +(240:0.5);

   \node (eq) at (1.3,-0.9) {\(=\)};
   \node (rel) at (1.3,-1.2) {(23)};
   \node[delta] (dub2) at (2.6,-0.468) {};
   \node[codelta] (cod2) at (2.1,-0.9) {};
   \node[multiply] (zip2) at (3.1,-1.234) {\(0\)};

   \draw (dub2.io) -- +(90:0.5) (dub2.right out) -- (zip2.90) (zip2.io) -- +(0,-0.3)
   (dub2.left out) -- (cod2.right in) (cod2.left in) -- +(120:0.5) (cod2.io) -- +(0,-0.5);
   \end{tikzpicture}
\qquad
   \begin{tikzpicture}[thick]
   \node[codelta] (cod1) at (0,0) {};
   \node[delta] (dub1) at (0,-0.734) {};
   \node[multiply] (zip1) at (0.5,-1.5) {\(c\)};

   \draw (cod1.left in) -- +(120:0.5) (cod1.right in) -- + (60:0.5) (cod1.io) -- (dub1.io)
   (dub1.right out) -- (zip1.90) (zip1.io) -- +(0,-0.3) (dub1.left out) -- +(240:0.5);

   \node (eq) at (1.3,-0.9) {\(=\)};
   \node (rel) at (1.3,-1.2) {(24)};
   \node[delta] (dub2) at (2.2,-0.268) {};
   \node[codelta] (cod2) at (2.7,-0.7) {};
   \node[multiply] (zip2) at (2.7,-1.5) {\(c\)};

   \draw (dub2.io) -- +(90:0.5) (dub2.left out) -- +(240:0.5) (cod2.io) -- (zip2.90)
   (zip2.io) -- +(0,-0.3) (dub2.right out) -- (cod2.left in) (cod2.right in) -- +(60:0.5);
   \end{tikzpicture}
    }
    \scalebox{0.80}{
   \begin{tikzpicture}[thick]
   \node[multiply] (c) at (0,0) {\(c\)};
   \node[codelta] (cod1) at (0,0.75) {};
   \node (eq1) at (0.8,0.55) {\(=\)};
   \node (rel1) at (0.8,0.25) {(31)\({}^\dagger\)};

   \draw (c.90) -- (cod1.io) (cod1.left in) -- +(120:0.5) (cod1.right in) -- +(60:0.5)
   (c.io) -- +(0,-0.3);

   \node[upmultiply] (cinv2) at (1.8,-0.1) {\(\!c^{-1}\!\)};
   \node[codelta] (cod2) at (1.8,1) {};
   \node (eq2) at (2.9,0.55) {\(=\)};
   \node (rel2) at (2.9,0.25) {(17)\({}^\dagger\)};

   \draw (cinv2.io) -- (cod2.io) (cod2.left in) -- +(120:0.5) (cod2.right in) -- +(60:0.5)
   (cinv2.270) -- +(0,-0.4);

   \node[upmultiply] (cinv3a) at (4,1.1) {\(\!c^{-1}\!\)};
   \node[upmultiply] (cinv3b) at (5.2,1.1) {\(\!c^{-1}\!\)};
   \node[codelta] (cod3) at (4.6,-0.15) {};
   \node (eq3) at (6.3,0.55) {\(=\)};
   \node (rel3) at (6.3,0.25) {(31)\({}^\dagger\)};

   \draw (cinv3a.io) -- +(0,0.4) (cinv3b.io) -- +(0,0.4) (cod3.io) -- +(0,-0.5)
   (cod3.left in) .. controls +(120:0.5) and +(270:0.3) .. (cinv3a.270)
   (cod3.right in) .. controls +(60:0.5) and +(270:0.3) .. (cinv3b.270);

   \node[multiply] (c4a) at (7,1.1) {\(c\)};
   \node[multiply] (c4b) at (8,1.1) {\(c\)};
   \node[codelta] (cod4) at (7.5,0.05) {};

   \draw (c4a.90) -- +(0,0.4) (c4b.90) -- +(0,0.4) (cod4.io) -- +(0,-0.5)
   (cod4.left in) -- (c4a.270) (cod4.right in) -- (c4b.270);
   \end{tikzpicture}
    }
  \end{center}
When the codelta gets to a \(+\) cluster, derived equation \textbf{(D5)} has a net effect of
bringing it to the bottom of the subdiagram, as the other morphisms involved all belong to
\(\vectk\).  This allows the process to be repeated on the next addition until \(\Delta^\dagger\)
reaches the bottom of the \(+\) cluster.  Once there, codelta interacts with the cozero layer below
\(T\); equation \textbf{(8)}\({}^\dagger\) reduces it to a pair of cozeros.
  \begin{center}
    \scalebox{0.80}{
   \begin{tikzpicture}[-, thick, node distance=0.7cm]
   \node (in1z) {};
   \node (in2z) [right of=in1z, shift={(0.2,0)}] {};
   \node (in3z) [right of=in2z, shift={(0.45,0)}] {};
   \node [codelta] (nabzip) [below right of=in2z, shift={(0.1,-0.3)}] {};
   \node [plus] (add) [below left of=nabzip, shift={(0.05,-0.3)}] {};
   \node (outz) [below of=add] {};
   \node (equal) [below right of=nabzip, shift={(0.2,0)}] {\(=\)};
   \node (rel) [below of=equal, shift={(0,0.4)}] {(D5)};
   \node [plus] (addl) [right of=equal, shift={(0.2,0)}] {};
   \node (cross) [above right of=addl, shift={(-0.1,0)}] {};
   \node [delta] (delta) [above left of=cross, shift={(0.1,0)}] {};
   \node (in1u) [above of=delta] {};
   \node [plus] (addr) [below right of=cross, shift={(-0.1,0)}] {};
   \node [codelta] (nablunzip) [below left of=addr, shift={(0.1,-0.3)}] {};
   \node (outu) [below of=nablunzip] {};
   \node (in2u) [right of=in1u, shift={(0.4,0)}] {};
   \node (in3u) [right of=in2u, shift={(0.1,0)}] {};

   \draw (in1z) -- (add.left in) (add) -- (outz) (in2z) --
   (nabzip.left in) (in3z) -- (nabzip.right in) (nabzip.io) -- (add.right in);
   \path
   (delta.left out) edge [bend right=30] (addl.left in);
   \draw (in1u) -- (delta) (delta.right out) -- (cross) -- (addr.left in);
   \draw (in2u) -- (addl.right in) (in3u) -- (addr.right in);
   \draw (addl.io) -- (nablunzip.left in) (addr.io) -- (nablunzip.right in)
   (nablunzip) -- (outu);
   \end{tikzpicture}
   \qquad
   \begin{tikzpicture}[thick]
   \node [codelta] (nabla) at (0,-0.65) {};
   \node [zero] (Z1) at (0,-1.3) {};
   \node [coordinate] (il1) at (-0.5,0) {};
   \node [coordinate] (ir1) at (0.5,0) {};
   \node (eq) at (0.9,-0.6) {\(=\)};
   \node (rel) at (0.9,-0.9) {(8)\({}^\dagger\)};
   \node [coordinate] (il2) at (1.5,0) {};
   \node [coordinate] (ir2) at (2.2,0) {};
   \node [zero] (ZL) [below of=il2] {};
   \node [zero] (ZR) [below of=ir2] {};
   \node [hole] (space) at (0,-2) {};

   \draw (il1) -- (nabla.left in) (nabla) -- (Z1) (nabla.right in) -- (ir1)
   (il2) -- (ZL) (ir2) -- (ZR);
   \end{tikzpicture}
    }
  \end{center}
If all the scalings incident to the \(\Delta\) cluster are by \(0\), rather than trickling down,
\(\Delta^\dagger\) composes with \(!\) (due to equation \textbf{(14)}), which gives \(\cup\) by
equation \textbf{(30)}\({}^\dagger\).  By the zigzag identities, this cup becomes a cap that is
tensored with a subdiagram of \(S\) that is in prestandard form.
  \begin{center}
    \scalebox{0.80}{
   \begin{tikzpicture}[thick]
   \node[codelta] (top2) at (1,0.866) {};
   \node[delta] (delt1) at (1,-0.134) {};
   \node[delta] (delt2) at (0.5,-1) {};
   \node[multiply] (c1) at (0,-1.65) {\(0\)};
   \node[multiply] (c2) at (1,-1.65) {\(0\)};
   \node[multiply] (c3) at (2,-1.65) {\(0\)};
   \node (11b) at (0,-2.516) {};
   \node (21b) at (1,-2.516) {};
   \node (n1b) at (2,-2.516) {};

   \draw[dotted] (delt1.left out) -- (delt2.io);
   \draw (delt2.left out) -- (c1) -- (11b) (delt2.right out) -- (c2) -- (21b)
   (delt1.right out) -- (c3) -- (n1b) (top2.io) -- (delt1.io)
   (top2.left in) -- +(120:0.4) (top2.right in) -- +(60:0.4);

   \node (eq) at (2.7,-0.65) {\(=\)};
   \node (rel1) at (2.7,-0.95) {(14)};
   \node (rel2) at (2.7,-1.3) {(4)};

   \node[codelta] (cod) at (3.8,0.5) {};
   \node[bang] (bang) at (3.8,-0.2) {};
   \node[zero] (ze) at (3.4,-1) {};
   \node[zero] (ro) at (4.2,-1) {};
   \node (dots) at (3.8,-1.5) {\(\cdots\)};

   \draw (cod.io) -- (bang) (ze) -- +(0,-1) (ro) -- +(0,-1)
   (cod.left in) -- +(120:0.4) (cod.right in) -- +(60:0.4);

   \node (eq2) at (4.8,-0.65) {\(=\)};
   \node (rel3) at (4.8,-0.95) {(30)\({}^\dagger\)};
   \node[zero] (ze1) at (5.4,-1) {};
   \node[zero] (ro1) at (6.2,-1) {};
   \node (dots1) at (5.8,-1.5) {\(\cdots\)};

   \draw[out=-90,in=-90,relative,looseness=2]
   (5.4,1) -- ++(0,-1) to +(0.8,0) -- +(0,1)
   (ze1) -- +(0,-1) (ro1) -- +(0,-1);
   \end{tikzpicture}
    }
  \end{center}
     \item \Define{\(+ \of S\)}\\
There is a similar trickle down argument for \(+^\dagger\).  First rewriting all instances of
\(s_0\) via equation \textbf{(14)}, the two \(\Delta\) clusters incident to the coaddition can
either reduce to \(\Delta\) clusters that are incident only to nonzero scalings or reduce to a
single deletion, as above, if all incident scalings were \(s_0\).  There are three cases of what
can happen from here.
        \begin{itemize}[leftmargin=1em]
        \item \Define{Both \(\Delta\) clusters were incident to only \(s_0\)}\\
In the first case, as above, the \(\Delta\) clusters will reduce to \(!\) incident to the outputs of
\(+^\dagger\).  Equations \textbf{(D7)} and \textbf{(28)} delete the coaddition.
        \item \Define{One \(\Delta\) cluster was incident to only \(s_0\)}\\
Without loss of generality, the \(!\) incident to \(+^\dagger\) is on the left.  Equation
\textbf{(D7)} replaces \(!\) and \(+^\dagger\) with \(!^\dagger \of !\), and equation \textbf{(30)}
replaces \(\Delta\) and \(!^\dagger\) with a cap.  The \(\Delta\) was -- and the cap is -- incident
to some scaling \(s_c\), \(c \ne 0\).  Without loss of generality, \(s_c\) is incident to the bottom
addition in the cluster.  Equation \textbf{(29)} replaces the addition and cozero with a cup and
\(s_{-1}\), which combines with \(s_c\) by equation \textbf{(11)}.  The cup and cap turn \(s_{-c}\)
around to its adjoint, which is scaling by \(-c^{-1}\), by equation \textbf{(31)}.
  \begin{center}
    \scalebox{0.80}{
   \begin{tikzpicture}[thick]
   \node[coplus] (cop) at (0.5,0.866) {};
   \node[delta] (dub) at (1,0) {};
   \node[bang] (cobig) at (0,0.2) {};
   \node[multiply] (c1) at (0.5,-1) {\(c\)};
   \node (eq) at (2.1,0) {\(=\)};
   \node (rel1a) at (2.1,-0.3) {(D7)\({}^\dagger\)};
   \node (rel1b) at (2.1,-0.75) {(30)};

   \draw (cop.io) -- +(0,0.5) (cop.left out) -- (cobig) (cop.right out) -- (dub.io)
   (dub.left out) .. controls +(240:0.2) and +(90:0.2) .. (c1.90);
   \draw[dotted] (c1.io) -- +(0,-0.5) (dub.right out) -- +(300:0.5);

   \node[bang] (cobang) at (3.5,0.8) {};
   \node[multiply] (c2) at (3,-1) {\(c\)};

   \draw[out=90,in=90,relative,looseness=2] (cobang) -- +(0,0.5) (c2.90) -- ++(0,0.5) to +(1,0);
   \draw[dotted] (c2.io) -- +(0,-0.5) (c2.90) ++(1,0.5) -- +(0,-0.7);
   \end{tikzpicture}
\qquad
   \begin{tikzpicture}[thick]
   \node[multiply] (c) at (1,-1) {\(c\)};
   \node[plus] (sum) at (0.5,-2) {};
   \node[zero] (coz) at (0.5,-2.65) {};
   \node (eq1) at (2.5,-1) {\(=\)};
   \node (rel1) at (2.5,-1.3) {(29)};
   \node (rel1b) at (2.5,-1.7) {(11)};

   \draw[out=90,in=90,relative,looseness=2] (c.90) -- ++(0,0.5) to +(1,0) (sum.io) -- (coz)
   (c.io) .. controls +(270:0.2) and +(60:0.2) .. (sum.right in);
   \draw[dotted] (sum.left in) -- +(120:0.5) (c.90) ++(1,0.5) -- +(0,-0.7);

   \node[multiply] (negc) at (4,-1) {\(\scriptstyle{-}\)\(c\)};
   \node (eq2) at (5.5,-1) {\(=\)};
   \node (rel2) at (5.5,-1.3) {(31)};

   \draw[out=90,in=90,relative,looseness=2] (negc.90) -- ++(0,0.4) to +(1,0)
   (negc.io) to (negc.io) -- ++(0,-0.4) to +(-1,0);
   \draw[dotted] (negc.90) ++(1,0.4) -- +(0,-0.7) (negc.io) ++(-1,-0.4) -- +(0,0.7);

   \node[multiply] (inv) at (6.5,-1) {\(\!\scriptstyle{-}\)\(c^{-1}\!\!\)};
   \draw[dotted] (inv.90) -- +(0,0.7) (inv.io) -- +(0,-0.7);
   \end{tikzpicture}
    }
  \end{center}
An addition cluster is above the \(-c^{-1}\) scaling and a duplication cluster is below, but because
those clusters are not otherwise connected to each other, there is a vertical arrangement of the
morphisms in the \(\vectk\) block of the string diagram such that no cups or caps are present.
        \item \Define{Both \(\Delta\) clusters are incident to at least one nonzero scaling}\\
Using equation \textbf{(D5)}\({}^\dagger\), a \(+^\dagger\) will pass through one \(\Delta\) at a
time.  A new \(\Delta^\dagger\) is created each time, but this can trickle down as before.
  \begin{center}
    \scalebox{0.80}{
   \begin{tikzpicture}[-, thick, node distance=0.7cm]
   \node (out1z) {};
   \node (out2z) [right of=out1z, shift={(0.2,0)}] {};
   \node (out3z) [right of=out2z, shift={(0.45,0)}] {};
   \node [delta] (delzip) [above right of=out2z, shift={(0.1,0.3)}] {};
   \node [coplus] (coadd) [above left of=delzip, shift={(0.05,0.3)}] {};
   \node (inz) [above of=coadd] {};
   \node (equal) [above right of=delzip, shift={(0.2,0)}] {\(=\)};
   \node [coplus] (coaddl) [right of=equal, shift={(0.2,0)}] {};
   \node (cross) [below right of=coaddl, shift={(-0.1,0)}] {};
   \node [codelta] (nabla) [below left of=cross, shift={(0.1,0)}] {};
   \node (out1u) [below of=nabla] {};
   \node [coplus] (coaddr) [above right of=cross, shift={(-0.1,0)}] {};
   \node [delta] (deltunzip) [above left of=coaddr, shift={(0.1,0.3)}] {};
   \node (inu) [above of=deltunzip] {};
   \node (out2u) [right of=out1u, shift={(0.4,0)}] {};
   \node (out3u) [right of=out2u, shift={(0.1,0)}] {};

   \draw (out1z) -- (coadd.right in) (coadd) -- (inz) (out2z) --
   (delzip.left out) (out3z) -- (delzip.right out) (delzip.io) -- (coadd.left in);
   \path
   (nabla.left in) edge [bend left=30] (coaddl.right in);
   \draw (out1u) -- (nabla) (nabla.right in) -- (coaddr.right in);
   \draw (out2u) -- (cross)  -- (coaddl.left in) (out3u) -- (coaddr.left in);
   \draw (coaddl.io) -- (deltunzip.left out) (coaddr.io) -- (deltunzip.right out) (deltunzip) -- (inu);
   \end{tikzpicture}
    }
  \end{center}
Once the \(\Delta^\dagger\) trickles down, there are two possibilities for what is directly beneath
each \(+^\dagger\): either the same scenario will recur with a \(\Delta\) connected to one or both
outputs, which can only happen finitely many times, or two nonzero scalings will be below the
\(+^\dagger\).  A scaling by any unit in \(k\), \emph{i.e.}\ \(c \ne 0\), can move through a coaddition by
inserting \(c^{-1} c\) on the top branch and applying equation \textbf{(15)}\({}^\dagger\):
  \begin{center}
    \scalebox{0.80}{
   \begin{tikzpicture}[thick]
   \node[coplus] (cop1) at (0.5,1) {};
   \node[multiply] (c1) at (0,0) {\(c\)};
   \node (eq) at (1.7,0.5) {\(=\)};
   \node[multiply] (c2) at (2.8,1.8) {\(c\)};
   \node[coplus] (cop2) at (2.8,1) {};
   \node[multiply] (cinv) at (3.3,-0.1) {\(c^{-1}\!\!\)};

   \draw (cop1.io) -- +(0,0.5) (cop1.right out) .. controls +(300:0.5) and +(90:0.9) .. +(0.5,-1.5)
   (cop1.left out) .. controls +(240:0.2) and +(90:0.2) .. (c1.90) (c1.io) -- +(0,-0.3)
   (c2) -- +(0,0.5) (c2.io) -- (cop2.io) (cop2.left out) .. controls +(240:0.5) and +(90:0.9) .. +(-0.5,-1.9)
   (cop2.right out) .. controls +(300:0.2) and +(90:0.2) .. (cinv.90) (cinv.io) -- +(0,-0.3);
   \end{tikzpicture}
    }.
  \end{center}
This allows one of the outputs of the coaddition to connect directly to a \(+\) cluster.
         \begin{itemize}[leftmargin=1em]
         \item \Define{If both branches go to different \(+\) clusters}, Frobenius equations
\textbf{(21)--(22)} slide the \(+^\dagger\) down the \(+\) cluster on one side until it gets to the
end of that cluster.
  \begin{center}
    \scalebox{0.80}{
   \begin{tikzpicture}[thick]
   \node[plus] (P1a) {};
   \node[plus] (P1b) at (-0.5,0.866) {};
   \node[coplus] (cop1) at (0.2,1.61) {};
   \node[multiply] (c1) at (0.9,0.41) {\(c\)};
   \node[zero] (out1) at (0,-0.5) {};
   \node (eq) at (1.7,0.75) {\(=\)};
   \node (rel1) at (1.7,0.45) {(21)};
   \node (rel2) at (1.7,0.05) {(22)};

   \draw (P1a.io) -- (out1) (c1.io) -- +(0,-0.3) (cop1.io) -- +(0,0.3) (P1a.right in) -- +(60:0.4)
   (cop1.right out) -- (c1.90) (P1b.right in) -- (cop1.left out) (P1b.left in) -- +(120:0.5);
   \draw[dotted] (P1a.left in) -- (P1b.io);

   \node[plus] (P2a) at (2.8,1) {};
   \node[plus] (P2b) at (2.3,1.866) {};
   \node[coplus] (cop2) at (2.8,0.42) {};
   \node[multiply] (c2) at (3.3,-0.42) {\(c\)};
   \node[zero] (out2) at (2.3,-0.23) {};

   \draw (P2b.left in) -- +(120:0.5) (c2.io) -- +(0,-0.3) (P2a.io) -- (cop2.io) (cop2.left out) -- (out2)
   (cop2.right out) -- (c2.90) (P2b.right in) -- +(60:0.5) (P2a.right in) -- +(60:0.5);
   \draw[dotted] (P2a.left in) -- (P2b.io);
   \end{tikzpicture}
    }
  \end{center}
The only morphisms added to the \(\vectk\) block that are not from \(\vectk\) were the coaddition
and the cozero.  Since these reduce to an identity morphism string by equation
\textbf{(1)}\({}^\dagger\), the \(\vectk\) block is truly a \(\vectk\) block again.
         \item \Define{If both branches go to the same \(+\) cluster}, equation \textbf{(3)} and the
Frobenius equation \textbf{(21)} take both branches to the same addition.
  \begin{center}
    \scalebox{0.80}{
   \begin{tikzpicture}[thick]
   \node[coplus] (cop1) at (0,1)              {};
   \node[plus]  (add1a) at (-0.25,-0.433)     {};
   \node[plus]  (add1b) at (0.25,-1.3)        {};
   \node[multiply] (c1) at (0.65,-0.104) {\(c\)};
   \node           (eq) at (1.3,-0.15)   {\(=\)};
   \node         (rel1) at (1.3,-0.45)     {(3)};
   \node         (rel2) at (1.3,-0.85)    {(21)};

   \draw (add1a.left in) .. controls +(120:0.5) and +(240:0.5) .. (cop1.left out)
   (cop1.io) -- +(0,0.3) (cop1.right out) -- (c1.90) (add1b.io) -- +(0,-0.3)
   (c1.io) .. controls +(270:0.2) and +(60:0.2) .. (add1b.right in);
   \node[hole] (cross) at (0.42,0.5) {};
   \draw (add1a.right in) -- +(60:2);
   \draw[dotted] (add1a.io) -- (add1b.left in);

   \node[plus] (topadd) at (2.3,1.2) {};
   \node[coplus] (cop2) at (2.3,0.42) {};
   \node[multiply] (c2) at (2.75,-0.42) {\(c\)};
   \node[plus] (botadd) at (2.3,-1.5) {};

   \draw (topadd.left in) -- +(120:0.5) (topadd.right in) -- +(60:0.5) (botadd.io) -- +(0,-0.4)
   (cop2.right out) .. controls +(300:0.15) and +(90:0.15) .. (c2.90)
   (c2.io) .. controls +(270:0.2) and +(60:0.2) .. (botadd.right in)
   (botadd.left in) .. controls +(120:0.8) and +(240:0.8) .. (cop2.left out);
   \draw[dotted] (cop2.io) -- (topadd.io);
   \end{tikzpicture}
    }
  \end{center}
Depending on whether the remaining scaling is \(s_1\), either equation \textbf{(25)} reduces the
coaddition and the given addition to an identity string or equation \textbf{(D8)} applies.  In the
former case we are done, and in the latter case equations \textbf{(D7)} and
\textbf{(10)}\({}^\dagger\) remove the \(!^\dagger\) introduced by applying equation \textbf{(D8)}.
  \begin{center}
    \scalebox{0.80}{
   \begin{tikzpicture}[thick]
   \node[bang] (cob1) at (0,1) {};
   \node[plus] (sum1a) at (0.5,0.35) {};
   \node[plus] (sum1b) at (1,-0.516) {};
   \node (eq) at (2.2,0) {\(=\)};
   \node (rel) at (2.2,-0.3) {(D7)};

   \draw (cob1) -- (sum1a.left in) (sum1a.right in) -- +(60:0.5) (sum1a.io) -- (sum1b.left in)
   (sum1b.right in) -- +(60:0.5) (sum1b.io) -- +(0,-0.5);

   \node[bang] (cob2) at (2.9,0.134) {};
   \node[bang] (bang) at (2.9,0.65) {};
   \node[plus] (sum2) at (3.4,-0.516) {};

   \draw (bang) -- +(0,0.5) (cob2) -- (sum2.left in) (sum2.right in) -- +(60:0.5)
   (sum2.io) -- +(0,-0.5);
   \end{tikzpicture}
\qquad
   \begin{tikzpicture}[thick]
   \node[bang] (cob1) at (0,0.65) {};
   \node[plus] (sum1) at (0.5,0) {};
   \node[zero] (coz1) at (0.5,-0.65) {};
   \node (eq1) at (1.3,0) {\(=\)};
   \node (rel1) at (1.3,-0.3) {(D7)};

   \draw (cob1) -- (sum1.left in) (sum1.right in) -- +(60:0.5) (sum1.io) -- (coz1);

   \node[bang] (cob2) at (1.9,-0.325) {};
   \node[bang] (bang) at (1.9,0.325) {};
   \node[zero] (coz2) at (1.9,-0.975) {};
   \node (eq2) at (2.5,0) {\(=\)};
   \node (rel2) at (2.5,-0.3) {(10)\({}^\dagger\)};

   \draw (bang) -- +(0,0.65) (cob2) -- (coz2);

   \node[bang] (cob3) at (3.1,-0.5) {};
   \draw (cob3) -- +(0,1);
   \end{tikzpicture}
    }
  \end{center}
          \end{itemize}
        \end{itemize}
     \item \Define{\(\cup \of S\) and \(S \of \cap\)}\\
Composing with a cup below \(S\) is equivalent to composing with cap above \(T\), since \(\cap =
\cup^\dagger\).  Using equation \textbf{(D10)}\({}^\dagger\), this cap can be replaced by
\(s_{-1}\), coaddition, and zero.  By the arguments above, \(s_{-1}\), \(+^\dagger\), and \(\zero\)
can each be absorbed into the \(\vectk\) block.
  \begin{center}
    \scalebox{0.80}{
   \begin{tikzpicture}[thick]
   \filldraw[fill=black,draw=black] (-0.4,-0.8) rectangle (0.5,-0.35);
   \draw[out=90,in=90,relative,looseness=2] (-0.25,-0.45) -- ++(0,0.9) to +(0.6,0) -- +(0,-0.9);
   \node (eq1) at (1.15,0.225) {\(=\)};
   \node (rel) at (1.15,-0.1) {(D10)\({}^\dagger\)};

   \filldraw[fill=black,draw=black] (1.8,-1.4) rectangle (2.8,-0.95);
   \node[multiply] (neg) at (2.65,-0.1) {\(\scriptstyle{-1}\)};
   \node[coplus] (cops) at (2.3,1) {};
   \node[zero] (zero1) at (2.3,1.7) {};
   \node (eq2) at (3.5,0.225) {\(=\)};

   \draw (zero1) -- (cops.io) (neg.io) -- (2.65,-1.05)
   (cops.right out) .. controls +(300:0.2) and +(90:0.2) .. (neg.90)
   (cops.left out) .. controls +(240:0.2) and +(90:0.2) .. (1.95,0) -- (1.95,-1.05);

   \filldraw[fill=black,draw=black] (3.95,-0.8) rectangle (4.85,-0.35);
   \node[coplus] (robbers) at (4.4,0.225) {};
   \node[zero] (zero2) at (4.4,0.9) {};
   \node (eq3) at (5.3,0.225) {\(=\)};

   \draw (zero2) -- (robbers.io)
   (robbers.left out) .. controls +(240:0.2) and +(90:0.2) .. +(255:0.5)
   (robbers.right out) .. controls +(300:0.2) and +(90:0.2) .. +(285:0.5);

   \filldraw[fill=black,draw=black] (5.75,0.2) rectangle (6.65,-0.25);
   \node[zero] (zero3) at (6.2,0.65) {};
   \draw (zero3) -- +(0,-0.5);
   \node (eq4) at (7.1,0.225) {\(=\)};
   \filldraw[fill=black,draw=black] (7.55,0.45) rectangle (8.45,0);
   \end{tikzpicture}
    }
  \end{center}
The compositions with zero and \(s_{-1}\) expand the \(\vectk\) block, thus have no effect on
whether the diagram can be written in prestandard form.
     \item \Define{\(! \of S\)}\\
When composing \(!^\dagger\) above \(T\), two possibilities arise, depending on whether there is a
layer of \(\Delta\)s in the \(\vectk\) block.  If there is such a layer, equation \textbf{(30)}
combines the \(!^\dagger\) with a \(\Delta\), making a cap on top of \(T\).  As we have just seen,
this can be rewritten in prestandard form.
  \begin{center}
    \scalebox{0.80}{
   \begin{tikzpicture}[thick]
   \filldraw[fill=black,draw=black] (-0.45,0.45) rectangle (0.45,0);
   \node[bang] (cob1) at (0,0.9) {};
   \node (eq1) at (0.9,0.225) {\(=\)};
   \node[delta] (dub) at (1.5,0.225) {};
   \node[bang] (cob2) at (1.5,0.9) {};
   \filldraw[fill=black,draw=black] (1.05,-0.8) rectangle (1.95,-0.35);
   \node (eq2) at (2.5,0.225) {\(=\)};
   \node (rel) at (2.5,-0.075) {(30)};
   \filldraw[fill=black,draw=black] (2.85,-0.8) rectangle (3.75,-0.35);

   \draw[out=90,in=90,relative,looseness=2] (3,-0.45) -- ++(0,0.9) to +(0.6,0) -- +(0,-0.9)
   (cob1) -- +(0,-0.5) (cob2) -- (dub.io)
   (dub.left out) .. controls +(240:0.2) and +(90:0.2) .. +(255:0.5)
   (dub.right out) .. controls +(300:0.2) and +(90:0.2) .. +(285:0.5);
   \end{tikzpicture}
    }
  \end{center}
If no layer of \(\Delta\)s exists, equations \textbf{(31)}\({}^\dagger\) and
\textbf{(18)}\({}^\dagger\) pass the codeletion through a nonzero scaling.  Then equations
\textbf{(D7)} and \textbf{(10)}\({}^\dagger\) can be used to remove \(!^\dagger\), as we have
already seen.  This leaves only the basic morphisms of \(\vectk\) within the \(\vectk\) block.
  \begin{center}
    \scalebox{0.80}{
   \begin{tikzpicture}[thick]
   \node[bang] (cob1) at (0,1) {};
   \node[multiply] (c) at (0,0) {\(c\)};
   \node (eq1) at (0.8,0) {\(=\)};
   \node (rel1) at (0.8,-0.3) {(31)\({}^\dagger\)};

   \draw (cob1) -- (c) -- +(0,-1);

   \node[bang] (cob2) at (1.8,1) {};
   \node[upmultiply] (cinv) at (1.8,0) {\(\!c^{-1}\!\)};
   \node (eq2) at (2.8,0) {\(=\)};
   \node (rel2) at (2.8,-0.3) {(18)\({}^\dagger\)};

   \draw (cob2) -- (cinv) -- +(0,-1);

   \node[bang] (cob3) at (3.4,0.5) {};
   \draw (cob3) -- +(0,-1);
   \end{tikzpicture}
    }
  \end{center}
If the scaling is \(s_0\), equation \textbf{(14)} converts \(s_0\) to \(\zero \of !\), allowing
equation \textbf{(28)} to remove the \(!^\dagger\), with the same conclusion.
  \begin{center}
    \scalebox{0.80}{
   \begin{tikzpicture}[thick]
   \node[bang] (cob1) at (0.3,1) {};
   \node[multiply] (c0) at (0.3,0) {\(0\)};
   \node (eq1) at (1,0) {\(=\)};
   \node (rel1) at (1,-0.3) {(14)};

   \draw (cob1) -- (c0) -- +(0,-1);

   \node[bang] (cob2) at (1.6,0.925) {};
   \node[bang] (bang) at (1.6,0.325) {};
   \node[zero] (zero1) at (1.6,-0.325) {};
   \node (eq2) at (2.2,0) {\(=\)};
   \node (rel2) at (2.2,-0.3) {(28)};

   \draw (cob2) -- (bang) (zero1) -- +(0,-0.65);

   \node[zero] (zero2) at (2.8,0.5) {};
   \draw (zero2) -- +(0,-1);
   \end{tikzpicture}
    }
  \end{center}
     \item \Define{\(s_0 \of S\)}\\
Composing with \(s_0\) below \(S\) is equivalent to composing with codeletion, followed by tensoring
with zero.  Codeletion is the \(! \of S\) case, and zero can be written in a prestandard form, so
this reduces to tensoring two diagrams that are in prestandard form.
  \begin{center}
    \scalebox{0.80}{
   \begin{tikzpicture}[thick]
   \filldraw[fill=black,draw=black] (-0.45,0.45) rectangle (0.45,0);
   \node[multiply] (c) at (1.05,0) {\(0\)};

   \draw[out=-90,in=-90,relative,looseness=2]
   (c.io) -- (1.05,-1.35) (c.90) -- (1.05,0.45) to (0.15,0.45);

   \node (eq) at (1.8,0) {\(=\)};
   \node (rel) at (1.8,-0.3) {(14)};
   \filldraw[fill=black,draw=black] (2.3,0.45) rectangle (3.2,0);
   \node[zero] (ins2) at (3.8,-0.3) {};
   \node[bang] (del2) at (3.8,0.3) {};

   \draw[out=-90,in=-90,relative,looseness=2]
   (ins2) -- (3.8,-1.35) (del2) -- (3.8,0.45) to +(-0.9,0);
   \end{tikzpicture}
    }
  \end{center}
     \end{itemize}
Finally, we need to show the prestandard forms can be rewritten in standard form.  We need to show
what elementary row operations look like in terms of string diagrams.  We also need to show for an
arbitrary prestandard string diagram \(S\) with \(\vectk\) block \(T\) that if \(T\) is replaced
with \(T'\), the diagram where an elementary row operation has been performed on \(T\), the 
resulting diagram \(S'\) can be built from \(S\) using equations \textbf{(1)--(31)}.

Because the \(i\)th output of a \(\vectk\) diagram is a linear combinations of the inputs, with the
coefficients coming from the \(i\)th row of its matrix, rows of the matrix correspond to outputs of
the \(\vectk\) block.  Because of this, the row operation subdiagrams in \(S'\) will have
\(0^\dagger\)s immediately beneath them.  Showing \(S'\) can be built from \(S\) reduces to showing
composition of row operations with \(0^\dagger\)s builds the same number of \(0^\dagger\)s.
     \begin{itemize}[leftmargin=1em]
     \item Add a multiple \(c\) of one row to another row:\\
If we want to add a multiple of the \(\beta\) row to the \(\alpha\) row, we need a map \((y_\alpha,
y_\beta) \mapsto (y_\alpha + c y_\beta, y_\beta)\).  By the naturality of the braiding in a
\smc, we can ignore any intermediate outputs:
  \begin{center}
\scalebox{0.80}{
   \begin{tikzpicture}[thick]
   \node [plus] (plus) at (0,0) {};
   \node [multiply] (c) at (0.5,1.1) {\(c\)};
   \node [delta] (dub) at (1,2) {};
   \node (in1) at (-0.5,3) {\(y_\alpha\)};
   \node (in2) at (1,3) {\(y_\beta\)};
   \node (out1) at (0,-1) {\(y_\alpha + c y_\beta\)};
   \node (out2) at (1.5,-1) {\(y_\beta\)};

   \draw
   (in1) .. controls +(270:2) and +(120:0.5) .. (plus.left in)
   (plus.io) -- (out1) (dub.io) -- (in2)
   (out2) .. controls +(90:2) and +(300:0.5) .. (dub.right out)
   (dub.left out) .. controls +(240:0.2) and +(90:0.2) .. (c.90)
   (c.io) .. controls +(270:0.2) and +(60:0.2) .. (plus.right in);
   \end{tikzpicture}}.
  \end{center}
When two cozeros are composed on the bottom of this diagram, the result is two cozeros:
  \begin{center}
    \scalebox{0.80}{
   \begin{tikzpicture}[thick]
   \node [plus] (plus) at (-0.3,0) {};
   \node [multiply] (c) at (0.2,1.1) {\(c\)};
   \node [delta] (dub) at (0.7,2) {};
   \node [coordinate] (in1) at (-0.8,2.7) {};
   \node [coordinate] (in2) at (0.7,2.7) {};
   \node [zero] (out1) at (-0.3,-0.7) {};
   \node [zero] (out2) at (1.2,-0.7) {};

   \draw
   (in1) .. controls +(270:2) and +(120:0.5) .. (plus.left in)
   (plus.io) -- (out1) (dub.io) -- (in2)
   (out2) .. controls +(90:2) and +(300:0.5) .. (dub.right out)
   (dub.left out) .. controls +(240:0.2) and +(90:0.2) .. (c.90)
   (c.io) .. controls +(270:0.2) and +(60:0.2) .. (plus.right in);
   \node (eq1) at (1.85,1) {\(=\)};
   \node (rel1) at (1.85,0.7) {(D10)};
   \node [multiply] (c2) at (3.3,1.1) {\(c\)};
   \node [multiply] (neg2) at (3.3,0.3) {\(\scriptstyle{-1}\)};
   \node [delta] (dub2) at (3.8,2) {};
   \node [zero] (coz2) at (4.3,-0.7) {};

   \draw
   (dub2) -- (3.8,2.7) (c2) -- (neg2)
   (dub2.left out) .. controls +(240:0.2) and +(90:0.2) .. (c2.90)
   (coz2) .. controls +(90:2) and +(300:0.5) .. (dub2.right out)
   (neg2.io) .. controls +(270:0.5) and +(270:0.5) .. +(-0.8,0) -- (2.5,2.7);
   \node (eq2) at (4.8,1) {\(=\)};
   \node (rel2) at (4.8,0.7) {(11)};
   \node [multiply] (c3) at (6.1,1.1) {\(\scriptstyle{-}\)\(c\)};
   \node [delta] (dub3) at (6.6,2) {};
   \node [zero] (coz3) at (7.1,-0.7) {};

   \draw
   (dub3) -- (6.6,2.7)
   (dub3.left out) .. controls +(240:0.2) and +(90:0.2) .. (c3.90)
   (coz3) .. controls +(90:2) and +(300:0.5) .. (dub3.right out)
   (c3.io) .. controls +(270:0.5) and +(270:0.5) .. +(-0.8,0) -- (5.3,2.7);
   \node (eq3) at (7.6,1) {\(=\)};
   \node (rel3) at (7.6,0.7) {(D6)};
   \node [multiply] (c4) at (8.9,0.2) {\(\scriptstyle{-}\)\(c\)};
   \node [zero] (zero) at (8.9,1) {};
   \node [zero] (coz4) at (8.9,1.5) {};

   \draw
   (zero) -- (c4) (coz4) -- (8.9,2.2)
   (c4.io) .. controls +(270:0.5) and +(270:0.5) .. +(-0.8,0) -- (8.1,2.2);
   \node (eq4) at (9.9,1) {\(=\)};
   \node (rel4) at (9.9,0.7) {(16)};
   \node [zero] (coz5a) at (10.5,0.5) {};
   \node [zero] (coz5b) at (11,0.5) {};

   \draw (coz5a) -- (10.5,1.5) (coz5b) -- (11,1.5);
   \end{tikzpicture}
    }.
  \end{center}
     \item Swap rows:\\
If we want to swap the \(\beta\) row with the \(\alpha\) row, we need a map \((y_\alpha, y_\beta)
\mapsto (y_\beta, y_\alpha)\), which is the braiding of two outputs.  Again, intermediate outputs
may be ignored:
  \begin{center}
   \begin{tikzpicture}[thick,node distance=0.5cm]
   \node (fstart) {\(y_\alpha\)};
   \node [coordinate] (ftop) [below of=fstart] {};
   \node (center) [below right of=ftop] {};
   \node [coordinate] (fout) [below right of=center] {};
   \node (fend) [below of=fout] {\(y_\alpha\)};
   \node [coordinate] (gtop) [above right of=center] {};
   \node (gstart) [above of=gtop] {\(y_\beta\)};
   \node [coordinate] (gout) [below left of=center] {};
   \node (gend) [below of=gout] {\(y_\beta\)};

   \draw [rounded corners] (fstart) -- (ftop) -- (center) --
   (fout) -- (fend) (gstart) -- (gtop) -- (gout) -- (gend);
   \end{tikzpicture}.
  \end{center}
When two cozeros are composed at the bottom of this diagram, the cut strings untwist by the
naturality of the braiding:
  \begin{center}
    \scalebox{0.80}{
   \begin{tikzpicture}[thick,node distance=0.5cm]
   \node [coordinate] (fstart) {};
   \node [coordinate] (ftop) [below of=fstart] {};
   \node (center) [below right of=ftop] {};
   \node [coordinate] (fout) [below right of=center] {};
   \node [zero] (fend) [below of=fout] {};
   \node [coordinate] (gtop) [above right of=center] {};
   \node [coordinate] (gstart) [above of=gtop] {};
   \node [coordinate] (gout) [below left of=center] {};
   \node [zero] (gend) [below of=gout] {};

   \draw [rounded corners=0.25cm] (fstart) -- (ftop) -- (center) --
   (fout) -- (fend) (gstart) -- (gtop) -- (gout) -- (gend);

   \node (eq1) [below right of=gtop, shift={(0.5,0)}] {\(=\)};
   \node [zero] (ftop1) [above right of=eq1, shift={(0.5,0)}] {};
   \node [coordinate] (fstart1) [above of=ftop1] {};
   \node (center1) [below right of=ftop1] {};
   \node [coordinate] (gtop1) [above right of=center1] {};
   \node [coordinate] (gstart1) [above of=gtop1] {};
   \node [coordinate] (gout1) [below left of=center1] {};
   \node [zero] (gend1) [below of=gout1] {};

   \draw [rounded corners=0.25cm] (fstart1) -- (ftop1)
   (gstart1) -- (gtop1) -- (gout1) -- (gend1);

   \node (eq2) [below right of=gtop1, shift={(0.35,0)}] {\(=\)};
   \node [zero] (coz1) [right of=eq2, shift={(0.2,-0.5)}] {};
   \node [zero] (coz2) [right of=coz1] {};

   \draw (coz1) -- +(0,1) (coz2) -- +(0,1); 
   \end{tikzpicture}
    }.
  \end{center}
     \item Multiply a row by \(c \neq 0\):\\
The third row operation is multiplying an arbitrary row by a unit, but since \(k\) is a field, that
means any \(c \neq 0\).  This is just the scaling map on one of the outputs:
  \begin{center}
   \begin{tikzpicture}[thick]
   \node [multiply] (c) at (0,0) {\(c\)};
   \node (in) at (0,1) {\(y_\alpha\)};
   \node (out) at (0,-1) {\(c y_\alpha\)};

   \draw (in) -- (c) -- (out);
   \end{tikzpicture}.
  \end{center}
Because \(c\) is a unit, \(c^{-1} \in k\), so \(s_c\) can be replaced by the adjoint of scaling by
\(c^{-1}\).
  \begin{center}
    \scalebox{0.80}{
   \begin{tikzpicture}[thick]
   \node [multiply] (c) at (0,0) {\(c\)};
   \node (in) at (0,1) {};
   \node [zero] (out) at (0,-1) {};

   \draw (in) -- (c) -- (out);

   \node (eq1) at (0.8,0) {\(=\)};
   \node (rel1) at (0.8,-0.35) {(31)\({}^\dagger\)};
   \node [zero] (coz1) at (1.9,-1) {};
   \node [upmultiply] (c1) at (1.9,-0.1) {\(\!c^{-1}\!\)};

   \draw (coz1) -- (c1) -- (1.9,1);

   \node (eq2) at (3,0) {\(=\)};
   \node (rel2) at (3,-0.35) {(16)\({}^\dagger\)};
   \node [zero] (coz2) at (3.7,-0.5) {};

   \draw (coz2) -- +(0,1);
   \end{tikzpicture}
    }
  \end{center}
     \end{itemize}
\hfill
\end{proof}
\pagebreak 

Given the PROP \(\relk\), it is natural to consider the free PROP \(\sigflow_k\), which is defined by
the same generators, but has no equations.  The morphisms in this PROP are signal-flow
diagrams\footnote{Because a free PROP must still respect the equations of the symmetry, our
signal-flow diagrams are isomorphism classes of string diagrams.}.  The general considerations of
Chapter~\ref{PROPs} give a functor from \(\sigflow_k\) to any other PROP that has the same
generators, which `imposes the equations' of the target PROP.  In particular, we get a \(\prop\)
morphism \(\bbox \maps \sigflow_k \to \relk\).  In the state-space context, control theorists are
only interested in a certain subcollection of signal-flow diagrams, which correspond to the
state-space equations:
\begin{align*}
\dot{x} &= Ax + Bu \\
y       &= Cx + Du,
\end{align*}
where \(A\), \(B\), \(C\) and \(D\) are linear maps; and \(u\), \(y\), and \(x\) are input, output,
and state vectors, respectively.  We formalize this correspondence in Chapter~\ref{goodflow}.
Intuitively, \(\sigflow\) is `too big' and \(\finrel\) is `too small', so we consider two ways to
get a Goldilocks PROP:  \(\st\) and \(\goodflow\).

The PROP \(\goodflow\) is the most coarse subPROP of \(\sigflow\) whose morphisms include the
subcollection of signal-flow diagrams that correspond to the state-space equations.  We show
\(\goodflow\) does not have any morphisms outside of this subcollection.  While, roughly speaking,
\(\goodflow\) is a way to `shrink' \(\sigflow\), \(\st\) is a way to `grow' \(\finrel\).  These two
approaches are explored in further detail in the following two chapters.

\section{An example}
\label{example}
  \begin{figure}[!h]
    \centering

\begin{tikzpicture}[>=stealth']
\draw (0,0) -- (7,0);
\foreach \x in {0.15,0.4,...,7}
  \draw (\x,0) -- +(245:0.3);
\fill[black!33] (2.5,0.4) circle (0.4);
\fill[black!33] (4.6,0.4) circle (0.4);
\draw[black!33] (3.55,1) -- +(90:5);
\fill[black!33] (3.55,1.7) circle (0.2);
\draw (3.55,1.7) circle (0.2);
\draw[->] (5,1.25) -- +(0:2);
\fill[black!33] (1.9,0.8) rectangle (5.2,1.7);
\draw (2.5,0.4) circle (0.4);
\draw (4.6,0.4) circle (0.4);
\draw (1.9,0.8) rectangle (5.2,1.7);
\filldraw[fill=black!33] (3.52,1.75) -- (1.52,5) -- (1.59,5.06) -- (3.59,1.81) -- (3.52,1.75);
\fill (1.555,5.03) circle (0.4);
\draw[<->] (3.55,4.2) arc (90:119.5:2.5);
\draw[<->] (0.5,3.3) -- (0.5,2.3) -- (1.5,2.3);
\node at (2.85,4.4) {\(\theta\)};
\node at (2.15,5.4) {\(m\)};
\node at (2.35,3.05) {\(\ell\)};
\node at (1.53,1.25) {\(M\)};
\node at (1.5,2.05) {\(x\)};
\node at (0.75,3.3) {\(y\)};
\node at (6,1.5) {\(F\)};
\end{tikzpicture}
    \caption[Schematic diagram of an inverted pendulum]{Schematic diagram of an inverted pendulum.\label{invpend}}
  \end{figure}
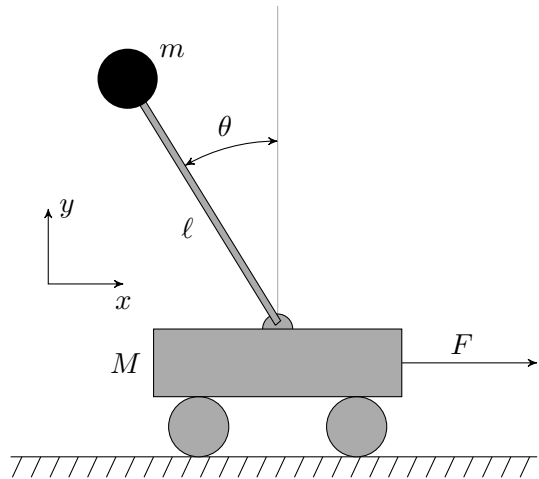
A famous example in control theory is the `inverted pendulum': an upside-down pendulum on a cart
\cite{Friedland}.  The pendulum naturally tends to fall over, but we can stabilize it by setting up
a feedback loop where we observe its position and move the cart back and forth in a suitable way
based on this observation.  Without introducing this feedback loop, let us see how signal-flow
diagrams can be used to describe the pendulum and the cart.  We shall see that the diagram for a
system made of parts is built from the diagrams for the parts, not merely by composing and
tensoring, but also with the help of duplication and coduplication, which give additional ways to
set variables equal to one another.

Suppose the cart has mass \(M\) and can only move back and forth in one direction, so its
position is described by a function \(x(t)\).  If it is acted on by a total force 
\(F_{\mathrm{net}}(t)\) then Newton's second law says 
\[
F_{\mathrm{net}}(t) = M \ddot{x}(t)  .
\]
We can thus write a signal-flow diagram with the force as input and the cart's position as output:
  \begin{center}
   \begin{tikzpicture}[thick]
   \node[coordinate] (q) [label={[shift={(0,-0.6)}]\(x\)}] {};
   \node [integral] (diff) [above of=q] {\(\int\)};
   \node (v) [above of=diff, label={[shift={(0.4,-0.5)}]\(\dot x\)}] {};
   \node [integral] (dot) [above of=v] {\(\int\)};
   \node (a) [above of=dot, label={[shift={(0.4,-0.5)}]\(\ddot{x}\)}] {};
   \node [multiply] (M) [above of=a] {\(\frac{1}{M}\)};
   \node[coordinate] (F) [above of=M, label={[shift={(0,0)}]\(F_{\mathrm{net}}\)}] {};

   \draw (F) -- (M) -- (dot) -- (diff) -- (q);
   \end{tikzpicture}.
  \end{center}

The inverted pendulum is a rod of length \(\ell\) with a mass \(m\) at
its end, mounted on the cart and only able to swing back and forth in
one direction, parallel to the cart's movement.  If its angle from
vertical, \(\theta(t)\), is small, then its equation of motion is
approximately linear:
\[
   \ell \ddot{\theta}(t) = g \theta(t) - \ddot{x}(t) ,
\]
where \(g\) is the gravitational constant.  We can turn this equation
into a signal-flow diagram with \(\ddot{x}\) as input and \(\theta\) as output:
\begin{center}
\begin{tikzpicture}[thick]
   \node [multiply] (linverse) at (0,0) {\(-\frac{1}{\ell}\)};
   \node [plus] (adder) at (-0.5,-2) {};
   \node [integral] (int1) at (-0.5,-3) {\(\int\)};
   \node [upmultiply] (goverl) at (-2,-3.5) {\(\frac{g}{\ell}\)};
   \node [integral] (int2) at (-0.5,-4.3) {\(\int\)};
   \node [delta] (split) at (-0.5,-5.5) {};
   \node at (0 em,1.3) {\(\ddot{x}\)}; 
   \node at (0 em,-7.4) {\(\theta\)}; 

   \draw (linverse) -- (0,1)
         (linverse.io) .. controls +(270:0.8) and +(60:0.7) .. (adder.right in)
         (adder.io) -- (int1) -- (int2) -- (split)
         (goverl.io) .. controls +(90:1.5) and +(120:1) .. (adder.left in)
         (split.right out) .. controls +(300:0.7) and +(90:1.3) .. (0,-7)
         (split.left out) .. controls +(240:1) and +(270:1.5) .. (goverl.270);
\end{tikzpicture}.
\end{center}
Note that this already includes a kind of feedback loop, since the pendulum's angle affects
the force on the pendulum.  

Finally, there is an equation describing the total force on the cart:
\[ F_{\mathrm{net}}(t)  = F(t) - m g \theta(t) , \]
where \(F(t)\) is an externally applied force and \(-mg\theta(t)\) is the force due to the 
pendulum.  It will be useful to express this as follows:
\begin{center}
\begin{tikzpicture}[thick]
   \node [plus] (differ) at (0,0) {};
   \node [upmultiply] (mg) at (1.5,-1.5) {\(\scriptstyle{-mg}\)};
   \node at (0,-3.5) {\(F_{\mathrm{net}}\)}; 
   \node at (1.5,-3.5) {\(\theta\)}; 
   \node at (-0.5,1.8) {\(F\)}; 

   \draw (mg.io) .. controls +(90:1.5) and +(60:1) .. (differ.right in)
         (differ.left in) .. controls +(120:.7) and +(270:.7) .. (-0.5,1.5)
         (differ.io) -- (0,-3.2) 
         (mg) -- (1.5,-3.2);
\end{tikzpicture}.
\end{center}
Here we are treating \(\theta\) as an output rather than an input, with the help of a cap.

The three signal-flow diagrams above describe the following linear relations:
\begin{eqnarray}
x &=& \int \int \frac{1}{M}  F_{\mathrm{net}}   \label{eq.1}  \\
\theta &=& \int \int 
\left(\frac{g}{\ell} \,\theta - \frac{1}{\ell}\, \ddot{x}\right)  \label{eq.2} \\
 F_{\mathrm{net}} + mg \theta &=&  F \label{eq.3},
\end{eqnarray}
where we treat \eqref{eq.1} as a linear relation with \(F_{\mathrm{net}}\) as input and
\(x\) as output, \eqref{eq.2} as a linear relation with \(\ddot{x}\) as input and \(\theta\)
as output, and \eqref{eq.3} as a linear relation with \(F\) as input and \((F_{\mathrm{net}}, 
\theta)\) as output.   

To understand how the external force affects the position of the cart and the angle of the pendulum,
we wish to combine all three diagrams to form a signal-flow diagram that has the external force
\(F\) as input and the pair \((x, \theta)\) as output.  This is not just a simple matter of
composing and tensoring the three diagrams.  We can take \(F_{\mathrm{net}}\), which is an output of
\eqref{eq.3}, and use it as an input for \eqref{eq.1}.  But we also need to duplicate \(\ddot{x}\),
which appears as an intermediate variable in \eqref{eq.1} since \(\ddot{x} = \frac{1}{M}
F_{\mathrm{net}}\), and use it as an input for \eqref{eq.2}.  Finally, we need to take the variable
\(\theta\), which appears as an output of both \eqref{eq.2} and \eqref{eq.3}, and identify the two
copies of this variable using coduplication.  To emphasize the relational nature of the component,
we shall write coduplication in terms of duplication and a cup, as follows:
\begin{center}
\begin{tikzpicture}[thick]
\node [codelta] (split) at (-2,0) {};
\node [] at (-1,0) {\(=\)};
 \node [delta] (theta) at (0,0) {};
\draw     (theta.left out) .. controls +(240:0.5) and +(90:0.4) .. (-0.4,-1)
         (theta.right out) .. controls +(300:1) and +(270:2) .. (1,1)
         (theta.io) -- (0,1)
         (split.left in) .. controls +(120:0.6) .. (-2.5,1)
         (split.right in) .. controls +(60:0.6) .. (-1.5,1)
         (split.io) -- (-2,-1);
\end{tikzpicture}.
\end{center}

The result is this signal-flow diagram:
\begin{center}
\begin{tikzpicture}[thick]
   \node [plus] (differ) at (0,0) {};
   \node [upmultiply] (mg) at (1.5,-1.5) {\(\scriptstyle{-mg}\)};
   \node [delta] (split) at (0,-2.5) {};
   \node [multiply] (Minv) at (0,-1) {\(\frac{1}{M}\)};
   \node at (-0.5,1.8) {\(F\)}; 

   \draw (mg.io) .. controls +(90:1.5) and +(60:1) .. (differ.right in)
         (differ.left in) .. controls +(120:.7) and +(270:.7) .. (-0.5,1.5)
         (differ.io) -- (Minv.90)
         (Minv.io) -- (split.io)
         (mg.270) .. controls +(270:0.6) and +(90:0.6) .. (3.5,-4);

   \node [integral] (int1) at (-0.5,-5.5) {\(\int\)};
   \node [integral] (int2) at (-0.5,-8) {\(\int\)};
   \node at (-0.5 cm,-13.3) {\(x\)}; 

   \draw (split.left out) .. controls +(240:0.7) and +(90:1) .. (int1.90)
         (int1.io) -- (int2.90)
         (int2.io) -- (-0.5,-13);

   \node [multiply] (linverse) at (2.5,-5) {\(-\frac{1}{\ell}\)};
   \node [plus] (adder) at (2,-7) {};
   \node [integral] (int3) at (2,-8) {\(\int\)};
   \node [upmultiply] (goverl) at (0.75,-8.5) {\(\frac{g}{\ell}\)};
   \node [integral] (int4) at (2,-9.3) {\(\int\)};
   \node [delta] (split2) at (2,-10.5) {};
   \node [delta] (theta) at (2.5,-11.5) {};
   \node at (2 cm,-13.3) {\(\theta\)}; 

   \draw (split.right out) .. controls +(300:1) and +(90:1) .. (linverse.90) 
         (linverse.io) .. controls +(270:0.6) and +(60:0.6) .. (adder.right in)
         (adder.io) -- (int3) -- (int4) -- (split2)
         (goverl.io) .. controls +(90:1.5) and +(120:1) .. (adder.left in)
         (split2.right out) .. controls +(300:0.5) and +(90:0.5) .. (theta)
         (theta.left out) .. controls +(240:0.7) and +(90:0.7) .. (2,-13)
         (theta.right out) .. controls +(300:1) and +(270:1.5) .. (3.5,-10.5)
         (3.5,-10.5) -- (3.5,-4)
         (split2.left out) .. controls +(240:1) and +(270:1.5) .. (goverl.270);
\end{tikzpicture}.
\end{center}

This is not the signal-flow diagram for the inverted pendulum that one sees in
Friedland's textbook on control theory \cite{Friedland}.  We leave it as an exercise to the reader to 
rewrite the above diagram using the rules given in this paper, obtaining Friedland's diagram:

\begin{center}
\begin{tikzpicture}[thick]
   \node [delta] (F) at (0,10) {};
   \node at (0,11.3) {\(F\)}; 
   \node [multiply] (Minv) at (-1.75,8) {\(\frac{1}{M}\)};
   \node [multiply] (Mlinv) at (1.75,8) {\(\frac{-1}{M\ell}\)};
   \node [plus] (xsum) at (-1.25,6) {};
   \node [plus] (thsum) at (2.25,6) {};
   \node [integral] (i1) at (-1.25,5) {\(\int\)};
   \node [integral] (i2) at (-1.25,3.5) {\(\int\)};
   \node at (-1.25,-1.3) {\(x\)}; 
   \node [integral] (i3) at (2.25,5) {\(\int\)};
   \node [integral] (i4) at (2.25,3.5) {\(\int\)};
   \node [upmultiply] (mgM) at (0.5,4.25) {\(-\frac{mg}{M}\)};
   \node [upmultiply] (mess) at (4.5,3.25) {\(\!\!\frac{(M+m)g}{M\ell}\!\!\)};
   \node [delta] (theta1) at (2.25,2) {};
   \node [delta] (theta2) at (2.75,0.5) {};
   \node at (2.25,-1.3) {\(\theta\)}; 

   \draw (F) -- (0,11)
         (F.left out) .. controls +(240:0.7) and +(90:0.7) .. (Minv.90)
         (F.right out) .. controls +(300:0.7) and +(90:0.7) .. (Mlinv.90)
         (Minv.io) .. controls +(270:0.7) and +(120:0.7) .. (xsum.left in)
         (Mlinv.io) .. controls +(270:0.7) and +(120:0.7) .. (thsum.left in)
         (mgM.io) .. controls +(90:1.7) and +(60:1) .. (xsum.right in)
         (mess.io) .. controls +(90:2.5) and +(60:1) .. (thsum.right in)
         (xsum) -- (i1) -- (i2) -- (-1.25,-1)
         (thsum) -- (i3) -- (i4) -- (theta1)
         (mgM.270) .. controls +(270:1.7) and +(240:1.7) .. (theta1.left out)
         (mess.270) .. controls +(270:1.7) and +(300:1.7) .. (theta2.right out)
         (theta2.io) .. controls +(90:0.7) and +(300:0.7) .. (theta1.right out)
         (theta2.left out) .. controls +(240:0.7) and +(90:0.7) .. (2.25,-1)
;
\end{tikzpicture}.
\end{center}
As a start, one can use Theorem \ref{presrk} to prove that it is indeed possible to do this rewriting.  
To do this, simply check that both signal-flow diagrams define the same linear relation.  The proof of
the theorem gives a method to actually do the rewriting---but not necessarily the fastest method.

\section{Related work}
\label{vectrelated}
We conclude this chapter with some remarks aimed at setting it in context.  This chapter is heavily
based on \cite{BE}, so we would like to focus on comparisons with other papers published around the
same time.  On April 30th, 2014, after much of \cite{BE} was written, Soboci\'nski told Baez about
some closely related papers that he wrote with Bonchi and Zanasi \cite{BSZ1,BSZ2}.  These provide
interesting characterizations of symmetric monoidal categories equivalent to \(\Vectk\) and
\(\Relk\).  Later, while \cite{BE} was being refereed, Wadsley and Woods \cite{WW} generalized the
presentation of \(\vectk\) to the case where \(k\) is any commutative rig.  We discuss Wadsley and
Woods' work first, since doing so makes the exposition simpler.

What we have called \(\vectk\) here, Wadsley and Woods looked at from a slightly different
perspective, getting an isomorphic PROP \(\Mat(k)\), where a morphism \(f \maps m \to n\) is an \(n
\times m\) matrix with entries in \(k\), composition of morphisms is given by matrix multiplication,
and the tensor product of morphisms is the direct sum of matrices.  Wadsley and Woods gave an
elegant description of the algebras of \(\Mat(k)\).  Suppose \(\P\) is a PROP and \(\propQ\) is a
strict symmetric monoidal category.  Then the \Define{category of algebras} of \(\P\) in \(\propQ\)
is the category of strict symmetric monoidal functors \(F \maps \P \to \propQ\) and natural
transformations between these.  If for every choice of \(\propQ\) the category of algebras of \(\P\)
in \(\propQ\) is equivalent to the category of algebraic structures of some kind in \(\propQ\), we
say \(\P\) is the PROP for structures of that kind.

In this language, Wadsley and Woods proved that \(\Mat(k)\) is the PROP for `bicommutative bimonoids
over \(k\)'.  To understand this, first note that for any bicommutative bimonoid \(A\) in
\(\propQ\), the bimonoid endomorphisms of \(A\) can be added and composed, giving a rig \(\End(A)\).
A bicommutative bimonoid \Define{over \(k\)} in \(\propQ\) is one equipped with a rig homomorphism
\(\Phi_A \maps k \to \End(A)\).  Bicommutative bimonoids over \(k\) form a category where a
morphism \(f \maps A \to B\) is a bimonoid homomorphism compatible with this extra structure,
meaning that for each \(c \in k\) the square
\begin{center}
\begin{tikzpicture}[node distance=2cm,->]
  \node (A1) at (0,0) {\(A\)};
  \node (B1) [below of=A1] {\(B\)};
  \node (A2) [right of=A1] {\(A\)};
  \node (B2) [below of=A2] {\(B\)};

  \draw (A1) to node[above] {\(\Phi_A(c)\)} (A2);
  \draw (B1) to node[below] {\(\Phi_B(c)\)} (B2);
  \draw (A1) to node[left] {\(f\)} (B1);
  \draw (A2) to node[right] {\(f\)} (B2);
\end{tikzpicture}
\end{center}
commutes.  Wadsley and Woods proved that this category is equivalent to the category of algebras of
\(\Mat(k)\) in \(\propQ\).

This result amounts to a succinct restatement of Theorem~\ref{presvk}, though technically the
result is a bit different, and the style of proof much more so.  The fact that an algebra of
\(\Mat(k)\) is a bicommutative bimonoid is equivalent to our equations
{\hyperref[eqn123]{\textbf{(1)--(10)}}}.  The fact that \(\Phi_A(c)\) is a bimonoid homomorphism for
all \(c \in k\) is equivalent to equations {\hyperref[eqn1516]{\textbf{(15)--(18)}}}, and the fact
that \(\Phi\) is a rig homomorphism is equivalent to equations
{\hyperref[eqn11121314]{\textbf{(11)--(14)}}}.  

Even better, Wadsley and Woods showed that \(\Mat(k)\) is the PROP for bicommutative bimonoids over
\(k\) whenever \(k\) is a commutative rig.  Subtraction and division are not required to define the
PROP \(\Mat(k)\), nor are they relevant to the definition of bicommutative bimonoids over \(k\).
Working with commutative rigs is not just generalization for the sake of generalization: it
clarifies some interesting facts.

For example, the commutative rig of natural numbers gives a PROP \(\Mat(\N)\).  This is equivalent
to the symmetric monoidal category where morphisms are isomorphism classes of spans of finite sets,
with disjoint union as the tensor product.  Lack \cite[Ex.\ 5.4]{Lack} had already shown that this
is the PROP for bicommutative bimonoids.  But this also follows from the result of Wadsley and
Woods, since every bicommutative bimonoid \(A\) is automatically equipped with a unique rig
homomorphism \(\Phi_A \maps \N \to \End(A)\).

Similarly, the commutative rig of booleans \(\B = \{F,T\}\), with `or' as addition and `and' as
multiplication, gives a PROP \(\Mat(\B)\).  This is equivalent to the symmetric monoidal category
where morphisms are relations between finite sets, with disjoint union as the tensor product.
Mimram \cite[Thm.\ 16]{Mimram} had already shown this is the PROP for \Define{special}
bicommutative bimonoids, meaning those where comultiplication followed by multiplication is the
identity:
\begin{center}
   \begin{tikzpicture}[thick]
   \node [plus] (sum) at (0.4,-0.5) {};
   \node [delta] (cosum) at (0.4,0.5) {};
   \node [coordinate] (in) at (0.4,1) {};
   \node [coordinate] (out) at (0.4,-1) {};
   \node (eq) at (1.3,0) {\(=\)};
   \node [coordinate] (top) at (2,1) {};
   \node [coordinate] (bottom) at (2,-1) {};

   \path (sum.left in) edge[bend left=30] (cosum.left out)
   (sum.right in) edge[bend right=30] (cosum.right out);
   \draw (top) -- (bottom)
   (sum.io) -- (out)
   (cosum.io) -- (in);
   \end{tikzpicture}.
  \end{center}
But again, this follows from the general result of Wadsley and Woods.

Finally, taking the commutative ring of integers \(\Z\), Wadsley and Woods showed that \(\Mat(\Z)\)
is the PROP for bicommutative Hopf monoids.  The key here is that scaling by \(-1\) obeys the axioms
for an antipode, namely:
  \begin{center}
   \begin{tikzpicture}[thick]
  \node [delta] (cosum) at (0,1.8) {};
   \node [multiply] (times) at (-0.34,0.97) {\tiny \(-1\)};
   \node [plus] (sum) at (0,0) {};

   \draw
   (cosum.io) -- +(0,0.3) (sum.io) -- +(0,-0.3)
   (cosum.right out) .. controls +(300:0.5) and +(60:0.5) .. (sum.right in)
   (cosum.left out) .. controls +(240:0.15) and +(90:0.15) .. (times.90)
   (times.io) .. controls +(270:0.15) and +(120:0.15) .. (sum.left in);

   \node (eq) at (1.2,0.9) {\(=\)};
   \node [bang] (bang) at (2.1,1.3) {};
   \node [zero] (cobang) at (2.1,0.5) {};

   \draw (bang) -- +(0,1.02) (cobang) -- +(0,-1.02);

   \node (eq) at (3,0.9) {\(=\)};
   \node [delta] (cosum) at (4.2,1.8) {};
   \node [multiply] (times) at (4.54,0.97) {\tiny \(-1\)};
   \node [plus] (sum) at (4.2,0) {};

   \draw
   (cosum.io) -- +(0,0.3) (sum.io) -- +(0,-0.3)
   (cosum.left out) .. controls +(240:0.5) and +(120:0.5) .. (sum.left in)
   (cosum.right out) .. controls +(300:0.15) and +(90:0.15) .. (times.90)
   (times.io) .. controls +(270:0.15) and +(60:0.15) .. (sum.right in);
  
   \end{tikzpicture}.
   \end{center}
More generally, whenever \(k\) is a commutative ring, the presence of \(-1 \in k\) guarantees that
a bimonoid over \(k\) is automatically a Hopf monoid over \(k\).  So, when \(k\) is a commutative
ring, Wadsley and Woods' result implies that \(\Mat(k)\) is the PROP for Hopf monoids over \(k\).  

Earlier, Bonchi, Soboci\'nski and Zanasi gave an elegant and very different proof that \(\Mat(R)\)
is the PROP for Hopf monoids over \(R\) when \(R\) is a principal ideal domain
\cite[Prop.\ 3.7]{BSZ1}.  The advantage of their argument is that they build up the PROP for Hopf
monoids over \(R\) from smaller pieces, using some ideas developed by Lack \cite{Lack}.

These authors also proved that \(\relk\) is a pushout in the category \(\prop\) of PROPs and
\(\prop\) morphisms:
\begin{center}
\begin{tikzpicture}[node distance=2cm]
  \node (coproduct) at (-0.5,2) {\(\Mat(R) + \Mat(R)^{\mathrm{op}}\)};
  \node (cospan) [below of=coproduct] {\(\mathrm{Cospan}(\Mat(R))\)};
  \node (span) at (3.5,2) {\(\mathrm{Span}(\Mat(R))\)};
  \node (svk) [below of=span] {\(\relk\)};

  \draw[->] (coproduct) to (cospan);
  \draw[->] (coproduct) to (span);
  \draw[->] (cospan) to (svk);
  \draw[->] (span) to (svk);
  \draw (2.77,0.73) +(0,-0.57) -- +(0,0) -- +(0.57,0);
\end{tikzpicture}.
\end{center}

This pushout square requires a bit of explanation.
Here \(R\) is any principal ideal domain whose field of fractions is \(k\).  For example, we 
could take \(R = k\), though Bonchi, Soboci\'nski and Zanasi  are more interested in the example
where  \(R = \R[s]\) and \(k = \R(s)\).  A morphism in \(\mathrm{Span}(\Mat(R))\) is an isomorphism 
class of spans in \(\Mat(R)\).  There is a covariant functor 
\[      \begin{array}{ccc} \Mat(R) &\to& \mathrm{Span}(\Mat(R))   \\ 
                                    m \stackrel{f}{\to} n &\mapsto & m \stackrel{1}{\leftarrow} m \stackrel{f}{\to} n \end{array} \]
and also a contravariant functor
\[      \begin{array}{ccc} \Mat(R) &\to& \mathrm{Span}(\Mat(R))   \\ 
                                    m \stackrel{f}{\to} n &\mapsto & n \stackrel{f}{\leftarrow} m \stackrel{1}{\to} m. \end{array} \]
Putting these together we get the functor from \(\Mat(R) + \Mat(R)^{\mathrm{op}}\) to
\(\mathrm{Span}(\Mat(R))  \) that gives the top edge of the square.  Similarly, a morphism in
\(\mathrm{Cospan}(\Mat(R))\) is an isomorphism class of cospans in \(\Mat(R)\), and we have both a
covariant functor 
\[      \begin{array}{ccc} \Mat(R) &\to& \mathrm{Cospan}(\Mat(R))   \\
                                     m \stackrel{f}{\to} n &\mapsto & m \stackrel{f}{\rightarrow} n \stackrel{1}{\leftarrow} n \end{array} \]
and a contravariant functor
\[      \begin{array}{ccc} \Mat(R) &\to& \mathrm{Cospan}(\Mat(R)) \\ 
                                    m \stackrel{f}{\to} n &\mapsto & n \stackrel{1}{\rightarrow} n \stackrel{f}{\leftarrow} m. \end{array} \]
Putting these together we get the functor from \( \Mat(R) + \Mat(R)^{\mathrm{op}} \) to
\( \mathrm{Cospan}(\Mat(R)) \) that gives the left edge of the square. 

Bonchi, Soboci\'nski and Zanasi analyze this pushout square in detail, giving explicit
presentations for each of the PROPs involved, all based on their presentation of \(\Mat(R)\).  The
upshot is a presentation of \(\relk\) which is very similar to our presentation of \(\relk\).  Their
methods allow them to avoid many, though not all, of the lengthy arguments that involve putting
morphisms in standard form.

\chapter{The PROP $\st$}
\label{stateful}

\section{Constructing categories of state}
\label{statecats}
In the late 1950s and early 1960s, Kalman worked on the state-space approach to control theory:  In
1960 \cite{Kalman60} he introduced the concepts of \textit{controllability} and
\textit{observability} into control theory, showing in 1963 \cite{Kalman63} how these concepts can
be used to decompose an arbitrary linear control system into four parts.  He showed in the
time-invariant case, these four parts are actual subsystems.  Earlier work in control theory had
focused on \textit{transfer functions}, defined as the ratio of a transform (typically the Laplace
transform) of the output of a system by the transform of the input of the system.  Kalman showed
transfer functions only capture the part of the system that is both controllable and observable:
uncontrollable parts of the system do not depend on the input, and unobservable parts of the system
do not affect the output.

The morphisms of \(\vectk\) and \(\relk\) are completely determined by how the input relates to the
output, so while they are reasonable models for the frequency analysis approach and its transfer
functions, they are unsatisfactory as models for the state-space approach.  Our goal here is to
define new categories based on \(\vectk\) and \(\relk\) to address the shortcomings of these PROPs
in the state-space context for linear time-invariant systems.

Kalman's concepts of controllability and observability apply to systems of differential equations of the form
\[
\dot{x}(t) = A x(t) + B u(t)
\]
\[
y(t) = C x(t) + D u(t),
\]
where \(u(t)\) is the input vector, \(y(t)\) is the output vector, and \(x(t)\) is the state
vector.  Written as a signal-flow diagram, such a system looks like
\begin{center}
 \begin{tikzpicture}[thick]
   \node [delta] (usplit) at (-0.5,3) {};
   \node [upmultiply] (A) at (-2.6,-0.15) {\(A\)};
   \node [multiply] (B) at (-1,2) {\(B\)};
   \node [plus] (xdotsum) at (-1.5,1) {};
   \node [multiply] (int) at (-1.5,0) {\(\int\)};
   \node [delta] (xsplit) at (-1.5,-1) {};
   \node [multiply] (C) at (-1,-2) {\(C\)};
   \node [multiply] (D) at (0,0) {\(D\)};
   \node [plus] (ysum) at (-0.5,-3) {};

   \node [coordinate] (capend) [above of=A] {};
   \node [coordinate] (cupend) [below of=A, shift={(0,0.2)}] {};
   \node [coordinate] (ubend) [right of=B] {};
   \node [coordinate] (ybend) [right of=C, shift={(0,-0.2)}] {};

   \draw (usplit) -- +(0,0.6)
         (ysum) -- +(0,-0.6)
         (xsplit.left out) .. controls +(240:0.7) and +(270:0.5) .. (cupend)
         (xdotsum.left in) .. controls +(120:0.7) and +(90:0.5) .. (capend)
         (capend) -- (A) -- (cupend)
         (xsplit.right out) .. controls +(300:0.2) and +(90:0.2) .. (C.90)
         (C.270) .. controls +(270:0.2) and +(120:0.2) .. (ysum.left in)
         (ybend) .. controls +(270:0.3) and +(60:0.3) .. (ysum.right in)
         (ybend) -- (D) -- (ubend)
         (usplit.right out) .. controls +(300:0.5) and +(90:0.5) .. (ubend)
         (usplit.left out) .. controls +(240:0.2) and +(90:0.2) .. (B.90)
         (B.270) .. controls +(270:0.2) and +(60:0.2) .. (xdotsum.right in)
         (xdotsum) -- (int) -- (xsplit)
   ;
 \end{tikzpicture}
\end{center}
After taking Laplace transforms, we may also write the output \(y\) in terms of the input \(u\)
simply as
\[   y = (D+C(sI-A)^{-1}B) u .\]
Here the term \(Du\) gives the `direct' dependence of output on input, while the other term,
\(C(sI-A)^{-1}B) u\), gives its `indirect' dependence, mediated by the state \(x\).  We can
visualize this split into direct and indirect terms as a noncommuting square 
\begin{center}
\begin{tikzpicture}[->]
\node (A) at (0,0) {\(V_1\)};
\node (S) at (0,1.8) {\(S\)};
\node (T) at (2.2,1.8) {\(T\)};
\node (B) at (2.2,0) {\(V_2\)};

\path
(A) edge node[below] {\(D\)} (B)
edge node[left] {\(B\)} (S)
(S) edge node[above] {\((sI-A)^{-1}\)} (T)
(T) edge node[right] {\(C\)} (B)
;
\end{tikzpicture},
\end{center}
where \(D\) goes directly from the vector space \(V_1\) containing the input to the space \(V_2\) containing the output, while the other arrows compose to give the `indirect' map \(C(sI-A)^{-1}B\).  More abstractly, we can write such a square simply as
\begin{center}
\begin{tikzpicture}[->]
\node (A) at (0,0) {\(V_1\)};
\node (S) at (0,1.5) {\(S\)};
\node (T) at (1.5,1.5) {\(T\)};
\node (B) at (1.5,0) {\(V_2\)};

\path
(A) edge node[below] {\(d\)} (B)
edge node[left] {\(b\)} (S)
(S) edge node[above] {\(a\)} (T)
(T) edge node[right] {\(c\)} (B)
;
\end{tikzpicture},
\end{center}
Squares of this form serve as the morphisms in the PROPs we consider now.

\section{The Box construction}
\label{boxvsection}
In pursuit of this goal, we define a new construction that, for a suitable symmetric monoidal
category \(\catC\), forms a new symmetric monoidal category \(\boxful\).  The full details of how
this works can be found in Appendix~\ref{generalbox}.  For this section we will focus on the
particular case of the PROP \(\boxv\).

\begin{definition}
The category \(\boxv\) has
\begin{itemize}
  \item the same objects as \(\vectk\) (\emph{i.e.}\ vector spaces \(k^n\)),
  \item morphisms that are equivalence classes of
  \begin{center}
    \begin{tikzpicture}[->]
     \node (V)  at (0,0) {\(V_1\)};
     \node (VV) at (1.7,0) {\(V_1 \oplus V_1\)};
     \node (WS) at (4.4,0) {\(V_1 \oplus S\)};
     \node (WT) at (7.1,0) {\(V_1 \oplus T\)};
     \node (WW) at (9.4,0) {\(V_2 \oplus V_2\)};
     \node (W)  at (11.1,0) {\(V_2\)};

     \path
      (V)  edge node[above] {\(\Delta\)}                      (VV)
      (VV) edge node[above] {\(\mathrm{id}_{V_1} \oplus b\)} (WS)
      (WS) edge node[above] {\(\mathrm{id}_{V_1} \oplus a\)} (WT)
      (WT) edge node[above] {\(d \oplus c\)}                 (WW)
      (WW) edge node[above] {\(m\)}                           (W);
    \end{tikzpicture},
  \end{center}
  abbreviated \((d,c,a,b)\),
  \item composition given by
    \[(d',c',a',b') \of (d,c,a,b) = \left( d'd, [d'c \quad c'],
   \left[ \begin{array}{cc}
   a      & 0 \\
   a'b'ca & a'  \end{array} \right], 
   \left[ \begin{array}{c}
   b \\ b'd \end{array} \right] \right).\]
\end{itemize}
\end{definition}
The morphisms of \(\boxv\) can be depicted as non-commuting squares:
\begin{center}
    \begin{tikzpicture}[->]
     \node (A) at (0,0) {\(V_1\)};
     \node (S) at (0,1.5) {\(S\)};
     \node (T) at (1.5,1.5) {\(T\)};
     \node (B) at (1.5,0) {\(V_2\)};

     \path
      (A) edge node[below] {\(d\)} (B)
          edge node[left] {\(b\)} (S)
      (S) edge node[above] {\(a\)} (T)
      (T) edge node[right] {\(c\)} (B)
;
    \end{tikzpicture},
\end{center}
which explains the Box in the name of the category.  Two squares, \((d,c,a,b)\) and
\((d',c',a',b')\) are in the same equivalence class if there are isomorphisms \(\alpha \maps S \to
S'\) and \(\omega \maps T \to T'\) in \(\vectk\) such that the following diagram in \(\vectk\)
commutes:
\begin{center}
 \begin{tikzpicture}[->]
  \node (A) at (0,0) {\(V_1\)};
  \node (S) at (-1,1.5) {\(S\)};
  \node (T) at (2.5,1.5) {\(T\)};
  \node (B) at (1.5,0) {\(V_2\)};
  \node (S1) at (-1,-1.5) {\(S'\)};
  \node (T1) at (2.5,-1.5) {\(T'\)};

  \path
   (A) edge node[below right] {\(b'\)} (S1)
       edge node[above right] {\(b\)} (S)
   (S) edge node[above] {\(a\)} (T)
       edge node[left] {\(\alpha\)} (S1)
   (T) edge node[above left] {\(c\)} (B)
       edge node[right] {\(\omega\)} (T1)
   (S1) edge node[above] {\(a'\)} (T1)
   (T1) edge node[below left] {\(c'\)} (B)
;
 \end{tikzpicture}.
\end{center}
Since \(\boxv\) has the same objects as \(\vectk\), it is clear that \(\boxv\) is a skeletal
category.

We refer to the objects \(S\) and \(T\) as the \emph{prestate space} and \emph{state space},
respectively.  The formula for composition of morphisms in \(\boxv\) can be understood as coming
from the diagram:
\begin{center}
 \begin{tikzpicture}[->]
  \node (A) at (0,0) {\(V_1\)};
  \path (A) +(108:2.5) node (S1) {\(S_1\)};
  \path (S1) +(0:2.5) node (T1) {\(T_1\)};
  \path (A) +(0:2.5) node (B) {\(V_2\)};
  \path (B) +(0:2.5) node (C) {\(V_3\)};
  \path (B) +(72:2.5) node (S2) {\(S_2\)};
  \path (S2) +(0:2.5) node (T2) {\(T_2\)};
  \path (S1) +(36:2.5) node (S3) {\(S_1 \oplus S_2\)};
  \path (S3) +(0:2.5) node (T3) {\(T_1 \oplus T_2\)};

  \path
   (T1) edge node[left] {\(c\)} (B)
   (T2) edge node[right] {\(c'\)} (C)
   (T3) edge node[right] {\(\pi_1\)} (T1)
        edge node[right] {\(\pi_2\)} (T2)
;
  \draw[white,line width=5pt,-] (S2) -- (S3);

  \path
   (A) edge node[below] {\(d\)} (B)
       edge node[left] {\(b\)} (S1)
   (B) edge node[below] {\(d'\)} (C)
       edge node[right] {\(b'\)} (S2)
   (S1) edge node[above] {\(a\)} (T1)
        edge node[left] {\(\iota_1\)} (S3)
   (S2) edge node[above] {\(a'\)} (T2)
        edge node[left] {\(\iota_2\)} (S3)
   (S3) edge node[above] {\(a \oplus a'\)} (T3)
;
 \end{tikzpicture}.
\end{center}
The \(b\) and \(c\) sides in the composite come from the pentagons on the left and right,
respectively.  That is, \(b\) comes from summing the paths from \(V_1\) to \(S_1 \oplus S_2\), and
\(c\) comes from summing the paths from \(T_1 \oplus T_2\) to \(V_3\).  Similarly, the \(a\) side in
the composite comes from the paths from \(S_1 \oplus S_2\) to \(T_1 \oplus T_2\), which includes
the direct path \(a \oplus a'\) (taking \(S_1\) to \(T_1\) and \(S_2\) to \(T_2\)) and a looped
path that goes through \(a \oplus a'\) twice (taking \(S_1\) to \(T_2\)).  The \(d\) side in the
composite comes from the most direct path from \(V_1\) to \(V_3\).

\begin{theorem}
The category \(\boxv\) is a monoidal category with direct sum of vector spaces as the monoidal
product on objects of \(\boxv\), and \((d,c,a,b) \oplus (d',c',a',b') = (d \oplus d', c \oplus c', a
\oplus a', b \oplus b')\) as the monoidal product on morphisms.
\end{theorem}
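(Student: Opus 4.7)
The plan is to reduce the claim to the already-known strict symmetric monoidal structure of \(\vectk\) under direct sum, using it to obtain both the tensor product data and the coherence isomorphisms for \(\boxv\). The only genuinely new content is checking (i) that the proposed $\oplus$ on morphisms respects the equivalence relation defining $\boxv$'s homsets, (ii) that $\oplus$ is a bifunctor, and (iii) that composing with identity squares behaves correctly on the state component. Everything else can be pulled back from $\vectk$.

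First I would verify well-definedness. Suppose $(d,c,a,b) \sim (\tilde d,\tilde c,\tilde a,\tilde b)$ via isomorphisms $\alpha\maps S\to\tilde S$, $\omega\maps T\to\tilde T$, and similarly $(d',c',a',b') \sim (\tilde d',\tilde c',\tilde a',\tilde b')$ via $\alpha'\maps S'\to\tilde S'$, $\omega'\maps T'\to\tilde T'$. Then $\alpha\oplus\alpha'$ and $\omega\oplus\omega'$ are isomorphisms in $\vectk$, and the needed commuting pentagon for the pair of squares built from $\oplus$ decomposes as the direct sum of the two given pentagons; hence $\oplus$ descends to equivalence classes.

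Next I would verify bifunctoriality. The identity on $V$ in $\boxv$ is the square $(\mathrm{id}_V, 0, 0, 0)$ whose prestate and state space may be taken to be $\{0\}$, and direct summing two such identities gives the identity on $V\oplus V'$ because $\{0\}\oplus\{0\}=\{0\}$ strictly in $\vectk$. The interchange law is the main technical step: comparing
\[
\bigl((d_1,c_1,a_1,b_1)\of(d_2,c_2,a_2,b_2)\bigr)\oplus\bigl((d_1',c_1',a_1',b_1')\of(d_2',c_2',a_2',b_2')\bigr)
\]
with
\[
\bigl((d_1,c_1,a_1,b_1)\oplus(d_1',c_1',a_1',b_1')\bigr)\of\bigl((d_2,c_2,a_2,b_2)\oplus(d_2',c_2',a_2',b_2')\bigr),
\]
I would unfold the composition formula on both sides. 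The two state spaces are $(T_2\oplus T_1)\oplus(T_2'\oplus T_1')$ and $(T_2\oplus T_2')\oplus(T_1\oplus T_1')$ respectively, with analogous prestate spaces. I would take for $\alpha$ and $\omega$ the canonical middle-two-swap isomorphism in $\vectk$ (obtained from the symmetry) and then check that under this reshuffling the four components $d,c,a,b$ agree. For $d$ and $d'$ parts this is immediate since no states are touched; for $b$ and $c$ parts one sees the column and row matrices match block-by-block; for the $a$ part the key point is that the $2\times 2$ block matrix containing the feedback term $a'b'ca$ in one arrangement becomes the block-diagonalized feedback in each summand separately, which is exactly the matrix conjugation induced by the swap. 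This is the main obstacle of the proof: getting the block matrices to line up correctly, since the off-diagonal feedback entries must be shown to reorganize properly under the middle swap.

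Finally I would check the monoidal axioms. On objects $\oplus$ is the addition of natural numbers (since $\boxv$ is skeletal with objects indexed as in $\vectk$), hence strictly associative and strictly unital with unit $0$. On morphisms, direct sum of state spaces and of the four maps is strictly associative in $\vectk$, so the candidate associator and unitors can all be taken to be (equivalence classes of) identities, and the pentagon and triangle equations reduce to those in $\vectk$, which hold strictly. Thus $\boxv$ is in fact a strict monoidal category, and in particular a monoidal category, completing the proof.
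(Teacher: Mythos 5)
There is a genuine gap. You declare at the outset that ``everything else can be pulled back from $\vectk$,'' and on that basis you never verify that $\boxv$ is a category at all: you do not check that composition is well-defined on equivalence classes of squares, and you do not check that composition is associative. The paper's proof treats both as part of this theorem (the preceding definition only supplies the data), and associativity is in fact the bulk of its content. It cannot be inherited from $\vectk$, because the composition rule is not componentwise: the composite's $a$-side is the block matrix $\left[\begin{array}{cc} a & 0 \\ a'b'ca & a' \end{array}\right]$, whose off-diagonal feedback entry mixes all four components of both squares. Verifying $a_{12,3}=a_{1,23}$ requires unfolding the nested formula and checking that the entries $a_3b_3d_2c_1a_1 + a_3b_3c_2a_2b_2c_1a_1$, etc., appear identically under both bracketings (using $c_{12}=[d_2c_1 \;\; c_2]$), and similarly $c_{12,3}=c_{1,23}$ and $b_{12,3}=b_{1,23}$ reduce to a telescoping of row/column block matrices. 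None of this is a formal consequence of associativity of $\oplus$ or of composition in $\vectk$; it is an explicit computation your proposal omits entirely.

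What you do include is largely sound and in one place more careful than the paper: your verification that $\oplus$ descends to equivalence classes (direct sum of the two commuting pentagons) and your treatment of the interchange law via the middle-two-swap isomorphism spell out a step the paper dismisses as ``easy to see.'' The identity square and the strictness of associators and unitors are also handled correctly. But to make the proof complete you must add the well-definedness of composition (the paper takes $\alpha_{12}=\alpha_1\oplus\alpha_2$, $\omega_{12}=\omega_1\oplus\omega_2$) and the block-matrix verification of associativity for all four components.
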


\begin{proof}
To show \(\boxv\) is a category, we need to show the composition is well-defined and associative,
and the unit laws hold.  To show \(\boxv\) is a monoidal category, we also need to show the
associators and unitors exist and satisfy the pentagon and triangle equations.  We start by showing
composition in \(\boxv\) is well-defined.  Given two composable morphisms, \(f \maps V_1 \to V_2\)
and \(g \maps V_2 \to V_3\) with representatives \((d_1,c_1,a_1,b_1)\) and \((d_1',c_1',a_1',b_1')\)
for \(f\) and \((d_2,c_2,a_2,b_2)\) and \((d_2',c_2',a_2',b_2')\) for \(g\), we have the following
commutative diagrams:
\begin{center}
 \begin{tikzpicture}[->]
  \node (A) at (0,0) {\(V_1\)};
  \node (S) at (-1,1.5) {\(S_1\)};
  \node (T) at (2.5,1.5) {\(T_1\)};
  \node (B) at (1.5,0) {\(V_2\)};
  \node (S1) at (-1,-1.5) {\(S_1'\)};
  \node (T1) at (2.5,-1.5) {\(T_1'\)};

  \path
   (A) edge node[below right] {\(b_1'\)} (S1)
       edge node[above right] {\(b_1\)} (S)
   (S) edge node[above] {\(a_1\)} (T)
       edge node[left] {\(\alpha_1\)} (S1)
   (T) edge node[above left] {\(c_1\)} (B)
       edge node[right] {\(\omega_1\)} (T1)
   (S1) edge node[above] {\(a_1'\)} (T1)
   (T1) edge node[below left] {\(c_1'\)} (B)
;
 \end{tikzpicture}
\end{center}
and
\begin{center}
 \begin{tikzpicture}[->]
  \node (A) at (0,0) {\(V_2\)};
  \node (S) at (-1,1.5) {\(S_2\)};
  \node (T) at (2.5,1.5) {\(T_2\)};
  \node (B) at (1.5,0) {\(V_3\)};
  \node (S1) at (-1,-1.5) {\(S_2'\)};
  \node (T1) at (2.5,-1.5) {\(T_2'\)};

  \path
   (A) edge node[below right] {\(b_2'\)} (S1)
       edge node[above right] {\(b_2\)} (S)
   (S) edge node[above] {\(a_2\)} (T)
       edge node[left] {\(\alpha_2\)} (S1)
   (T) edge node[above left] {\(c_2\)} (B)
       edge node[right] {\(\omega_2\)} (T1)
   (S1) edge node[above] {\(a_2'\)} (T1)
   (T1) edge node[below left] {\(c_2'\)} (B)
;
 \end{tikzpicture}.
\end{center}
We leave it as an exercise to the reader to show \(\alpha_{12} = \alpha_1 \oplus \alpha_2 \maps S_1
\oplus S_2 \to S_1' \oplus S_2'\) and \(\omega_{12} = \omega_1 \oplus \omega_2 \maps T_1 \oplus T_2
\to T_1' \oplus T_2'\) are the isomorphisms required to make the corresponding diagram for \(g \of
f\) commute.

It is easy to verify that
\begin{center}
 \begin{tikzpicture}[->]
  \node (A) at (0,0) {\(V\)};
  \node (S) at (0,1.5) {\(0\)};
  \node (T) at (1.5,1.5) {\(0\)};
  \node (B) at (1.5,0) {\(V\)};

  \path
   (A) edge node[below] {\(1_V\)} (B)
       edge node[left] {} (S)
   (S) edge node[above] {} (T)
   (T) edge node[right] {} (B)
;
 \end{tikzpicture}
\end{center}
is a left and right identity morphism in \(\boxv\).  The associators and unitors can be formed by
the same trick, but these (along with the pentagon and triangle equations) are trivial since
\(\vectk\) is a strict monoidal category.  It is also easy to see the monoidal product on morphisms
is compatible with composition, so it remains to show composition in \(\boxv\) is associative.

Aside from the source and target objects, morphisms in \(\boxv\) have six pieces of data: the
prestate space, the state space, and four linear maps, \(d\), \(c\), \(a\), and \(b\).  To check
associativity in \(\boxv\), we need to ensure both groupings of \(f_3 \of f_2 \of f_1\) of
composable morphisms in \(\boxv\) give the same results for all six of these pieces of data.  The
prestate space and the state space will be the same, thanks to the associativity of the monoidal
product in \(\vectk\).  Denoting the compositions \((d_j,c_j,a_j,b_j) \of (d_i,c_i,a_i,b_i)\) as
\((d_{ij},c_{ij},a_{ij},b_{ij})\), we get \((d_3,c_3,a_3,b_3) \of (d_{12},c_{12},a_{12},b_{12}) =
(d_{12,3},c_{12,3},a_{12,3},b_{12,3})\):
\begin{center}
 \begin{tikzpicture}[->]
  \node (A) at (-0.3,0) {\(V_1\)};
  \node (B) at (2,0) {\(V_3\)};
  \node (C) at (3.5,0) {\(V_4\)};
  \node (S1) at (-1.3,1.5) {\(S_1 \oplus S_2\)};
  \node (T1) at (1,1.5) {\(T_1 \oplus T_2\)};
  \node (S2) at (3,1.5) {\(S_3\)};
  \node (T2) at (4.5,1.5) {\(T_3\)};

  \node (eq) at (5.15,0.75) {\(=\)};

  \node (A1) at (6.7,0) {\(V_1\)};
  \node (B1) at (10.0,0) {\(V_4\)};
  \node (S) at (6.7,1.5) {\((S_1 \oplus S_2) \oplus S_3\)};
  \node (T) at (10.0,1.5) {\((T_1 \oplus T_2) \oplus T_3\)};

  \path
   (A) edge node[below] {\(d_{12}\)} (B)
       edge node[left] {\(b_{12}\)} (S1)
   (B) edge node[below] {\(d_3\)} (C)
       edge node[right] {\(b_3\)} (S2)
   (S1) edge node[above] {\(a_{12}\)} (T1)
   (S2) edge node[above] {\(a_3\)} (T2)
   (T1) edge node[left] {\(c_{12}\)} (B)
   (T2) edge node[right] {\(c_3\)} (C)

   (A1) edge node[below] {\(d_{12,3}\)} (B1)
        edge node[left] {\(b_{12,3}\)} (S)
   (S) edge node[above] {\(a_{12,3}\)} (T)
   (T) edge node[right] {\(c_{12,3}\)} (B1)
;
 \end{tikzpicture}
\end{center}
and \((d_{23},c_{23},a_{23},b_{23}) \of (d_1,c_1,a_1,b_1) = (d_{1,23},c_{1,23},a_{1,23},b_{1,23})\):
\begin{center}
 \begin{tikzpicture}[->]
  \node (A) at (-0.3,0) {\(X_1\)};
  \node (B) at (1.2,0) {\(X_2\)};
  \node (C) at (3.5,0) {\(X_4\)};
  \node (S1) at (-1.3,1.5) {\(S_1\)};
  \node (T1) at (0.2,1.5) {\(T_1\)};
  \node (S2) at (2.2,1.5) {\(S_2 \oplus S_3\)};
  \node (T2) at (4.5,1.5) {\(T_2 \oplus T_3\)};

  \node (eq) at (5.15,0.75) {\(=\)};

  \node (A1) at (6.7,0) {\(X_1\)};
  \node (B1) at (10.0,0) {\(X_4\)};
  \node (S) at (6.7,1.5) {\(S_1 \oplus (S_2 \oplus S_3)\)};
  \node (T) at (10.0,1.5) {\(T_1 \oplus (T_2 \oplus T_3)\)};

  \path
   (A) edge node[below] {\(d_1\)} (B)
       edge node[left] {\(b_1\)} (S1)
   (B) edge node[below] {\(d_{23}\)} (C)
       edge node[right] {\(b_{23}\)} (S2)
   (S1) edge node[above] {\(a_1\)} (T1)
   (S2) edge node[above] {\(a_{23}\)} (T2)
   (T1) edge node[left] {\(c_1\)} (B)
   (T2) edge node[right] {\(c_{23}\)} (C)

   (A1) edge node[below] {\(d_{1,23}\)} (B1)
        edge node[left] {\(b_{1,23}\)} (S)
   (S) edge node[above] {\(a_{1,23}\)} (T)
   (T) edge node[right] {\(c_{1,23}\)} (B1)
;
 \end{tikzpicture}.
\end{center}

For the linear map data, \(d\), \(c\), \(a\), and \(b\), the associativity of \(\boxv\) requires
\(d_{12,3} = d_{1,23}\), \(c_{12,3} = c_{1,23}\), \(a_{12,3} = a_{1,23}\), and \(b_{12,3} =
b_{1,23}\).  It is clear that the associativity requirement for \(d\) is met because composition of
linear maps is associative in \(\vectk\) --- \(d_{1,23} = d_1 (d_2 d_3) = (d_1 d_2) d_3 = d_{12,3}\).
We see the associativity requirement for \(a\) is met because \(a_{ij} = \left[
\begin{array}{cc}
a_i & 0 \\
a_j b_j c_i a_i & a_j
\end{array} \right],\)
which means
\[a_{12,3} = \left[
\begin{array}{cc}
a_{12} & 0 \\
a_3 b_3 c_{12} a_{12} & a_3
\end{array}
\right] = \left[
\begin{array}{ccc}
\left[\begin{array}{c}
a_1 \\ a_2 b_2 c_1 a_1
\end{array}\right. &
\left.\begin{array}{c}
0 \\ a_2
\end{array}\right] &
\begin{array}{c}
0 \\ 0
\end{array}
\\
\left[a_3 b_3 d_2 c_1 a_1 + a_3 b_3 c_2 a_2 b_2 c_1 a_1 \right. & \left. a_3 b_3 c_2 a_2 \right] & a_3
\end{array} \right],\]
since \(c_{12} = \left[ d_2 c_1 \quad c_2 \right]\).  A similar calculation gives the same matrix,
grouped slightly differently, for \(a_{1,23}\), 
\[a_{1,23} = \left[
\begin{array}{ccc}
a_1 & 0 & 0\\
\left[\begin{array}{c}
a_2 b_2 c_1 a_1\\
a_3 b_3 d_2 c_1 a_1 + a_3 b_3 c_2 a_2 b_2 c_1 a_1
\end{array}\right] &
\left[\begin{array}{c}
a_2 \\ a_3 b_3 c_2 a_2
\end{array}\right. &
\left.\begin{array}{c}
0 \\ a_3
\end{array}\right]
\end{array}\right].\]

The proofs that \(c\) and \(b\) meet their respective associativity requirements are similar to each
other, transposed.  We present the argument for \(c\) and leave the argument for \(b\) to the
reader.  Since \(c_{ij} = [d_j c_i \quad c_j]\), we have
\begin{align*}
c_{12,3} & = [d_3 c_{12} \quad c_3]\\
         & = [d_3 [d_2 c_1 \quad c_2] \quad c_3]\\
         & = [[d_3 d_2 c_1 \quad d_3 c_2] \quad c_3]\\
         & = [d_3 d_2 c_1 \quad [d_3 c_2 \quad c_3]]\\
         & = [d_{23} c_1 \quad c_{23}] = c_{1,23}.
\end{align*}
So we see composition of morphisms in \(\boxv\) is associative.
\end{proof}

These kinds of manipulations of \(\boxv\) make more sense when \(\boxv\) is understood as a category
with an obvious evaluation functor \(\eval \maps \boxv \to \vectk\).  We can also find a
`feedthrough' functor \(\feed \maps \boxv \to \vectk\) and a functor in the reverse direction \(\G
\maps \vectk \to \boxv\).  The map of objects \(\eval_0 \maps \Obj(\boxv) \to \Obj(\vectk)\) is trivial,
\(\feed_0 = \eval_0\), and \(\G_0\) is its inverse.  The map of morphisms \(\eval_1 \maps
\Mor(\boxful) \to \Mor(\catC)\) is given by \(\eval_1(d,c,a,b) = d + c a b\), \(\feed_1 \maps
\Mor(\boxful) \to \Mor(\catC)\) is given by \(\feed_1(d,c,a,b) = d\), and \(\G_1 \maps \Mor(\catC)
\to \Mor(\boxful)\) is given by \(\G_1(d) = (d, !, 0, 0)\).  That is,

\begin{center}
\(\eval_1 \left(
 \begin{tikzpicture}[baseline=0.5cm,->]
  \node (A) at (0,0) {\(V_1\)};
  \node (C) at (1.4,0) {\(V_2\)};
  \node (B) at (0,1.4) {\(S\)};
  \node (D) at (1.4,1.4) {\(T\)};

  \path
   (A) edge node[below] {\(d\)} (C)
       edge node[left] {\(b\)} (B)
   (B) edge node[above] {\(a\)} (D)
   (D) edge node[right] {\(c\)} (C);
 \end{tikzpicture} \right) = d + c a b\),\quad
\(\feed_1 \left(
 \begin{tikzpicture}[baseline=0.5cm,->]
  \node (A) at (0,0) {\(V_1\)};
  \node (C) at (1.4,0) {\(V_2\)};
  \node (B) at (0,1.4) {\(S\)};
  \node (D) at (1.4,1.4) {\(T\)};

  \path
   (A) edge node[below] {\(d\)} (C)
       edge node[left] {\(b\)} (B)
   (B) edge node[above] {\(a\)} (D)
   (D) edge node[right] {\(c\)} (C);
 \end{tikzpicture} \right) = d\)
\\and
\(\G_1(d) =\)
 \begin{tikzpicture}[baseline=0.5cm,->]
  \node (A) at (0,0) {\(V_1\)};
  \node (C) at (1.4,0) {\(V_2\)};
  \node (B) at (0,1.4) {\(0\)};
  \node (D) at (1.4,1.4) {\(0\)};

  \path
   (A) edge node[below] {\(d\)} (C)
       edge (B)
   (B) edge (D)
   (D) edge (C);
 \end{tikzpicture}.
\end{center}

\begin{theorem}
\label{boxvectfunctors}
There are functors \(\eval\), \(\feed\), and \(\G\) as defined above which are \(\prop\) morphisms, and
\begin{itemize}
  \item \(\eval\) and \(\feed\) are full, but not faithful,
  \item \(\G\) is faithful, but not full,
  \item \(\G\) does not preserve limits or colimits.  In particular, \(\G\) has no adjoint.
\end{itemize}
\end{theorem}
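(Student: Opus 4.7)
The plan is to address each claim separately, starting with the most mechanical part: verifying that $\eval$, $\feed$, and $\G$ are well-defined functors, hence $\prop$ morphisms since they clearly preserve $\oplus$ and send $1$ to $1$. For $\eval$, well-definedness on equivalence classes follows directly from the commuting diagram: $c'a'b' = (c'\omega)(a\alpha^{-1})(\alpha b) = cab$. Preservation of composition is the key calculation, which works out cleanly by expanding
\[
\eval\!\bigl((d',c',a',b')\circ(d,c,a,b)\bigr)=d'd+[d'c\ c']\begin{bmatrix}a & 0\\ a'b'ca & a'\end{bmatrix}\begin{bmatrix}b\\ b'd\end{bmatrix}=(d'+c'a'b')(d+cab).
\]
For $\feed$, only the $d$-component survives in both the equivalence relation and the composition formula, so functoriality is immediate. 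For $\G$, I will observe that $\G(d')\circ\G(d)$ has prestate and state spaces $0\oplus 0=0$, causing the block matrix and column/row entries to collapse, leaving $(d'd,!,0,0)=\G(d'd)$.

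Next I would handle fullness and faithfulness together, since they follow from the same identity $\eval\circ\G=\mathrm{id}_{\vectk}=\feed\circ\G$. Surjectivity of the left-hand sides on hom-sets gives fullness of $\eval$ and $\feed$; injectivity of $\G$ on hom-sets (read off the first component) gives faithfulness of $\G$. For the three negative statements I will exhibit explicit witnesses in $\hom_{\boxv}(k,k)$. Failure of faithfulness for $\eval$: the morphisms $(1_k,!,0,0)$ and $(0,1_k,1_k,1_k)$ both evaluate to $1_k$, but their state spaces $0$ and $k$ are non-isomorphic, so they represent distinct equivalence classes. Failure of faithfulness for $\feed$: the morphisms $(0,!,0,0)$ and $(0,1_k,1_k,1_k)$ both feed through to $0$, with the same obstruction. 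Failure of fullness for $\G$: the morphism $(0,1_k,1_k,1_k)$ has state space $k$, so it cannot be equivalent to any $(d,!,0,0)$, whose state space is $0$.

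For the limit/colimit claim I will show that $\G$ fails to preserve the terminal object. In $\vectk$ the zero space is terminal, with $!$ the unique morphism $V\to 0$. In $\boxv$, however, any tuple $(0,0,a,b)$ with $b\colon V\to S$ and $a\colon S\to T$ is a morphism $V\to 0$, and for $V=k$ the choices $(S,T)=(0,0)$ versus $(S,T)=(k,k)$ with $a=b=1_k$ yield inequivalent classes, since an equivalence would require an isomorphism $0\to k$. Hence $0$ is not terminal in $\boxv$, so $\G$ does not preserve this limit; dually, $0$ is initial in $\vectk$ but not in $\boxv$, so $\G$ does not preserve this colimit. Since right adjoints preserve limits and left adjoints preserve colimits, $\G$ admits neither.

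I expect the composition computation for $\eval$ (and the consistency check making $\boxv$ a category in the first place) to be the only place that could hide an error; everything else is parameter-chasing or exhibiting a counterexample. The non-preservation of $0$ as terminal/initial object is the cleanest obstruction to $\G$ having adjoints, and I will use it rather than a more refined (co)limit argument to keep the proof short.
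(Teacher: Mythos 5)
Your proposal is correct and follows essentially the same route as the paper: the same composition computation for $\eval$, the identities $\eval\of\G=\feed\of\G=1_{\vectk}$ for fullness and faithfulness, nonzero state spaces as the obstruction for the negative claims, and the failure of $\G_0(0)$ to be initial or terminal in $\boxv$ to rule out adjoints. The only cosmetic difference is that the paper exhibits nontrivial endomorphisms $0_f$ of $\G_0(0)$ rather than your two inequivalent morphisms in $\hom(k,0)$ and its dual, but the argument is the same.
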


\begin{proof}
It is easy to check that \(\eval\), \(\feed\), and \(\G\) all preserve identity maps.  Preservation
of composition is again easy to check for \(\feed\) and \(\G\), but \(\eval\) takes a little more
work.  On one hand, \(\eval(d,c,a,b) \of \eval(d',c',a',b') = (d+cab) \of (d'+c'a'b') =
dd'+dc'a'b'+cabd'+cabc'a'b'\).  On the other hand,
\begin{align*}
\eval((d,c,a,b) \of (d',c',a',b')) & = \eval\left(\left( dd', [dc' \quad c],
   \left[ \begin{array}{cc}
   a'      & 0 \\
   abc'a' & a  \end{array} \right], 
   \left[ \begin{array}{c}
   b' \\ bd' \end{array} \right] \right)\right)                               \\
                                   & = dd' + [dc' \quad c]
                                             \left[ \begin{array}{cc}
                                             a'      & 0 \\
                                              abc'a' & a  \end{array} \right] 
                                             \left[ \begin{array}{c}
                                             b' \\ bd' \end{array} \right]    \\
                                   & = dd'+dc'a'b'+cabc'a'b'+cabd'.
\end{align*}
Addition is commutative, so \(\eval\) preserves composition.

All three functors act as identities on objects, so it immediately follows they are essentially
surjective.  We note that \(\feed \of \G\) and \(\eval \of \G\) are both the identity functor on
\(\vectk\), which implies \(\feed\) and \(\eval\) are surjective on \emph{all} morphisms, hence
full.  This also implies \(\G\) is injective on morphisms, so \(\G\) is faithful.  On the other hand,
a morphism in \(\boxv\) between \(\G_0 (V_1)\) and \(\G_0 (V_2)\) where the prestate space or state
space are not isomorphic to the zero object is not the \(\G_1\)-image of any morphism in \(\vectk\),
so \(\G\) is not full.  Similarly, \(\feed\) and \(\eval\) cannot be faithful.

The object \(0\) in \(\vectk\) is both initial and terminal, so to show \(\G\) does not preserve
limits or colimits, it suffices to show \(\G_0 (0)\) is neither initial nor terminal.  For each
\(f \maps S \to T\) in \(\vectk\), there will be a morphism 
\(0_f \maps \G_0 (0) \to \G_0 (0)\) in \(\boxv\), given by
\begin{center}
\(0_f =\)
 \begin{tikzpicture}[baseline=0.5cm,->]
  \node (A) at (0,0) {\(0\)};
  \node (C) at (1.4,0) {\(0\)};
  \node (B) at (0,1.4) {\(S\)};
  \node (D) at (1.4,1.4) {\(T\)};

  \path
   (A) edge (C)
       edge (B)
   (B) edge node[above] {\(f\)} (D)
   (D) edge (C);
 \end{tikzpicture}.
\end{center}

Thus \(\G\) does not preserve initial or terminal objects, so it cannot preserve limits or colimits.
Because \(\G\) does not preserve limits, \(\G\) is not a right adjoint, and because \(\G\) does not
preserve colimits, \(\G\) is not a left adjoint.

  \begin{figure}[h]
    \centering
    \scalebox{0.85}{\begin{tikzpicture}
   \node (123) at (2.5,0)  {\(V_1 \oplus V_2 \oplus V_3\)};
   \node (231) at (5,2)    {\(V_2 \oplus V_3 \oplus V_1\)};
   \node (213) at (7.5,0)  {\(V_2 \oplus V_1 \oplus V_3\)};

   \draw [->] (123) to node [above left] {\((B_{V_1,V_2 \oplus V_3},!,0,0)\)} (231);
   \draw [->] (123) to node [below] {\((B_{V_1,V_2} \oplus \mathrm{Id},!,0,0)\)} (213);
   \draw [->] (213) to node [above right] {\((\mathrm{Id} \oplus B_{V_1,V_3},!,0,0)\)} (231);

   \draw [->,very thick] (-0.5,1) to node [above] {\(\G\)} (0.5,1);
   \draw [->,very thick] (0.5,0.5) to node [below] {\(\eval\)} (-0.5,0.5);

   \node (v123) at (-6,0)  {\(V_1 \oplus V_2 \oplus V_3\)};
   \node (v231) at (-4,2)  {\(V_2 \oplus V_3 \oplus V_1\)};
   \node (v213) at (-2,0)  {\(V_2 \oplus V_1 \oplus V_3\)};

   \draw [->] (v123) to node [above left] {\(B_{V_1,V_2 \oplus V_3}\)} (v231);
   \draw [->] (v123) to node [below] {\(B_{V_1,V_2} \oplus \mathrm{Id}\)} (v213);
   \draw [->] (v213) to node [above right] {\(\mathrm{Id} \oplus B_{V_1,V_3}\)} (v231);
\end{tikzpicture}}
    \caption[A coherence law preserved by $\G$]{A hexagon law is preserved by \(\eval\)
      and \(\G\).  Since \(\vectk\) and \(\boxv\) are strict monoidal categories, the associators
      are all identities, so three sides of the hexagon have been omitted.\label{stricthexagonlaw}}
  \end{figure}
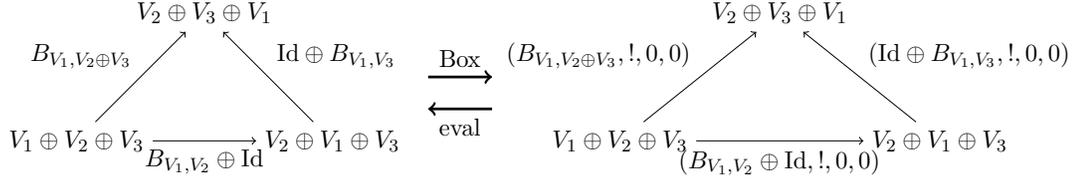

It remains to show these functors are \emph{symmetric} monoidal functors.  We already know
\(\vectk\) and \(\boxv\) are strict monoidal categories, so the associators and unitors are all
identity morphisms.  It is easy to check the symmetry isomorphisms in \(\boxv\) are the \(\G_1\)-images
of the symmetry isomorphisms in \(\vectk\).  Since \((d, !, 0, 0) \of (d', !, 0, 0) = (d \of d', !,
0, 0)\) in \(\boxv\), any coherence law in \(\vectk\) is preserved by \(\G\).  Similarly, \(\feed_1
(d,!,0,0) = \eval_1 (d,!,0,0) = d\), so \(\eval\) and \(\feed\) will also preserve the coherence
laws in \(\boxv\) that come from \(\vectk\).  See Figure~\ref{stricthexagonlaw} for an example of a
coherence law preserved by these three functors.

Thus we see all three functors are identity on objects \smf s between PROPs, hence \(\prop\)
morphisms.
\end{proof}

An alternate way to depict morphisms in \(\boxv\) is through string diagrams.  The morphism
\((d,c,a,b)\) is depicted:
\begin{center}
\scalebox{0.75}{
  \begin{tikzpicture}[thick]
   \node [delta] (split) at (0,0) {};
   \node [coordinate] (above) [above of=split] {};
   \node [multiply] (d) at (0.5,-2) {\(d\)};
   \node [multiply] (a) at (-0.5,-2) {\(a\)};
   \node [multiply] (c) [below of=a] {\(c\)};
   \node [multiply] (b) [above of=a] {\(b\)};
   \node [plus] (join) at (0,-4.1) {};
   \node [coordinate] (below) [below of=join] {};
  
   \draw (above) -- (split.io)
         (split.left out) .. controls +(240:0.3) and +(90:0.3) .. (b.90)
         (c.io) .. controls +(270:0.3) and +(120:0.3) .. (join.left in)
         (join.io) -- (below)
         (split.right out) .. controls +(300:0.7) and +(90:0.7) .. (d.90)
         (b.io) -- (a.90) (a.io) -- (c.90)
         (d.io) .. controls +(270:0.7) and +(60:0.7) .. (join.right in);
  \end{tikzpicture}}.
\end{center}
This form will make it easier to see the connection between the state equations and the PROP
\(\st_k\), which we introduce in the next section.

\section{The PROP $\st$}
\label{stsection}
Recall the state equations, Equation~\ref{stateeq} and Equation~\ref{outputeq}, and the associated
convention that \(\mathrm{dim}(u) = m\), \(\mathrm{dim}(x) = n\), and \(\mathrm{dim}(y) = p\).
Note that the Laplace transform of these equations give the following: 
\begin{align*}
Xs - x(0) & = AX + BU \\
Y  & = CX + DU,
\end{align*}
where \(U, X,\) and \(Y\) are the Laplace transforms of \(u, x,\) and \(y\), respectively.  Under
the assumption that the initial state vector is zero, these equations can be expressed as a morphism in
\(\relks\) with the following signal-flow diagram: 
\begin{center}
\scalebox{0.8}{
\begin{tikzpicture}[thick]
  \node (u) at (0,0.4) {\(U\)};
  \node [delta] (d1) at (0,-0.6) {};
  \node [multiply] (b) at (-0.5,-1.8) {\(B\)};
  \node [multiply] (a) at (-1.5,-1.8) {\(A\)};
  \node [plus] (p1) at (-1,-3) {};
  \node [multiply] (s) at (-1,-4.5) {\(\frac{I}{s}\)};
  \node [multiply] (d) at (0.5,-4.5) {\(D\)};
  \node [delta] (d2) at (-1,-6) {};
  \node [multiply] (c) at (-0.5,-7.2) {\(C\)};
  \node [plus] (p2) at (0,-8.5) {};
  \node (y) at (0,-9.5) {\(Y\)};

  \node (dot) at (-0.6,-3.75) {\(Xs\)};
  \node (x) at (-0.65,-5.5) {\(X\)};

  \draw (u) -- (d1)
        (d1.left out) .. controls +(240:0.5) and +(90:0.3) .. (b.90)
        (d1.right out) .. controls +(-60:0.75) and +(90:1) .. (d.90)
        (b.io) .. controls +(270:0.3) and +(60:0.4) .. (p1.right in)
        (a.io) .. controls +(270:0.3) and +(120:0.4) .. (p1.left in)
        (p1.io) -- (s.90)
        (s.io) -- (d2.io)
        (d2.right out) .. controls +(-60:0.5) and +(90:0.3) .. (c.90)
        (c.io) .. controls +(270:0.3) and +(120:0.5) .. (p2.left in)
        (d.io) .. controls +(270:1) and +(60:0.75) .. (p2.right in)
        (p2.io) -- (y)
        (d2.left out) .. controls +(240:0.6) and +(270:0.75) .. (-2.5,-6.1)
        (-2.5,-6.1) -- (-2.5,-1.6)
        (-2.5,-1.6) .. controls +(90:0.75) and +(90:0.75) .. (a.90);
\end{tikzpicture}},
\end{center}
where the subdiagrams \(A, B, C,\) and \(D\) are the standard forms for their respective linear maps
in the state equations, and \(I\) is the identity on \(k^n\).  Note that each instance of addition
(\emph{resp.} duplication) in the diagram represents zero or more copies of the morphism, in
parallel.  This diagram can always be rewritten in a form without cups or caps using the reduction:
\begin{center}
\scalebox{0.8}{
\begin{tikzpicture}[thick]
  \node (b) at (-0.5,-1.8) {};
  \node [multiply] (a) at (-1.5,-1.8) {\(A\)};
  \node [plus] (p) at (-1,-3) {};
  \node [multiply] (s) at (-1,-4.5) {\(\frac{I}{s}\)};
  \node [multiply] (sia) at (3.5,-3.5) {\!\!\!\((sI-A)^{-1}\)\!\!\!};
  \node [delta] (d) at (-1,-6) {};
  \node (c) at (-0.5,-7.2) {};
  \node (eq) at (1,-4) {\(=\)};

  \draw (b.90) -- +(0,1)
        (b.90) .. controls +(270:0.3) and +(60:0.4) .. (p.right in)
        (a.io) .. controls +(270:0.3) and +(120:0.4) .. (p.left in)
        (p.io) -- (s.90)
        (s.io) -- (d.io)
        (sia.90) -- +(0,1.5)
        (sia.io) -- +(0,-1.5)
        (d.right out) .. controls +(-60:0.5) and +(90:0.3) .. (c.270)
        (d.left out) .. controls +(240:0.6) and +(270:0.75) .. (-2.5,-6.1)
        (-2.5,-6.1) -- (-2.5,-1.6)
        (-2.5,-1.6) .. controls +(90:0.75) and +(90:0.75) .. (a.90);
\end{tikzpicture}}.
\end{center}
Since \(A\) has no dependence on \(s\) in the time-independent case, \(sI-A\) will have an inverse
in \(\vectks\)\footnote{More generally, as long as the Laplace transform of \(A\) has no positive
powers of \(s\), \(sI-A\) can be guaranteed invertible.  Even in the case where \(A\) depends on
time, its Laplace transform will only ever contain positive powers of \(s\) if a distribution like
\(\frac{\mathrm{d}}{\mathrm{d}t}\delta(t)\) appears in \(A\).}.  Similarly, given \(a=(sI-A)^{-1}\),
we can find \(A = sI - a^{-1}\), so \(a\) and \(A\) both provide the same information.

Note that we have the right form for a morphism in \(\boxvs\), where \(a =
(sI-A)^{-1}, b=B, c=C, d=D\), but not all morphisms in \(\boxvs\) arise this way, \emph{e.g.}
\((s,!,0,0)\).  This motivates the definition of \(\st_k\) as the subPROP of \(\boxvs)\) where
the morphisms are of this form.

\begin{definition}
The category \(\st_k\) is the subPROP of \(\boxvs\) with
  \begin{itemize}
   \item the same objects as \(\vectks\)
   \item morphisms of the form \((D,C,(sI-A)^{-1},B)\) for some \(A,B,C,D \in \vectk\).
  \end{itemize}
\end{definition}
\begin{proposition}
\label{st_k:def}
The category \(\st_k\) is actually a PROP.
\end{proposition}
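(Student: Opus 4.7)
Since $\st_k$ is defined as a subcategory of the PROP $\boxvs$ (which was already shown to be a symmetric strict monoidal category with the right objects), it inherits associativity, the unit laws, the naturality of the symmetry, and the coherence hexagons for free. What remains to check is that the class of morphisms of the form $(D, C, (sI-A)^{-1}, B)$, with $A, B, C, D$ all living in $\vectk$, is closed under (i) identities, (ii) the braiding of $\boxvs$, (iii) the tensor product $\oplus$, and (iv) composition. The plan is to verify each of these in turn, and I expect (iv) to be the only real content.

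For (i) and (ii), the idea is simply to use the trivial state space. The identity on $k^n$ in $\boxvs$ was exhibited as $(1_{k^n}, !, 0, 0)$ with prestate and state space both $0$; taking $A=0\colon 0\to 0$, $B=0$, $C=0$, $D=1_{k^n}$, the middle entry $(sI-A)^{-1}$ is the unique endomorphism of $0$, so the identity lies in $\st_k$. The braiding $B_{k^m,k^n}$ comes from the same construction with $D$ equal to the underlying swap and the remaining data trivial. For (iii), given two morphisms $(D, C, (sI-A)^{-1}, B)$ and $(D', C', (sI'-A')^{-1}, B')$ in $\st_k$, their $\oplus$ in $\boxvs$ is componentwise direct sum, and the middle entry becomes
\[
(sI-A)^{-1} \oplus (sI'-A')^{-1} \;=\; \bigl(s(I\oplus I')-(A\oplus A')\bigr)^{-1},
\]
exhibiting $A\oplus A'\in\vectk$ as the witnessing matrix.

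The main step is (iv). Applying the composition formula from the definition of $\boxvs$ to $f=(D,C,(sI-A)^{-1},B)$ and $g=(D',C',(sI'-A')^{-1},B')$, the composite $g\circ f$ has feedthrough $D'D$, cap $\begin{bmatrix}D'C & C'\end{bmatrix}$, cup $\begin{bmatrix}B \\ B'D\end{bmatrix}$, and middle block
\[
M \;=\; \begin{pmatrix} (sI-A)^{-1} & 0 \\ (sI'-A')^{-1}B'C(sI-A)^{-1} & (sI'-A')^{-1} \end{pmatrix}.
\]
Writing $X=(sI-A)^{-1}$, $Y=(sI'-A')^{-1}$, a direct block multiplication shows
\[
M \cdot \begin{pmatrix} X^{-1} & 0 \\ -B'C & Y^{-1} \end{pmatrix} \;=\; \begin{pmatrix} I & 0 \\ 0 & I \end{pmatrix},
\]
so $M = (sI''-A'')^{-1}$ where $I''$ is the identity on the combined state space $S\oplus S'$ and
\[
A'' \;=\; \begin{pmatrix} A & 0 \\ B'C & A' \end{pmatrix}.
\]
Crucially, $A''$ has no $s$-dependence because $A$, $A'$, $B'$, $C$ are all in $\vectk$, so $A''\in\vectk$ and $g\circ f$ is again in stateful form.

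The main obstacle, such as it is, is keeping the roles of $b$, $c$, $b'$, $c'$ straight in the composition formula so that the off-diagonal block is $YB'CX$ and not (say) $YC'BX$; once that is matched against the Schur-style inverse $\bigl(\begin{smallmatrix}sI-A & 0\\ -B'C & sI'-A'\end{smallmatrix}\bigr)$ the verification is purely mechanical. With closure under all four operations established, $\st_k$ is a symmetric strict monoidal subcategory of $\boxvs$ on the same objects, hence a PROP.
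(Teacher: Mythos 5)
Your proof is correct and follows the same route as the paper: the only substantive point is closure under composition, where one inverts the composite's middle block to exhibit \(A'' = \left[\begin{smallmatrix} A & 0 \\ B'C & A'\end{smallmatrix}\right]\), which is exactly the matrix the paper asserts and leaves to the reader to verify. Your block-inverse computation supplies that verification, and the remaining closure checks (identities, braiding, \(\oplus\)) are the ones the paper treats as immediate.
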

\begin{proof}
For this definition of \(\st_k\) to make sense, composition in \(\st_k\) must be closed:
\((D,C,(sI-A)^{-1},B) \of (D',C',(sI'-A')^{-1},B') = (D'',C'',(sI''-A'')^{-1},B'')\) for some
\(A'',B'',C'',D'' \in \vectk\).  We leave it to the reader to verify
\[
A'' = \left[ \begin{array}{cc}
A & 0 \\
B'C & A'
\end{array} \right],
\]
where \(I'' = I \oplus I'\).  Closure for the other three sides is immediate.
\end{proof}

In Appendix~\ref{generalbox} the Box construction is extended to allow, among other possibilities,
\(\Vectk\) to replace \(\vectk\).  While \(\Box(\Vectk)\) is no longer strict, it is still symmetric
monoidal.  Thus we can define \(\St_k\) as a symmetric monoidal subcategory of \(\Box(\Vectks)\)
similarly to how \(\st_k\) is defined here.  Indeed, \(\st_k\) is the skeletalization of \(\St_k\).

\section{Controllability and observability}
\label{cando}

Controllability and observability were introduced based on a physical interpretation of the
solutions to Equations \ref{stateeq} and \ref{outputeq}.  The results in this section have long been
well-known in the control theory literature \cite{Kalman60,Kalman63}.  When the matrices \(A(t)\)
and \(B(t)\) are continuous in \(t\), the general solution to Equation~\ref{stateeq} has the form
\[x(t) = \Phi(t,t_0) x(t_0) + \int_{t_0}^t \Phi(t, \tau) B(\tau) u(\tau) d\tau,\]
where \(\Phi(t, \tau)\) is a fundamental matrix of solutions satisfying \(\Phi(t,t) = I\) for all
\(t\).  A system is \Define{controllable} if for each state \(x\) and time \(t_0\) there is an input
function \(u(t)\) such that the state can be set to the equilibrium state in a finite amount of
time.  That is, there is a function \(u(t)\) such that \(x(t_1) = 0\) for some \(t_1 > t_0\).  An
equivalent characterization of controllability involves a symmetric matrix called the 
\emph{Controllability Gramian}, \(W_c(t_0, t_1)\).  A system is controllable if and only if
\[W_c(t_0, t_1) = \int_{t_0}^{t_1} \Phi(t_0, t) B(t) B^\top(t) \Phi^\top(t_0, t) dt\]
is positive definite for some \(t_1 > t_0\).  In the case of linear time-invariant systems, this
criterion is simplified to finding the row rank of the block matrix \(M_c = [B, AB, \dotsc,
A^{n-1}B]\).  This controllability matrix \(M_c\) is an \(n \times mn\) matrix, and a linear
time-invariant system is controllable when
\[\mathrm{rank}(M_c) = n.\]

While controllability of a system ignores the system's output, observability of a system ignores the
system's input, which can be accomplished by setting the input \(u(t) = 0\).  A system is
\Define{observable} if the state \(x(t_0)\) of the system can be determined at some later time
\(t_1\) by setting the input function \(u(t)\) to zero and measuring the output function \(y(t)\).
Kalman noticed a `duality principle' connecting controllability and observability.  If you
\begin{enumerate}
\item Reverse the direction of time,
\item Swap input and output constraints,
\item Replace \(\Phi\) with \(\Phi^\top\),
\end{enumerate}
then an observable system is transformed into a controllable system and vice versa.  Explicitly,
this duality transforms Equations \ref{stateeq} and \ref{outputeq} by making the following
replacements:
\begin{align*}
t - t_0 &= t_0 - t',\\
A(t-t_0) &\Leftrightarrow A^\top(t_0-t'),\\
B(t-t_0) &\Leftrightarrow C^\top(t_0-t'), \\
C(t-t_0) &\Leftrightarrow B^\top(t_0-t'),\\
D(t-t_0) &\Leftrightarrow D^\top(t_0-t').
\end{align*}
From this duality, we can see the \emph{Observability Gramian} will be
\[W_o(t_0, t_1) = \int_{t_0}^{t_1} \Phi^\top(t, t_1) C^\top(t) C(t) \Phi(t, t_1) dt,\]
which says a system is observable when \(W_o(t_0, t_1)\) is positive definite for some \(t_1 >
t_0\).  We also see the observability matrix for a linear time-invariant system will be \(M_o = [C,
CA, \dotsc, CA^{n-1}]^\top\).  This observability matrix \(M_o\) is an \(np \times n\) matrix, and
a linear time-invariant system is determined to be observable using the column rank:
\[\mathrm{rank}(M_o) = n.\]

If we view \(M_c\) as a linear transformation \(\R^{mn} \to \R^n\), a controllable system
has \(\mathrm{rank}(M_c) = n\), which means \(M_c\) is an epimorphism for a controllable system.
Similarly, when \(M_o \maps \R^n \to \R^{np}\) is viewed as a linear transformation, the system is
observable exactly when \(M_o\) is a monomorphism.


Diagrammatically, this can be expressed as saying that a stateful morphism \break 
\((D,C,(sI-A)^{-1},B)\) is controllable if:
\begin{center}
\scalebox{0.8}{ \begin{tikzpicture}[thick]
   \node [multiply] (B1) {\(B\)};
   \node [multiply, right of=B1] (B2) {\(B\)};
   \node [multiply, right of=B2] (B3) {\(B\)};
   \node [multiply, below of=B2] (A21) {\(A\)};
   \node [multiply, below of=B3] (A31) {\(A\)};
   \node [multiply, below of=A31] (A32) {\(A\)};
   \node [right of=A31] (dots) {\(\dots\)};
   \node [multiply, right of=dots] (An1) {\(A\)};
   \node [multiply, above of=An1] (Bn) {\(B\)};
   \node [multiply, below of=An1, shift={(0,-1)}] (An) {\(A\)};

   \node [plus, below of=A21, shift={(-0.5,-0.5)}] (12sum) {};
   \node [plus, below of=A32, shift={(-0.5,-1)}] (123sum) {};
   \node [plus, below of=A32, shift={(1,-2.5)}] (12nsum) {};

   \draw (B1.90) -- +(90:0.5);
   \draw (B2.90) -- +(90:0.5);
   \draw (B3.90) -- +(90:0.5);
   \draw (Bn.90) -- +(90:0.5);
   \draw (12nsum.io) -- +(270:0.5);
   \draw (B2) -- (A21);
   \draw (B3) -- (A31) -- (A32);
   \draw (Bn) -- (An1);
   \draw[dotted] (An1) -- (An)
         (123sum.io) .. controls +(270:0.5) and +(120:0.5) .. (12nsum.left in);
   \draw (A21.270) .. controls +(270:0.5) and +(60:0.5) .. (12sum.right in)
         (A32.270) .. controls +(270:0.5) and +(60:0.5) .. (123sum.right in)
         (An.270) .. controls +(270:0.5) and +(60:1) .. (12nsum.right in)
         (B1.270) .. controls +(270:1) and +(120:0.5) .. (12sum.left in)
         (12sum.io) .. controls +(270:0.5) and +(120:0.5) .. (123sum.left in);

\draw [decorate,decoration={brace,amplitude=10pt},xshift=-4pt,yshift=0pt]
(4.7,-0.6) -- (4.7,-3.4)node [midway,xshift=3em] {\(n-1\)};

\end{tikzpicture} }
\end{center}
is an epimorphism in \(\vectk\), and it is observable if:
\begin{center}
\scalebox{0.8}{ \begin{tikzpicture}[thick]
   \node [multiply] (B1) {\(C\)};
   \node [multiply, right of=B1] (B2) {\(C\)};
   \node [multiply, right of=B2] (B3) {\(C\)};
   \node [multiply, above of=B2] (A21) {\(A\)};
   \node [multiply, above of=B3] (A31) {\(A\)};
   \node [multiply, above of=A31] (A32) {\(A\)};
   \node [right of=A31] (dots) {\(\dots\)};
   \node [multiply, right of=dots] (An1) {\(A\)};
   \node [multiply, below of=An1] (Bn) {\(C\)};
   \node [multiply, above of=An1, shift={(0,1)}] (An) {\(A\)};

   \node [delta, above of=A21, shift={(-0.5,0.5)}] (12sum) {};
   \node [delta, above of=A32, shift={(-0.5,1)}] (123sum) {};
   \node [delta, above of=A32, shift={(1,2.5)}] (12nsum) {};

   \draw (B1.270) -- +(270:0.5);
   \draw (B2.270) -- +(270:0.5);
   \draw (B3.270) -- +(270:0.5);
   \draw (Bn.270) -- +(270:0.5);
   \draw (12nsum.io) -- +(90:0.5);
   \draw (B2) -- (A21);
   \draw (B3) -- (A31) -- (A32);
   \draw (Bn) -- (An1);
   \draw[dotted] (An1) -- (An)
         (123sum.io) .. controls +(90:0.5) and +(240:0.5) .. (12nsum.left out);
   \draw (A21.90) .. controls +(90:0.5) and +(300:0.5) .. (12sum.right out)
         (A32.90) .. controls +(90:0.5) and +(300:0.5) .. (123sum.right out)
         (An.90) .. controls +(90:0.5) and +(300:1) .. (12nsum.right out)
         (B1.90) .. controls +(90:1) and +(240:0.5) .. (12sum.left out)
         (12sum.io) .. controls +(90:0.5) and +(240:0.5) .. (123sum.left out);

\draw [decorate,decoration={brace,amplitude=10pt},xshift=-4pt,yshift=0pt]
(4.7,3.4) -- (4.7,0.6)node [midway,xshift=3em] {\(n-1\)};

\end{tikzpicture} }
\end{center}
is a monomorphism in \(\vectk\).

Soboci\'nski noted \cite{GLA-epimono} there are purely diagrammatic tests for determining whether a
linear relation is an epimorphism and whether it is a monomorphism.  Given a linear map \(F \maps V
\to W\), it is a monomorphism if \(F^{\dagger} F = 1_V\) and an epimorphism if \(F F^{\dagger} =
1_W\).   This extends to linear relations.  Diagrammatically, given a linear relation \(F\),
depicted for convenience as
\begin{center}
 \begin{tikzpicture}[thick]
   \node[multiply] (F) {\(F\)};
   \draw (F.90) -- +(0,0.3) (F.270) -- +(0,-0.3);
 \end{tikzpicture},
\end{center}
\(F\) is a monomorphism if
\begin{center}
 \begin{tikzpicture}[thick]
   \node[multiply] (F) {\(F\)};
   \node[upmultiply] (Fup) [below of=F] {\(F\)};
   \node (eq) [right of=F, shift={(0,-0.5)}] {\(=\)};
   \node[coordinate] (point) [right of=eq] {};
   \draw (point) -- +(0,1) (point) -- +(0,-1);
   \draw (F.90) -- +(0,0.3) (F) -- (Fup) (Fup.270) -- +(0,-0.3);
 \end{tikzpicture},
\end{center}
and \(F\) is an epimorphism if
\begin{center}
 \begin{tikzpicture}[thick]
   \node[multiply] (F) {\(F\)};
   \node[upmultiply] (Fup) [above of=F, shift={(0,-0.3)}] {\(F\)};
   \node (eq) [right of=F, shift={(0,0.35)}] {\(=\)};
   \node[coordinate] (point) [right of=eq] {};
   \draw (point) -- +(0,1.1) (point) -- +(0,-1.1);
   \draw (Fup.90) -- +(0,0.3) (F) -- (Fup) (F.270) -- +(0,-0.3);
 \end{tikzpicture}.
\end{center}

Combining these results, we can say a stateful morphism \((D,C,(sI-A)^{-1},B)\) is controllable when
\begin{center}
\scalebox{0.80} { \begin{tikzpicture}[thick]
   \node [multiply] (B1) {\(B\)};
   \node [multiply, right of=B1] (B2) {\(B\)};
   \node [multiply, right of=B2] (B3) {\(B\)};
   \node [multiply, below of=B2] (A21) {\(A\)};
   \node [multiply, below of=B3] (A31) {\(A\)};
   \node [multiply, below of=A31] (A32) {\(A\)};
   \node [right of=A31] (dots) {\(\dots\)};
   \node [multiply, right of=dots] (An1) {\(A\)};
   \node [multiply, above of=An1] (Bn) {\(B\)};
   \node [multiply, below of=An1, shift={(0,-1)}] (An) {\(A\)};

   \node [plus, below of=A21, shift={(-0.5,-0.5)}] (12sum) {};
   \node [plus, below of=A32, shift={(-0.5,-1)}] (123sum) {};
   \node [plus, below of=A32, shift={(1,-2.5)}] (12nsum) {};

   \node [upmultiply, above of=B1] (B1a) {\(B\)};
   \node [upmultiply, right of=B1a] (B2a) {\(B\)};
   \node [upmultiply, right of=B2a] (B3a) {\(B\)};
   \node [upmultiply, above of=B2a] (A21a) {\(A\)};
   \node [upmultiply, above of=B3a] (A31a) {\(A\)};
   \node [upmultiply, above of=A31a] (A32a) {\(A\)};
   \node [right of=A31a] (dots) {\(\dots\)};
   \node [upmultiply, right of=dots] (An1a) {\(A\)};
   \node [upmultiply, below of=An1a] (Bna) {\(B\)};
   \node [upmultiply, above of=An1a, shift={(0,1)}] (Ana) {\(A\)};

   \node [coplus, above of=A21a, shift={(-0.5,0.5)}] (12suma) {};
   \node [coplus, above of=A32a, shift={(-0.5,1)}] (123suma) {};
   \node [coplus, above of=A32a, shift={(1,2.5)}] (12nsuma) {};

   \draw (B1) -- (B1a);
   \draw (B2) -- (B2a);
   \draw (B3) -- (B3a);
   \draw (Bn) -- (Bna);
   \draw (12nsum.io) -- +(270:0.5);
   \draw (B2) -- (A21);
   \draw (B3) -- (A31) -- (A32);
   \draw (Bn) -- (An1);
   \draw[dotted] (An1) -- (An)
         (123sum.io) .. controls +(270:0.5) and +(120:0.5) .. (12nsum.left in);
   \draw (A21.270) .. controls +(270:0.5) and +(60:0.5) .. (12sum.right in)
         (A32.270) .. controls +(270:0.5) and +(60:0.5) .. (123sum.right in)
         (An.270) .. controls +(270:0.5) and +(60:1) .. (12nsum.right in)
         (B1.270) .. controls +(270:1) and +(120:0.5) .. (12sum.left in)
         (12sum.io) .. controls +(270:0.5) and +(120:0.5) .. (123sum.left in);

\draw [decorate,decoration={brace,amplitude=10pt},xshift=-4pt,yshift=0pt]
(4.7,-0.6) -- (4.7,-3.4)node [midway,xshift=3em] {\(n-1\)};

   \draw (12nsuma.io) -- +(90:0.5);
   \draw (B2a) -- (A21a);
   \draw (B3a) -- (A31a) -- (A32a);
   \draw (Bna) -- (An1a);
   \draw[dotted] (An1a) -- (Ana)
         (123suma.io) .. controls +(90:0.5) and +(240:0.5) .. (12nsuma.left out);
   \draw (A21a.90) .. controls +(90:0.5) and +(300:0.5) .. (12suma.right out)
         (A32a.90) .. controls +(90:0.5) and +(300:0.5) .. (123suma.right out)
         (Ana.90) .. controls +(90:0.5) and +(300:1) .. (12nsuma.right out)
         (B1a.90) .. controls +(90:1) and +(240:0.5) .. (12suma.left out)
         (12suma.io) .. controls +(90:0.5) and +(240:0.5) .. (123suma.left out);

\draw [decorate,decoration={brace,amplitude=10pt},xshift=-4pt,yshift=0pt]
(4.7,4.4) -- (4.7,1.6)node [midway,xshift=3em] {\(n-1\)};

   \node (eq) [right of=Bn, shift={(1.75,0.5)}] {\(=\)};
   \node[coordinate, right of=eq, shift={(1,0)}] (point) {};

   \draw (point) -- +(0,6.7) (point) -- +(0,-6.7);
\end{tikzpicture} },
\end{center}
and observable when
\begin{center}
\scalebox{0.80} { \begin{tikzpicture}[thick]
   \node [upmultiply] (B1) {\(C\)};
   \node [upmultiply, right of=B1] (B2) {\(C\)};
   \node [upmultiply, right of=B2] (B3) {\(C\)};
   \node [upmultiply, below of=B2] (A21) {\(A\)};
   \node [upmultiply, below of=B3] (A31) {\(A\)};
   \node [upmultiply, below of=A31] (A32) {\(A\)};
   \node [right of=A31] (dots) {\(\dots\)};
   \node [upmultiply, right of=dots] (An1) {\(A\)};
   \node [upmultiply, above of=An1] (Bn) {\(C\)};
   \node [upmultiply, below of=An1, shift={(0,-1)}] (An) {\(A\)};

   \node [codelta, below of=A21, shift={(-0.5,-0.5)}] (12sum) {};
   \node [codelta, below of=A32, shift={(-0.5,-1)}] (123sum) {};
   \node [codelta, below of=A32, shift={(1,-2.5)}] (12nsum) {};

   \node [multiply, above of=B1] (B1a) {\(C\)};
   \node [multiply, right of=B1a] (B2a) {\(C\)};
   \node [multiply, right of=B2a] (B3a) {\(C\)};
   \node [multiply, above of=B2a] (A21a) {\(A\)};
   \node [multiply, above of=B3a] (A31a) {\(A\)};
   \node [multiply, above of=A31a] (A32a) {\(A\)};
   \node [right of=A31a] (dots) {\(\dots\)};
   \node [multiply, right of=dots] (An1a) {\(A\)};
   \node [multiply, below of=An1a] (Bna) {\(C\)};
   \node [multiply, above of=An1a, shift={(0,1)}] (Ana) {\(A\)};

   \node [delta, above of=A21a, shift={(-0.5,0.5)}] (12suma) {};
   \node [delta, above of=A32a, shift={(-0.5,1)}] (123suma) {};
   \node [delta, above of=A32a, shift={(1,2.5)}] (12nsuma) {};

   \draw (B1) -- (B1a);
   \draw (B2) -- (B2a);
   \draw (B3) -- (B3a);
   \draw (Bn) -- (Bna);
   \draw (12nsum.io) -- +(270:0.5);
   \draw (B2) -- (A21);
   \draw (B3) -- (A31) -- (A32);
   \draw (Bn) -- (An1);
   \draw[dotted] (An1) -- (An)
         (123sum.io) .. controls +(270:0.5) and +(120:0.5) .. (12nsum.left in);
   \draw (A21.270) .. controls +(270:0.5) and +(60:0.5) .. (12sum.right in)
         (A32.270) .. controls +(270:0.5) and +(60:0.5) .. (123sum.right in)
         (An.270) .. controls +(270:0.5) and +(60:1) .. (12nsum.right in)
         (B1.270) .. controls +(270:1) and +(120:0.5) .. (12sum.left in)
         (12sum.io) .. controls +(270:0.5) and +(120:0.5) .. (123sum.left in);

\draw [decorate,decoration={brace,amplitude=10pt},xshift=-4pt,yshift=0pt]
(4.7,-0.6) -- (4.7,-3.4)node [midway,xshift=3em] {\(n-1\)};

   \draw (12nsuma.io) -- +(90:0.5);
   \draw (B2a) -- (A21a);
   \draw (B3a) -- (A31a) -- (A32a);
   \draw (Bna) -- (An1a);
   \draw[dotted] (An1a) -- (Ana)
         (123suma.io) .. controls +(90:0.5) and +(240:0.5) .. (12nsuma.left out);
   \draw (A21a.90) .. controls +(90:0.5) and +(300:0.5) .. (12suma.right out)
         (A32a.90) .. controls +(90:0.5) and +(300:0.5) .. (123suma.right out)
         (Ana.90) .. controls +(90:0.5) and +(300:1) .. (12nsuma.right out)
         (B1a.90) .. controls +(90:1) and +(240:0.5) .. (12suma.left out)
         (12suma.io) .. controls +(90:0.5) and +(240:0.5) .. (123suma.left out);

\draw [decorate,decoration={brace,amplitude=10pt},xshift=-4pt,yshift=0pt]
(4.7,4.4) -- (4.7,1.6)node [midway,xshift=3em] {\(n-1\)};

   \node (eq) [right of=Bn, shift={(1.75,0.5)}] {\(=\)};
   \node[coordinate, right of=eq, shift={(1,0)}] (point) {};

   \draw (point) -- +(0,6.7) (point) -- +(0,-6.7);
\end{tikzpicture} }.
\end{center}

While the controllability and observability criteria deal with linear maps (morphisms in
\(\vectk\)), the detour through \(\st_k\) is still necessary.  The linear maps \(A\), \(B\), and
\(C\) are defined in terms of stateful morphisms: a linear map in \(\vectks\) has no way of
`knowing' what \(A\), \(B\), and \(C\) are.  At best, a state space of minimum dimension can be
determined.  Through the rose-tinted glasses of \(\vectks\) alone, every morphism appears to be both
controllable and observable!

\chapter{The PROP $\goodflow$}
\label{goodflow}
In this chapter we will define the PROP \(\goodflow_k\) of `good' signal-flow diagrams over a field
\(k\).  Roughly speaking, a `good' signal-flow diagram is one for which we can describe
controllability and observability via stateful morphisms, using the results from
Section~\ref{cando}.  A sufficient condition on signal-flow diagrams for controllability and
observability to make sense, then, is for the signal-flow diagram to be a morphism in
\(\goodflow_k\).  A more formal description of what constitutes a `good' signal-flow diagram
involves the commutative square in Figure~\ref{fig:contflow}.
  \begin{figure}[!h]
    \centering
    \begin{tikzpicture}
   \node (SF) {\(\sigflow_{k,s}\)};
   \node (P) [above of=SF, shift={(0,1)}] {\(\goodflow_k\vphantom{\vectks}\)};
   \node (FR) [right of=SF, shift={(2,0)}] {\(\relks\)};
   \node (ST) [above of=FR, shift={(0,1)}] {\(\st_k\vphantom{\vectks}\)};

   \draw [->] (P) to node [above] {\(\dbox\)} (ST);
   \draw [right hook->] (P) to node [left] {\(j\)} (SF);
   \draw [->] (ST) to node [right] {\(i \of \eval\)} (FR);
   \draw [->] (SF) to node [below] {\(\bbox\)} (FR);
\end{tikzpicture}
    \caption[The $\prop$ morphism $\dbox$ makes this square commute]{This square in \(\prop\) commutes.\label{fig:contflow}}
  \end{figure}
The limitations of \(\st_k\) force limitations on the kinds of signal-flow diagrams that can be
considered `good' here.  The simplest signal-flow diagrams that fail to be `good' are the generators
\(\cup\) and \(\cap\).  Indeed, control theorists never write these explicitly, and only implicitly
use them as parts of larger signal-flow diagrams.

In order to formalize this discussion, we consider \(\sigflow_k\), the free PROP of all signal-flow
diagrams over a field \(k\), which we informally referred to at the end of
Section~\ref{presrksection}.  Recall, the signal-flow diagrams that generate this PROP are denoted
\begin{center}
\scalebox{0.9}{
 \begin{tikzpicture}[thick]
   \node [coordinate] (in) at (0,2) {};
   \node [multiply] (mult) at (0,1) {\(c\)};
   \node [coordinate] (out) at (0,0) {};
   \draw (in) -- (mult) -- (out);
   \end{tikzpicture}
\hspace{3 em}
\begin{tikzpicture}[thick]
   \node [plus] (adder) at (0,0.85) {};
   \node [coordinate] (f) at (-0.5,1.5) {}; 
   \node [coordinate] (g) at (0.5,1.5) {};
   \node [coordinate] (out) at (0,0) {};
   \node [coordinate] (pref) at (-0.5,2) {};
   \node [coordinate] (preg) at (0.5,2) {};

   \draw [rounded corners] (pref) -- (f) -- (adder.left in);
   \draw [rounded corners] (preg) -- (g) -- (adder.right in);
   \draw (adder) -- (out);
   \end{tikzpicture} 
\hspace{2em}
  \begin{tikzpicture}[thick]
   \node[delta] (dupe) at (0,1.15) {};
   \node[coordinate] (o1) at (-0.5,0.5) {};
   \node[coordinate] (o2) at (0.5,0.5) {};
   \node[coordinate] (in) at (0,2) {};
   \node [coordinate] (posto1) at (-0.5,0) {};
   \node [coordinate] (posto2) at (0.5,0) {};

   \draw[rounded corners] (posto1) -- (o1) -- (dupe.left out);
   \draw[rounded corners] (posto2) -- (o2) -- (dupe.right out);
   \draw (in) -- (dupe);
\end{tikzpicture}
\hspace{3em}
\begin{tikzpicture}[thick]
   \node[coordinate] (in) at (0,2) {};
   \node [bang] (mult) at (0,1) {};
   \node [hole] (heightHolder) at (0,0) {};
   \draw (in) -- (mult);
   \end{tikzpicture}
\hspace{2em}
  \begin{tikzpicture}[thick]
   \node[hole] (heightHolder) at (0,2) {};
   \node [coordinate] (out) at (0,0) {};
   \node [zero] (del) at (0,1) {};
   \draw (del) -- (out);
\end{tikzpicture}
\hspace{3em}
 \begin{tikzpicture}[thick]
   \node [coordinate] (3) at (0,0.375) {};
   \node [coordinate] (4) at (1.3,0.375) {};
   \node [coordinate] (1) at (0,2) {};
   \node [coordinate] (2) at (1.3,2) {};
   \path
   (1) edge (3)
   (2) edge (4)
   (3) edge [-, bend right=90] (4);
   \end{tikzpicture}
        \hspace{3em}
   \begin{tikzpicture}[thick]
   \node [coordinate] (3) at (0,1.625) {};
   \node [coordinate] (4) at (1.3,1.625) {};
   \node [coordinate] (1) at (0,0) {};
   \node [coordinate] (2) at (1.3,0) {};
   \path
   (3) edge (1)
   (4) edge (2)
   (3) edge [bend left=90] (4);
   \end{tikzpicture}
},
\end{center}%
where \(c \in k\) is arbitrary.  There is an obvious `black-box' functor \(\bbox \maps \sigflow_k
\to \relk\) which is also a \(\prop\) morphism, thanks to the machinery of Chapter~\ref{PROPs}.

More generally, suppose \(S\) is any subset of \(k\).  Then we can define \(\sigflow_S\) to be the
free PROP on the above generators, where \(c\) is restricted to be an element of \(S\).  This is a
subPROP of \(\sigflow_k\), so we can restrict the black-box functor \(\bbox \maps \sigflow_k \to
\relk\) and obtain a \(\prop\) morphism, which by abuse of notation we call \(\bbox \maps \sigflow_S
\to \relk\) and let the context indicate which \(\bbox\) is intended.  Note that when \(S\)
generates \(k\) as a field, the black-box functor from \(\sigflow_S\) is full.  This abuse will be
used primarily for \(k \cup \{\frac{1}{s}\}\) as a subset of \(k(s)\), with \(\sigflow_{k \cup
\{\frac{1}{s}\}}\) abbreviated as \(\sigflow_{k,s}\).  We call \(\sigflow_{k,s}\) the PROP of
signal-flow diagrams with integrators.  Integrators are treated separately from the other scalings
here because integration plays a special role in control theory.  Compared to \(\sigflow_k\),
\(\sigflow_{k,s}\) is a free PROP that has one extra generator to indicate integration:
\begin{center}
\begin{tikzpicture}[thick]
 \node [integral] (int) {\(\int\)};
 \draw (int.90) -- +(0,0.4)
       (int.270) -- +(0,-0.4);
\end{tikzpicture}.
\end{center}

In Theorem~\ref{boxvectfunctors} we found some \(\prop\) morphisms from \(\st_k\) to \(\vectks\),
namely \(\eval\) and \(\feed\).  By composing \(\eval\) with the inclusion map \(i \maps \vectks \to
\relks\), we get a \(\prop\) morphism \(i \of \eval \maps \st_k \to \relks\) that will be
instrumental in determining which signal-flow diagrams are `good'.  Ultimately we will find a
\(\prop\) morphism \(\dbox \maps \goodflow_k \to \st_k\) and a commutative square in the category
\(\prop\) shown in Figure~\ref{fig:contflow}.  The desire to find the PROP \(\goodflow_k\) that
makes this square commute leads us directly to the definition of \(\goodflow_k\), stated in
Definition~\ref{gooddef}.

\section{Finding $\goodflow_k$}
\label{flowgood}
Since we wish to define \(\goodflow_k\) as a PROP of signal-flow diagrams with a certain property,
we first define PROPs of signal-flow diagrams and then narrow down to those with that property.
\begin{definition}
\label{sigflowdef}
The PROP \Define{\(\sigflow_k\)} is the free PROP on the generators of \(\relk\).  That is,
\(\sigflow_k = \F(\Sigma_{\relk})\), where
\begin{align*}\Sigma_{\relk} = {} & \{\sigma_+ \maps 2 \to 1, \sigma_0 \maps 0 \to 1, \sigma_\Delta \maps 1 \to 2,
\sigma_! \maps 1 \to 0, \sigma_\cup \maps 2 \to 0, \sigma_\cap \maps 0 \to 2\}
\\ & \cup \{\sigma_{s_c} \maps 1 \to 1 : c \in k\},\end{align*}
so that the \(\F\)-images of these formal symbols are the generators listed in Lemma~\ref{gensrk}:
\begin{center}
\scalebox{0.9}{
 \begin{tikzpicture}[thick]
   \node [coordinate] (in) at (0,2) {};
   \node [multiply] (mult) at (0,1) {\(c\)};
   \node [coordinate] (out) at (0,0) {};
   \draw (in) -- (mult) -- (out);
   \end{tikzpicture}
\hspace{3 em}
\begin{tikzpicture}[thick]
   \node [plus] (adder) at (0,0.85) {};
   \node [coordinate] (f) at (-0.5,1.5) {}; 
   \node [coordinate] (g) at (0.5,1.5) {};
   \node [coordinate] (out) at (0,0) {};
   \node [coordinate] (pref) at (-0.5,2) {};
   \node [coordinate] (preg) at (0.5,2) {};

   \draw [rounded corners] (pref) -- (f) -- (adder.left in);
   \draw [rounded corners] (preg) -- (g) -- (adder.right in);
   \draw (adder) -- (out);
   \end{tikzpicture} 
\hspace{2em}
  \begin{tikzpicture}[thick]
   \node[delta] (dupe) at (0,1.15) {};
   \node[coordinate] (o1) at (-0.5,0.5) {};
   \node[coordinate] (o2) at (0.5,0.5) {};
   \node[coordinate] (in) at (0,2) {};
   \node [coordinate] (posto1) at (-0.5,0) {};
   \node [coordinate] (posto2) at (0.5,0) {};

   \draw[rounded corners] (posto1) -- (o1) -- (dupe.left out);
   \draw[rounded corners] (posto2) -- (o2) -- (dupe.right out);
   \draw (in) -- (dupe);
\end{tikzpicture}
\hspace{3em}
\begin{tikzpicture}[thick]
   \node[coordinate] (in) at (0,2) {};
   \node [bang] (mult) at (0,1) {};
   \node [hole] (heightHolder) at (0,0) {};
   \draw (in) -- (mult);
   \end{tikzpicture}
\hspace{2em}
  \begin{tikzpicture}[thick]
   \node[hole] (heightHolder) at (0,2) {};
   \node [coordinate] (out) at (0,0) {};
   \node [zero] (del) at (0,1) {};
   \draw (del) -- (out);
\end{tikzpicture}
\hspace{3em}
 \begin{tikzpicture}[thick]
   \node [coordinate] (3) at (0,0.375) {};
   \node [coordinate] (4) at (1.3,0.375) {};
   \node [coordinate] (1) at (0,2) {};
   \node [coordinate] (2) at (1.3,2) {};
   \path
   (1) edge (3)
   (2) edge (4)
   (3) edge [-, bend right=90] (4);
   \end{tikzpicture}
        \hspace{3em}
   \begin{tikzpicture}[thick]
   \node [coordinate] (3) at (0,1.625) {};
   \node [coordinate] (4) at (1.3,1.625) {};
   \node [coordinate] (1) at (0,0) {};
   \node [coordinate] (2) at (1.3,0) {};
   \path
   (3) edge (1)
   (4) edge (2)
   (3) edge [bend left=90] (4);
   \end{tikzpicture}
},
\end{center}%
where \(c \in k\) is arbitrary.
\end{definition}

For any subset \(S\) of the field \(k\), we also have the following definition.
\begin{definition}
\label{sigflowSdef}
The PROP \Define{\(\sigflow_S\)} is the free PROP on the generators of \(\relk\), with the scaling
morphisms \(s_c\) restricted to \(c \in S\).  That is, \(\sigflow_S = \F(\Sigma_{\relS})\), where
\begin{align*}\Sigma_{\relS} = {} & \{\sigma_+ \maps 2 \to 1, \sigma_0 \maps 0 \to 1, \sigma_\Delta \maps 1 \to 2,
\sigma_! \maps 1 \to 0, \sigma_\cup \maps 2 \to 0, \sigma_\cap \maps 0 \to 2\}
\\ & \cup \{\sigma_{s_c} \maps 1 \to 1 : c \in S\},\end{align*}
so that the \(\F\)-images of these formal symbols are the generators listed in
Definition~\ref{sigflowdef} above, but now with \(c \in S\).
\end{definition}

Since \(\sigflow_k = \F\Sigma_{\relk}\), there is an obvious \(\prop\) morphism \(\bbox \maps
\sigflow_k \to \relk\), namely the coequalizer of \(\F \sige_{\relk} \rightrightarrows
\F\Sigma_{\relk}\).  Following Lawvere's ideas on functorial semantics \cite{Lawvere}, we can treat
the PROP \(\sigflow_k\) as providing `syntax' and the PROP \(\relk\) as providing `semantics'.  In
other words, morphisms in \(\sigflow_k\) are a notation---signal-flow diagrams---while morphisms in
\(\relk\) are what this notation stands for, namely linear relations between inputs and outputs,
which we arrive at by imposing the equations of \(\relk\) on signal-flow diagrams.  Understood in
this light, the black-box functor \(\bbox \maps \sigflow_k \to \relk\) assigns to each signal-flow
diagram its meaning: the linear relation it stands for.

Because controllability and observability involve extending \(k\) to \(k(s)\), we will be concerned
with the PROPs \(\relks\) and \(\sigflow_{k,s}\), where \(\sigflow_{k,s}\) is the free PROP
\(\F(\Sigma_{\relk,s})\), and \(\Sigma_{\relk,s} = \Sigma_{\relk} \cup \{\sigma_{\!\int} \maps 1 \to
1\}\).  We take the \(\F\)-image of \(\sigma_{\!\int}\) to be a scaling by \(\frac{1}{s}\) in the
field extension \(k(s)\).  Then \(k \cup \frac{1}{s}\) is a subset of \(k(s)\), and we identify
\(\sigflow_{k,s}\) with \(\sigflow_{k \cup \frac{1}{s}}\).  Since \(k(s)\) is generated as a field
by \(k \cup \frac{1}{s}\), the restriction of \(\bbox \maps \sigflow_{k(s)} \to \relks\) to \(\bbox
\maps \sigflow{k,s} \to \relks\) is still full.  A factor of \(\frac{1}{s}\) comes from the Laplace
transform of \(\int_0^t f(\tau) d\tau\), so a `scale by \(\frac{1}{s}\)' morphism will be referred
to as an `integrator'.

In order to extend the controllability and observability results in \(\st_k\) to \(\sigflow_{k,s}\),
we need to find a subPROP \(\P\) of \(\sigflow_{k,s}\) that maps to \(\st_k\) such that there are
arrows making this diagram commute:
\begin{center}
\begin{tikzpicture}
   \node (SF) {\(\sigflow_{k,s}\)};
   \node (P) [above of=SF, shift={(0,1)}] {\(\P\vphantom{\vectks}\)};
   \node (ST) [right of=P, shift={(1,0)}] {\(\st_k\vphantom{\vectks}\)};
   \node (FV) [right of=ST, shift={(2.5,0)}] {\(\vectks\)};
   \node (FR) [below of=FV, shift={(0,-1)}] {\(\relks\)};

   \draw [->] (ST) to node [above] {\(\eval\)} (FV);
   \draw [right hook->] (P) to node [left] {\(j\)} (SF);
   \draw [right hook->] (FV) to node [right] {\(i\)} (FR);
   \draw [->] (P) to (ST);
   \draw [->] (SF) to node [below] {\(\bbox\)} (FR);
\end{tikzpicture},
\end{center}
where the evaluation map \(\eval \maps \st_k \to \vectks\) mentioned in
Theorem~\ref{boxvectfunctors} sends any stateful morphism to the linear map it describes.  Because
\(\P\) will be the PROP of signal-flow diagrams one might expect a control theorist to draw, we will
name this PROP \(\goodflow_k\).  Our goal, then, is to find this subPROP \(\goodflow_k\) of
\(\sigflow_{k,s}\) and a \(\prop\) morphism \(\dbox \maps \goodflow_k \to \st_k\) such that
\begin{center}
\begin{tikzpicture}
   \node (SF) {\(\sigflow_{k,s}\)};
   \node (P) [above of=SF, shift={(0,1)}] {\(\goodflow_k\vphantom{\vectks}\)};
   \node (FR) [right of=SF, shift={(2,0)}] {\(\relks\)};
   \node (ST) [above of=FR, shift={(0,1)}] {\(\st_k\vphantom{\vectks}\)};

   \draw [->] (P) to node [above] {\(\dbox\)} (ST);
   \draw [right hook->] (P) to node [left] {\(j\)} (SF);
   \draw [->] (ST) to node [right] {\(i \of \eval\)} (FR);
   \draw [->] (SF) to node [below] {\(\bbox\)} (FR);
\end{tikzpicture}
\end{center}
commutes.

This commutative square is not a pullback square, so \(\goodflow_k\) and \(\dbox\) cannot
be simply defined by a pullback.  A pullback square here would not give us a subPROP of
\(\sigflow_{k,s}\), since \(i \of \eval\) is not a monomorphism in \(\prop\).  To define
\(\goodflow_k\), we first need four processes that can be applied to any signal-flow diagram \(f
\maps m \to p\) in \(\sigflow_{k,s}\).  In Chapter~\ref{stateful} we saw that signal-flow diagrams
of the following form play a fundamental role in the state-space approach:
\begin{center}
\begin{tikzpicture}[thick]
   \node [delta] (usplit) at (2.5,3) {};
   \node [sqnode] (A) at (1,2) {\(A\)};
   \node [sqnode] (B) at (2,2) {\(B\)};
   \node [plus] (xdotsum) at (1.5,1) {};
   \node [multiply] (int) at (1.5,0) {\(\int\)};
   \node [delta] (xsplit) at (1.5,-1) {};
   \node [sqnode] (C) at (2,-2) {\(C\)};
   \node [sqnode] (D) at (3,-2) {\(D\)};
   \node [plus] (ysum) at (2.5,-3) {};

   \node [coordinate] (capend) [left of=A] {};
   \node [coordinate] (cupend) [below of=capend, shift={(0,-2)}] {};
   \node [coordinate] (ubend) [right of=B] {};

   \draw (usplit) -- +(0,0.75)
         (ysum) -- +(0,-0.75)
         (xsplit.left out) .. controls +(240:1) and +(270:1) .. (cupend)
         (A.90) .. controls +(90:0.8) and +(90:1.2) .. (capend)
         (capend) -- (cupend)
         (xsplit.right out) .. controls +(300:0.2) and +(90:0.2) .. (C.90)
         (C.270) .. controls +(270:0.2) and +(120:0.2) .. (ysum.left in)
         (D.270) .. controls +(270:0.2) and +(60:0.2) .. (ysum.right in)
         (D.90) -- (ubend)
         (usplit.right out) .. controls +(300:0.5) and +(90:0.5) .. (ubend)
         (usplit.left out) .. controls +(240:0.2) and +(90:0.2) .. (B.90)
         (A.270) .. controls +(270:0.2) and +(120:0.2) .. (xdotsum.left in)
         (B.270) .. controls +(270:0.2) and +(60:0.2) .. (xdotsum.right in)
         (xdotsum) -- (int) -- (xsplit)
;
\end{tikzpicture}.
\end{center}
When applied to a signal-flow diagram of this form, our four processes pick out the linear relations
\(A\), \(B\), \(C\), and \(D\) --- at least when they are linear \emph{maps}.  When all four of
these linear relations are linear maps, we decree that \(f\) is a morphism in \(\goodflow_k\) and
these linear maps form the \(\st_k\) morphism \(\dbox(f)\).

In what follows, we describe each of these four processes in generality and illustrate how they work
for signal-flow diagrams of the above form.  We choose the designations of \(m\), \(n\) and \(p\)
for the number of input wires, integrators, and output wires, respectively, on \(f\) in order to be
consistent with the convention established in Section~\ref{stsection}.

\begin{definition}
The linear relation \(A(f) \maps k^n \asrelto k^n\) is obtained from the signal-flow diagram \(f\) by
replacing the \(n\) wires leaving the integrators in \(!^p \of f \of \zero^m\) with inputs and the
\(n\) wires entering the integrators with outputs, then black-boxing the resulting signal-flow
diagram.
\end{definition}
\vbox{\begin{example} \({}\)
\begin{center}
\scalebox{0.90}{\begin{tikzpicture}[thick]
   \node [delta] (usplit) at (2.5,3) {};
   \node [sqnode] (A) at (1,2) {\(A\)};
   \node [sqnode] (B) at (2,2) {\(B\)};
   \node [plus] (xdotsum) at (1.5,1) {};
   \node [multiply] (int) at (1.5,0) {\(\int\)};
   \node [delta] (xsplit) at (1.5,-1) {};
   \node [sqnode] (C) at (2,-2) {\(C\)};
   \node [sqnode] (D) at (3,-2) {\(D\)};
   \node [plus] (ysum) at (2.5,-3) {};

   \node [coordinate] (capend) [left of=A] {};
   \node [coordinate] (cupend) [below of=capend, shift={(0,-2)}] {};
   \node [coordinate] (ubend) [right of=B] {};

   \draw (usplit) -- +(0,0.75)
         (ysum) -- +(0,-0.75)
         (xsplit.left out) .. controls +(240:1) and +(270:1) .. (cupend)
         (A.90) .. controls +(90:0.8) and +(90:1.2) .. (capend)
         (capend) -- (cupend)
         (xsplit.right out) .. controls +(300:0.2) and +(90:0.2) .. (C.90)
         (C.270) .. controls +(270:0.2) and +(120:0.2) .. (ysum.left in)
         (D.270) .. controls +(270:0.2) and +(60:0.2) .. (ysum.right in)
         (D.90) -- (ubend)
         (usplit.right out) .. controls +(300:0.5) and +(90:0.5) .. (ubend)
         (usplit.left out) .. controls +(240:0.2) and +(90:0.2) .. (B.90)
         (A.270) .. controls +(270:0.2) and +(120:0.2) .. (xdotsum.left in)
         (B.270) .. controls +(270:0.2) and +(60:0.2) .. (xdotsum.right in)
         (xdotsum) -- (int) -- (xsplit)
;
\end{tikzpicture}} \qquad \raisebox{2.7cm}{\(\mapsto\)} \qquad \scalebox{0.90}{\begin{tikzpicture}[thick]
   \node [delta] (usplit) at (2.5,3) {};
   \node [multiply] (A) at (1,2) {\(A\)};
   \node [multiply] (B) at (2,2) {\(B\)};
   \node [plus] (xdotsum) at (1.5,1) {};
   \node [delta] (xsplit) at (1.5,-1) {};
   \node [multiply] (C) at (2,-2) {\(C\)};
   \node [multiply] (D) at (3,-2) {\(D\)};
   \node [plus] (ysum) at (2.5,-3) {};

   \node [coordinate] (capend) [left of=A] {};
   \node [coordinate] (cupend) [below of=capend, shift={(0,-2)}] {};
   \node [coordinate] (ubend) [right of=B] {};

   \node [bang] (bot) at (2.5,-3.5) {};
   \node [zero] (top) at (2.5,3.5) {};
   \node [coordinate] (u) at (3.5,3.75) {};
   \node [coordinate] (y) at (0.5,-3.75) {};

   \draw (usplit) -- (top)
         (ysum) -- (bot)
         (xsplit.left out) .. controls +(240:1) and +(270:1) .. (cupend)
         (A.90) .. controls +(90:0.8) and +(90:1.2) .. (capend)
         (capend) -- (cupend)
         (xsplit.right out) .. controls +(300:0.2) and +(90:0.2) .. (C.90)
         (C.270) .. controls +(270:0.2) and +(120:0.2) .. (ysum.left in)
         (D.270) .. controls +(270:0.2) and +(60:0.2) .. (ysum.right in)
         (xsplit.io) .. controls +(90:0.7) and +(270:2.5) .. (u)
         (usplit.right out) .. controls +(300:0.5) and +(90:0.5) .. (ubend)
         (usplit.left out) .. controls +(240:0.2) and +(90:0.2) .. (B.90)
         (A.270) .. controls +(270:0.2) and +(120:0.2) .. (xdotsum.left in)
         (B.270) .. controls +(270:0.2) and +(60:0.2) .. (xdotsum.right in);

   \node [hole] at (3,1.65) {};
   \node [hole] at (0.7,-1.75) {};

   \draw (xdotsum.io) .. controls +(270:0.5) and +(90:3) .. (y)
         (D.90) -- (ubend);
\end{tikzpicture}}
\end{center}
\end{example}}
\begin{definition}
The linear relation \(B(f) \maps k^m \asrelto k^n\) is obtained from \(f\) by replacing the \(n\)
wires entering the integrators in \(!^p \of f\) with outputs and replacing the \(n\) wires leaving
the integrators with \(\zero^n\), then black-boxing.
\end{definition}
\vbox{\begin{example} \({}\)
\begin{center}
\scalebox{0.90}{\begin{tikzpicture}[thick]
   \node [delta] (usplit) at (2.5,3) {};
   \node [sqnode] (A) at (1,2) {\(A\)};
   \node [sqnode] (B) at (2,2) {\(B\)};
   \node [plus] (xdotsum) at (1.5,1) {};
   \node [multiply] (int) at (1.5,0) {\(\int\)};
   \node [delta] (xsplit) at (1.5,-1) {};
   \node [sqnode] (C) at (2,-2) {\(C\)};
   \node [sqnode] (D) at (3,-2) {\(D\)};
   \node [plus] (ysum) at (2.5,-3) {};

   \node [coordinate] (capend) [left of=A] {};
   \node [coordinate] (cupend) [below of=capend, shift={(0,-2)}] {};
   \node [coordinate] (ubend) [right of=B] {};

   \draw (usplit) -- +(0,0.75)
         (ysum) -- +(0,-0.75)
         (xsplit.left out) .. controls +(240:1) and +(270:1) .. (cupend)
         (A.90) .. controls +(90:0.8) and +(90:1.2) .. (capend)
         (capend) -- (cupend)
         (xsplit.right out) .. controls +(300:0.2) and +(90:0.2) .. (C.90)
         (C.270) .. controls +(270:0.2) and +(120:0.2) .. (ysum.left in)
         (D.270) .. controls +(270:0.2) and +(60:0.2) .. (ysum.right in)
         (D.90) -- (ubend)
         (usplit.right out) .. controls +(300:0.5) and +(90:0.5) .. (ubend)
         (usplit.left out) .. controls +(240:0.2) and +(90:0.2) .. (B.90)
         (A.270) .. controls +(270:0.2) and +(120:0.2) .. (xdotsum.left in)
         (B.270) .. controls +(270:0.2) and +(60:0.2) .. (xdotsum.right in)
         (xdotsum) -- (int) -- (xsplit)
;
\end{tikzpicture}} \qquad \raisebox{2.7cm}{\(\mapsto\)} \qquad \scalebox{0.90}{\begin{tikzpicture}[thick]
   \node [delta] (usplit) at (2.5,3) {};
   \node [multiply] (A) at (1,2) {\(A\)};
   \node [multiply] (B) at (2,2) {\(B\)};
   \node [plus] (xdotsum) at (1.5,1) {};
   \node [delta] (xsplit) at (1.5,-1) {};
   \node [multiply] (C) at (2,-2) {\(C\)};
   \node [multiply] (D) at (3,-2) {\(D\)};
   \node [plus] (ysum) at (2.5,-3) {};

   \node [coordinate] (capend) [left of=A] {};
   \node [coordinate] (cupend) [below of=capend, shift={(0,-2)}] {};
   \node [coordinate] (ubend) [right of=B] {};

   \node [bang] (bot) at (2.5,-3.5) {};
   \node [zero] (int) at (1.5,-0.25) {};
   \node [coordinate] (y) at (0.5,-3.75) {};

   \draw (usplit) -- +(0,0.75)
         (ysum) -- (bot)
         (xsplit.left out) .. controls +(240:1) and +(270:1) .. (cupend)
         (A.90) .. controls +(90:0.8) and +(90:1.2) .. (capend)
         (capend) -- (cupend)
         (xsplit.right out) .. controls +(300:0.2) and +(90:0.2) .. (C.90)
         (C.270) .. controls +(270:0.2) and +(120:0.2) .. (ysum.left in)
         (D.270) .. controls +(270:0.2) and +(60:0.2) .. (ysum.right in)
         (D.90) -- (ubend)
         (usplit.right out) .. controls +(300:0.5) and +(90:0.5) .. (ubend)
         (usplit.left out) .. controls +(240:0.2) and +(90:0.2) .. (B.90)
         (A.270) .. controls +(270:0.2) and +(120:0.2) .. (xdotsum.left in)
         (B.270) .. controls +(270:0.2) and +(60:0.2) .. (xdotsum.right in)
         (xsplit) -- (int);

   \node [hole] at (0.7,-1.75) {};

   \draw (xdotsum.io) .. controls +(270:0.5) and +(90:3) .. (y);
  \end{tikzpicture}}
\end{center}
\end{example}}
\begin{definition}
The linear relation \(C(f) \maps k^n \asrelto k^p\) is obtained from \(f\) dually, by replacing the
\(n\) wires leaving the integrators in \(f \of \zero^m\) with inputs and replacing the \(n\) wires
entering the integrators with \(!^n\), then black-boxing.
\end{definition}
\vbox{\begin{example} \({}\)
\begin{center}
\scalebox{0.90}{\begin{tikzpicture}[thick]
   \node [delta] (usplit) at (2.5,3) {};
   \node [sqnode] (A) at (1,2) {\(A\)};
   \node [sqnode] (B) at (2,2) {\(B\)};
   \node [plus] (xdotsum) at (1.5,1) {};
   \node [multiply] (int) at (1.5,0) {\(\int\)};
   \node [delta] (xsplit) at (1.5,-1) {};
   \node [sqnode] (C) at (2,-2) {\(C\)};
   \node [sqnode] (D) at (3,-2) {\(D\)};
   \node [plus] (ysum) at (2.5,-3) {};

   \node [coordinate] (capend) [left of=A] {};
   \node [coordinate] (cupend) [below of=capend, shift={(0,-2)}] {};
   \node [coordinate] (ubend) [right of=B] {};

   \draw (usplit) -- +(0,0.75)
         (ysum) -- +(0,-0.75)
         (xsplit.left out) .. controls +(240:1) and +(270:1) .. (cupend)
         (A.90) .. controls +(90:0.8) and +(90:1.2) .. (capend)
         (capend) -- (cupend)
         (xsplit.right out) .. controls +(300:0.2) and +(90:0.2) .. (C.90)
         (C.270) .. controls +(270:0.2) and +(120:0.2) .. (ysum.left in)
         (D.270) .. controls +(270:0.2) and +(60:0.2) .. (ysum.right in)
         (D.90) -- (ubend)
         (usplit.right out) .. controls +(300:0.5) and +(90:0.5) .. (ubend)
         (usplit.left out) .. controls +(240:0.2) and +(90:0.2) .. (B.90)
         (A.270) .. controls +(270:0.2) and +(120:0.2) .. (xdotsum.left in)
         (B.270) .. controls +(270:0.2) and +(60:0.2) .. (xdotsum.right in)
         (xdotsum) -- (int) -- (xsplit)
;
\end{tikzpicture}} \qquad \raisebox{2.7cm}{\(\mapsto\)} \qquad \scalebox{0.90}{\begin{tikzpicture}[thick]
   \node [delta] (usplit) at (2.5,3) {};
   \node [multiply] (A) at (1,2) {\(A\)};
   \node [multiply] (B) at (2,2) {\(B\)};
   \node [plus] (xdotsum) at (1.5,1) {};
   \node [delta] (xsplit) at (1.5,-1) {};
   \node [multiply] (C) at (2,-2) {\(C\)};
   \node [multiply] (D) at (3,-2) {\(D\)};
   \node [plus] (ysum) at (2.5,-3) {};

   \node [coordinate] (capend) [left of=A] {};
   \node [coordinate] (cupend) [below of=capend, shift={(0,-2)}] {};
   \node [coordinate] (ubend) [right of=B] {};

   \node [zero] (top) at (2.5,3.5) {};
   \node [bang] (int) at (1.5,0.25) {};
   \node [coordinate] (u) at (3.5,3.75) {};

   \draw (usplit) -- (top)
         (ysum) -- +(0,-0.75)
         (xsplit.left out) .. controls +(240:1) and +(270:1) .. (cupend)
         (A.90) .. controls +(90:0.8) and +(90:1.2) .. (capend)
         (capend) -- (cupend)
         (xsplit.right out) .. controls +(300:0.2) and +(90:0.2) .. (C.90)
         (C.270) .. controls +(270:0.2) and +(120:0.2) .. (ysum.left in)
         (D.270) .. controls +(270:0.2) and +(60:0.2) .. (ysum.right in)
         (xsplit.io) .. controls +(90:0.7) and +(270:2.5) .. (u)
         (usplit.right out) .. controls +(300:0.5) and +(90:0.5) .. (ubend)
         (usplit.left out) .. controls +(240:0.2) and +(90:0.2) .. (B.90)
         (A.270) .. controls +(270:0.2) and +(120:0.2) .. (xdotsum.left in)
         (B.270) .. controls +(270:0.2) and +(60:0.2) .. (xdotsum.right in)
         (xdotsum) -- (int);

   \node [hole] at (3,1.65) {};

   \draw (D.90) -- (ubend);
  \end{tikzpicture}}
\end{center}
\end{example}}
\begin{definition}
The linear relation \(D(f) \maps k^m \asrelto k^p\) is obtained by replacing each integrator with
scaling by zero, then black-boxing.
\end{definition}
\vbox{\begin{example} \({}\)
\begin{center}
\scalebox{0.90}{\begin{tikzpicture}[thick]
   \node [delta] (usplit) at (2.5,3) {};
   \node [sqnode] (A) at (1,2) {\(A\)};
   \node [sqnode] (B) at (2,2) {\(B\)};
   \node [plus] (xdotsum) at (1.5,1) {};
   \node [multiply] (int) at (1.5,0) {\(\int\)};
   \node [delta] (xsplit) at (1.5,-1) {};
   \node [sqnode] (C) at (2,-2) {\(C\)};
   \node [sqnode] (D) at (3,-2) {\(D\)};
   \node [plus] (ysum) at (2.5,-3) {};

   \node [coordinate] (capend) [left of=A] {};
   \node [coordinate] (cupend) [below of=capend, shift={(0,-2)}] {};
   \node [coordinate] (ubend) [right of=B] {};

   \draw (usplit) -- +(0,0.75)
         (ysum) -- +(0,-0.75)
         (xsplit.left out) .. controls +(240:1) and +(270:1) .. (cupend)
         (A.90) .. controls +(90:0.8) and +(90:1.2) .. (capend)
         (capend) -- (cupend)
         (xsplit.right out) .. controls +(300:0.2) and +(90:0.2) .. (C.90)
         (C.270) .. controls +(270:0.2) and +(120:0.2) .. (ysum.left in)
         (D.270) .. controls +(270:0.2) and +(60:0.2) .. (ysum.right in)
         (D.90) -- (ubend)
         (usplit.right out) .. controls +(300:0.5) and +(90:0.5) .. (ubend)
         (usplit.left out) .. controls +(240:0.2) and +(90:0.2) .. (B.90)
         (A.270) .. controls +(270:0.2) and +(120:0.2) .. (xdotsum.left in)
         (B.270) .. controls +(270:0.2) and +(60:0.2) .. (xdotsum.right in)
         (xdotsum) -- (int) -- (xsplit)
;
\end{tikzpicture}} \qquad \raisebox{2.7cm}{\(\mapsto\)} \qquad \scalebox{0.90}{\begin{tikzpicture}[thick]
   \node [multiply] (D) at (3,-2) {\(D\)};
   \node [delta] (usplit) at (2.5,3) {};
   \node [multiply] (B) at (2,2) {\(B\)};
   \node [plus] (xdotsum) at (1.5,1) {};
   \node [multiply] (int) at (1.5,0) {\(0\)};
   \node [delta] (xsplit) at (1.5,-1) {};
   \node [plus] (ysum) at (2.5,-3) {};
   \node [multiply] (A) at (1,2) {\(A\)};
   \node [multiply] (C) at (2,-2) {\(C\)};

   \node [coordinate] (capend) [left of=A] {};
   \node [coordinate] (cupend) [below of=capend, shift={(0,-2)}] {};
   \node [coordinate] (ubend) [right of=B] {};

   \draw (usplit) -- +(0,0.75)
         (ysum) -- +(0,-0.75)
         (xsplit.left out) .. controls +(240:1) and +(270:1) .. (cupend)
         (A.90) .. controls +(90:0.8) and +(90:1.2) .. (capend)
         (capend) -- (cupend)
         (xsplit.right out) .. controls +(300:0.2) and +(90:0.2) .. (C.90)
         (C.270) .. controls +(270:0.2) and +(120:0.2) .. (ysum.left in)
         (D.270) .. controls +(270:0.2) and +(60:0.2) .. (ysum.right in)
         (D.90) -- (ubend)
         (usplit.right out) .. controls +(300:0.5) and +(90:0.5) .. (ubend)
         (usplit.left out) .. controls +(240:0.2) and +(90:0.2) .. (B.90)
         (A.270) .. controls +(270:0.2) and +(120:0.2) .. (xdotsum.left in)
         (B.270) .. controls +(270:0.2) and +(60:0.2) .. (xdotsum.right in)
         (xdotsum) -- (int) -- (xsplit)
;
  \end{tikzpicture}}.
\end{center}
\end{example}}
Note, in addition to its connection to the state equations, the signal-flow diagram used for the
examples is idealized to give a visual intuition of what the process does, with the design to
suggest \(A(f) = A\), \(B(f) = B\), \(C(f) = C\), and \(D(f) = D\).  As we shall see, this will be
the case when \(A(f)\), \(B(f)\), \(C(f)\), and \(D(f)\) are all linear maps, deepening the
connection between these processes and \(\st_k\), and hinting toward what makes a signal-flow
diagram `good'.

\begin{definition}
\label{gooddef}
The category \Define{\(\goodflow_k\)} is a subPROP of \(\sigflow_{k,s}\): a morphism \(f \maps
m \to p\) in \(\goodflow_k\) is a morphism \(f \maps m \to p\) in \(\sigflow_{k,s}\) such that the
linear relations \(A(f)\), \(B(f)\), \(C(f)\), and \(D(f)\) defined above are all linear maps.
\end{definition}

It is easy to see that all identity signal-flow diagrams are in \(\goodflow_k\).  It is also clear
that \(\goodflow_k\) is closed under direct sum.  It is not as obvious that all composites of
signal-flow diagrams in \(\goodflow_k\) are also in \(\goodflow_k\), but we show this in the proof
of Theorem~\ref{goodprop}, so \(\goodflow_k\) is indeed a PROP.

While this definition does not make it clear that the morphisms of \(\goodflow_k\) are closed under
composition, it has some advantages over other definitions that appear reasonable, such as one that
only insists `there is a morphism \(g \maps m \to p\) in \(\st_k\) with \(i(\eval(g)) = \bbox(f)\)'.
While closure under composition is then immediate, this alternative definition does not guarantee
the uniqueness of the morphism \(g\).  We can impose some extra conditions, for example requiring
the number of integrators in \(f\) to be equal to the dimension of the state space of \(g\), but
even this fails to make \(g\) unique.  For example, these morphisms in \(\st_k\) are different:
\begin{center}
\quad  \begin{tikzpicture}[baseline=0.5cm,->]
  \node (A) at (0,0) {\(k\)};
  \node (C) at (1.4,0) {\(k\)};
  \node (B) at (0,1.4) {\(k\)};
  \node (D) at (1.4,1.4) {\(k\)};

  \path
   (A) edge node[below] {\([1]\)} (C)
       edge node[left] {\([0]\)} (B)
   (B) edge node[above] {\([\frac{1}{s-1}]\)} (D)
   (D) edge node[right] {\([1]\)} (C);

   \node (neq) at (2.65,0.7) {\(\neq\)};

  \node (A) at (3.9,0) {\(k\)};
  \node (C) at (5.3,0) {\(k\)};
  \node (B) at (3.9,1.4) {\(k\)};
  \node (D) at (5.3,1.4) {\(k\)};

  \path
   (A) edge node[below] {\([1]\)} (C)
       edge node[left] {\([1]\)} (B)
   (B) edge node[above] {\([\frac{1}{s-1}]\)} (D)
   (D) edge node[right] {\([0]\)} (C);
 \end{tikzpicture},
\end{center}
since the former is observable and the latter is not observable\footnote{These two stateful morphisms
can also be distinguished based on controllabilty: the former is not controllable, while the latter
is controllable}.  However, both have state spaces of dimension 1, and both evaluate to the identity
relation, \(1\).  As we can see from this example, we need to be intelligent in how we translate
\(\goodflow_k\) signal-flow diagrams to \(\st_k\) morphisms if we want to have reasonable notions of
controllability and observability.  This will include knowing the number of integrators used, which
explains why the category of signal-flow diagrams from which we get \(\goodflow_k\) needs to be
\(\sigflow_{k,s}\) rather than \(\sigflow_{k(s)}\).  This is reflected in the processes \(A(f)\),
\(B(f)\), \(C(f)\), and \(D(f)\), since they cannot be defined on \(\sigflow_{k(s)}\).

While there is no reason \emph{a priori} to expect a signal-flow diagram to be in such a convenient
form as used in the examples above, the processes defined are unaffected by rewrites using the 
equations of \(\relk\), where the integrators are left to be free.  We therefore also consider the
PROP \(\porp = \P(\Sigma_{\relk,s},E_{\relk})\) and its associated `black-box' functor \(\dBox \maps
\sigflow_{k,s} \to \porp\).

\begin{lemma}
\label{goodrewrite}
If two signal-flow diagrams \(f_1\) and \(f_2\) are in \(\sigflow_{k,s}\) and \(\dBox f_1 =
\dBox f_2\), then \(A(f_1) = A(f_2)\), \(B(f_1) = B(f_2)\), \(C(f_1) = C(f_2)\), and \(D(f_1) =
D(f_2)\).
\end{lemma}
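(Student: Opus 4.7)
The plan is to show that each of the four processes descends along the quotient $\dBox : \sigflow_{k,s} \to \porp$, which is precisely the statement of the lemma. Since $\porp$ is the coequalizer of $\F\sige_{\relk} \rightrightarrows \F\Sigma_{\relk,s}$, its kernel congruence is generated by pairs $(K(L), K(R))$ where $(L,R) \in E_{\relk}$ is a pair of $\Sigma_{\relk}$-terms and $K$ is a $\sigflow_{k,s}$-context with one hole of matching arity. By an easy induction on the number of elementary rewrites, it suffices to handle a single one.

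For $D$ this is painless. Define $\tau_D : \sigflow_{k,s} \to \sigflow_k$ as the unique $\prop$ morphism sending $\sigma_{\!\int}$ to $\sigma_{s_0}$ and fixing every other generator; it exists because $\sigflow_{k,s}$ is free on $\Sigma_{\relk,s}$. Directly from the definition of $D$, we have $D = \bbox \circ \tau_D$, and since no equation in $E_{\relk}$ uses $\sigma_{\!\int}$, each such equation is fixed by $\tau_D$ and then annihilated by $\bbox$. Hence $D$ factors through $\porp$.

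For $A$, $B$, and $C$ the target arity depends on $n$, the number of integrators of $f$, so no $\prop$ morphism can encode them; I would argue directly. The crucial observation is that in an elementary rewrite taking $f_1 = K(L)$ to $f_2 = K(R)$, the subterms $L$ and $R$ contain no integrator, so every integrator of $f_1$ and $f_2$ lies inside the context $K$ and occupies the same position. Each of the three processes consists of (i) attaching $0^m$ and/or ${!}^{\,p}$ at the original boundary, and then (ii) for each integrator discarding it and reassigning its in-wire and out-wire to fresh boundary ports; both operations take place entirely within $K$ and are blind to what sits in the hole. Fixing a consistent enumeration of the integrator occurrences of $K$ (preserved by $E_{\relk}$-rewrites because these neither create, delete, nor permute integrators) to perform the reassignment, the preprocessed diagrams for $f_1$ and $f_2$ still differ only by the local replacement of $L$ by $R$, which is an equation in $E_{\relk}$. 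Applying $\bbox$ then yields equal morphisms in $\relk$.

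The main obstacle is the bookkeeping hidden in phrases like ``fixed enumeration of integrator occurrences'' and ``same position inside $K$'', since the symmetric monoidal structure of a free PROP makes the notion of ``position'' a bit slippery. A convenient way to discharge this is to choose a $\Sigma_{\relk,s}$-term representing $f$ and identify integrator occurrences by their term-positions, which are unaffected by any rewrite confined to the hole. An alternative route is to first pull all integrators into a single parallel layer in $\porp$ using the zigzag identities (equations \textbf{(19)--(20)}, which lie in $E_{\relk}$), writing $\dBox(f) = \dBox\bigl((1_p \oplus \cup^n) \circ (\hat f \oplus 1_n) \circ (1_{m+n} \oplus \sigma_{\!\int}^{\oplus n}) \circ (1_m \oplus \cap^n)\bigr)$ with $\hat f \in \sigflow_k(m+n,p+n)$, and then verifying the explicit formulas $A(f) = \bbox((!^{\,p} \oplus 1_n) \circ \hat f \circ (0^m \oplus 1_n))$ together with its three analogues for $B$, $C$, and $D$. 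Either route reduces the lemma to the fact that $E_{\relk}$-rewrites in the integrator-free part are collapsed by $\bbox$.
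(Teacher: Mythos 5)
Your proposal is correct and takes essentially the same approach as the paper: the paper's proof likewise localizes each elementary $E_{\relk}$-rewrite to an integrator-free subdiagram, observes that such rewrites commute with the compositions by $\zero^m$, $!^p$ and with the integrator-replacement steps of the processes $A,\dotsc,D$, and concludes by noting that black-boxing coequalizes diagrams differing by such a rewrite. Your version merely spells out the bookkeeping (contexts, integrator positions, and the $\tau_D$ factorization for $D$) that the paper leaves implicit.
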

That is, rewrites from \(\relk\) (the ones that treat \(\frac{1}{s}\) as a free generator) have no
effect on \(A(f)\), \(B(f)\), \(C(f)\), and \(D(f)\).

\begin{proof}
We take advantage of the compositionality of signal-flow diagrams: a rewrite of \(f \in
\sigflow_{k,s}\) using one of equations {\hyperref[eqn123]{\textbf{(1)--(31)}}} can be localized to
a subdiagram of \(f\) that has no integrators in it.  Doing a rewrite on such a subdiagram and
then composing \(!^p \of f\) or \(f \of \zero^m\) yields the same signal-flow diagram as first
composing, then doing the same rewrite on that subdiagram.  Likewise, rewrites of such subdiagrams
and the changes to the integrators that occur in processes \(A(f), \dotsc, D(f)\) can be done in
either order with the result of the same signal-flow diagram either way.  Thus these rewrites
`commute' with the compositions and integrator replacements involved in these processes.  Since
black-boxing coequalizes signal-flow diagrams that differ in a rewrite from \(\relk\), a rewrite of
\(f\) using one of equations \textbf{(1)--(31)} will have no effect on \(A(f), \dotsc, D(f)\).

Since any rewrite of \(f\) from \(\relk\) is a series of rewrites using equations
\textbf{(1)--(31)}, no rewrite of \(f\) from \(\relk\) has an effect on \(A(f), \dotsc, D(f)\).
\end{proof}

Up to this point we have focused on breaking up signal-flow diagrams into four subdiagrams via the
PROP \(\porp\).  The following lemma shows that, in terms of \(\porp\), there are actually ten
subdiagrams than need to be considered in a fully general signal-flow diagram.  The processes
\(A(f), \dotsc, D(f)\) are each affected by several of these subdiagrams.  Our next goal is to show
that for a signal-flow diagram in \(\goodflow_k\), six of these subdiagrams will be trivial,
leaving only one non-trivial subdiagram each that can affect \(A(f), \dotsc, D(f)\).

\begin{lemma}
\label{porpnorm}
Any morphism in \(\porp\) can be rewritten in the following form:
\begin{center}
\scalebox{0.60} {  \begin{tikzpicture}[thick]
   \node[delta] (topD1) at (0,10) {};
   \node[delta, below of=topD1, shift={(0.5,0)}] (topD2) {};
   \node[delta, below of=topD2, shift={(0.5,0)}] (topD3) {};

   \node[plus] (topS1) at (-3,5.5) {};
   \node[plus, below of=topS1, shift={(0.5,0)}] (topS2) {};
   \node[plus, below of=topS2, shift={(0.5,0)}] (topS3) {};
   \node (int) [integral, below of=topS3] {\(\int\)};
   \node[delta, below of=int] (botD1) {};
   \node[delta, below of=botD1, shift={(0.5,0)}] (botD2) {};
   \node[delta, below of=botD2, shift={(0.5,0)}] (botD3) {};

   \node[plus] (botS1) at (-3,-4) {};
   \node[plus, below of=botS1, shift={(0.5,0)}] (botS2) {};
   \node[plus, below of=botS2, shift={(0.5,0)}] (botS3) {};

   \node[sqnode] (box0) at (-1,8)      {0};
   \node[sqnode] (box1) at (-1,6.5)    {1};
   \node[sqnode] (box2) at (-1,5)      {2};
   \node[sqnode] (box3) at (-5,2.5)    {3};
   \node[sqnode] (box4) at (-4,3.5)    {4};
   \node[sqnode] (box5) at (0,2)       {5};
   \node[sqnode] (box6) at (1.5,1)     {6};
   \node[sqnode] (box7) at (-2.5,-1.5) {7};
   \node[sqnode] (box8) at (-2,-2.5)   {8};
   \node[sqnode] (box9) at (-1.5,-3.5) {9};

   \draw (topD1.io) -- +(0,0.6)
         (topD1.right out) .. controls +(300:0.2) and +(90:0.2) .. (topD2.io)
         (topD2.right out) .. controls +(300:0.2) and +(90:0.2) .. (topD3.io)
         (topS1.io) .. controls +(270:0.2) and +(120:0.2) .. (topS2.left in)
         (topS2.io) .. controls +(270:0.2) and +(120:0.2) .. (topS3.left in)
         (topS3) -- (int) -- (botD1)
         (botD1.right out) .. controls +(300:0.2) and +(90:0.2) .. (botD2.io)
         (botD2.right out) .. controls +(300:0.2) and +(90:0.2) .. (botD3.io)
         (botS1.io) .. controls +(270:0.2) and +(120:0.2) .. (botS2.left in)
         (botS2.io) .. controls +(270:0.2) and +(120:0.2) .. (botS3.left in)
         (botS3.io) -- +(0,-0.6);
   \draw (topD1.left out)  .. controls +(240:0.3) and +(90:0.5) .. (box0)
         (topD2.left out)  .. controls +(240:0.5) and +(90:0.5) .. (box1)
                      .. controls +(270:0.5) and +(60:0.3) .. (topS2.right in)
         (topS3.right in)  .. controls +(60:0.3) and +(270:0.5) .. (box2)
         (topS1.right in)  .. controls +(60:2) and +(90:5)      .. (box3)
                      .. controls +(270:1.5) and +(120:1.5) .. (botS1.left in)
         (topS1.left in)   .. controls +(120:1) and +(90:1.3)   .. (box4)
                     .. controls +(270:1.5) and +(240:1.5) .. (botD1.left out)
         (topD3.left out)  .. controls +(240:1) and +(90:1)     .. (box5)
                    .. controls +(270:1.5) and +(300:1.5) .. (botD3.right out)
         (topD3.right out) .. controls +(300:1.5) and +(90:1.5) .. (box6)
                      .. controls +(270:1.5) and +(60:1.5) .. (botS3.right in)
         (botD2.left out)  .. controls +(240:0.3) and +(90:0.5) .. (box7)
         (botD3.left out)  .. controls +(240:0.3) and +(90:0.5) .. (box8)
                      .. controls +(270:0.5) and +(60:0.3) .. (botS1.right in)
         (botS2.right in)  .. controls +(60:0.3) and +(270:0.5) .. (box9)
;
 \end{tikzpicture} },
\end{center}
where the numbered boxes are linear relations in \(\relk\).
\end{lemma}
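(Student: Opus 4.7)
The plan is to exploit the fact that $\porp = \P(\Sigma_{\relk,s}, E_{\relk})$ is the PROP $\relk$ with one extra free generator, the integrator. So any morphism in $\porp$ can be represented by a signal-flow diagram built from the $\relk$ generators together with some finite number of integrators, and we are free to apply any of the $\relk$-equations \textbf{(1)}--\textbf{(31)} on integrator-free subdiagrams. The goal is to maneuver this picture until there is a single integrator, surrounded by a prescribed topological skeleton whose only non-skeletal data lives in ten $\relk$-morphism boxes.

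First I would consolidate all integrators. Given a $\porp$-morphism with some $n$ integrator generators placed arbitrarily throughout the diagram, I would use the PROP structure (tensor and braiding) together with the compact-closed structure of $\relk$ (cups and caps), applied via equations \textbf{(19)}--\textbf{(20)}, to pull every integrator to a common horizontal level, routing signals around them by zigzags. Integrators in parallel on $k^{\oplus n}$ can then be treated as one bundled integrator, leaving one $\relk$-morphism $\alpha \maps k^m \to k^{n+?}$ above and another $\relk$-morphism $\beta \maps k^{n+?} \to k^p$ below, where the extra wires arise from the cups and caps used when routing. Because equations \textbf{(1)}--\textbf{(31)} act only on $\relk$-subdiagrams, Lemma~\ref{goodrewrite}-style reasoning confirms that this consolidation is a valid identity in $\porp$.

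Second I would decompose each of $\alpha$ and $\beta$ into the prescribed form. Using the presentation of $\relk$ from Theorem~\ref{presrk}, I would arrange $\alpha$ so that the input $m$ wires first pass through a layer of duplications (topD1, topD2, topD3) that cleanly separate the signal into three streams: one for purely-input constraints (leading into box 0), one contributing to the pre-integrator sum (through box 1), and one reserved for bypassing the integrator entirely (feeding boxes 5 and 6 via cups to the bottom half). The pre-integrator additions topS1, topS2, topS3 then accumulate contributions from box 1, from box 2 (applied to wires that go straight into the addition cluster), and from cross-boxes 3 and 4 (coming through caps from below). A symmetric decomposition of $\beta$ produces the bottom duplication cluster, bottom addition cluster, and boxes 7, 8, 9 on the output side, with the cross-boxes 3, 4, 5, 6 supplied by the same cups and caps used in the consolidation step.

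The main obstacle will be Step 2, specifically verifying that four classes of wires into $\alpha$ (or out of $\beta$) --- the pure-input channel, the input-to-integrator channel, the input-to-output bypass channel, and the input-receiving-feedback channel --- actually suffice to realize every $\relk$-morphism of the appropriate type. This is a compact-closed coherence argument: in a $\dagger$-compact category an arbitrary morphism may be ``bent'' using cups and caps into several forms, and one has to show that the particular bending pattern in the target diagram exhausts all possible configurations. The proof will use the standard-form algorithm of Theorem~\ref{presrk} to bring $\alpha$ (resp.\ $\beta$) into a layered \textbf{duplications}-\textbf{relation}-\textbf{additions} shape, then repeatedly apply the extra-special Frobenius equations \textbf{(25)}--\textbf{(28)} together with the zigzag equations \textbf{(19)}--\textbf{(20)} to split each layer into the three or four channels listed above; each channel then contributes exactly one of the numbered boxes.
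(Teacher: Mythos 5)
Your opening move---using the symmetric monoidal structure and the zigzag equations \textbf{(19)}--\textbf{(20)} to slide all \(n\) integrators to a single level, leaving one \(\relk\)-block \(R \maps k^{m}\oplus k^{n} \asrelto k^{p}\oplus k^{n}\) with the integrators closing a feedback loop---is sound, and it makes explicit a consolidation that the paper's proof silently assumes. (One caveat on your framing: an ``\(\alpha\) above, \(\beta\) below'' picture with \(\alpha \maps k^m \to k^{n+?}\) cannot literally carry the feedback data of box 4 or the cross-connections of boxes 3 and 5 unless the cap/cup wires are counted among the ``\(?\)''; the honest post-consolidation shape is a single relation with a partial trace through the integrators.) From there, however, your Step 2 is not a proof but a restatement of the lemma: the assertion that the four channels ``actually suffice to realize every \(\relk\)-morphism of the appropriate type'' is exactly the claim to be established, and neither the standard form of Theorem~\ref{presrk} (a single block with caps and cozeros, a quite different shape from the target) nor the extra-special Frobenius equations \textbf{(25)}--\textbf{(28)} obviously reorganize that block into ten pairwise boxes hung on pure-\(\Delta\) fan-outs and pure-\(+\) fan-ins.

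The paper fills this hole differently and locally: it first observes that the ten boxes exhaust the possible connection patterns among the four port types (input, output, integrator input, integrator output), and then shows that the ``joints'' where several boxes meet can be purified---if a fan-out joint contains a stray \(+^{\dagger}\), equation \textbf{(4)} replaces its input string by a \(\Delta/!\) pair and \textbf{(D5)}\({}^{\dagger}\) then pushes the \(+^{\dagger}\) into an adjacent box, and dually for a \(\Delta^{\dagger}\) sitting in a fan-in joint. Your plan contains no analogue of this purification step. If you prefer to stay on your global route, the cleanest way to close the gap is semantic rather than syntactic: prove as linear algebra that every subspace \(R \subseteq (k^{m}\oplus k^{n})\oplus(k^{n}\oplus k^{p})\) is realized by the ten-box diagram (take box 5 to be the projection of \(R\) onto its domain, box 3 to be the fibre of \(R\) over \(0\), boxes 1, 4, 6, 8 to be the four blocks of any linear lift of the induced map from that projection to the quotient by that fibre, and boxes 0, 2, 7, 9 trivial), and then invoke the completeness of the presentation in Theorem~\ref{presrk} to convert the equality of the two \(\relk\)-blocks into a chain of rewrites by \textbf{(1)}--\textbf{(31)}; since these rewrites live entirely inside the integrator-free block, they are valid in \(\porp\) by the reasoning of Lemma~\ref{goodrewrite}. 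As written, though, the heart of the argument is missing.
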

We will refer to this form as \(\porp\)-normal form.

\begin{conjecture}
\label{conject}
We can further take boxes 1, 4\({}^{\dagger}\), 6, and 8 to be linear maps.
\end{conjecture}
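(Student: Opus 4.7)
The plan is to strengthen Lemma~\ref{porpnorm} by combining its $\porp$-normal form with the span/cospan factorization of linear relations, using the compact closed and bicommutative bimonoid structure of $\relk$ to absorb the non-map portions of the four target boxes into the six unrestricted ones.

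First, I would identify the roles of the four target boxes by analogy with the state equations $\dot{x} = Ax + Bu$, $y = Cx + Du$: box 6 sits on the direct input-to-output channel (playing the role of $D$), box 1 on the input-to-prestate channel (playing $B$), box 8 on the state-to-output channel (playing $C$), and box 4 on the feedback channel where the state output becomes the prestate input, which is why the conjecture names 4$^{\dagger}$ (the natural A-map direction) rather than 4 itself. Under the functor $\dbox \maps \goodflow_k \to \st_k$ of Figure~\ref{fig:contflow}, these four relations correspond exactly to the $B$, $D$, $C$, and $A$ components of a stateful morphism, all of which are by construction linear maps; so the conjecture is essentially the statement that every morphism of $\porp$ admits a ``state-space realization up to relational slack.''

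Second, for each target box I would factor its linear relation $R \subseteq V \oplus W$ as a span $V \xleftarrow{f} U \xrightarrow{g} W$ of linear maps, giving $R = g \of f^{\dagger}$ in $\relk$, and then push the $f^{\dagger}$-component into the adjacent unrestricted box (0, 2, 3, 5, 7, or 9). For box 6 concretely: the composite $g \of f^{\dagger}$ lies between topD3.right\_out and botS3.right\_in, and the presence of topD3 gives us a second copy of the input signal that flows into box 5. Using the Frobenius/bimonoid rearrangements \textbf{(17)}, \textbf{(21)}, and \textbf{(D5)} from Chapter~\ref{vectrel}, together with the zigzag equations, one can commute $f^{\dagger}$ past topD3 and absorb it into box 5, leaving only the linear map $g$ behind. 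Analogous rewrites handle box 1 (absorbing into boxes 0 and 2 on the input-duplication side), box 8 (into 7 and 9 on the output-addition side), and box 4$^{\dagger}$ (into 3 and 5, exploiting both the prestate-addition and state-duplication structures).

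The main obstacle I expect is ensuring these absorption moves preserve the overall shape of the $\porp$-normal form. A naive absorption may introduce new integrator loops, inflate the state-space dimension, or entangle the signals in ways that break the clean pattern of Lemma~\ref{porpnorm}. One approach is to perform all four reductions simultaneously via a single global factorization: identify the ``map part'' of the $\porp$-morphism (modelled on $\st_k$) and dump all relational slack into the six unrestricted boxes at once, then re-apply Lemma~\ref{porpnorm} to the residual diagram outside the four target positions to recover the refined normal form. A subtler obstacle is that boxes 1, 4$^{\dagger}$, 6, 8 may each present relational features (partiality or multi-valuedness) on both sides, and the two spans-legs need to go to \emph{different} unrestricted neighbors without colliding; verifying that the six slots really do provide enough independent routes — rather than, say, forcing a collision at the integrator interface — is the technical heart of the argument and is probably where the conjecture could conceivably fail, in which case one would need either to enlarge the normal form or to restrict the conjecture to the image of $i \of \eval \of \dbox$.
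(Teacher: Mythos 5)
The paper does not prove this statement: it is stated explicitly as an open conjecture (``This conjecture is not necessary to our argument\dots''), so there is no proof of record to compare against. Judged on its own merits, your overall strategy---split each of the four target relations into a ``map part'' plus ``relational slack'' and push the slack into the six unrestricted boxes using the bimonoid rewrites of \(\relk\)---is the right one, and I believe it can be made to work. But two steps as written would fail. First, you cannot ``commute \(f^{\dagger}\) past topD3'': a general \(f^{\dagger}\) is not a comonoid homomorphism, so \((1 \oplus f^{\dagger}) \of \Delta \neq \Delta \of f^{\dagger}\). What \emph{does} commute past \(\Delta\) is only the partial-identity (domain-restriction) part of the relation, via \((P \oplus 1)\of\Delta = \Delta \of P\) for \(P = \{(u,u) : u \in \mathrm{dom}\}\); the multivalued part must exit on the \emph{output} side. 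So the correct factorization is not a single span leg but the three-factor form \(R = \Sigma \of R_0 \of P\), with \(P\) a partial identity on \(\mathrm{dom}(R)\), \(R_0\) a genuine linear map, and \(\Sigma = \{(y, y+z) : z \in \mathrm{ind}(R)\}\) adding the indeterminacy subspace. Second, your routing is wrong in two of the four cases: box 5 (which ties a copy of \(u\) to a copy of \(x\)) and box 3 (which ties \(\dot{x}\) to \(y\)) are the acausal cross-connections, not receptacles for single-port slack. The natural receptacles are the four unary boxes: input-side constraints go to box 0 (for \(u\)) and box 7 (for \(x\)), output-side indeterminacies go to box 2 (for \(\dot{x}\)) and box 9 (for \(y\)).

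With those corrections the argument closes and your ``collision'' worry dissolves. Writing \(P = (1 \oplus E)\of\Delta\) with \(E \maps k^m \asrelto 0\) the constraint relation, coassociativity turns the pre-composed \(P\) into one extra leg of the duplication cluster feeding \(E\), which then merges with box 0 by intersecting subspaces; dually \(\Sigma = {+} \of (1 \oplus I)\) with \(I \maps 0 \asrelto k^p\) merges with box 9 by summing subspaces. Intersection and subspace sum are each well defined for all four absorptions performed simultaneously, the two legs of each box's slack land in \emph{distinct} unary boxes by construction, and none of the rewrites touches an integrator, so by the reasoning of Lemma~\ref{goodrewrite} the \(\porp\)-morphism and the shape of the normal form are preserved---no new loops and no change to \(n\). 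The remaining loose end, which you should not elide, is that the conjecture is only as solid as Lemma~\ref{porpnorm} itself, whose proof in the paper is quite schematic; any write-up should verify that the ten-box form is genuinely attainable before refining it.
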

This conjecture is not necessary to our argument, but it would simplify future work extending our
results if it is true.

We see there are two more ways to connect integrators, inputs, and outputs in \(\porp\) that are not
accounted for by \(A\), \(B\), \(C\), and \(D\) (boxes 3 and 5), along with four self-connections
(boxes 0, 2, 7, and 9).  The input of an integrator can connect to an output (box 3), and an input
can connect to the output of an integrator (box 5).  When either of these kinds of connection occurs
non-trivially in the signal-flow diagram \(f\), \(A(f)\) and \(D(f)\) will not be linear maps, so
these kinds of connection are trivial in any signal-flow diagram in \(\goodflow_k\).  When any of
the self-connections is non-trivial, either \(A(f)\) or \(D(f)\) will not be a linear map, so
self-connections are also trivial for any signal-flow diagram in \(\goodflow_k\).  Precisely what is
meant by `trivial' here is formalized in Lemma~\ref{goodstandard}.  Note: the status of \(B(f)\) and
\(C(f)\) as linear maps is also affected by these extra connections.


\begin{proof}[Proof of Lemma~\ref{porpnorm}]
Since integrators are free in \(\porp\), their placement in a diagram is important, relative to how
their inputs and outputs connect to other parts of the diagram.  Considering all the ways inputs,
outputs, integrator inputs, and integrator outputs can connect, the diagram in Lemma~\ref{porpnorm}
has all such possible connections in it.  The only question, then, is whether there are other ways
to join the connections that cannot be rewritten in this form.  The joints here are all series of
parallel \(\Delta\) morphisms and series of parallel \(+\) morphisms,
\begin{center}
\scalebox{0.7}{   \begin{tikzpicture}[thick]
   \node[delta] (delta) at (-0.5,1) {};
   \node[delta] (dupe) at (0,0)     {};
   \node[delta] (dub) at (0.5,-1)   {};

   \draw (0,-2)     .. controls +(90:0.3) and +(-120:0.3) .. (dub.left out)
         (-1,-2)    .. controls +(90:0.6) and +(-120:0.4) .. (dupe.left out)
         (-2,-2)    .. controls +(90:0.9) and +(-120:0.5) .. (delta.left out)
         (1,-2)     .. controls +(90:0.3) and +(-60:0.3)  .. (dub.right out)
         (dub.io)   .. controls +(90:0.2) and +(-60:0.3)  .. (dupe.right out)
         (dupe.io)  .. controls +(90:0.2) and +(-60:0.3)  .. (delta.right out)
         (delta.io) -- +(0,0.5)
;
   \end{tikzpicture}} \raisebox{1.4cm}{and} \scalebox{0.7}{   \begin{tikzpicture}[thick]
   \node[plus] (delta) at (0.5,-1) {};
   \node[plus] (dupe) at (0,0)     {};
   \node[plus] (dub) at (-0.5,1)   {};

   \draw (0,2)      .. controls +(-90:0.3) and +(60:0.3) .. (dub.left out)
         (1,2)      .. controls +(-90:0.6) and +(60:0.4) .. (dupe.left out)
         (2,2)      .. controls +(-90:0.9) and +(60:0.5) .. (delta.left out)
         (-1,2)     .. controls +(-90:0.3) and +(120:0.3)  .. (dub.right out)
         (dub.io)   .. controls +(-90:0.2) and +(120:0.3)  .. (dupe.right out)
         (dupe.io)  .. controls +(-90:0.2) and +(120:0.3)  .. (delta.right out)
         (delta.io) -- +(0,-0.5)
;
   \end{tikzpicture}}.
\end{center}
If a joint has a \(+^{\dagger}\) in it, the input string to the \(+^{\dagger}\) can be replaced with a
\(\Delta\)/\(!\) pair, as in equation~{\hyperref[eqn456]{\textbf{(4)}}}.
\begin{center}
\scalebox{0.7}{\begin{tikzpicture}[thick]
   \node[coplus] (plus1) at (0,0) {};
   \node[coplus] (plus2) at (2.5,-0.5) {};
   \node[delta] (dupe) at (2,0.5) {};
   \node[bang] (bang) at (1.5,-0.6) {};
   \node (eq) at (1,0) {\(=\)};

   \draw (-0.5,-1.5)  .. controls +(90:0.8) and +(-120:0.4) .. (plus1.left out)
         (0.5,-1.5)   .. controls +(90:0.8) and +(-60:0.4)  .. (plus1.right out)
         (plus1.io)   -- +(0,1)
         (2,-1.5)     .. controls +(90:0.4) and +(-120:0.4) .. (plus2.left out)
         (3,-1.5)     .. controls +(90:0.4) and +(-60:0.4)  .. (plus2.right out)
         (plus2.io)   .. controls +(90:0.2) and +(-60:0.3)  .. (dupe.right out)
         (bang)       .. controls +(90:0.5) and +(-120:0.4) .. (dupe.left out)
         (dupe.io)    -- +(0,0.5)
;
\end{tikzpicture}}
\end{center}
At this point, equation~{\hyperref[eqnD5D6D7]{\textbf{(D5)}}}\({}^{\dagger}\) can be applied as
often as necessary to allow the \(+^{\dagger}\)s to assimilate into one or more of the boxes.
Similarly, a joint with a \(\Delta^{\dagger}\) in it can be transformed to a joint with only \(+\)
morphisms in it.
\end{proof}

\vbox{The next lemma shows when \(f\) is a signal-flow diagram in \(\goodflow_k\), \(\dBox f\) can be
written in the form
\begin{center}
 \begin{tikzpicture}[thick]
   \node [delta] (usplit) at (-0.5,3) {};
   \node [upmultiply] (A) at (-2.6,-0.15) {\(A\)};
   \node [multiply] (B) at (-1,2) {\(B\)};
   \node [plus] (xdotsum) at (-1.5,1) {};
   \node [multiply] (int) at (-1.5,0) {\(\int\)};
   \node [delta] (xsplit) at (-1.5,-1) {};
   \node [multiply] (C) at (-1,-2) {\(C\)};
   \node [multiply] (D) at (0,0) {\(D\)};
   \node [plus] (ysum) at (-0.5,-3) {};

   \node [coordinate] (capend) [above of=A] {};
   \node [coordinate] (cupend) [below of=A, shift={(0,0.2)}] {};
   \node [coordinate] (ubend) [right of=B] {};
   \node [coordinate] (ybend) [right of=C, shift={(0,-0.2)}] {};

   \draw (usplit) -- +(0,0.6)
         (ysum) -- +(0,-0.6)
         (xsplit.left out) .. controls +(240:0.7) and +(270:0.5) .. (cupend)
         (xdotsum.left in) .. controls +(120:0.7) and +(90:0.5) .. (capend)
         (capend) -- (A) -- (cupend)
         (xsplit.right out) .. controls +(300:0.2) and +(90:0.2) .. (C.90)
         (C.270) .. controls +(270:0.2) and +(120:0.2) .. (ysum.left in)
         (ybend) .. controls +(270:0.3) and +(60:0.3) .. (ysum.right in)
         (ybend) -- (D) -- (ubend)
         (usplit.right out) .. controls +(300:0.5) and +(90:0.5) .. (ubend)
         (usplit.left out) .. controls +(240:0.2) and +(90:0.2) .. (B.90)
         (B.270) .. controls +(270:0.2) and +(60:0.2) .. (xdotsum.right in)
         (xdotsum) -- (int) -- (xsplit)
   ;
 \end{tikzpicture}.
\end{center}}
This means when \(f \in \goodflow_k\), \(f\) can be transformed into the form of the signal-flow
diagram used to demonstrate the processes \(A(f), \dotsc, D(f)\) without affecting \(A(f), \dotsc,
D(f)\).

\begin{lemma}
\label{goodstandard}
If \(A(f), \dotsc, D(f)\) are all linear maps, the corresponding boxes in the \(\porp\)-normal form
are these maps, and the other boxes are trivial.  That is, box 0 is \(!^m\), box 1 is \(B(f)\), box
2 is \(\zero^n\), box 3 is \(\zero^p \of (\zero^{\dagger})^n\), box 4 is \(A(f)^{\dagger}\), box 5
is \((!^{\dagger})^n \of{} !^m\), box 6 is \(D(f)\), box 7 is \((!^{\dagger})^n\), box 8 is \(C(f)\),
and box 9 is \(\zero^p\).
\end{lemma}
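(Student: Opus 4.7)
The plan is to use Lemma~\ref{porpnorm} to rewrite $\dBox f$ in $\porp$-normal form with boxes $X_0, X_1, \dotsc, X_9$ (linear relations in $\relk$), and then to compute each of $A(f), B(f), C(f), D(f)$ directly on that normal form. This is legitimate because Lemma~\ref{goodrewrite} guarantees that the four processes are invariant under rewrites coming from the equations of $\relk$, which is exactly the content of passing from $f$ to $\dBox f$. So the problem reduces to computing four linear relations in $\relks$, each expressed as a composite built from the ten boxes and some copies of the generating morphisms.

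Next I would trace each process through the normal form. The process $D$ replaces the integrator with $s_0$, which kills every path that passes through it; what remains is the composite of $X_6$ (the direct bypass from the top $\Delta$-cascade to the bottom $+$-cascade) with contributions from $X_3$, $X_5$, and the four self-loop boxes $X_0, X_2, X_7, X_9$. The process $A$ cuts the integrator open, converting the wire above it into an output and the wire below it into an input; the resulting relation expresses signals in the feedback loop that flows through $X_4$, together with auxiliary constraints involving $X_3, X_5$ and the self-loop boxes. The processes $B$ and $C$ are similar, cutting only one of the two integrator wires and retaining the through-path via $X_1$ and $X_8$ respectively. In each case I would write the resulting relation as an explicit composite of the $X_i$'s with the deletions/zeros introduced by the process.

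With these four formulas in hand, I would then impose the hypothesis that $A(f), B(f), C(f), D(f)$ are linear maps. Each stray box $X_0, X_2, X_3, X_5, X_7, X_9$ occupies a position where any extra content would introduce either an unconstrained degree of freedom or an inconsistent constraint into at least one of the four relations, and hence would destroy the map property. A careful case analysis forces $X_0 = {!^m}$, $X_2 = \zero^n$, $X_7 = (!^\dagger)^n$, $X_9 = \zero^p$ from the four processes being single-valued and total; the more subtle cross-connections $X_3 = \zero^p \of (\zero^\dagger)^n$ and $X_5 = (!^\dagger)^n \of{} !^m$ are then forced because any deviation would couple the inputs to the outputs of the cut integrator in ways that $A(f)$ or $D(f)$ could not absorb while remaining a map. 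Once these six boxes are pinned down, the formulas for $B(f), C(f), D(f)$ collapse to $X_1, X_8, X_6$ respectively, and the formula for $A(f)$ becomes $X_4^\dagger$; the dagger appears because the feedback wire traverses $X_4$ in the direction opposite to signal flow, as the $\porp$-normal form uses an implicit cup-and-cap to connect the post-integrator $\Delta$-cascade back to the pre-integrator $+$-cascade.

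The hard part will be Step 3, the case analysis showing that nontrivial content in the stray boxes is incompatible with the linear-map hypothesis. Boxes $X_3$ and $X_5$ are especially delicate because they sit in positions where a nonzero relation looks innocuous in any single one of the four processes in isolation, but produces inconsistency once all four are required to be maps simultaneously; the right framing is probably to organize the argument around a single combined relation on $k^m \oplus k^n \oplus k^n \oplus k^p$ and to read off the constraints for all four processes from the same subspace.
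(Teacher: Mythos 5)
Your skeleton matches the paper's: pass to the \(\porp\)-normal form via Lemmas~\ref{porpnorm} and~\ref{goodrewrite}, apply the four processes, and argue that the linear-map hypothesis forces the stray boxes to be trivial. But the step you yourself defer as ``the hard part'' --- the case analysis --- is exactly the content of the lemma, and you have not supplied the mechanism that makes it work. The paper's argument is concrete: after applying process \(D\) (resp.\ process \(A\)), box 0 (resp.\ box 7) is a linear relation with no outputs, so by the standard form of Theorem~\ref{presrk} it factors as a linear map followed by \((\zero^{\dagger})^j\) for some \(j\); if \(j>0\), each cozero imposes a linear dependence among the inputs of that box, and since those inputs are obtained by \emph{duplicating} the inputs of \(D(f)\) (resp.\ \(A(f)\)), the same dependence constrains the domain of \(D(f)\), so \(D(f)\) is not an everywhere-defined linear map. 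The \(*\)-dual of this argument, using the \(*\)-dual standard form, handles boxes 2 and 9. Without this factor-and-duplicate argument, your claim that ``any extra content would introduce an unconstrained degree of freedom or an inconsistent constraint'' is an assertion, not a proof.

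Your diagnosis of where the difficulty lies is also off. Boxes 3 and 5 do not call for a simultaneous analysis of all four processes on a combined relation over \(k^m \oplus k^n \oplus k^n \oplus k^p\): the paper first shows box 5 must split as a direct sum of a no-output piece on top and a no-input piece on the bottom (any genuine connection between its inputs and outputs would again impose a linear relation among the duplicated inputs of \(D(f)\)), and then applies the box-0 and box-7 arguments verbatim to the two halves; box 3 is dual. Finally, you skip the argument that boxes 1 and 8 are themselves linear maps, which is needed before you can conclude they equal \(B(f)\) and \(C(f)\), and before boxes 6 and \(4^{\dagger}\) are forced to be maps: the paper rules out linear dependence among the inputs of box 1 using the hypothesis on \(D(f)\), and rules out multi-valuedness of box 1 using \(A(f)^*\), which is a (transpose) map because \(A(f)\) is. These are the pieces you would need to add to turn the plan into a proof.
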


\begin{proof}
Applying process \(D\) to the \(\porp\)-normal form, boxes 2, 4, and 7 become disconnected from the
inputs and outputs.  The portion of the diagram that remains can be rewritten in the form below,
noted as diagram for \(D\).

Similarly, the diagram for \(A\) is the result of applying process \(A\) to the \(\porp\)-normal
form and rewriting.  The application of process \(A\) disconnects boxes 0, 6, and 9 from the inputs
and outputs.  The diagram for \(A\) is of the same form as the diagram for \(D\), so we can apply
the same arguments to corresponding linear relations.

\begin{table}
\centering
\begin{tabular}{c c c}
 \begin{tikzpicture}[thick]
   \node[delta] (D1) at (0,6.25) {};
   \node[delta, below of=D1, shift={(-0.75,0)}] (D2) {};
   \node[delta, below of=D2, shift={(-0.75,0)}] (D3) {};

   \node[plus] (S1) at (-1.5,-0.25) {};
   \node[plus, below of=S1, shift={(0.75,0)}] (S2) {};
   \node[plus, below of=S2, shift={(0.75,0)}] (S3) {};

   \node[sqnode] (box1) at (-1,3.25) {1};
   \node[sqnode] (box8) at (-1,0.75) {8};
   \node[sqnode] (box0) at (-2,3.25) {0};
   \node[sqnode] (box3) at (-2,0.75) {3};
   \node[sqnode] (box5) at (0,3.25)  {5};
   \node[sqnode] (box6) at (1,2)     {6};
   \node[sqnode] (box9) at (0,0.75)  {9};

   \node[zero, below of=box5] (z1) {};
   \node[bang, above of=box3] (b1) {};
   \node[zero, above of=box8] (z2) {};
   \node[bang, below of=box1] (b2) {};

   \draw (D1.io)        -- +(0,0.4)
         (D1.left out)  .. controls +(240:0.2) and +(90:0.2)  .. (D2.io)
         (D2.left out)  .. controls +(240:0.2) and +(90:0.2)  .. (D3.io)
         (D1.right out) .. controls +(300:1)   and +(90:1)    .. (box6.90)
         (D2.right out) .. controls +(300:0.5) and +(90:0.5)  .. (box5.90)
         (D3.right out) .. controls +(300:0.2) and +(90:0.2)  .. (box1.90)
         (D3.left out)  .. controls +(240:0.2) and +(90:0.2)  .. (box0.90)
         (box5.270)     -- (z1)
         (z2)           -- (box8.90)
         (b1)           -- (box3.90)
         (box1.270)     -- (b2)
         (box3.270)     .. controls +(270:0.2) and +(120:0.2) .. (S1.left in)
         (box8.270)     .. controls +(270:0.2) and +(60:0.2)  .. (S1.right in)
         (box9.270)     .. controls +(270:0.5) and +(60:0.5)  .. (S2.right in)
         (box6.270)     .. controls +(270:1)   and +(60:1)    .. (S3.right in)
         (S1.io)        .. controls +(270:0.2) and +(120:0.2) .. (S2.left in)
         (S2.io)        .. controls +(270:0.2) and +(120:0.2) .. (S3.left in)
         (S3.io)        -- +(0,-0.4);
 \end{tikzpicture} & \({}\) \qquad \({}\) &  \begin{tikzpicture}[thick]
   \node[delta] (D1) at (0,6.25) {};
   \node[delta, below of=D1, shift={(-0.75,0)}] (D2) {};
   \node[delta, below of=D2, shift={(-0.75,0)}] (D3) {};

   \node[plus] (S1) at (-1.5,-0.25) {};
   \node[plus, below of=S1, shift={(0.75,0)}] (S2) {};
   \node[plus, below of=S2, shift={(0.75,0)}] (S3) {};

   \node[sqnode] (box1) at (-1,3.25) {8};
   \node[sqnode] (box8) at (-1,0.75) {1};
   \node[sqnode] (box0) at (-2,3.25) {7};
   \node[sqnode] (box3) at (-2,0.75) {3\({}^{\dagger}\)};
   \node[sqnode] (box5) at (0,3.25)  {5\({}^{\dagger}\)};
   \node[sqnode] (box6) at (1,2)     {4\({}^{\dagger}\)};
   \node[sqnode] (box9) at (0,0.75)  {2};

   \node[zero, below of=box5] (z1) {};
   \node[bang, above of=box3] (b1) {};
   \node[zero, above of=box8] (z2) {};
   \node[bang, below of=box1] (b2) {};

   \draw (D1.io)        -- +(0,0.4)
         (D1.left out)  .. controls +(240:0.2) and +(90:0.2)  .. (D2.io)
         (D2.left out)  .. controls +(240:0.2) and +(90:0.2)  .. (D3.io)
         (D1.right out) .. controls +(300:1)   and +(90:1)    .. (box6.90)
         (D2.right out) .. controls +(300:0.5) and +(90:0.5)  .. (box5.90)
         (D3.right out) .. controls +(300:0.2) and +(90:0.2)  .. (box1.90)
         (D3.left out)  .. controls +(240:0.2) and +(90:0.2)  .. (box0.90)
         (box5.270)     -- (z1)
         (z2)           -- (box8.90)
         (b1)           -- (box3.90)
         (box1.270)     -- (b2)
         (box3.270)     .. controls +(270:0.2) and +(120:0.2) .. (S1.left in)
         (box8.270)     .. controls +(270:0.2) and +(60:0.2)  .. (S1.right in)
         (box9.270)     .. controls +(270:0.5) and +(60:0.5)  .. (S2.right in)
         (box6.270)     .. controls +(270:1)   and +(60:1)    .. (S3.right in)
         (S1.io)        .. controls +(270:0.2) and +(120:0.2) .. (S2.left in)
         (S2.io)        .. controls +(270:0.2) and +(120:0.2) .. (S3.left in)
         (S3.io)        -- +(0,-0.4);
 \end{tikzpicture}\\
Diagram for \(D\) & & Diagram for \(A\)
\end{tabular}
\end{table}

Assuming \(A(f)\) and \(D(f)\) to be linear maps, we first argue that the linear relations in boxes
0 and 7 must be trivial.  Since the argument is the same for both, we focus on box 0.
\begin{center}
\begin{tikzpicture}[thick]
  \node[sqnode] (oh) at (0,0) {0};
  \node (eq) at (0.8,0) {\(=\)};
  \node[multiply] (T) at (1.6,0.2) {\(T_0\)};
  \node[zero] (z) at (1.6,-0.6) {};

  \draw (oh.90) -- (0,0.8) (oh.270) -- (0,-0.8)
        (T.90) -- (1.6,0.8) (T.270) -- (z);
\end{tikzpicture}

\end{center}

Box 0 is a linear relation, which means it can be written in the standard form for linear relations
that was demonstrated in Chapter~\ref{vectrel}.  Box 0 also has no output, so it is a composite of a
linear map \(T_0\) and \((\zero^{\dagger})^j\) for some \(j \in \N\).  If \(j = 0\), \(T_0\) has no
outputs, meaning box 0 is \(!^m\).  If \(j > 0\), one of the inputs of box 0 is a linear combination
of the other inputs of box 0.  Since the input to the diagram for \(D\) is duplicated to give the
input to box 0, one of the inputs of \(D(f)\) is a linear combination of the other inputs of
\(D(f)\).  This means \(D(f)\) is not a linear map, a contradiction.  Thus the linear relation in
box 0 is \(!^m\), and the linear relation in box 7 must similarly be \(!^n\).

The argument for boxes 2 and 9 is \(*\)-dual to the argument for boxes 0 and 7.  The \(*\)-dual of
the standard form for linear relations is an alternate standard form for linear relations, so each
step of the above argument has a valid \(*\)-dual step.  This means the linear relations in boxes 2
and 9 must be \(\zero^n\) and \(\zero^p\), respectively.

Box 5 must have its inputs and outputs disconnected from each other, or else there is a non-trivial
linear combination of the inputs of \(D(f)\) that is equal to zero.  This contradicts the assumption
that \(D(f)\) is a linear map.
\begin{center}
\begin{tikzpicture}[thick]
  \node[sqnode] (5) at (0,0) {5};
  \node (eq) at (0.8,0) {\(=\)};
  \node[sqnode] (5a) at (1.6,0.5) {\(5a\)};
  \node[sqnode] (5b) at (1.6,-0.5) {\(5b\)};

  \draw (5.90) -- (0,1.2) (5.270) -- (0,-1.2)
        (5a.90) -- (1.6,1.2) (5b.270) -- (1.6,-1.2);
\end{tikzpicture}
.
\end{center}
Now the linear relation in box 5a is \(!^m\) for the same reason as box 0, and the linear relation
in box (5b)\({}^{\dagger}\) is \(!^n\) for the same reason as box 7.  Thus the linear relation in
box 5 must be \((!^{\dagger})^n \of{} !^m\).

Dually, the linear relation in box 3 must be \(\zero^p \of (\zero^{\dagger})^n\).

If we can show boxes 1 and 8 are linear maps, that will force the linear relation in box 6 to be the
only contribution to \(D(f)\) and the \(\dagger\)-dual of the linear relation in box 4 to be the
only contribution to \(A(f)\), making box 6 a linear map and box 4\({}^{\dagger}\) a linear map.
With boxes 1, 4\({}^{\dagger}\), 6, and 8 all linear maps and all other boxes trivial, the linear
maps \(B(f)\) and \(C(f)\) must be boxes 1 and 8, respectively.

Assume, by way of contradiction, that box 1 is not a linear map.  It is a linear relation, so one or
both of the following must happen:  some input to box 1 is a linear combination of the other inputs
to box 1 or some input to box 1\({}^*\) is a linear combination of the other inputs to box
1\({}^*\).  The latter possibility is equivalent to some output of box 1 can take on multiple values
given a fixed input to box 1.  Since the inputs to \(D(f)\) are duplicated to form the inputs to box
1, any linear dependence of the inputs to box 1 will translate into linear dependence of the same
inputs to \(D(f)\).  This contradicts the assumption that \(D(f)\) is a linear map.

Similarly, any linear dependence of the inputs to box 1\({}^*\) will translate into linear
dependence of the inputs to \(A(f)^*\).  Since \(A(f)\) is assumed to be a linear map, it follows
that \(A(f)^*\) must also be a linear map, specifically the transpose map.  Thus we have our
contradiction for the latter case.  This means the linear relation in box 1 must be a linear map.
Swapping the roles of \(A(f)\) and \(D(f)\), we see the same must be true of box 8.
\end{proof}

Aside from their not conforming to \(\goodflow_k\), there is a good control theory reason for why
boxes 3 and 5 should be trivial.  According to control theory folklore, a control system with
differentiators in it will not be causal: the present state depends on future states and inputs.  If
this were not enough, if box 3 or box 5 is allowed to be non-trivial in \(\goodflow_k\), the
\(\prop\) morphism analogous to \(\dbox\) in the next theorem no longer makes the commutative square
commute.

\begin{theorem}
\label{goodprop}
There is a functor from \(\goodflow_k\) to \(\st_k\) given by
\begin{align*}
\dbox \maps \goodflow_k & {} \to \st_k\\
f & {} \mapsto (D(f), C(f), (sI - A(f))^{-1}, B(f)).
\end{align*}
This functor is a \(\prop\) morphism that makes the following diagram in \(\prop\) commute:
\begin{center}
\begin{tikzpicture}
   \node (SF) {\(\sigflow_{k,s}\)};
   \node (P) [above of=SF, shift={(0,1)}] {\(\goodflow_k\vphantom{\vectks}\)};
   \node (FR) [right of=SF, shift={(2,0)}] {\(\relks\)};
   \node (ST) [above of=FR, shift={(0,1)}] {\(\st_k\vphantom{\vectks}\)};

   \draw [->] (P) to node [above] {\(\dbox\)} (ST);
   \draw [right hook->] (P) to node [left] {\(j\)} (SF);
   \draw [->] (ST) to node [right] {\(i \of \eval\)} (FR);
   \draw [->] (SF) to node [below] {\(\bbox\)} (FR);
\end{tikzpicture}.
\end{center}
\end{theorem}

This functor is the means by which we translate the controllability and observability results from
\(\st_k\) to \(\goodflow_k\).

\begin{proof}
To show \(\dbox\) is a functor, we need to check identities map to identities, and \(\dbox(f \of f')
= \dbox(f) \of \dbox(f')\).  The former is immediate, so we will focus on the latter.  To show
\(\dbox\) is also a \(\prop\) morphism, we further need the distinguished object to map to the
distinguished object, and \(\dbox(f \oplus f') = \dbox(f) \oplus \dbox(f')\).  These additional
criteria are straightforward to check, so we leave it to the reader to check them.  To show the
diagram in \(\prop\) commutes, we need for an arbitrary signal-flow diagram \(f \in \goodflow_k\) to
satisfy \(\bbox f = D(f) + C(f) (sI-A(f))^{-1} B(f)\).

\begin{itemize}
\item \(\bbox f = D(f) + C(f) (sI-A(f))^{-1} B(f)\).\\
By Lemma~\ref{goodstandard}, any signal-flow diagram \(f \in \goodflow_k\) can be rewritten using
the equations of \(\relk\) into the form
\begin{center}
\begin{tikzpicture}[thick]
   \node [delta] (usplit) at (2.5,3) {};
   \node [sqnode] (A) at (1,2) {\(A\)};
   \node [sqnode] (B) at (2,2) {\(B\)};
   \node [plus] (xdotsum) at (1.5,1) {};
   \node [multiply] (int) at (1.5,0) {\(\int\)};
   \node [delta] (xsplit) at (1.5,-1) {};
   \node [sqnode] (C) at (2,-2) {\(C\)};
   \node [sqnode] (D) at (3,-2) {\(D\)};
   \node [plus] (ysum) at (2.5,-3) {};

   \node [coordinate] (capend) [left of=A] {};
   \node [coordinate] (cupend) [below of=capend, shift={(0,-2)}] {};
   \node [coordinate] (ubend) [right of=B] {};

   \draw (usplit) -- +(0,0.75)
         (ysum) -- +(0,-0.75)
         (xsplit.left out) .. controls +(240:1) and +(270:1) .. (cupend)
         (A.90) .. controls +(90:0.8) and +(90:1.2) .. (capend)
         (capend) -- (cupend)
         (xsplit.right out) .. controls +(300:0.2) and +(90:0.2) .. (C.90)
         (C.270) .. controls +(270:0.2) and +(120:0.2) .. (ysum.left in)
         (D.270) .. controls +(270:0.2) and +(60:0.2) .. (ysum.right in)
         (D.90) -- (ubend)
         (usplit.right out) .. controls +(300:0.5) and +(90:0.5) .. (ubend)
         (usplit.left out) .. controls +(240:0.2) and +(90:0.2) .. (B.90)
         (A.270) .. controls +(270:0.2) and +(120:0.2) .. (xdotsum.left in)
         (B.270) .. controls +(270:0.2) and +(60:0.2) .. (xdotsum.right in)
         (xdotsum) -- (int) -- (xsplit)
;
\end{tikzpicture},
\end{center}
and this rewriting has no effect on \(A(f), \dotsc, D(f)\).  Since the equations of \(\relk\) are a
subset of the equations of \(\relks\), the \(\prop\) morphism \(\bbox\) factors through \(\dBox\).
This means the rewriting will also have no effect on \(\bbox f\), which imposes the equations of
\(\relks\) onto \(f\).  By imposing all of the equations of \(\relks\) on this diagram, we get
\(\bbox f = D + C (sI-A)^{-1} B\).  By applying the processes \(A, \dotsc, D\) to this diagram, we
see that \(A(f) = A\), \(B(f) = B\), \(C(f) = C\), and \(D(f) = D\).

\item \(\dbox(f \of f') = \dbox(f) \of \dbox(f')\).\\
Suppose \(f\) and \(f'\) are signal-flow diagrams in \(\goodflow_k\) which are composable in
\(\sigflow_{k,s}\), with \(\dbox(f) = (D,C,(sI-A)^{-1},B)\) and \(\dbox(f') =
(D',C',(sI'-A')^{-1},B')\).  In Chapter~\ref{stateful} we saw how to compose two stateful morphisms,
\(\dbox(f) \of \dbox(f')\), so we need to check that we get the same composite from \(\dbox(f \of
f') = (D'',C'',(sI''-A''),B'')\).  This would mean \(A'', \dotsc, D''\) are all linear maps, finally
justifying the assertion above that composition is closed in \(\goodflow_k\).  In matrix form, that
means we need to verify:
\begin{align*}
D'' &= DD' & C'' &= \left[\begin{array}{cc}D'C & C'\end{array}\right] \\
B'' &= \left[\begin{array}{c}B \\ B'D\end{array}\right] &
A'' &= \left[\begin{array}{cc} A & 0 \\ B'C & A'\end{array}\right].
\end{align*}
We will be performing several surgeries on the signal-flow diagram \(f \of f'\), so let's take a
good look at the `patient'.  By Lemma~\ref{goodstandard}, we need only consider a signal-flow
diagram of the form:
\begin{center}
\scalebox{0.85}{
\begin{tikzpicture}[thick]
   \node [delta] (usplit) at (2.5,3) {};
   \node [sqnode] (A) at (1,2) {\(A\)};
   \node [sqnode] (B) at (2,2) {\(B\)};
   \node [plus] (xdotsum) at (1.5,1) {};
   \node [multiply] (int) at (1.5,0) {\(\int\)};
   \node [delta] (xsplit) at (1.5,-1) {};
   \node [sqnode] (C) at (2,-2) {\(C\)};
   \node [sqnode] (D) at (3,-2) {\(D\)};
   \node [plus] (ysum) at (2.5,-3) {};

   \node [coordinate] (capend) [left of=A] {};
   \node [coordinate] (cupend) [below of=capend, shift={(0,-2)}] {};
   \node [coordinate] (ubend) [right of=B] {};

   \draw (usplit) -- +(0,0.75)
         (xsplit.left out) .. controls +(240:1) and +(270:1) .. (cupend)
         (A.90) .. controls +(90:0.8) and +(90:1.2) .. (capend)
         (capend) -- (cupend)
         (xsplit.right out) .. controls +(300:0.2) and +(90:0.2) .. (C.90)
         (C.270) .. controls +(270:0.2) and +(120:0.2) .. (ysum.left in)
         (D.270) .. controls +(270:0.2) and +(60:0.2) .. (ysum.right in)
         (D.90) -- (ubend)
         (usplit.right out) .. controls +(300:0.5) and +(90:0.5) .. (ubend)
         (usplit.left out) .. controls +(240:0.2) and +(90:0.2) .. (B.90)
         (A.270) .. controls +(270:0.2) and +(120:0.2) .. (xdotsum.left in)
         (B.270) .. controls +(270:0.2) and +(60:0.2) .. (xdotsum.right in)
         (xdotsum) -- (int) -- (xsplit)
;
   \node [delta] (usplitp) at (2.5,-4) {};
   \node [sqnode] (Ap) at (1,-5) {\(A'\)};
   \node [sqnode] (Bp) at (2,-5) {\(B'\)};
   \node [plus] (xdotsump) at (1.5,-6) {};
   \node [multiply] (intp) at (1.5,-7) {\(\int\)};
   \node [delta] (xsplitp) at (1.5,-8) {};
   \node [sqnode] (Cp) at (2,-9) {\(C'\)};
   \node [sqnode] (Dp) at (3,-9) {\(D'\)};
   \node [plus] (ysump) at (2.5,-10) {};

   \node [coordinate] (capendp) [left of=Ap] {};
   \node [coordinate] (cupendp) [below of=capendp, shift={(0,-2)}] {};
   \node [coordinate] (ubendp) [right of=Bp] {};

   \draw (usplitp) -- (ysum)
         (ysump) -- +(0,-0.75)
         (xsplitp.left out) .. controls +(240:1) and +(270:1) .. (cupendp)
         (Ap.90) .. controls +(90:0.8) and +(90:1.2) .. (capendp)
         (capendp) -- (cupendp)
         (xsplitp.right out) .. controls +(300:0.2) and +(90:0.2) .. (Cp.90)
         (Cp.270) .. controls +(270:0.2) and +(120:0.2) .. (ysump.left in)
         (Dp.270) .. controls +(270:0.2) and +(60:0.2) .. (ysump.right in)
         (Dp.90) -- (ubendp)
         (usplitp.right out) .. controls +(300:0.5) and +(90:0.5) .. (ubendp)
         (usplitp.left out) .. controls +(240:0.2) and +(90:0.2) .. (Bp.90)
         (Ap.270) .. controls +(270:0.2) and +(120:0.2) .. (xdotsump.left in)
         (Bp.270) .. controls +(270:0.2) and +(60:0.2) .. (xdotsump.right in)
         (xdotsump) -- (intp) -- (xsplitp)
;
\end{tikzpicture}
}.
\end{center}
Since the \(D(f)\) part of the process is functorial, \(D'' = D(f \of f') = D(f) \of D(f') = DD'\).
The \(B(f)\) and \(C(f)\) parts are dual to each other, so we show only the argument for \(B''\) and
leave \(C''\) as an easy exercise.

To find \(B'' = B(f \of f')\), we replace the \(n + n'\) wires entering the integrators in \(!^p \of
f \of f'\) with outputs and replace the \(n + n'\) wires leaving the integrators with
\(\zero^{n+n'}\).  Since the \(\zero\)s always meet linear maps, they destroy everything in their
paths until they reach a \(+\), at which point equation {\hyperref[eqn123]{\textbf{(1)}}} gives an
identity wire.  This gets rid of \(A\), \(A'\), \(C\), \(C'\), and all the \(+\)s.  Similarly, each
\(!\) will always meet linear maps, destroying everything in their paths until they reach a
\(\Delta\), at which point equation {\hyperref[eqn456]{\textbf{(4)}}} gives an identity wire.  This
gets rid of \(D'\) and would get rid of \(C'\) if it were not already gone.  From these
considerations, we get the following results on our `patient':
\begin{center}
\scalebox{0.9}{
\begin{tikzpicture}[thick]
   \node [delta] (usplit) at (2.5,3) {};
   \node [multiply] (A) at (1,2) {\(A\)};
   \node [multiply] (B) at (2,2) {\(B\)};
   \node [plus] (xdotsum) at (1.5,1) {};
   \node [delta] (xsplit) at (1.5,-1) {};
   \node [multiply] (C) at (2,-2) {\(C\)};
   \node [multiply] (D) at (3,-2) {\(D\)};
   \node [plus] (ysum) at (2.5,-3) {};

   \node [coordinate] (capend) [left of=A] {};
   \node [coordinate] (cupend) [below of=capend, shift={(0,-2)}] {};
   \node [coordinate] (ubend) [right of=B] {};

   \node [zero] (int) at (1.5,-0.25) {};
   \node [coordinate] (y) at (-0.5,-5.75) {};

   \draw (usplit) -- +(0,0.75)
         (xsplit.left out) .. controls +(240:1) and +(270:1) .. (cupend)
         (A.90) .. controls +(90:0.8) and +(90:1.2) .. (capend)
         (capend) -- (cupend)
         (xsplit.right out) .. controls +(300:0.2) and +(90:0.2) .. (C.90)
         (C.270) .. controls +(270:0.2) and +(120:0.2) .. (ysum.left in)
         (D.270) .. controls +(270:0.2) and +(60:0.2) .. (ysum.right in)
         (D.90) -- (ubend)
         (usplit.right out) .. controls +(300:0.5) and +(90:0.5) .. (ubend)
         (usplit.left out) .. controls +(240:0.2) and +(90:0.2) .. (B.90)
         (A.270) .. controls +(270:0.2) and +(120:0.2) .. (xdotsum.left in)
         (B.270) .. controls +(270:0.2) and +(60:0.2) .. (xdotsum.right in)
         (int) -- (xsplit)
;
   \node [hole] at (0.422,-1.75) {};

   \draw (xdotsum.io) .. controls +(270:0.5) and +(90:3) .. (y) -- (-0.5,-10.9);

   \node [delta] (usplitp) at (2.5,-3.75) {};
   \node [multiply] (Ap) at (1,-4.85) {\(A'\)};
   \node [multiply] (Bp) at (2,-4.85) {\(B'\)};
   \node [plus] (xdotsump) at (1.5,-6) {};
   \node [delta] (xsplitp) at (1.5,-8) {};
   \node [multiply] (Cp) at (2,-9) {\(C'\)};
   \node [multiply] (Dp) at (3,-9) {\(D'\)};
   \node [plus] (ysump) at (2.5,-10.15) {};

   \node [coordinate] (capendp) [left of=Ap] {};
   \node [coordinate] (cupendp) [below of=capendp, shift={(0,-2)}] {};
   \node [coordinate] (ubendp) [right of=Bp] {};

   \node [bang] (bot) at (2.5,-10.65) {};
   \node [zero] (intp) at (1.5,-7.25) {};
   \node [coordinate] (yp) at (0.5,-10.9) {};

   \draw (usplitp) -- (ysum)
         (ysump) -- (bot)
         (xsplitp.left out) .. controls +(240:1) and +(270:1) .. (cupendp)
         (Ap.90) .. controls +(90:0.8) and +(90:1.2) .. (capendp)
         (capendp) -- (cupendp)
         (xsplitp.right out) .. controls +(300:0.2) and +(90:0.2) .. (Cp.90)
         (Cp.270) .. controls +(270:0.2) and +(120:0.2) .. (ysump.left in)
         (Dp.270) .. controls +(270:0.2) and +(60:0.2) .. (ysump.right in)
         (Dp.90) -- (ubendp)
         (usplitp.right out) .. controls +(300:0.5) and +(90:0.5) .. (ubendp)
         (usplitp.left out) .. controls +(240:0.2) and +(90:0.2) .. (Bp.90)
         (Ap.270) .. controls +(270:0.2) and +(120:0.2) .. (xdotsump.left in)
         (Bp.270) .. controls +(270:0.2) and +(60:0.2) .. (xdotsump.right in)
         (intp) -- (xsplitp);

   \node [hole] at (0.72,-8.65) {};

   \draw (xdotsump.io) .. controls +(270:0.5) and +(90:3) .. (yp);
\end{tikzpicture}
}
\raisebox{6.64cm}{=}
\raisebox{3.94cm}{
\scalebox{1}{
\begin{tikzpicture}[thick]
   \node [delta] (usplit) at (2.5,3) {};
   \node [multiply] (B) at (2,1.175) {\(B\)};
   \node [multiply] (D) at (3,1.85) {\(D\)};
   \node [multiply] (Bp) at (3,0.5) {\(B'\)};
   \node [coordinate] (out1) at (2,-0.5) {};
   \node [coordinate] (out2) at (3,-0.5) {};

   \draw (usplit) -- +(0,0.75)
         (usplit.right out) .. controls +(300:0.2) and +(90:0.2) .. (D.90)
         (usplit.left out) .. controls +(240:0.4) and +(90:0.75) .. (B.90)
         (D.270) -- (Bp.90)  (Bp.270) -- (out2)
         (B.270) -- (out1)
;
\end{tikzpicture}
}}
.
\end{center}
That is, \(B'' = \left[\begin{array}{c}B \\ B'D\end{array}\right]\), which is exactly what is
required.  As noted above, the argument for \(C''\) is just a dual version of this one, reflected
about the \(x\)-axis and with colors swapped.

To find \(A'' = A(f \of f')\), we replace the \(n+n'\) wires leaving the integrators in \(!^p \of f
\of f' \of \zero^{m'}\) with inputs and the \(n+n'\) wires entering the integrators with outputs.
As with the \(\zero\)s in \(B''\), the \(\zero\)s here always meet linear maps, so they destroy
everything in their paths until they reach a \(+\), and likewise for the \(!\)s until they reach a
\(\Delta\).  This time the zeros get rid of \(B\) and \(D\), while the deletions get rid of \(C'\)
and \(D'\).  From there some zig-zags can be straightened out, and finally commutativity allows
\(A'\) and \(B'\) to be swapped.  Graphically, this proceeds as follows:
\begin{center}
\scalebox{0.75}{
\begin{tikzpicture}[thick]
   \node [delta] (usplit) at (2.5,3) {};
   \node [multiply] (A) at (1,2) {\(A\)};
   \node [multiply] (B) at (2,2) {\(B\)};
   \node [plus] (xdotsum) at (1.5,1) {};
   \node [delta] (xsplit) at (1.5,-1) {};
   \node [multiply] (C) at (2,-2) {\(C\)};
   \node [multiply] (D) at (3,-2) {\(D\)};
   \node [plus] (ysum) at (2.5,-3) {};

   \node [coordinate] (capend) [left of=A] {};
   \node [coordinate] (cupend) [below of=capend, shift={(0,-2)}] {};
   \node [coordinate] (ubend) [right of=B] {};

   \node [zero] (top) at (2.5,3.5) {};
   \node [coordinate] (u) at (3.5,3.75) {};
   \node [coordinate] (y) at (-0.5,-5.75) {};

   \draw (usplit) -- (top)

         (xsplit.left out) .. controls +(240:1) and +(270:1) .. (cupend)
         (A.90) .. controls +(90:0.8) and +(90:1.2) .. (capend)
         (capend) -- (cupend)
         (xsplit.right out) .. controls +(300:0.2) and +(90:0.2) .. (C.90)
         (C.270) .. controls +(270:0.2) and +(120:0.2) .. (ysum.left in)
         (D.270) .. controls +(270:0.2) and +(60:0.2) .. (ysum.right in)
         (xsplit.io) .. controls +(90:0.7) and +(270:2.5) .. (u)
         (usplit.right out) .. controls +(300:0.5) and +(90:0.5) .. (ubend)
         (usplit.left out) .. controls +(240:0.2) and +(90:0.2) .. (B.90)
         (A.270) .. controls +(270:0.2) and +(120:0.2) .. (xdotsum.left in)
         (B.270) .. controls +(270:0.2) and +(60:0.2) .. (xdotsum.right in);

   \node [hole] at (3,1.65) {};
   \node [hole] at (0.422,-1.75) {};

   \draw (xdotsum.io) .. controls +(270:0.5) and +(90:3) .. (y) -- +(0,-5.15)
         (D.90) -- (ubend);
   \node [delta] (usplitp) at (2.5,-3.85) {};
   \node [multiply] (Ap) at (1,-4.85) {\(A'\)};
   \node [multiply] (Bp) at (2,-4.85) {\(B'\)};
   \node [plus] (xdotsump) at (1.5,-6) {};
   \node [delta] (xsplitp) at (1.5,-8) {};
   \node [multiply] (Cp) at (2,-9) {\(C'\)};
   \node [multiply] (Dp) at (3,-9) {\(D'\)};
   \node [plus] (ysump) at (2.5,-10.15) {};

   \node [coordinate] (capendp) [left of=Ap] {};
   \node [coordinate] (cupendp) [below of=capendp, shift={(0,-2)}] {};
   \node [coordinate] (ubendp) [right of=Bp] {};

   \node [bang] (bot) at (2.5,-10.65) {};

   \node [coordinate] (up) at (4,-2.25) {};
   \node [coordinate] (yp) at (0.5,-10.9) {};

   \draw (usplitp) -- (ysum)
         (ysump) -- (bot)
         (xsplitp.left out) .. controls +(240:1) and +(270:1) .. (cupendp)
         (Ap.90) .. controls +(90:0.8) and +(90:1.2) .. (capendp)
         (capendp) -- (cupendp)
         (xsplitp.right out) .. controls +(300:0.2) and +(90:0.2) .. (Cp.90)
         (Cp.270) .. controls +(270:0.2) and +(120:0.2) .. (ysump.left in)
         (Dp.270) .. controls +(270:0.2) and +(60:0.2) .. (ysump.right in)
         (xsplitp.io) .. controls +(90:0.7) and +(270:2.5) .. (up) -- +(0,6)
         (usplitp.right out) .. controls +(300:0.5) and +(90:0.5) .. (ubendp)
         (usplitp.left out) .. controls +(240:0.2) and +(90:0.2) .. (Bp.90)
         (Ap.270) .. controls +(270:0.2) and +(120:0.2) .. (xdotsump.left in)
         (Bp.270) .. controls +(270:0.2) and +(60:0.2) .. (xdotsump.right in);

   \node [hole] at (3,-5.29) {};
   \node [hole] at (0.72,-8.65) {};

   \draw (xdotsump.io) .. controls +(270:0.5) and +(90:3) .. (yp)
         (Dp.90) -- (ubendp);
\end{tikzpicture}
}
\raisebox{5.53125cm}{=}
\raisebox{2.58125cm}{
\scalebox{1}{
\begin{tikzpicture}[thick]
   \node [delta] (usplit) at (2.5,3) {};
   \node [multiply] (A) at (0.5,2.5) {\(A\)};
   \node [multiply] (C) at (3,1.85) {\(C\)};
   \node [multiply] (Bp) at (3,0.5) {\(B'\)};
   \node [multiply] (Ap) at (1.98,0.5) {\(A'\)};
   \node [plus] (ysum) at (2.49,-0.65) {};

   \node [coordinate] (topA) [above of=A, shift={(0,-0.7)}] {};
   \node [coordinate] (topAp) [above of=Ap, shift={(0,-0.7)}] {};
   \node [coordinate] (leftAp) [left of=topAp] {};
   \node [coordinate] (rightAp) [right of=topAp, shift={(0.65,-0.5)}] {};
   \node [coordinate] (leftA) [left of=topA] {};

   \draw (usplit) -- +(0,0.75) +(1.15,0.75) -- (rightAp)
         (rightAp) .. controls +(270:2) and +(270:2.5) .. (leftAp)
         (leftAp) .. controls +(90:0.7) and +(90:0.7) .. (Ap.90)
         (usplit.right out) .. controls +(300:0.2) and +(90:0.2) .. (C.90)
         (usplit.left out) .. controls +(240:1.5) and +(270:2.5) .. (leftA)
         (leftA) .. controls +(90:0.7) and +(90:0.7) .. (A.90)
         (C.270) -- (Bp.90)
         (Bp.270) .. controls +(270:0.2) and +(60:0.2) .. (ysum.right in)
         (Ap.270) .. controls +(270:0.2) and +(120:0.2) .. (ysum.left in);

   \node [hole] at (2.49,-1.1) {};
   \node [hole] at (0.5,1.39) {};

   \draw (ysum) -- +(0,-1) +(-2,-1) -- (A.270);
\end{tikzpicture}
}}
\raisebox{5.53125cm}{=}
\raisebox{2.70625cm}{
\scalebox{1}{
\begin{tikzpicture}[thick]
   \node [delta] (usplit) at (2.5,3) {};
   \node [multiply] (A) at (2,1.175) {\(A\)};
   \node [multiply] (C) at (3,1.85) {\(C\)};
   \node [multiply] (Bp) at (3,0.6) {\(B'\)};
   \node [multiply] (Ap) at (4,1.175) {\(A'\)};
   \node [plus] (ysum) at (3.5,-0.65) {};

   \draw (usplit) -- +(0,0.75) +(1.5,0.75) -- (Ap.90)
         (usplit.right out) .. controls +(300:0.2) and +(90:0.2) .. (C.90)
         (usplit.left out) .. controls +(240:0.4) and +(90:0.75) .. (A.90)
         (C.270) -- (Bp.90)
         (Bp.270) .. controls +(270:0.2) and +(120:0.2) .. (ysum.left in)
         (Ap.270) .. controls +(270:0.75) and +(60:0.4) .. (ysum.right in)
         (ysum) -- +(0,-0.75) +(-1.5,-0.75) -- (A.270)
;
\end{tikzpicture}
}}
.
\end{center}
This final diagram for \(A''\), written in block matrix form is
\(\left[\begin{array}{cc}A & 0 \\ B'C & A'\end{array}\right]\).
\end{itemize}
This is exactly what was required.\end{proof}

\section{Duality properties of $\goodflow_k$}
\label{gooddual}
Recall that \(\sigflow_k\) has two different dagger structures, \(\dagger\) and \(*\).  As noted in
Section~\ref{cando}, controllability and observability are dual concepts, with the duality relating
to transposition.  Since \(-^*\) is also a duality related to transposition, this suggests a
connection between the controllable/observable duality and the \(-^*\) duality.

\begin{proposition}\label{bizarroco}
\(\goodflow_k\) is a dagger-category in only one of the two ways that \(\sigflow_{k,s}\) is.  Specifically,
the \(-^*\) dual of a morphism \(f\) in \(\goodflow_k\) is again a morphism in \(\goodflow_k\) such that
\begin{align*}
A(f^*) &= A(f)^* & B(f^*) &= C(f)^*\\
D(f^*) &= D(f)^* & C(f^*) &= B(f)^*.
\end{align*}
\end{proposition}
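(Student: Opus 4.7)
The plan splits into proving two claims: first, that $-^*$ restricts to a dagger structure on $\goodflow_k$ with the indicated action on $A,B,C,D$; second, that $-^\dagger$ does not restrict. The main idea is that each of the four processes is built from primitive operations that all interact predictably with $-^*$.

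First I would verify that $-^*$ extends to a contravariant involutive $\prop$-automorphism of $\sigflow_{k,s}$. Since the integrator is scaling by $1/s$ and the formulas from Section~\ref{presrksection} give $s_c^* = s_c$, we have $(\int)^* = \int$, so the action of $-^*$ on the generators of $\sigflow_{k,s}$ is well-defined, and it commutes with $\bbox$ by the same formulas. Next I would show, by unwinding definitions, that each of $A,B,C,D$ is built from three operations on $f$: (a) composing with tensor powers of $\zero$ on top and/or $!$ on bottom; (b) ``cutting'' each integrator in $f$ and routing its four resulting wire-ends (either to new input/output ports or to $\zero$/$!$); and (c) applying $\bbox$. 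Step (a) respects $-^*$ because $\zero^* =\, !$ and $!^* = \zero$; step (b) respects $-^*$ because the process is top-bottom symmetric and each $\int$ is fixed by $-^*$; step (c) respects $-^*$ by the preceding paragraph.

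From this, the four identities follow directly by tracking how the roles of top and bottom are swapped by $-^*$. For $D$ there is no capping and integrators are replaced by $s_0 = s_0^*$, so $D(f^*) = D(f)^*$. For $A$, the cutting process symmetrically produces $n$ new inputs and $n$ new outputs, and the replacement $!^p \circ f \circ \zero^m \mapsto !^m \circ f^* \circ \zero^p$ is exactly the $*$-dual, so $A(f^*) = A(f)^*$. For $B$ and $C$ the situation is asymmetric: $B$ turns integrator-inputs into new outputs and sets integrator-outputs to $\zero$, while $C$ does the reverse; under the top-bottom flip of $-^*$, these two processes interchange, giving $B(f^*) = C(f)^*$ and $C(f^*) = B(f)^*$. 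Since $-^*$ restricted to $\vectk$ is transposition and transposition preserves the property of being a linear map, the linear-map conditions defining $\goodflow_k$ are preserved by $-^*$, so $f \in \goodflow_k$ implies $f^* \in \goodflow_k$.

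To rule out $-^\dagger$ I would exhibit a single counterexample: take $f = \int$. A direct unwinding of the definitions gives $A(\int) = 0$, $D(\int) = 0$, and $B(\int) = C(\int) = 1_k$, all linear maps, so $\int \in \goodflow_k$. But $\int^\dagger$ is the signal-flow diagram obtained by surrounding $\int$ with a cup-and-cap zigzag, and applying the $D$ process replaces the interior $\int$ by $s_0$, so
\[
D(\int^\dagger) \;=\; \bbox\!\left(\text{$s_0$ wrapped in a zigzag}\right) \;=\; s_0^\dagger \;=\; \{(0,x) : x \in k\} \,\subset\, k\oplus k
\]
in $\relks$, which is not a linear map. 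Hence $\int^\dagger \notin \goodflow_k$, and $-^\dagger$ does not restrict to $\goodflow_k$.

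The main obstacle is the careful bookkeeping in step (b) showing that the integrator-cutting operation commutes with $-^*$: one must check that removing the $n$ integrators and routing the resulting $2n$ wire-ends to new ports or caps is a $-^*$-equivariant operation, and that the top-bottom flip induced by $-^*$ is precisely what exchanges the roles of ``integrator-input becomes new output'' and ``integrator-output becomes new input,'' which in turn is what swaps $B$ and $C$ while leaving $A$ and $D$ fixed. Once this is verified, the rest of the proof is formal.
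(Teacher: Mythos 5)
Your proof is correct, and the positive half takes essentially the same route as the paper: the paper compresses the whole argument for the displayed identities into the single observation that $\dBox(f^*) = \dBox(f)^*$, which is exactly the fact you establish by checking that the generator-level action of $-^*$ (fixing scalings, hence $\int$, and swapping $\zero$ with $!$) commutes with the capping, integrator-cutting, and black-boxing steps of the four processes; your bookkeeping of how the top--bottom flip exchanges the $B$- and $C$-processes while fixing $A$ and $D$ is the content the paper leaves implicit. The only genuine divergence is the counterexample for $-^\dagger$: you take $f = \int$, whereas the paper takes $f$ to be the integrator-free diagram $\zero \circ {!}$ (whose black-box is $s_0$) and observes that $D(f^\dagger) = s_0^\dagger = \{(0,x) : x \in k\}$ is not a linear map. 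Both counterexamples exploit the identical phenomenon—the $\dagger$-adjoint of scaling by $0$ fails to be a map—and both are valid; the paper's is marginally more economical since it avoids checking that $\int$ itself lies in $\goodflow_k$, while yours has the small virtue of using the most natural nontrivial morphism of $\sigflow_{k,s}$.
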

\begin{proof}
That the \(-^*\) duality behaves as described is an immediate consequence of \(\dBox(f^*) =
\dBox(f)^*\).  Thus we have
\begin{center}
\scalebox{0.80}{
 \begin{tikzpicture}[thick]
   \node [delta] (usplit) at (-0.5,4) {};
   \node [upmultiply] (A) at (-2.6,-0.15) {\(A\)};
   \node [multiply] (B) at (-1,2.7) {\(B\)};
   \node [plus] (xdotsum) at (-1.5,1) {};
   \node [multiply] (int) at (-1.5,0) {\(\int\)};
   \node [delta] (xsplit) at (-1.5,-1) {};
   \node [multiply] (C) at (-1,-2.3) {\(C\)};
   \node [multiply] (D) at (0,0) {\(D\)};
   \node [plus] (ysum) at (-0.5,-4) {};

   \node [coordinate] (capend) [above of=A] {};
   \node [coordinate] (cupend) [below of=A, shift={(0,0.2)}] {};
   \node [coordinate] (ubend) [right of=B] {};
   \node [coordinate] (ybend) [right of=C, shift={(0,-0.2)}] {};

   \draw (usplit) -- +(0,0.75)
         (ysum) -- +(0,-0.75)
         (xsplit.left out) .. controls +(240:0.7) and +(270:0.5) .. (cupend)
         (xdotsum.left in) .. controls +(120:0.7) and +(90:0.5) .. (capend)
         (capend) -- (A) -- (cupend)
         (xsplit.right out) .. controls +(300:0.2) and +(90:0.2) .. (C.90)
         (C.270) .. controls +(270:0.3) and +(120:0.4) .. (ysum.left in)
         (ybend) .. controls +(270:0.3) and +(60:0.3) .. (ysum.right in)
         (ybend) -- (D) -- (ubend)
         (usplit.right out) .. controls +(300:0.5) and +(90:0.5) .. (ubend)
         (usplit.left out) .. controls +(240:0.2) and +(90:0.2) .. (B.90)
         (B.270) .. controls +(270:0.3) and +(60:0.4) .. (xdotsum.right in)
         (xdotsum) -- (int) -- (xsplit)
   ;

   \node [delta] (usplit) at (7.5,4) {};
   \node [upmultiply] (A) at (5,-0.3) {\(A^*\)};
   \node [multiply] (B) at (7,2.7) {\(C^*\)};
   \node [plus] (xdotsum) at (6.5,1) {};
   \node [multiply] (int) at (6.5,0) {\(\int\)};
   \node [delta] (xsplit) at (6.5,-1) {};
   \node [multiply] (C) at (7,-2.3) {\(B^*\)};
   \node [multiply] (D) at (8,0) {\(D^*\)};
   \node [plus] (ysum) at (7.5,-4) {};

   \node [coordinate] (capend) [above of=A] {};
   \node [coordinate] (cupend) [below of=A, shift={(0,0.2)}] {};
   \node [coordinate] (ubend) [right of=B] {};
   \node [coordinate] (ybend) [right of=C, shift={(0,-0.4)}] {};

   \draw (usplit) -- +(0,0.75)
         (ysum) -- +(0,-0.75)
         (xsplit.left out) .. controls +(240:1) and +(270:0.7) .. (cupend)
         (xdotsum.left in) .. controls +(120:1) and +(90:0.7) .. (capend)
         (capend) -- (A) -- (cupend)
         (xsplit.right out) .. controls +(300:0.2) and +(90:0.2) .. (C.90)
         (C.270) .. controls +(270:0.2) and +(120:0.2) .. (ysum.left in)
         (ybend) .. controls +(270:0.2) and +(60:0.2) .. (ysum.right in)
         (ybend) -- (D) -- (ubend)
         (usplit.right out) .. controls +(300:0.5) and +(90:0.5) .. (ubend)
         (usplit.left out) .. controls +(240:0.2) and +(90:0.2) .. (B.90)
         (B.270) .. controls +(270:0.2) and +(60:0.2) .. (xdotsum.right in)
         (xdotsum) -- (int) -- (xsplit)
   ;

   \draw[very thick,<->] (1.3,0) to node[above] {\(*\)} (3.7,0);
 \end{tikzpicture}
}.
\end{center}

To show \(\goodflow_k\) does not have the \(-^\dagger\) duality, it suffices to give a counterexample.
Let \(f\) be any signal-flow diagram such that \(D(f) = 0 \maps k \to k\).  For example take \(f\)
to be the signal-flow diagram:
\begin{center}
\begin{tikzpicture}[thick]
   \node[bang] (del) {};
   \node[zero] (zer) [below of=del, shift={(0,0.4)}] {};
   \draw (del) -- +(0,0.7)  (zer) -- +(0,-0.7);
\end{tikzpicture}.
\end{center}
We see \(D(f^\dagger)\) is the scaling by \(0^{-1}\).  While this is a linear relation, it is
clearly not a linear map.
\end{proof}

Our signal-flow diagrams have all been time-independent, so the time reversal part in Kalman's
duality is trivial for our diagrams.  Note also that the \(-^*\) duality is exactly transposition when
applied to linear maps, so we can rewrite the equations in Proposition~\ref{bizarroco}:
\begin{align*}
A(f^*) &= (A(f))^\top & B(f^*) &= (C(f))^\top \\
D(f^*) &= (D(f))^\top & C(f^*) &= (B(f))^\top.
\end{align*}
At the level of state-space equations, this is exactly the time-independent version of Kalman's
duality!  We predict the \(-^*\) duality can be rigorously extended to time-dependent signal-flow
diagrams in such a way that it exactly matches Kalman's duality.  In the present work we have taken
advantage of time-independence in the definition of \(\st_k\), restricting the appearance of Laplace
transform variable \(s\) to the \((sI-A)^{-1}\) part.  The time-dependent case will require a new
approach to \(\st_k\), hence a new approach to \(\goodflow_k\).

\chapter{Conclusions}
\label{conclusions}
While the story of control theory placed into the context of category theory is far from complete,
we have advanced the plot.  First, in Chapter~\ref{vectrel} we found a \smc{} that describes the
relation between the inputs and outputs of signal-flow diagrams and described it in terms of
generating morphisms and a set of equations between morphisms.  The strict version of this gave us
\(\relk\).  Bonchi, Soboci\'nski and Zanasi \cite{BSZ1,BSZ2} independently studied an equivalent
\smc{} around the same time and with the same generator-and-equations perspective, but from a very
different approach and with a slightly different set of generators.  On our way to \(\relk\), we
considered \(\vectk\), which is the strict \smc{} we would get if signal-flow diagrams had no
feedback.  Wadsley and Woods \cite{WW} considered \(\Mat(k)\), which is equivalent to our \(\vectk\)
when \(k\) is a field, but only insisted that \(k\) be a rig.  There are also many mysterious and
interesting connections between \(\relk\) and \smcs{} used in quantum mechanics 
\cite{AC,BS,CD,CP,CPV,CW,Kock,Kock2,RSW,Selinger,Vicary}, exposing similarities and subtle
differences between vector spaces, Hilbert spaces, and cobordisms.  Nevertheless, \(\relks\) is
helpless to describe certain important control theory concepts such as controllability and
observability.

To get a handle on these two concepts, we went back in Chapter~\ref{stateful} to the state-space
equations, Equations \ref{stateeq} and \ref{outputeq} upon which these concepts are founded.  By
encoding these equations in signal-flow diagrams, we found the category \(\st_k\), where the four
matrices in the state-space equations taken \emph{en masse} correspond to stateful morphisms.  While
for simplicity's sake we only considered the linear time-independent case where the four matrices
are constant in time, \(\st_k\) can easily be extended to the time-varying case by adding a very
mild condition on the matrices.  Controllability and observability only depend on the matrices in
the state-space equations in either case, so knowing a stateful morphism provides enough information
to determine controllability and observability.  The analysis simplifies greatly in the linear
time-independent case, where they can be determined in terms of epimorphisms and monomorphisms.

In Chapter~\ref{goodflow} we pushed the idea of controllability and observability a little further.
Stateful morphisms can only be determined if we already know what the matrices in the state-space
equations are, which leaves the issue of finding these matrices.  Given an arbitrary signal-flow
diagram, the values of these matrices may not be obvious, or worse, linear relations may be
involved, not just linear maps.  For this reason we limited the scope of signal-flow diagrams to
form a new category \(\goodflow_k\), where only the signal-flow diagrams that can be converted to
stateful morphisms are considered.  These signal-flow diagrams coincide with the ones most typically
drawn by control theorists.  By converting a signal-flow diagram in \(\goodflow_k\) to a stateful
morphism, we can determine controllability and observability for that signal-flow diagram.

While controllability and observability are important, there are many other concepts that are
important to control theorists, such as stability (which itself comes in several guises) and pole
placement.  On the other hand, there are several opportunities for the current work to be extended
to allow control theorists to draw more general signal-flow diagrams.  Our \(\st_k\) PROP, which
finally allowed us to describe controllability and observability in the category theory context,
should be extendable using the results of Appendix~\ref{generalbox}, allowing a larger collection of
signal-flow diagrams to be considered for controllability and observability.

Once we have extended \(\st_k\) in this way, we can also extend \(\goodflow_k\).  We have seen in
Chapter~\ref{goodflow} how the state-space equations \ref{stateeq} and \ref{outputeq} can be used to
form the PROP \(\goodflow_k\).  One of the features of \(\goodflow_k\) is the deterministic nature
of its morphisms: the current state and input uniquely determine the future state and output.  We
may not be able to induce the system to enter a given state if the system is not controllable, and
we may not be able to determine the state of the system if it is not observable, so the state of a
system can act as a `hidden variable'.  However, in some circumstances it may be useful to eschew
determinism.  We can do this by generalizing the state-space \emph{equations} to state-space
\emph{relations} in a way that does not sacrifice much of the convenience of dealing with linear
maps.

To get a flavor of this, we will still insist \(B\), \(C\) and \(D\) are linear maps, only allowing
\(A\) to be replaced with a linear relation.  In Appendix~\ref{generalbox} we see that we can
generalize a bit more than this, but the full generality at this point would only serve to weigh
down the exposition, obscuring what we wish to point out: a direction for extending
\(\goodflow_\R\).  The state-space relations will then appear as:
\begin{equation}\label{staterels}
\dot{x}(t) \in A(t) x(t) + B(t) u(t)
\end{equation}
\begin{equation}\label{outputrels}
y(t) = C(t) x(t) + D(t) u(t).
\end{equation}
Given two such systems: \(\dot{x}_1 \in A_1 x_1 + B_1 u_1\), \(y_1 = C_1 x_1 + D_1 u_1\) and
\(\dot{x}_2 \in A_2 x_2 + B_2 y_1\), \(y_2 = C_2 x_2 + D_2 y_1\); they can compose by writing all
the relations and eliminating the common \(y_1\).  Thus:
\begin{align*}
\dot{x}_1 &\in A_1 x_1 + B_1 u_1\\
\dot{x}_2 &\in A_2 x_2 + B_2 (C_1 x_1 + D_1 u_1)\\
y_2       &=   C_2 x_2 + D_2 (C_1 x_1 + D_1 u_1).
\end{align*}
Since each \(B\), \(C\) and \(D\) is a linear map, compositions of these linear relations still
distribute over addition, so the composite system can be written:

\begin{align*}
\dot{x}_1 &\in A_1 x_1 + B_1 u_1\\
\dot{x}_2 &\in (B_2 C_1) x_1 + A_2 x_2 + (B_2 D_1) u_1\\
y_2       &=   (D_2 C_1) x_1 + C_2 x_2 + (D_2 D_1) u_1.
\end{align*}
It is easy to show this is equivalent to
\begin{align*}
\left[\begin{array}{c} \dot{x}_1 \\ \dot{x}_2 \end{array}\right] &\in
A_3
\left[\begin{array}{c} x_1 \\ x_2 \end{array}\right] + 
\left[\begin{array}{c} B_1 \\ B_2 D_1 \end{array}\right] u_1 \\
y_2       &=
\left[\begin{array}{cc} D_2 C_1 & C_2 \end{array}\right]
\left[\begin{array}{c} x_1 \\ x_2 \end{array}\right] + (D_2 D_1) u_1,
\end{align*}
for some linear relation \(A_3\).  The fact that we get a system of state-space relations again
indicates that we should be able to form a PROP \(\looseflow_k\) from the state-space relations in
much the way that we formed \(\goodflow_k\) from the state-space equations.

A difficulty in dealing with \(\looseflow_k\) arises when trying to determine the linear relations
\(A(f)\), \(B(f)\), \(C(f)\), and \(D(f)\) associated with the signal-flow diagram \(f \in
\looseflow_k\).  The processes defined for finding linear maps from a signal-flow diagram \(f \in
\goodflow_k\) are not appropriate here.  When \(A(f)\) is a linear relation that is not a linear
map, these processes will give one or both of \(B(f)\) and \(C(f)\) as linear relations that are not
linear maps.  If Conjecture~\ref{conject} is true, it may be possible to use the \(\porp\)-normal
form of a signal-flow diagram to find the linear relations \(A(f)\), \(B(f)\), \(C(f)\), and
\(D(f)\).  This approach, if it works, would be more satisfying than the current \emph{ad hoc}
approach to finding these when they are linear maps.

Another direction for research, taken up by Baez, Coya and Rebro \cite{BCR}, is connecting this work
with the work of Baez and Fong \cite{BF} on passive linear networks.  Whereas we have primarily
focused on presenting PROPs in terms of generators and equations, Baez and Fong use a framework of
`decorated cospans'.  Using this framework, they find a black-box functor from \(\Ecirc\), the
category of open passive linear electric circuits, to \(\Lagr\), the category of symplectic vector
spaces over the field \(k(s)\) and Lagrangian relations.  These categories are equivalent to their
skeletons, so there is a black-box functor between their skeletons, \(\bbox \maps \ecirc \to
\lagr\).  Since there is a \smdf{} \(i \maps \lagr \to \relks\) that includes \(\lagr\) in
\(\relks\), composing with the black-box functor gives the \smdf
\[i \of \bbox \maps \ecirc \to \relks.\]
A key result here is to find a \smdf{} \(F \maps \ecirc \to \sigflow_{k(s)}\) such that the
black-box functor we defined from \(\sigflow_{k(s)}\) to \(\relks\) makes this functor diagram
commute (up to isomorphism):
\begin{center}
\begin{tikzpicture}
   \node (SF) {\(\sigflow_{k(s)}\)};
   \node (P) [above of=SF, shift={(0,2)}] {\(\ecirc \vphantom{\lagr}\)};
   \node (FR) [right of=SF, shift={(2,0)}] {\(\relks\)};
   \node (ST) [above of=FR, shift={(0,2)}] {\(\lagr\)};

   \draw [->] (P) to node [above] {\(\bbox\)} (ST);
   \draw [->] (P) to node [left] {\(F\)} (SF);
   \draw [right hook->] (ST) to node [right] {\(i\)} (FR);
   \draw [->] (SF) to node [below] {\(\bbox\)} (FR);
\end{tikzpicture}.
\end{center}
The main challenge in this endeavor is not in finding the functor \(F\), but in reconciling the
approaches sufficient to prove the square commutes.

\nocite{*}
\bibliographystyle{plain}
\bibliography{bibfile}

\appendix
\chapter{Proofs of selected derived equations}
\label{derivedeqns}
In the proof of Theorem~\ref{presrk} we used several equations derived from the equations in our
presentation of \(\relk\).  While the more straightforward equations were demonstrated immediately,
some of the more useful derived equations are less straightforward and we demonstrate them here.
While these derived equations could simply be appended to our presentation, the elegance of only
using simple structures would be lost.  On the other hand, Heunen and Vicary \cite{HV} pointed out
only one Frobenius equation (per color) is necessary instead of two.  Several of the equations
necessary for the presentation of \(\vectk\) are also superfluous for the presentation of \(\relk\),
but it seems more elegant to build from simple structures than to minimize the number of equations
for the sake of minimization.

\section{(D5)}
\label{A:D5}
Derived equations {\hyperref[eqnD5D6D7]{\textbf{(D5)--(D7)}}} are variations on the bimonoid
equations {\hyperref[eqn78910]{\textbf{(7)--(9)}}}.  Derived equation \textbf{(D5)} can be proved as
follows:
\begin{center}
   \begin{tikzpicture}[thick, node distance=0.7cm]
   \node [codelta] (nabzip) at (1.4,0.3) {};
   \node [plus] (add) at (1,-0.55) {};
   \node [coordinate] (outz) [below of=add] {};
   \node (equal) at (2.15,0) {\(=\)};
   \node [below=0.2em] at (equal) {(D3)};

   \draw (add.left in) .. controls +(120:0.5) and +(-90:0.5) .. (0,1.25)
         (add.io) -- (outz)
         (nabzip.left in) .. controls +(120:0.3) and +(-90:0.3) .. (0.9,1.25)
         (nabzip.right in) .. controls +(60:0.3) and +(-90:0.3) .. (1.9,1.25)
         (nabzip.io) .. controls +(-90:0.2) and +(60:0.2) .. (add.right in);
   \node [multiply] (m2) at (3,0.1) {\(\scriptstyle{-1}\)};
   \node [coplus] (plus2) at (4,-0.3) {};
   \node [codelta] (nab2) at (4,0.3) {};

   \draw (nab2.io) -- (plus2.io) (m2.90) -- (3,1.25)
         (nab2.left in) .. controls +(120:0.3) and +(-90:0.3) .. (3.5,1.25)
         (nab2.right in) .. controls +(60:0.3) and +(-90:0.3) .. (4.5,1.25)
         (plus2.left out) .. controls +(-120:0.5) and +(-90:0.8) .. (m2.io)
         (plus2.right out) .. controls +(-60:0.3) and +(90:0.3) .. (4.5,-1.25);

   \node (eq2) at (5,0) {\(=\)};
   \node [below=0.1em] at (eq2) {(7)\({}^{\dagger}\)};
   \node [multiply] (m3) at (5.85,0.1) {\(\scriptstyle{-1}\)};
   \node [coordinate] (m3aux) at (5.85,-0.85) {};
   \node [codelta] (addL) at (6.8,-0.35) {};
   \node (cross) [above right of=addL, shift={(-0.1,-0.0435)}] {};
   \node [codelta] (addR) [below right of=cross, shift={(-0.1,0.0435)}] {};
   \node [coplus] (dupeL) [above left of=cross, shift={(0.1,-0.0435)}] {};
   \node [coplus] (dupeR) [above right of=cross, shift={(-0.1,-0.0435)}] {};
   \node [coordinate] (f) [above of=dupeL] {};
   \node [coordinate] (g) [above of=dupeR] {};
   \node [coordinate] (sum1) [below of=addL, shift={(0,0.2)}] {};
   \node [coordinate] (sum2) [below of=addR, shift={(0,-0.2)}] {};
   \node [coordinate] (inL) [left of=sum1, shift={(0.2,0)}] {};
   \node (eq3) at (8.5,0) {\(=\)};
   \node [below=0.1em] at (eq3) {\(\Delta^{\dagger}\)};

   \path
   (addL) edge (sum1)
   (addL.right in) edge (dupeR.left out)
   (addL.left in) edge [bend left=30] (dupeL.left out)
   (addR) edge (sum2)
   (addR.left in) edge (cross)
   (addR.right in) edge [bend right=30] (dupeR.right out)
   (dupeL) edge (f)
   (dupeL.right out) edge (cross)
   (dupeR) edge (g);

   \draw (sum1) .. controls +(-90:0.4) and +(-90:0.4) .. (m3aux) -- (m3) -- +(0,1.15);
   \node [multiply] (m4) at (9.4,0.7) {\(\scriptstyle{-1}\)};
   \node [delta] (delt) at (9.4,-0.2) {};
   \node [codelta] (nab4) at (11.65,-0.35) {};
   \node [coplus] (coadd4L) at (10.9,0.513) {};
   \node (cross4) at (11.275,0.0815) {};
   \node [coplus] (coadd4R) at (11.65,0.513) {};

   \path (nab4.right in) edge [bend right=30] (coadd4R.right out);

   \draw (m4) -- +(0,0.55) (coadd4L) -- +(0,0.737) (coadd4R) -- +(0,0.737) (nab4) -- +(0,-0.9) (delt.io) -- (m4.io)
         (delt.right out) .. controls +(-60:0.3) and +(-120:0.6) .. (coadd4L.left out)
         (delt.left out) .. controls +(-120:1) and +(-120:1.7) .. (coadd4R.left out)
         (coadd4L.right out) -- (cross4) -- (nab4.left in);

   \node (spacer) at (14,0) {\({}\)};
   \end{tikzpicture}
\vskip 1.5em
\noindent
   \begin{tikzpicture}[thick, node distance=0.7cm]
   \node (spacer) at (-1,0) {\({}\)};
   \node (eq4) at (0.5,0) {\(=\)};
   \node [below=0.2em] at (eq4) {(17)};

   \node [delta] (delt5) at (2,0.95) {};
   \node [multiply] (m5L) at (1.5,-0.1) {\(\scriptstyle{-1}\)};
   \node [multiply] (m5R) at (2.5,-0.1) {\(\scriptstyle{-1}\)};
   \node [coplus] (coadd5L) at (4.15,0.2) {};
   \node [coplus] (coadd5R) at (4.9,0.2) {};
   \node (cross5) at (4.525,-0.2315) {};
   \node [codelta] (nab5) at (4.9,-0.663) {};
   \node (eq5) at (5.85,0) {\(=\)};
   \node [below=0.2em] at (eq5) {(D3)};
   \node [below=1.3em] at (eq5) {(D3)};

   \path (nab5.right in) edge [bend right=30] (coadd5R.right out);

   \draw (delt5) -- +(0,0.5) (coadd5L) -- +(0,1.25) (coadd5R) -- +(0,1.25) (nab5) -- +(0,-0.787)
         (delt5.right out) .. controls +(-60:0.3) and +(90:0.3) .. (m5R.90)
         (delt5.left out) .. controls +(-120:0.3) and +(90:0.3) .. (m5L.90)
         (coadd5L.left out) .. controls +(-120:0.5) and +(-90:0.8) .. (m5R.io)
         (coadd5R.left out) .. controls +(-120:1.5) and +(-90:1.3) .. (m5L.io)
         (coadd5L.right out) -- (cross5) -- (nab5.left in);
   \node [delta] (delt6) at (6.7,0.95) {};
   \node [plus] (add6L) at (7.75,-0.3) {};
   \node [codelta] (nab6) at (8.1,-0.95) {};
   \node [plus] (add6R) at (8.45,-0.3) {};
   \node (eq6) at (9.55,0) {\(=\)};
   \node [below=0.2em] at (eq6) {(6)};

   \draw (delt6) -- +(0,0.5) (nab6) -- +(0,-0.5)
         (add6R.left in) .. controls +(120:1) and +(-120:1) .. (delt6.left out)
         (add6L.left in) .. controls +(120:0.3) and +(-60:0.3) .. (delt6.right out)
         (add6L.right in) .. controls +(60:0.4) and +(-90:1) .. (8.45,1.45)
         (add6R.right in) .. controls +(60:0.4) and +(-90:1) .. (9.15,1.45)
         (nab6.right in) .. controls +(60:0.2) and +(270:0.1) .. (add6R.io)
         (nab6.left in) .. controls +(120:0.2) and +(270:0.1) .. (add6L.io);

   \node [hole] at (7.265,0.31) {};
   \node [hole] at (8.08,0.05) {};
   \draw (add6L.left in) .. controls +(120:0.3) and +(-60:0.3) .. (delt6.right out)
         (add6L.right in) .. controls +(60:0.4) and +(-90:1) .. (8.45,1.45);

   \node [plus] (addl) at (10.45,-0.1) {};
   \node (cross) at (10.845,0.395) {};
   \node [delta] (delta) at (10.45,0.89) {};
   \node [plus] (addr) [below right of=cross, shift={(-0.1,0)}] {};
   \node [codelta] (nablunzip) [below left of=addr, shift={(0.1,-0.3)}] {};
   \node (outu) [below of=nablunzip] {};

   \path
   (delta.left out) edge [bend right=30] (addl.left in);

   \draw (delta) -- +(0,0.56)
         (delta.right out) -- (cross) -- (addr.left in);
   \draw (addl.right in) .. controls +(60:0.5) and +(-90:0.5) .. (11.15,1.45)
         (addr.right in) .. controls +(60:0.5) and +(-90:0.5) .. (11.94,1.45)
         (addl.io) .. controls +(-90:0.2) and +(120:0.2) .. (nablunzip.left in)
         (addr.io) .. controls +(-90:0.2) and +(60:0.2) .. (nablunzip.right in);
   \draw (nablunzip) -- (outu);
   \end{tikzpicture}
\end{center}

\section{(D6)--(D7)}
\label{A:D6D7}
Derived equation {\hyperref[eqnD5D6D7]{\textbf{(D7)}}} can be proved as follows:

\begin{center}
\scalebox{1}{   \begin{tikzpicture}[thick]
   \node [bang] (bang1) at (-0.3,0.5) {};
   \node [plus] (plus1) at (0,0) {};
   \node (eq1) at (1,0) {\(=\)};
   \node at (1,-1em) {(3)};
   \node [plus] (plus2) at (2,0) {};
   \node [bang] (bang2) at (2.3,0.5) {};
   \node (eq2) at (3,0) {\(=\)};
   \node at (3,-1em) {(D3)};
   \node [multiply] (m3) at (4,0.4) {\(\scriptstyle{-1}\)};
   \node [coplus] (plus3) at (5,0) {};
   \node [bang] (bang3) at (5,0.5) {};
   \node (eq3) at (6,0) {\(=\)};
   \node at (6,-1em) {(9)\({}^{\dagger}\)};
   \node [multiply] (m4) at (7,0.4) {\(\scriptstyle{-1}\)};
   \node [bang] (bang4a) at (7.5,-0.5) {};
   \node [bang] (bang4b) at (8,-0.5) {};
   \node (eq4) at (8.5,0) {\(=\)};
   \node at (8.5,-1em) {\(!^{\dagger}\)};
   \node [multiply] (m5) at (9.5,0.4) {\(\scriptstyle{-1}\)};
   \node [bang] (bang5a) at (9.5,-0.5) {};
   \node [bang] (bang5b) at (9.8,-0.4) {};
   \node (eq5) at (10.5,0) {\(=\)};
   \node at (10.5,-1em) {(18)};
   \node [bang] (bang6a) at (11.3,0.3) {};
   \node [bang] (bang6b) at (11.3,-0.3) {};

   \draw (bang1) .. controls +(-90:0.15) and +(120:0.15) .. (plus1.left in)
         (bang2) .. controls +(-90:0.15) and +(60:0.15) .. (plus2.right in)
         (bang3) -- (plus3.io)
         (bang4a) .. controls +(-90:0.5) and +(-90:0.5) .. (7,-0.5) -- (m4.io)
         (bang4b) -- +(-90:0.5)
         (bang5a) -- (m5.io)
         (bang5b) -- +(-90:0.6)
         (bang6a) -- +(90:0.7)
         (bang6b) -- +(-90:0.7)
         (m3.90) -- (4,1)
         (m4.90) -- (7,1)
         (m5.90) -- (9.5,1)
         (plus1.io) -- (0,-1)
         (plus2.io) -- (2,-1)
         (plus1.right in) .. controls +(60:0.3) and +(-90:0.3) .. (0.5,1)
         (plus2.left in) .. controls +(120:0.3) and +(-90:0.3) .. (1.5,1)
         (plus3.left out) .. controls +(-120:0.5) and +(-90:0.8) .. (m3.io)
         (plus3.right out) .. controls +(-60:0.3) and +(90:0.3) .. (5.5,-1)
;
   \end{tikzpicture}}
\end{center}

The proof of derived equation \textbf{(D6)} is a vertically flipped and color-swapped version of the
proof of derived equation \textbf{(D7)} above, but without the scaling by \(-1\).  These two
equations are also proved in the \textit{Graphical Linear Algebra} blog. 

\section{(D8)--(D9)}
\label{A:D8D9}
Derived equation {\hyperref[eqnD8D9]{\textbf{(D8)}}} is simply a statement that \(x+y\) and \(x+cy\)
are linearly independent whenever \(c \neq 1\).  This can be proved diagrammatically as follows:
\begin{center}
\scalebox{0.95}{   \begin{tikzpicture}[thick]
   \node [coplus] (cosum) at (0,0.9) {};
   \node [multiply] (times) at (0.38,0.05) {\(c\)};
   \node [plus] (sum) at (0,-0.9) {};

   \draw
   (cosum.io) -- +(0,0.3) (sum.io) -- +(0,-0.3)
   (cosum.left out) .. controls +(240:0.5) and +(120:0.5) .. (sum.left in)
   (cosum.right out) .. controls +(300:0.15) and +(90:0.15) .. (times.90)
   (times.io) .. controls +(270:0.15) and +(60:0.15) .. (sum.right in);

   \node (eq) at (1.2,0) {\(=\)};
   \node [below=0.2em] at (eq) {(1)};
   \node [coplus] (cosum2) at (2.2,0.4) {};
   \node [multiply] (times2) at (2.58,-0.45) {\(c\)};
   \node [plus] (sum2) at (2.2,-1.4) {};
   \node [plus] (top2) at (2.2,1) {};
   \node [zero] (z2) at (2.58,1.65) {};

   \draw
   (cosum2.io) -- (top2.io) (sum2.io) -- +(0,-0.3)
   (cosum2.left out) .. controls +(240:0.5) and +(120:0.5) .. (sum2.left in)
   (cosum2.right out) .. controls +(300:0.15) and +(90:0.15) .. (times2.90)
   (times2.io) .. controls +(270:0.15) and +(60:0.15) .. (sum2.right in)
   (top2.right in) .. controls +(60:0.1) and +(270:0.2) .. (z2)
   (top2.left in) .. controls +(120:0.2) and +(270:0.4) .. +(-0.28,0.9);

   \node (eq2) at (3.4,0) {\(=\)};
   \node [below=0.2em] at (eq2) {(21)};
   \node [below=1.4em] at (eq2) {(2)};
   \node [coplus] (cosum3) at (5,1.2) {};
   \node [multiply] (times3) at (5.38,0.35) {\(c\)};
   \node [plus] (sum3) at (5,-0.6) {};
   \node [plus] (bot3) at (4.6,-1.4) {};
   \node [zero] (z3) at (5,1.7) {};

   \draw
   (cosum3.io) -- (z3) (bot3.io) -- +(0,-0.3)
   (cosum3.left out) .. controls +(240:0.5) and +(120:0.5) .. (sum3.left in)
   (cosum3.right out) .. controls +(300:0.15) and +(90:0.15) .. (times3.90)
   (times3.io) .. controls +(270:0.15) and +(60:0.15) .. (sum3.right in)
   (sum3.io) .. controls +(270:0.1) and +(60:0.2) .. (bot3.right in)
   (bot3.left in) .. controls +(120:1) and +(270:1) .. +(-0.4,3.3);

   \node (eq3) at (6.2,0) {\(=\)};
   \node [below=0.1em] at (eq3) {(29)\({}^{\dagger}\)};
   \node [below=1.3em] at (eq3) {(30)};
   \node [delta] (delta4) at (7.8,1.2) {};
   \node [multiply] (times4) at (8.2,0.35) {\(c\)};
   \node [multiply] (neg4) at (7.4,0.35) {\(\hspace{-1pt}\scriptstyle{-1}\hspace{-1pt}\)};
   \node [plus] (sum4) at (7.8,-0.6) {};
   \node [plus] (bot4) at (7.4,-1.4) {};
   \node [bang] (b4) at (7.8,1.7) {};

   \draw
   (delta4.io) -- (b4) (bot4.io) -- +(0,-0.3)
   (delta4.left out) .. controls +(240:0.15) and +(90:0.15) .. (neg4.90)
   (neg4.io) .. controls +(270:0.15) and +(120:0.15) .. (sum4.left in)
   (delta4.right out) .. controls +(300:0.15) and +(90:0.15) .. (times4.90)
   (times4.io) .. controls +(270:0.15) and +(60:0.15) .. (sum4.right in)
   (sum4.io) .. controls +(270:0.1) and +(60:0.2) .. (bot4.right in)
   (bot4.left in) .. controls +(120:1) and +(270:1) .. +(-0.4,3.3);

   \node (eq4) at (9,0) {\(=\)};
   \node [below=0.2em] at (eq4) {(12)};
   \node [multiply] (times5) at (10.8,0.35) {\(\hspace{-2.5pt}c-1\hspace{-2.5pt}\)};
   \node [plus] (sum5) at (10.2,-1.4) {};
   \node [bang] (b5) at (10.8,1.2) {};

   \draw
   (sum5.io) -- +(0,-0.3) (times5) -- (b5)
   (times5.io) .. controls +(270:0.2) and +(60:0.2) .. (sum5.right in)
   (sum5.left in) .. controls +(120:1) and +(270:1) .. +(-0.4,3.3);

   \node (eq5) at (12,0) {\(=\)};
   \node [below=0.1em] at (eq5) {(31)\({}^{\dagger}\)};
   \node [below=1.2em] at (eq5) {(18)\({}^{\dagger}\)};
   \node [plus] (sum6) at (13,-0.3) {};
   \node [bang] (b6) at (13.4,0.5) {};

   \draw
   (sum6.io) -- +(0,-0.7)
   (sum6.right in) .. controls +(60:0.2) and +(270:0.2) .. (b6)
   (sum6.left in) .. controls +(120:0.5) and +(270:0.5) .. +(-0.3,1.4);

   \node (eq6) at (14,0) {\(=\)};
   \node [below=0.2em] at (eq6) {(D7)};
   \node [bang] (bang) at (14.8,0.4) {};
   \node [bang] (cobang) at (14.8,-0.4) {};

   \draw (bang) -- +(0,1.02) (cobang) -- +(0,-1.02);
   \end{tikzpicture}}
\end{center}

Derived equation \textbf{(D9)} is the statement that when \(c \neq 1\), \(x = cx\) implies \(x=0\).
The proof of derived equation \textbf{(D9)} is a vertically flipped and color-swapped version of the
proof of derived equation \textbf{(D8)} above.

\section{Frobenius equations}
\label{A:frobenius}
In our presentation of \(\relk\), two of the Frobenius equations are superfluous.  For each pair of
Frobenius equations {\hyperref[eqn2122]{\textbf{(21)--(22)}}} and
{\hyperref[eqn2324]{\textbf{(23)--(24)}}} either equation can be derived from the other.
Furthermore, both equations of a pair can be derived from the `outer' equation.  Here we show how to
derive equation \textbf{(22)} from equation \textbf{(21)}, commutativity of \(+\), and
cocomutativity of \(+^{\dagger}\).

\begin{center}
\scalebox{0.85}{   \begin{tikzpicture}[thick]
   \node [plus] (sum1) at (1,0.325) {};
   \node [coplus] (cosum1) at (1,-0.325) {};
   \node [coordinate] (sum1inleft) at (0.5,0.975) {};
   \node [coordinate] (sum1inright) at (1.5,0.975) {};
   \node [coordinate] (cosum1outleft) at (0.5,-0.975) {};
   \node [coordinate] (cosum1outright) at (1.5,-0.975) {};
   \node [coordinate] (1in1) at (0.5,1.22) {};
   \node [coordinate] (1in2) at (1.5,1.22) {};
   \node [coordinate] (1out1) at (0.5,-1.22) {};
   \node [coordinate] (1out2) at (1.5,-1.22) {};

   \draw (sum1inleft) .. controls +(270:0.3) and +(120:0.15) .. (sum1.left in)
   (sum1inright) .. controls +(270:0.3) and +(60:0.15) .. (sum1.right in)
   (cosum1outleft) .. controls +(90:0.3) and +(240:0.15) .. (cosum1.left out)
   (cosum1outright) .. controls +(90:0.3) and +(300:0.15) .. (cosum1.right out)
   (sum1.io) -- (cosum1.io) (1in1) -- (sum1inleft) (1in2) -- (sum1inright)
   (1out1) -- (cosum1outleft) (1out2) -- (cosum1outright)
;

   \node (eq1) at (2,0) {\(=\)};
   \node at (2,-1em) {(3)};
   \node at (2,-2.4em) {(3)\({}^{\dagger}\)};

   \node [plus] (sum2) at (3,0.325) {};
   \node [coplus] (cosum2) at (3,-0.325) {};
   \node [coordinate] (sum2inleft) at (2.5,0.975) {};
   \node [coordinate] (sum2inright) at (3.5,0.975) {};
   \node [coordinate] (cosum2outleft) at (2.5,-0.975) {};
   \node [coordinate] (cosum2outright) at (3.5,-0.975) {};
   \node [coordinate] (2in1) at (2.5,1.822) {};
   \node [coordinate] (2in2) at (3.5,1.822) {};
   \node [coordinate] (2out1) at (2.5,-1.822) {};
   \node [coordinate] (2out2) at (3.5,-1.822) {};

   \draw (sum2inleft) .. controls +(270:0.3) and +(120:0.15) .. (sum2.left in)
   (sum2inright) .. controls +(270:0.3) and +(60:0.15) .. (sum2.right in)
   (cosum2outleft) .. controls +(90:0.3) and +(240:0.15) .. (cosum2.left out)
   (cosum2outright) .. controls +(90:0.3) and +(300:0.15) .. (cosum2.right out)
   (sum2.io) -- (cosum2.io)
   (sum2inright) .. controls +(90:0.4) and +(270:0.4) .. (2in1)
   (cosum2outleft) .. controls +(270:0.4) and +(90:0.4) .. (2out2)
;
   \node [hole] at (3,1.4) {};
   \node [hole] at (3,-1.4) {};

   \draw
   (sum2inleft) .. controls +(90:0.4) and +(270:0.4) .. (2in2)
   (cosum2outright) .. controls +(270:0.4) and +(90:0.4) .. (2out1)
;

   \node (eq2) at (4,0) {\(=\)};
   \node at (4,-1em) {(21)};

   \node [plus] (sum3) at (5,-0.216) {};
   \node [coplus] (cosum3) at (5.5,0.216) {};
   \node [coordinate] (sum3corner) at (4.5,0.434) {};
   \node [coordinate] (cosum3corner) at (6,-0.434) {};
   \node [coordinate] (sum3out) at (5,-0.757) {};
   \node [coordinate] (cosum3in) at (5.5,0.757) {};
   \node [coordinate] (3cornerin) at (4.5,0.757) {};
   \node [coordinate] (3cornerout) at (6,-0.757) {};
   \node (topcross3) at (5,1.191) {};
   \node (botcross3) at (5.5,-1.191) {};
   \node [coordinate] (3in1) at (4.5,1.605) {};
   \node [coordinate] (3in2) at (5.5,1.605) {};
   \node [coordinate] (3in1u) at (4.5,1.822) {};
   \node [coordinate] (3in2u) at (5.5,1.822) {};
   \node [coordinate] (3out1) at (5,-1.605) {};
   \node [coordinate] (3out2) at (6,-1.605) {};
   \node [coordinate] (3out1d) at (5,-1.822) {};
   \node [coordinate] (3out2d) at (6,-1.822) {};

   \draw[rounded corners] (3in2u) -- (3in2) -- (3cornerin) -- (sum3corner) -- (sum3.left in)
   (3out1d) -- (3out1) -- (3cornerout) -- (cosum3corner) -- (cosum3.right out)
   (cosum3.io) -- (cosum3in) -- (topcross3) -- (3in1) -- (3in1u)
   (sum3.io) -- (sum3out) -- (botcross3) -- (3out2) -- (3out2d)
;
   \draw (sum3.right in) -- (cosum3.left out)
;

   \node (eq3) at (6.5,0) {\(=\)};
   \node at (6.5,-1em) {(3)\({}^{\dagger}\)};

   \node [plus] (sum4) at (8,-0.975) {};
   \node [coplus] (cosum4) at (8,0.215) {};
   \node [coordinate] (4out2) at (8,-1.822) {};
   \node [coordinate] (cosum4corner) at (7,-1.605) {};
   \node [coordinate] (sum4corner) at (7,0.434) {};
   \node [coordinate] (sum4corn) at (7,0.757) {};
   \node [coordinate] (cosum4in) at (8,0.757) {};
   \node [coordinate] (4in2) at (8,1.605) {};
   \node [coordinate] (4in2u) at (8,1.822) {};
   \node [coordinate] (4in1) at (7,1.605) {};
   \node [coordinate] (4in1u) at (7,1.822) {};
   \node (cross4bot) at (7.5,-0.38) {};
   \node (cross4top) at (7.5,1.191) {};

   \draw (sum4.right in) .. controls +(60:0.4) and +(-60:0.4) .. (cosum4.right out)
   (sum4.io) -- (4out2)
   (cosum4.left out) .. controls +(-120:0.5) and +(90:0.3) .. (cosum4corner) -- +(0,-0.217)
;
   \draw[rounded corners] (sum4.left in) -- (cross4bot) -- (sum4corner) -- (sum4corn) -- (4in2) -- (4in2u)
   (cosum4.io) -- (cosum4in) -- (cross4top) -- (4in1) -- (4in1u)
;

   \node (eq4) at (9,0) {\(=\)};
   \node at (9,-1em) {(3)};

   \node [plus] (sum5) at (10.5,-0.216) {};
   \node [coplus] (cosum5) at (10,0.216) {};
   \node [coordinate] (sum5corner) at (11,0.434) {};
   \node [coordinate] (cosum5corner) at (9.5,-0.434) {};
   \node [coordinate] (sum5out) at (10.5,-0.975) {};
   \node [coordinate] (cosum5in) at (10,0.975) {};
   \node [coordinate] (5cornerin) at (11,0.975) {};
   \node [coordinate] (5cornerout) at (9.5,-0.975) {};

   \draw[rounded corners] (5cornerin) -- (sum5corner) -- (sum5.right in)
   (5cornerout) -- (cosum5corner) -- (cosum5.left out);
   \draw (sum5.left in) -- (cosum5.right out)
   (sum5.io) -- (sum5out)
   (cosum5.io) -- (cosum5in);
   \end{tikzpicture}}
\end{center}

Color swapping this argument gives an argument for deriving equation \textbf{(24)} from equation
\textbf{(23)}.  One way to arrive at the numbered Frobenius equations from an outer equation uses
counitality and coassociativity.  Putting the pieces together for this is left as an exercise to the
reader.

\section{Additional connections}
\label{A:connections}
Recall that a special commutative Frobenius monoid is:\\
\textbf{(F1)--(F3)} a commutative monoid:
\begin{invisiblelabel}
\label{eqnF1F2F3}
\end{invisiblelabel}
  \begin{center}
    \scalebox{0.80}{
   \begin{tikzpicture}[-, thick, node distance=0.74cm]
   \node [ba] (summer) {};
   \node [coordinate] (sum) [below of=summer] {};
   \node [coordinate] (Lsum) [above left of=summer] {};
   \node [oao] (insert) [above of=Lsum, shift={(0,-0.35)}] {};
   \node [coordinate] (Rsum) [above right of=summer] {};
   \node [coordinate] (sumin) [above of=Rsum] {};
   \node (equal) [right of=Rsum, shift={(0,-0.26)}] {\(=\)};
   \node [coordinate] (in) [right of=equal, shift={(0,1)}] {};
   \node [coordinate] (out) [right of=equal, shift={(0,-1)}] {};

   \draw (insert) .. controls +(270:0.3) and +(120:0.3) .. (summer.left in)
         (summer.right in) .. controls +(60:0.6) and +(270:0.6) .. (sumin)
         (summer) -- (sum)    (in) -- (out);
   \end{tikzpicture}
        \hspace{1.0cm}
   \begin{tikzpicture}[-, thick, node distance=0.7cm]
   \node [ba] (uradder) {};
   \node [ba] (adder) [below of=uradder, shift={(-0.35,0)}] {};
   \node [coordinate] (urm) [above of=uradder, shift={(-0.35,0)}] {};
   \node [coordinate] (urr) [above of=uradder, shift={(0.35,0)}] {};
   \node [coordinate] (left) [left of=urm] {};

   \draw (adder.right in) .. controls +(60:0.2) and +(270:0.1) .. (uradder.io)
         (uradder.right in) .. controls +(60:0.35) and +(270:0.3) .. (urr)
         (uradder.left in) .. controls +(120:0.35) and +(270:0.3) .. (urm)
         (adder.left in) .. controls +(120:0.75) and +(270:0.75) .. (left)
         (adder.io) -- +(270:0.5);

   \node (eq) [right of=uradder, shift={(0,-0.25)}] {\(=\)};

   \node [ba] (ulsummer) [right of=eq, shift={(0,0.25)}] {};
   \node [ba] (summer) [below of=ulsummer, shift={(0.35,0)}] {};
   \node [coordinate] (ulm) [above of=ulsummer, shift={(0.35,0)}] {};
   \node [coordinate] (ull) [above of=ulsummer, shift={(-0.35,0)}] {};
   \node [coordinate] (right) [right of=ulm] {};

   \draw (summer.left in) .. controls +(120:0.2) and +(270:0.1) .. (ulsummer.io)
         (ulsummer.left in) .. controls +(120:0.35) and +(270:0.3) .. (ull)
         (ulsummer.right in) .. controls +(60:0.35) and +(270:0.3) .. (ulm)
         (summer.right in) .. controls +(60:0.75) and +(270:0.75) .. (right)
         (summer.io) -- +(270:0.5);
   \end{tikzpicture}
        \hspace{1.0cm}
   \begin{tikzpicture}[-, thick, node distance=0.7cm]
   \node [ba] (twadder) {};
   \node [coordinate] (twout) [below of=twadder] {};
   \node [coordinate] (twR) [above right of=twadder, shift={(-0.2,0)}] {};
   \node (cross) [above of=twadder] {};
   \node [coordinate] (twRIn) [above left of=cross, shift={(0,0.3)}] {};
   \node [coordinate] (twLIn) [above right of=cross, shift={(0,0.3)}] {};

   \draw (twadder.right in) .. controls +(60:0.35) and +(-45:0.25) .. (cross)
                            .. controls +(135:0.2) and +(270:0.4) .. (twRIn);
   \draw (twadder.left in) .. controls +(120:0.35) and +(-135:0.25) .. (cross.center)
                           .. controls +(45:0.2) and +(270:0.4) .. (twLIn);
   \draw (twout) -- (twadder);

   \node (eq) [right of=twR] {\(=\)};

   \node [coordinate] (L) [right of=eq] {};
   \node [ba] (adder) [below right of=L] {};
   \node [coordinate] (out) [below of=adder] {};
   \node [coordinate] (R) [above right of=adder] {};
   \node (cross) [above left of=R] {};
   \node [coordinate] (LIn) [above left of=cross] {};
   \node [coordinate] (RIn) [above right of=cross] {};

   \draw (adder.left in) .. controls +(120:0.7) and +(270:0.7) .. (LIn)
         (adder.right in) .. controls +(60:0.7) and +(270:0.7) .. (RIn)
         (out) -- (adder);
   \end{tikzpicture}
    }
\end{center}
\textbf{(F4)--(F6)} which is also a cocommutative comonoid:
\begin{invisiblelabel}
\label{eqnF4F5F6}
\end{invisiblelabel}
  \begin{center}
    \scalebox{0.80}{
     \reflectbox{
      \rotatebox[origin=c]{180}{
   \begin{tikzpicture}[-, thick, node distance=0.74cm]
   \node [ba] (summer) {};
   \node [coordinate] (sum) [below of=summer] {};
   \node [coordinate] (Lsum) [above left of=summer] {};
   \node [oao] (insert) [above of=Lsum, shift={(0,-0.35)}] {};
   \node [coordinate] (Rsum) [above right of=summer] {};
   \node [coordinate] (sumin) [above of=Rsum] {};
   \node (equal) [right of=Rsum, shift={(0,-0.26)}] {\(=\)};
   \node [coordinate] (in) [right of=equal, shift={(0,1)}] {};
   \node [coordinate] (out) [right of=equal, shift={(0,-1)}] {};
   \draw (insert) .. controls +(270:0.3) and +(120:0.3) .. (summer.left in)
         (summer.right in) .. controls +(60:0.6) and +(270:0.6) .. (sumin)
         (summer) -- (sum)    (in) -- (out);
   \end{tikzpicture}
        \hspace{1.0cm}
   \begin{tikzpicture}[-, thick, node distance=0.7cm]
   \node [ba] (uradder) {};
   \node [ba] (adder) [below of=uradder, shift={(-0.35,0)}] {};
   \node [coordinate] (urm) [above of=uradder, shift={(-0.35,0)}] {};
   \node [coordinate] (urr) [above of=uradder, shift={(0.35,0)}] {};
   \node [coordinate] (left) [left of=urm] {};
   \draw (adder.right in) .. controls +(60:0.2) and +(270:0.1) .. (uradder.io)
         (uradder.right in) .. controls +(60:0.35) and +(270:0.3) .. (urr)
         (uradder.left in) .. controls +(120:0.35) and +(270:0.3) .. (urm)
         (adder.left in) .. controls +(120:0.75) and +(270:0.75) .. (left)
         (adder.io) -- +(270:0.5);
   \node (eq) [right of=uradder, shift={(0,-0.25)}] {\(=\)};
   \node [ba] (ulsummer) [right of=eq, shift={(0,0.25)}] {};
   \node [ba] (summer) [below of=ulsummer, shift={(0.35,0)}] {};
   \node [coordinate] (ulm) [above of=ulsummer, shift={(0.35,0)}] {};
   \node [coordinate] (ull) [above of=ulsummer, shift={(-0.35,0)}] {};
   \node [coordinate] (right) [right of=ulm] {};
   \draw (summer.left in) .. controls +(120:0.2) and +(270:0.1) .. (ulsummer.io)
         (ulsummer.left in) .. controls +(120:0.35) and +(270:0.3) .. (ull)
         (ulsummer.right in) .. controls +(60:0.35) and +(270:0.3) .. (ulm)
         (summer.right in) .. controls +(60:0.75) and +(270:0.75) .. (right)
         (summer.io) -- +(270:0.5);
   \end{tikzpicture}
        \hspace{1.0cm}
   \begin{tikzpicture}[-, thick, node distance=0.7cm]
   \node [ba] (twadder) {};
   \node [coordinate] (twout) [below of=twadder] {};
   \node [coordinate] (twR) [above right of=twadder, shift={(-0.2,0)}] {};
   \node (cross) [above of=twadder] {};
   \node [coordinate] (twRIn) [above left of=cross, shift={(0,0.3)}] {};
   \node [coordinate] (twLIn) [above right of=cross, shift={(0,0.3)}] {};
   \draw (twadder.right in) .. controls +(60:0.35) and +(-45:0.25) .. (cross.center)
                            .. controls +(135:0.2) and +(270:0.4) .. (twRIn);
   \draw (twadder.left in) .. controls +(120:0.35) and +(-135:0.25) .. (cross)
                           .. controls +(45:0.2) and +(270:0.4) .. (twLIn);
   \draw (twout) -- (twadder);
   \node (eq) [right of=twR] {\(=\)};
   \node [coordinate] (L) [right of=eq] {};
   \node [ba] (adder) [below right of=L] {};
   \node [coordinate] (out) [below of=adder] {};
   \node [coordinate] (R) [above right of=adder] {};
   \node (cross) [above left of=R] {};
   \node [coordinate] (LIn) [above left of=cross] {};
   \node [coordinate] (RIn) [above right of=cross] {};
   \draw (adder.left in) .. controls +(120:0.7) and +(270:0.7) .. (LIn)
         (adder.right in) .. controls +(60:0.7) and +(270:0.7) .. (RIn)
         (out) -- (adder);
   \end{tikzpicture}
    }}}
\end{center}
\textbf{(F7)--(F8)} that satisfies the Frobenius equations:
\begin{invisiblelabel}
\label{eqnF7F8}
\end{invisiblelabel}
\begin{center}
 \scalebox{0.80}{
   \begin{tikzpicture}[thick]
   \node [ba] (sum1) at (0.5,-0.216) {};
   \node [ab] (cosum1) at (1,0.216) {};
   \node [coordinate] (sum1corner) at (0,0.434) {};
   \node [coordinate] (cosum1corner) at (1.5,-0.434) {};
   \node [coordinate] (sum1out) at (0.5,-0.975) {};
   \node [coordinate] (cosum1in) at (1,0.975) {};
   \node [coordinate] (1cornerin) at (0,0.975) {};
   \node [coordinate] (1cornerout) at (1.5,-0.975) {};

   \draw[rounded corners] (1cornerin) -- (sum1corner) -- (sum1.left in)
   (1cornerout) -- (cosum1corner) -- (cosum1.right out);
   \draw (sum1.right in) -- (cosum1.left out)
   (sum1.io) -- (sum1out)
   (cosum1.io) -- (cosum1in);

   \node (eq1) at (2,0) {\(=\)};
   \node [ba] (sum2) at (3,0.325) {};
   \node [ab] (cosum2) at (3,-0.325) {};
   \node [coordinate] (sum2inleft) at (2.5,0.975) {};
   \node [coordinate] (sum2inright) at (3.5,0.975) {};
   \node [coordinate] (cosum2outleft) at (2.5,-0.975) {};
   \node [coordinate] (cosum2outright) at (3.5,-0.975) {};

   \draw (sum2inleft) .. controls +(270:0.3) and +(120:0.15) .. (sum2.left in)
   (sum2inright) .. controls +(270:0.3) and +(60:0.15) .. (sum2.right in)
   (cosum2outleft) .. controls +(90:0.3) and +(240:0.15) .. (cosum2.left out)
   (cosum2outright) .. controls +(90:0.3) and +(300:0.15) .. (cosum2.right out)
   (sum2.io) -- (cosum2.io);

   \node (eq2) at (4,0) {\(=\)};
   \node [ba] (sum3) at (5.5,-0.216) {};
   \node [ab] (cosum3) at (5,0.216) {};
   \node [coordinate] (sum3corner) at (6,0.434) {};
   \node [coordinate] (cosum3corner) at (4.5,-0.434) {};
   \node [coordinate] (sum3out) at (5.5,-0.975) {};
   \node [coordinate] (cosum3in) at (5,0.975) {};
   \node [coordinate] (3cornerin) at (6,0.975) {};
   \node [coordinate] (3cornerout) at (4.5,-0.975) {};

   \draw[rounded corners] (3cornerin) -- (sum3corner) -- (sum3.right in)
   (3cornerout) -- (cosum3corner) -- (cosum3.left out);
   \draw (sum3.left in) -- (cosum3.right out)
   (sum3.io) -- (sum3out)
   (cosum3.io) -- (cosum3in);
   \end{tikzpicture}
}
\end{center}
\textbf{(F9)} and the special equation:
\begin{invisiblelabel}
\label{eqnF9}
\end{invisiblelabel}
\begin{center}
 \scalebox{0.80}{
\begin{tikzpicture}[thick]
   \node [ba] (sum) at (0.4,-0.5) {};
   \node [ab] (cosum) at (0.4,0.5) {};
   \node [coordinate] (in) at (0.4,1) {};
   \node [coordinate] (out) at (0.4,-1) {};
   \node (eq) at (1.3,0) {\(=\)};
   \node [coordinate] (top) at (2,1) {};
   \node [coordinate] (bottom) at (2,-1) {};

   \path (sum.left in) edge[bend left=30] (cosum.left out)
   (sum.right in) edge[bend right=30] (cosum.right out);
   \draw (top) -- (bottom)
   (sum.io) -- (out)
   (cosum.io) -- (in);
\end{tikzpicture}
}.
\end{center}
We have seen one of the bimonoid laws, equation {\hyperref[eqn78910]{\textbf{(10)}}}, is compatible
with the special commutative Frobenius monoid structure without making it trivial.  Indeed, we get
the extra-special commutative Frobenius monoid structure when we include this equation.  We can see
below that equation {\hyperref[eqn78910]{\textbf{(7)}}} is not only compatible with the special
commutative Frobenius monoid structure, it is a derived equation under the assumption of a special
commutative Frobenius monoid structure.  As noted by Heunen and Vicary \cite{HV}, if either of the
other two bimonoid equations ({\hyperref[eqn78910]{\textbf{(8)--(9)}}}) hold for a Frobenius monoid,
the monoid is trivial, so the characteristic difference between non-trivial bimonoids and
non-trivial Frobenius monoids is the way the unit and counit interact with the comultiplication and
multiplication, respectively.

\begin{center}
\scalebox{0.80}{   \begin{tikzpicture}[thick]
   \node [ba] (sum1) at (0.5,0.325) {};
   \node [ab] (cosum1) at (0.5,-0.325) {};
   \node [coordinate] (sum1inleft) at (0,0.975) {};
   \node [coordinate] (sum1inright) at (1,0.975) {};
   \node [coordinate] (cosum1outleft) at (0,-0.975) {};
   \node [coordinate] (cosum1outright) at (1,-0.975) {};
   \node [coordinate] (1in1) at (0,1.22) {};
   \node [coordinate] (1in2) at (1,1.22) {};
   \node [coordinate] (1out1) at (0,-1.22) {};
   \node [coordinate] (1out2) at (1,-1.22) {};

   \draw (sum1inleft) .. controls +(270:0.3) and +(120:0.15) .. (sum1.left in)
   (sum1inright) .. controls +(270:0.3) and +(60:0.15) .. (sum1.right in)
   (cosum1outleft) .. controls +(90:0.3) and +(240:0.15) .. (cosum1.left out)
   (cosum1outright) .. controls +(90:0.3) and +(300:0.15) .. (cosum1.right out)
   (sum1.io) -- (cosum1.io) (1in1) -- (sum1inleft) (1in2) -- (sum1inright)
   (1out1) -- (cosum1outleft) (1out2) -- (cosum1outright)
;

   \node (eq1) at (1.65,0) {\(=\)};
   \node at (1.65,-1em) {(F9)};

   \node [ab] (ab2top) at (2.5,1.5) {};
   \node [ba] (ba2top) at (2.5,0.5) {};
   \node [ba] (ba2bot) at (3,-0.477) {};
   \node [ab] (ab2bot) at (3,-1.127) {};
   \node [coordinate] (2in1) at (2.5,2.022) {};
   \node [coordinate] (2in2) at (3.5,2.022) {};
   \node [coordinate] (ab2outleft) at (2.5,-1.777) {};
   \node [coordinate] (ab2outright) at (3.5,-1.777) {};
   \node [coordinate] (2out1) at (2.5,-2.022) {};
   \node [coordinate] (2out2) at (3.5,-2.022) {};

   \draw (ba2top.right in) .. controls +(60:0.3) and +(-60:0.3) .. (ab2top.right out)
   (ba2top.left in) .. controls +(120:0.3) and +(-120:0.3) .. (ab2top.left out)
   (ab2top.io) -- (2in1) (ba2bot.io) -- (ab2bot.io)
   (ba2top.io) .. controls +(-90:0.3) and +(120:0.15) .. (ba2bot.left in)
   (ba2bot.right in) .. controls +(60:0.5) and +(-90:1.7) .. (2in2)
   (ab2bot.left out) .. controls +(-120:0.15) and +(90:0.3) .. (ab2outleft) -- (2out1)
   (ab2bot.right out) .. controls +(-60:0.15) and +(90:0.3) .. (ab2outright) -- (2out2)
;

   \node (eq2) at (4.15,0) {\(=\)};
   \node at (4.15,-1em) {(F2)};

   \node [ab] (ab3top) at (5.3,1.5) {};
   \node [ab] (ab3bot) at (5.3,-1.127) {};
   \node [ba] (ba3top) at (6,0.77) {};
   \node [ba] (ba3bot) at (5.3,-0.477) {};
   \node [coordinate] (3in1) at (5.3,2.022) {};
   \node [coordinate] (3in2) at (6.5,2.022) {};
   \node [coordinate] (ab3outleft) at (4.8,-1.777) {};
   \node [coordinate] (ab3outright) at (5.8,-1.777) {};
   \node [coordinate] (3out1) at (4.8,-2.022) {};
   \node [coordinate] (3out2) at (5.8,-2.022) {};

   \draw (ab3top.left out) .. controls +(-120:0.7) and +(120:0.7) .. (ba3bot.left in)
   (ba3bot.io) -- (ab3bot.io) (ab3top.io) -- (3in1)
   (ab3top.right out) .. controls +(-60:0.15) and +(120:0.15) .. (ba3top.left in)
   (ba3top.io) .. controls +(-90:0.3) and +(60:0.3) .. (ba3bot.right in)
   (ba3top.right in) .. controls +(60:0.5) and +(-90:0.5) .. (3in2)
   (ab3bot.left out) .. controls +(-120:0.15) and +(90:0.3) .. (ab3outleft) -- (3out1)
   (ab3bot.right out) .. controls +(-60:0.15) and +(90:0.3) .. (ab3outright) -- (3out2)
;

   \node (eq3) at (7,0) {\(=\)};
   \node at (7,-1em) {(F7)};

   \node [ab] (ab4top) at (8.2,1.055) {};
   \node [ba] (ba4top) at (8.9,0.325) {};
   \node [ab] (ab4bot) at (8.9,-0.325) {};
   \node [ba] (ba4bot) at (8.2,-1.055) {};
   \node [coordinate] (4in1) at (8.2,1.7) {};
   \node [coordinate] (4in2) at (9.5,1.7) {};
   \node [coordinate] (4out1) at (8.2,-1.7) {};
   \node [coordinate] (4out2) at (9.5,-1.7) {};

   \draw (ab4top.left out) .. controls +(-120:0.7) and +(120:0.7) .. (ba4bot.left in)
   (ab4top.io) -- (4in1) (ba4bot.io) -- (4out1) (ab4bot.io) -- (ba4top.io)
   (ab4bot.left out) .. controls +(-120:0.15) and +(60:0.15) .. (ba4bot.right in)
   (ba4top.left in) .. controls +(120:0.15) and +(-60:0.15) .. (ab4top.right out)
   (ba4top.right in) .. controls +(60:0.5) and +(-90:0.5) .. (4in2)
   (ab4bot.right out) .. controls +(-60:0.5) and +(90:0.5) .. (4out2)
;
   \end{tikzpicture}    \begin{tikzpicture}[thick]
   \node (eq0) at (-0.3,0) {\(=\)};
   \node at (-0.3,-1em) {(F6)};

   \node [ab] (ab1top) at (0.9,1.177) {};
   \node [ba] (ba1top) at (1.5,0.6) {};
   \node [ab] (ab1bot) at (1.5,-0.05) {};
   \node [ba] (ba1bot) at (0.9,-1.177) {};
   \node [coordinate] (1in1) at (0.9,1.822) {};
   \node [coordinate] (1in2) at (2,1.822) {};
   \node [coordinate] (1out1) at (0.9,-1.822) {};
   \node [coordinate] (1out2) at (2,-1.822) {};
   \node [coordinate] (corner1) at (2,-1.7) {};

   \draw (ab1top.left out) .. controls +(-120:0.7) and +(120:0.7) .. (ba1bot.left in)
   (ab1top.io) -- (1in1) (ba1bot.io) -- (1out1) (ab1bot.io) -- (ba1top.io)
   (ba1top.left in) .. controls +(120:0.15) and +(-60:0.15) .. (ab1top.right out)
   (ba1top.right in) .. controls +(60:0.5) and +(-90:0.5) .. (1in2)
   (ab1bot.left out) .. controls +(-120:0.4) and +(90:0.5) .. (corner1) -- (1out2)
;
   \node [hole] at (1.5,-0.69) {};
   \draw (ab1bot.right out) .. controls +(-60:0.4) and +(60:0.15) .. (ba1bot.right in) 
;

   \node (eq1) at (2.5,0) {\(=\)};
   \node at (2.5,-1em) {(F7)};

   \node [ab] (ab2l) at (3.7,1.177) {};
   \node [ab] (ab2r) at (4.9,1.177) {};
   \node [ba] (ba2top) at (4.3,0.6) {};
   \node [ba] (ba2bot) at (3.7,-1.177) {};
   \node [coordinate] (2in1) at (3.7,1.822) {};
   \node [coordinate] (2in2) at (4.9,1.822) {};
   \node [coordinate] (2out1) at (3.7,-1.822) {};
   \node [coordinate] (2out2) at (4.3,-1.822) {};
   \node (cross2) at (4.3,-0.53) {};

   \draw (ab2l.left out) .. controls +(-120:0.7) and +(120:0.7) .. (ba2bot.left in)
   (ab2r.right out) .. controls +(-60:0.7) and +(60:0.7) .. (ba2bot.right in)
   (ab2l.io) -- (2in1) (ab2r.io) -- (2in2) (ba2bot.io) -- (2out1) (ba2top.io) -- (cross2) -- (2out2)
   (ab2l.right out) .. controls +(-60:0.15) and +(120:0.15) .. (ba2top.left in)
   (ab2r.left out) .. controls +(-120:0.15) and +(60:0.15) .. (ba2top.right in)
;

   \node (eq2) at (5.5,0) {\(=\)};

   \node [ab] (abl) at (6.4,0.65) {};
   \node [ab] (abr) at (7.4,0.65) {};
   \node [ba] (bal) at (6.4,-0.65) {};
   \node [ba] (bar) at (7.4,-0.65) {};
   \node [coordinate] (3in1) at (6.4,1.4) {};
   \node [coordinate] (3in2) at (7.4,1.4) {};
   \node [coordinate] (3out1) at (6.4,-1.4) {};
   \node [coordinate] (3out2) at (7.4,-1.4) {};
   \node (cross3) at (6.9,0) {};

   \draw (abl.io) -- (3in1) (abr.io) -- (3in2) (bal.io) -- (3out1) (bar.io) -- (3out2)
   (abl.left out) .. controls +(-120:0.5) and +(120:0.5) .. (bal.left in)
   (abr.right out) .. controls +(-60:0.5) and +(60:0.5) .. (bar.right in)
   (abr.left out) .. controls +(-120:0.15) and +(60:0.15) .. (bal.right in)
   (abl.right out) -- (cross3) -- (bar.left in)
;
   \node at (0,-1.912) {\({}\)};
   \end{tikzpicture}}
\end{center}
The last equality is due to the naturality of symmetry.

\chapter{Generalization of the Box construction}
\label{generalbox}
In Section~\ref{boxvsection} we described the Box construction in the particular case of \(\catC =
\vectk\).  The description does not change much when we allow \(\catC\) to be an arbitrary category
with biproducts.  A more substantial generalization is required in order to consider the case of
\(\catC = \relk\).  We build up to the PROP \(\boxr\) here, but leave its exploitation for future
work.

\begin{definition}
Given a category \((\catC, m, 0, \Delta, !)\) with biproducts, we take \(\boxful\) to have
  \begin{itemize}
     \item the same objects as \(\catC\),
     \item the morphisms in \(\hom(X,Y)\) as equivalence classes of

   \begin{center}
    \begin{tikzpicture}[->]
     \node (V)  at (0,0) {\(X\)};
     \node (VV) at (1.7,0) {\(X \otimes X\)};
     \node (WS) at (4.4,0) {\(X \otimes S\)};
     \node (WT) at (7.1,0) {\(X \otimes T\)};
     \node (WW) at (9.4,0) {\(Y \otimes Y\)};
     \node (W)  at (11.1,0) {\(Y\)};

     \path
      (V)  edge node[above] {\(\Delta\)}                    (VV)
      (VV) edge node[above] {\(\mathrm{id}_{X} \otimes b\)} (WS)
      (WS) edge node[above] {\(\mathrm{id}_{X} \otimes a\)} (WT)
      (WT) edge node[above] {\(d \otimes c\)}               (WW)
      (WW) edge node[above] {\(m\)}                         (W);
    \end{tikzpicture},
   \end{center}
   abbreviated as \((d,c,a,b)\),
     \item composition given by \[(d,c,a,b) \of (d',c',a',b') = \left(d'd, [d'c \quad c'],
   \left[ \begin{array}{cc}
   a      & 0 \\
   a'b'ca & a'  \end{array} \right], 
   \left[ \begin{array}{c}
   b \\ b'd \end{array} \right] \right).\]
  \end{itemize}
\end{definition}
The morphisms in \(\hom(X,Y)\) are more convenient to work with when depicted as non-commuting
squares, as in
   \begin{center}
    \begin{tikzpicture}[->]
     \node (A) at (0,0) {\(X\)};
     \node (S) at (0,1.5) {\(S\)};
     \node (T) at (1.5,1.5) {\(T\)};
     \node (B) at (1.5,0) {\(Y\)};

     \path
      (A) edge node[below] {\(d\)} (B)
          edge node[left] {\(b\)} (S)
      (S) edge node[above] {\(a\)} (T)
      (T) edge node[right] {\(c\)} (B)
;
    \end{tikzpicture}.
   \end{center}
This form for morphisms explains the name \(\boxful\).

Two squares, \((d,c,a,b)\) and \((d,c',a',b')\) are in the same equivalence class if there are
isomorphisms \(\alpha \maps S \to S'\) and \(\omega \maps T \to T'\) such that the following diagram
in \(\catC\) commutes:

\begin{center}
 \begin{tikzpicture}[->]
  \node (A) at (0,0) {\(X\)};
  \node (S) at (-1,1.5) {\(S\)};
  \node (T) at (2.5,1.5) {\(T\)};
  \node (B) at (1.5,0) {\(Y\)};
  \node (S1) at (-1,-1.5) {\(S'\)};
  \node (T1) at (2.5,-1.5) {\(T'\)};

  \path
   (A) edge node[below right] {\(b'\)} (S1)
       edge node[above right] {\(b\)} (S)
   (S) edge node[above] {\(a\)} (T)
       edge node[left] {\(\alpha\)} (S1)
   (T) edge node[above left] {\(c\)} (B)
       edge node[right] {\(\omega\)} (T1)
   (S1) edge node[above] {\(a'\)} (T1)
   (T1) edge node[below left] {\(c'\)} (B)
;
 \end{tikzpicture}.
\end{center}
Note that when \(\catC\) is strict, \(\boxful\) will also be strict.

Because \(\catC\) has biproducts, each object in \(\catC\) is a bicommutative bimonoid, and
morphisms in \(\catC\) are all bimonoid homomorphisms.  This definition of \(\boxful\) generalizes
the construction we used to form the PROP \(\boxv\) from \(\vectk\), but it still does not
allow us to define \(\boxr\).  For that we need to drop the condition that \(\catC\) itself
has biproducts, but we still require some of the structure that came with having biproducts.  The
rough idea is to find a subcategory \(\catC'\) of \(\catC\) that has all the objects of \(\catC\)
such that \(\catC'\) has biproducts, define \(\boxful'\), and bootstrap up to \(\boxful\).
Generally, a subcategory `having all the objects' of a given category is not preserved by
equivalences of categories, but there is a good alternative.

\begin{definition}
An \Define{essentially wide} subcategory \(\catC'\) of \(\catC\) is a subcategory of \(\catC\) that
`essentially' contains all the objects of \(\catC\).  That is, the inclusion functor from \(\catC'\)
to \(\catC\) is essentially surjective on objects.
\end{definition}

In PROPs and other strict categories, where isomorphic objects are equal, the notion of an
essentially wide subcategory can be replaced with the notion of a \emph{wide} subcategory, also
referred to as a \emph{lluf} subcategory.  In this case `essentially surjective' in the above
definition is replaced with `bijective'.  That is, every object in \(\catC\) is also an object in
any wide subcategory of \(\catC\).

\begin{definition}
Given a category \((\catC, m, 0, \Delta, !)\) with an (essentially) wide subcategory \((\catC', m, 0,
\Delta, !)\) such that \(\catC'\) has biproducts, we take \(\boxful\) to have 
  \begin{itemize}
     \item the same objects as \(\catC\),
     \item the morphisms in \(\hom(X,Y)\) as equivalence classes of

   \begin{center}
    \begin{tikzpicture}[->]
     \node (V)  at (0,0) {\(X\)};
     \node (VV) at (1.7,0) {\(X \otimes X\)};
     \node (WS) at (4.4,0) {\(X \otimes S\)};
     \node (WT) at (7.1,0) {\(X \otimes T\)};
     \node (WW) at (9.4,0) {\(Y \otimes Y\)};
     \node (W)  at (11.1,0) {\(Y\)};

     \path
      (V)  edge node[above] {\(\Delta\)}                    (VV)
      (VV) edge node[above] {\(\mathrm{id}_{X} \otimes b\)} (WS)
      (WS) edge node[above] {\(\mathrm{id}_{X} \otimes a\)} (WT)
      (WT) edge node[above] {\(d \otimes c\)}               (WW)
      (WW) edge node[above] {\(m\)}                         (W);
    \end{tikzpicture},
   \end{center}
   abbreviated as \((d,c,a,b)\), such that \(\Delta \of c = \left[ \begin{array}{c} c \\
   c \end{array} \right] \of \Delta\), \(\Delta \of d = \left[ \begin{array}{c} d \\ d \end{array}
   \right] \of \Delta\), \(b \of m = m \of [b \quad b]\), and \(d \of m = m \of [d \quad d]\),
     \item composition given by \[(d,c,a,b) \of (d',c',a',b') = \left(d'd, [d'c \quad c'],
   \left[ \begin{array}{cc}
   a      & 0 \\
   a'b'ca & a'  \end{array} \right], 
   \left[ \begin{array}{c}
   b \\ b'd \end{array} \right] \right).\]
  \end{itemize}
\end{definition}

Again, the morphisms of \(\boxful\) can be depicted as non-commuting squares:
\begin{center}
 \begin{tikzpicture}[->]
  \node (A) at (0,0) {\(X\)};
  \node (S) at (0,1.5) {\(S\)};
  \node (T) at (1.5,1.5) {\(T\)};
  \node (B) at (1.5,0) {\(Y\)};

  \path
   (A) edge node[below] {\(d\)} (B)
       edge node[left] {\(b\)} (S)
   (S) edge node[above] {\(a\)} (T)
   (T) edge node[right] {\(c\)} (B)
;
 \end{tikzpicture}.
\end{center}
%
The technical conditions on \(b\), \(c\), and \(d\) introduced in this version of \(\boxful\) can be
summarized as \(b\) is a monoid homomorphism, \(c\) is a comonoid homomorphism, and \(d\) is a
bimonoid homomorphism.  These will be discussed further at the end of this section, together with a
depiction of the morphisms in \(\boxful\) as string diagrams.  The equivalence classes for morphisms 
in \(\boxful\) are similar to what they were above.  Two squares, \((d,c,a,b)\) and \((d,c',a',b')\)
are in the same equivalence class if there are isomorphisms \(\alpha \maps S \to S'\) and \(\omega
\maps T \to T'\) such that the following diagram in \(\catC\) commutes: 

\begin{center}
 \begin{tikzpicture}[->]
  \node (A) at (0,0) {\(X\)};
  \node (S) at (-1,1.5) {\(S\)};
  \node (T) at (2.5,1.5) {\(T\)};
  \node (B) at (1.5,0) {\(Y\)};
  \node (S1) at (-1,-1.5) {\(S'\)};
  \node (T1) at (2.5,-1.5) {\(T'\)};

  \path
   (A) edge node[below right] {\(b'\)} (S1)
       edge node[above right] {\(b\)} (S)
   (S) edge node[above] {\(a\)} (T)
       edge node[left] {\(\alpha\)} (S1)
   (T) edge node[above left] {\(c\)} (B)
       edge node[right] {\(\omega\)} (T1)
   (S1) edge node[above] {\(a'\)} (T1)
   (T1) edge node[below left] {\(c'\)} (B)
;
 \end{tikzpicture}.
\end{center}
In what follows, the term \(\boxful\) will refer to this more general version of \(\boxful\) unless
otherwise noted.

We refer to the objects \(S\) and \(T\) as the \emph{prestate} and \emph{state}, respectively.  This
should recall their names when these objects were vector spaces.  In that case we referred to them
as \emph{prestate space} and \emph{state space}, respectively.  The formula for composition in
\(\boxful\) is easier to understand as coming from the diagram:
\begin{center}
 \begin{tikzpicture}[->]
  \node (A) at (0,0) {\(X_1\)};
  \path (A) +(108:2.5) node (S1) {\(S_1\)};
  \path (S1) +(0:2.5) node (T1) {\(T_1\)};
  \path (A) +(0:2.5) node (B) {\(X_2\)};
  \path (B) +(0:2.5) node (C) {\(X_3\)};
  \path (B) +(72:2.5) node (S2) {\(S_2\)};
  \path (S2) +(0:2.5) node (T2) {\(T_2\)};
  \path (S1) +(36:2.5) node (S3) {\(S_1 \otimes S_2\)};
  \path (S3) +(0:2.5) node (T3) {\(T_1 \otimes T_2\)};

  \path
   (T1) edge node[left] {\(c\)} (B)
   (T2) edge node[right] {\(c'\)} (C)
   (T3) edge node[right] {\(\pi_1\)} (T1)
        edge node[right] {\(\pi_2\)} (T2)
;
  \draw[white,line width=5pt,-] (S2) -- (S3);

  \path
   (A) edge node[below] {\(d\)} (B)
       edge node[left] {\(b\)} (S1)
   (B) edge node[below] {\(d'\)} (C)
       edge node[right] {\(b'\)} (S2)
   (S1) edge node[above] {\(a\)} (T1)
        edge node[left] {\(\iota_1\)} (S3)
   (S2) edge node[above] {\(a'\)} (T2)
        edge node[left] {\(\iota_2\)} (S3)
   (S3) edge node[above] {\(a \otimes a'\)} (T3)
;
 \end{tikzpicture}.
\end{center}
The \(b\) and \(c\) sides in the composite come from the pentagons on the left and right,
respectively.  That is, \(b\) comes from the ways to get from \(X_1\) to \(S_1 \otimes S_2\), and
\(c\) comes from the ways \(T_1 \otimes T_2\) can get to \(X_3\).  Similarly, the \(a\) side in the
composite comes from the ways \(S_1 \otimes S_2\) can get to \(T_1 \otimes T_2\), which includes a
direct path (from \(S_1\) to \(T_1\) and from \(S_2\) to \(T_2\)) and a looped path (from \(S_1\) to
\(T_2\)).  The \(d\) side in the composite comes from the most direct path from \(X_1\) to \(X_3\).

Given two composable morphisms, \(f \maps X_1 \to X_2\) and \(g \maps X_2 \to X_3\) with
representatives \((d_1, c_1, a_1, b_1)\) and \((d_1', c_1', a_1', b_1')\) for \(f\) and \((d_2, c_2,
a_2, b_2)\) and \((d_2', c_2', a_2', b_2')\) for \(g\), we have the following commuting diagrams:
\begin{center}
 \begin{tikzpicture}[->]
  \node (A) at (0,0) {\(X_1\)};
  \node (S) at (-1,1.5) {\(S_1\)};
  \node (T) at (2.5,1.5) {\(T_1\)};
  \node (B) at (1.5,0) {\(X_2\)};
  \node (S1) at (-1,-1.5) {\(S_1'\)};
  \node (T1) at (2.5,-1.5) {\(T_1'\)};

  \path
   (A) edge node[below right] {\(b_1'\)} (S1)
       edge node[above right] {\(b_1\)} (S)
   (S) edge node[above] {\(a_1\)} (T)
       edge node[left] {\(\alpha_1\)} (S1)
   (T) edge node[above left] {\(c_1\)} (B)
       edge node[right] {\(\omega_1\)} (T1)
   (S1) edge node[above] {\(a_1'\)} (T1)
   (T1) edge node[below left] {\(c_1'\)} (B)
;
 \end{tikzpicture}
\end{center}
and
\begin{center}
 \begin{tikzpicture}[->]
  \node (A) at (0,0) {\(X_2\)};
  \node (S) at (-1,1.5) {\(S_2\)};
  \node (T) at (2.5,1.5) {\(T_2\)};
  \node (B) at (1.5,0) {\(X_3\)};
  \node (S1) at (-1,-1.5) {\(S_2'\)};
  \node (T1) at (2.5,-1.5) {\(T_2'\)};

  \path
   (A) edge node[below right] {\(b_2'\)} (S1)
       edge node[above right] {\(b_2\)} (S)
   (S) edge node[above] {\(a_2\)} (T)
       edge node[left] {\(\alpha_2\)} (S1)
   (T) edge node[above left] {\(c_2\)} (B)
       edge node[right] {\(\omega_2\)} (T1)
   (S1) edge node[above] {\(a_2'\)} (T1)
   (T1) edge node[below left] {\(c_2'\)} (B)
;
 \end{tikzpicture}.
\end{center}
We need isomorphisms \(\alpha_{12} \maps S_1 \tensor S_2 \to S_1' \tensor S_2'\) and \(\omega_{12}
\maps T_1 \tensor T_2 \to T_1' \tensor T_2'\) that make the corresponding diagram for \(g \of f\)
commute.  We leave it as an exercise to the reader to check that \(\alpha_{12} = \alpha_1 \tensor
\alpha_2\) and \(\omega_{12} = \omega_1 \tensor \omega_2\).

For the matrix notation of composition to make sense, there need to be notions of addition and
multiplication.  Multiplication is simply composition in \(\catC\).  Addition is a generalization of
equation~{\hyperref[eqn11121314]{\textbf{(12)}}} from Section~\ref{finvect}, given in Definition~\ref{morphadd}.

\begin{definition}
\label{morphadd}
Let \((\catC,m,0,\Delta,!)\) be a category with an (essentially) wide subcategory
\((\catC',m,0,\Delta,!)\) such that \(\catC'\) has biproducts, and \(A, B \in \Obj(\catC)\).  For
\(x,y \maps A \to B\) we define the operation \(x+y := m_B \of (x \otimes y) \of \Delta_A\)
\end{definition}

\begin{center}
\scalebox{0.65}{
\begin{tikzpicture}[thick]
   \node (bctop) {};
   \node [multiply] (bc) [below of=bctop, shift={(0,-1)}] {\(x+y\)};
   \node (bcbottom) [below of=bc, shift={(0,-1.1)}] {};

   \draw (bctop) -- (bc) -- (bcbottom);

   \node (eq) [right of=bc, shift={(0.5,0)}] {\(:=\)};

   \node [multiply] (b) [right of=eq] {\(x\)};
   \node [delta] (dupe) [above right of=b, shift={(-0.15,0.3)}] {};
   \node (top) [above of=dupe] {};
   \node [multiply] (c) [below right of=dupe, shift={(-0.15,-0.3)}] {\(y\)};
   \node [plus] (adder) [below right of=b, shift={(-0.15,-0.4)}] {};
   \node (out) [below of=adder] {};

   \draw
   (dupe.left out) .. controls +(240:0.15) and +(90:0.15) .. (b.90)
   (dupe.right out) .. controls +(300:0.15) and +(90:0.15) .. (c.90)
   (top) -- (dupe.io)
   (adder.io) -- (out)
   (adder.left in) .. controls +(120:0.15) and +(270:0.15) .. (b.io)
   (adder.right in) .. controls +(60:0.15) and +(270:0.15) .. (c.io);
\end{tikzpicture}
}.
\end{center}
It is clear that if \(x,y\) are endomorphisms of \(A\), \(x+y\) will also be an endomorphism of
\(A\).

\begin{theorem} \label{boxcat}
\(\boxful\) is a monoidal category with the same monoidal product on objects as \(\catC\), and
\((d,c,a,b) \otimes (d',c',a',b') = (d \otimes d', c \otimes c', a \otimes a', b \otimes b')\).
\end{theorem}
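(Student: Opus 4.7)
The plan is to adapt the proof structure already used for Theorem \ref{boxvectfunctors} (the $\boxv$ case), since the generalized $\boxful$ is built on essentially the same template. The novelty to be handled carefully is that $\catC$ itself need not have biproducts, so the biproduct operations $\Delta$, $m$, $0$, $!$ live only in $\catC'$, and we must ensure that every time the composition or tensor formula plugs maps into these operations, the requisite factors lie in $\catC'$. This is exactly what the standing hypotheses ``$b$ is a monoid homomorphism, $c$ is a comonoid homomorphism, $d$ is a bimonoid homomorphism'' are there to guarantee, so my first sub-goal is to check that these three conditions are stable under the composition and tensor formulas.

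First I would verify that composition is well-defined on equivalence classes: given two representatives of each of $f\maps X_1\to X_2$ and $g\maps X_2\to X_3$, connected by isomorphism pairs $(\alpha_1,\omega_1)$ and $(\alpha_2,\omega_2)$ of prestate and state objects, the pair $(\alpha_1\otimes\alpha_2,\omega_1\otimes\omega_2)$ witnesses equivalence of the two composites; this is a direct diagram chase using the commutativity of the two given squares and the bifunctoriality of $\otimes$ in $\catC$. Second I would show that for a composite $(d'',c'',a'',b'')=(d'd,\;[d'c\ \ c'],\;\bigl[\begin{smallmatrix}a & 0\\ a'b'ca & a'\end{smallmatrix}\bigr],\;\bigl[\begin{smallmatrix}b\\ b'd\end{smallmatrix}\bigr])$, the maps $b''$, $c''$, $d''$ still satisfy the required homomorphism properties. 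For instance, $d''=d'd$ is a composite of bimonoid homomorphisms, hence a bimonoid homomorphism; $b''=[b,\,b'd]^{\top}$ is a monoid homomorphism because $b$ and $b'$ are, $d$ is a bimonoid homomorphism (so in particular a monoid homomorphism), and pairing/copairing of monoid homomorphisms is again a monoid homomorphism (this is where the biproducts in $\catC'$ are used); $c''$ is handled dually. Third I would check that $(\mathrm{id}_V,\,!,\,0,\,0)$ (with prestate and state equal to the monoidal unit, say $I\in\catC'$) serves as a two-sided identity, using counitality of $!$ and unitality of $0$ in $\catC'$.

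Next I would prove associativity of composition. As in the $\boxv$ proof, the prestate $S_1\otimes S_2\otimes S_3$ and state $T_1\otimes T_2\otimes T_3$ of both bracketings agree by associativity of $\otimes$ in $\catC$, so it suffices to match the four linear-style data $d,c,a,b$ of $(f_3\circ f_2)\circ f_1$ and $f_3\circ(f_2\circ f_1)$. The $d$-component matches by associativity of composition in $\catC$. The $c$-component $[d_3c_{12}\ c_3]=[[d_3d_2c_1\ d_3c_2]\ c_3]=[d_3d_2c_1\ [d_3c_2\ c_3]]$ collapses the two bracketings using only bifunctoriality of copairing; the $b$-component is dual. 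The $a$-component is a $3\times 3$ block matrix and the two bracketings produce the same matrix with only the block partition differing—writing it out explicitly and matching entries finishes the job.

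For the monoidal structure, I would check that $(d,c,a,b)\otimes(d',c',a',b') := (d\otimes d',c\otimes c',a\otimes a',b\otimes b')$ is bifunctorial: preservation of identities is immediate, and preservation of composition reduces, by the interchange law in $\catC$, to matching $(x\otimes y)\circ(x'\otimes y')=(xx')\otimes(yy')$ entrywise in each of the four slots—the only subtle piece is the state-side matrix, where the off-diagonal $a'b'ca$ gets tensored with its primed counterpart and one uses that $\otimes$ distributes over the biproduct structure (which holds because $\catC'$ has biproducts and contains all these maps). The associators, unitors and symmetry are inherited from $\catC$ by taking $(\alpha_{X,Y,Z},\,!,\,0,\,0)$-type squares with trivial state and prestate, and the pentagon and triangle identities then reduce to the corresponding identities in $\catC$ because the state/prestate sides degenerate. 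The main obstacle is the bookkeeping in the second step: verifying that $b''$, $c''$, $d''$ satisfy the monoid/comonoid/bimonoid homomorphism conditions when built from the block matrix formulas, since this is where the failure of $\catC$ to have all biproducts forces us to argue strictly within the subcategory $\catC'$ and to use the full strength of the homomorphism hypotheses on the factors.
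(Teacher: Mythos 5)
Your proposal is correct and follows essentially the same route as the paper's proof: identity squares with trivial (pre)state, associativity checked componentwise with the block-matrix computation for the $a$-side and the copairing rebracketing for the $c$- and $b$-sides, and coherence inherited from $\catC$ via $\G$-type squares. You are in fact somewhat more careful than the paper in two places it leaves implicit — verifying that composition descends to equivalence classes via $(\alpha_1\otimes\alpha_2,\omega_1\otimes\omega_2)$ (which the paper relegates to a reader exercise before the theorem) and checking that the composite's components $b''$, $c''$, $d''$ still satisfy the monoid/comonoid/bimonoid homomorphism conditions required for membership in $\boxful$ — and both of those checks are worth keeping.
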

\begin{proof}
To show \(\boxful\) is a category, we need to show associativity and the unit laws hold.  To show
\(\boxful\) is a monoidal category, we also need to show the associator and unitors exist and satisfy
the pentagon and triangle equations.  It is clear that identity morphisms are formed when the
prestate and state are both the zero object and \(d\) is an identity morphism in \(\catC\):
\begin{center}
 \begin{tikzpicture}[->]
  \node (A) at (0,0) {\(X\)};
  \node (S) at (0,1.5) {\(0\)};
  \node (T) at (1.5,1.5) {\(0\)};
  \node (B) at (1.5,0) {\(X\)};

  \path
   (A) edge node[below] {\(1_X\)} (B)
       edge node[left] {} (S)
   (S) edge node[above] {} (T)
   (T) edge node[right] {} (B)
;
 \end{tikzpicture}.
\end{center}
The associator and unitors of \(\boxful\) are formed from those of \(\catC\) using the same
trick.  It's easy to see their pentagon and triangle equations follow directly from those in
\(\catC\).  It is also easy to see the monoidal product on morphisms is compatible with
composition.
Associativity of morphisms in \(\boxful\) reduces to associativity of the monoidal product for the
prestate and state, and an associativity requirement on the \(\catC\) morphisms \(d, c, a,\) and
\(b\).

Denoting the compositions \((d_j, c_j, a_j, b_j) \of (d_i, c_i, a_i, b_i)\) as \((d_{ij}, c_{ij},
a_{ij}, b_{ij})\), we get \((d_3, c_3, a_3, b_3) \of (d_{12}, c_{12}, a_{12}, b_{12}) = (d_{12,3},
c_{12,3}, a_{12,3}, b_{12,3})\):
\begin{center}
 \begin{tikzpicture}[->]
  \node (A) at (0,0) {\(X_1\)};
  \node (B) at (2,0) {\(X_3\)};
  \node (C) at (3.5,0) {\(X_4\)};
  \node (S1) at (-1,1.5) {\(S_1 \otimes S_2\)};
  \node (T1) at (1,1.5) {\(T_1 \otimes T_2\)};
  \node (S2) at (3,1.5) {\(S_3\)};
  \node (T2) at (4.5,1.5) {\(T_3\)};

  \node (eq) at (5.15,0.75) {\(=\)};

  \node (A1) at (6.7,0) {\(X_1\)};
  \node (B1) at (9.7,0) {\(X_4\)};
  \node (S) at (6.7,1.5) {\((S_1 \otimes S_2) \otimes S_3\)};
  \node (T) at (9.7,1.5) {\((T_1 \otimes T_2) \otimes T_3\)};

  \path
   (A) edge node[below] {\(d_{12}\)} (B)
       edge node[left] {\(b_{12}\)} (S1)
   (B) edge node[below] {\(d_3\)} (C)
       edge node[right] {\(b_3\)} (S2)
   (S1) edge node[above] {\(a_{12}\)} (T1)
   (S2) edge node[above] {\(a_3\)} (T2)
   (T1) edge node[left] {\(c_{12}\)} (B)
   (T2) edge node[right] {\(c_3\)} (C)

   (A1) edge node[below] {\(d_{12,3}\)} (B1)
        edge node[left] {\(b_{12,3}\)} (S)
   (S) edge node[above] {\(a_{12,3}\)} (T)
   (T) edge node[right] {\(c_{12,3}\)} (B1)
;
 \end{tikzpicture}
\end{center}
and \((d_{23}, c_{23}, a_{23}, b_{23}) \of (d_1, c_1, a_1, b_1) = (d_{1,23}, c_{1,23}, a_{1,23},
b_{1,23})\):
\begin{center}
 \begin{tikzpicture}[->]
  \node (A) at (0,0) {\(X_1\)};
  \node (B) at (1.5,0) {\(X_2\)};
  \node (C) at (3.5,0) {\(X_4\)};
  \node (S1) at (-1,1.5) {\(S_1\)};
  \node (T1) at (0.5,1.5) {\(T_1\)};
  \node (S2) at (2.5,1.5) {\(S_2 \otimes S_3\)};
  \node (T2) at (4.5,1.5) {\(T_2 \otimes T_3\)};

  \node (eq) at (5.15,0.75) {\(=\)};

  \node (A1) at (6.7,0) {\(X_1\)};
  \node (B1) at (9.7,0) {\(X_4\)};
  \node (S) at (6.7,1.5) {\(S_1 \otimes (S_2 \otimes S_3)\)};
  \node (T) at (9.7,1.5) {\(T_1 \otimes (T_2 \otimes T_3)\)};

  \path
   (A) edge node[below] {\(d_1\)} (B)
       edge node[left] {\(b_1\)} (S1)
   (B) edge node[below] {\(d_{23}\)} (C)
       edge node[right] {\(b_{23}\)} (S2)
   (S1) edge node[above] {\(a_1\)} (T1)
   (S2) edge node[above] {\(a_{23}\)} (T2)
   (T1) edge node[left] {\(c_1\)} (B)
   (T2) edge node[right] {\(c_{23}\)} (C)

   (A1) edge node[below] {\(d_{1,23}\)} (B1)
        edge node[left] {\(b_{1,23}\)} (S)
   (S) edge node[above] {\(a_{1,23}\)} (T)
   (T) edge node[right] {\(c_{1,23}\)} (B1)
;
 \end{tikzpicture}.
\end{center}
The associativity requirements for \(d, c, a,\) and \(b\) require there to be canonical isomorphisms
\(d_{12,3} \cong d_{1,23}\), \(c_{12,3} \cong c_{1,23}\), \(a_{12,3} \cong a_{1,23}\), and
\(b_{12,3} \cong b_{1,23}\).  The associativity requirement for \(d\) clearly holds because
composition is associative in \(\catC\) --- \(d_{1,23} = d_1 (d_2 d_3) \cong (d_1 d_2) d_3 =
d_{12,3}\).
We see the associativity requirement for \(a\) holds because \(a_{ij} = \left[
\begin{array}{cc}
a_i & 0 \\
a_j b_j c_i a_i & a_j
\end{array} \right],\)
which means
\[a_{12,3} = \left[
\begin{array}{cc}
a_{12} & 0 \\
a_3 b_3 c_{12} a_{12} & a_3
\end{array}
\right] = \left[
\begin{array}{ccc}
\left[\begin{array}{c}
a_1 \\ a_2 b_2 c_1 a_1
\end{array}\right. &
\left.\begin{array}{c}
0 \\ a_2
\end{array}\right] &
\begin{array}{c}
0 \\ 0
\end{array}
\\
\left[a_3 b_3 d_2 c_1 a_1 + a_3 b_3 c_2 a_2 b_2 c_1 a_1 \right. & \left. a_3 b_3 c_2 a_2 \right] & a_3
\end{array} \right],\]
since \(c_{12} = \left[ \begin{array}{cc} d_2 c_1 & c_2 \end{array} \right]\).  A similar
calculation gives a canonically isomorphic matrix for \(a_{1,23}\),
\[a_{1,23} = \left[
\begin{array}{ccc}
a_1 & 0 & 0\\
\left[\begin{array}{c}
a_2 b_2 c_1 a_1\\
a_3 b_3 d_2 c_1 a_1 + a_3 b_3 c_2 a_2 b_2 c_1 a_1
\end{array}\right] &
\left[\begin{array}{c}
a_2 \\ a_3 b_3 c_2 a_2
\end{array}\right. &
\left.\begin{array}{c}
0 \\ a_3
\end{array}\right]
\end{array}\right].\]

The proofs of the associativity requirements for \(c\) and \(b\) are similar to each other,
transposed.  We present the argument for \(c\) and leave the argument for \(b\) to the reader.
Since \(c_{ij} = [d_j c_i \quad c_j]\), we have
\begin{align*}
c_{12,3} & = [d_3 c_{12} \quad c_3]\\
         & = [d_3 [d_2 c_1 \quad c_2] \quad c_3]\\
         & = [[d_3 d_2 c_1 \quad d_3 c_2] \quad c_3]\\
     & \cong [d_3 d_2 c_1 \quad [d_3 c_2 \quad c_3]]\\
         & = [d_{23} c_1 \quad c_{23}] = c_{1,23}.
\end{align*}
So we see composition of morphisms in \(\boxful\) is associative.
\end{proof}

The anatomy of \(\boxful\) makes more sense when it is understood as a category with an obvious
evaluation functor \(\eval \maps \boxful \to \catC\).  We can also find a `feedthrough' functor
\(\feed \maps \boxful \to \catC\) and functor in the reverse direction \(\G \maps \catC \to
\boxful\).  The maps of objects \(\eval_0 \maps \Obj(\boxful) \to \Obj(\catC)\) is a bijection,
\(\feed_0 = \eval_0\), and \(\G_0\) is its inverse.  The map of morphisms \(\eval_1 \maps
\Mor(\boxful) \to \Mor(\catC)\) is given by \(\eval_1(d,c,a,b) = d + c a b\), \(\feed_1 \maps
\Mor(\boxful) \to \Mor(\catC)\) is given by \(\feed_1(d,c,a,b) = d\), and \(\G_1 \maps \Mor(\catC)
\to \Mor(\boxful)\) is given by \(\G_1(d) = (d, !, 0, 0)\).  That is,

\begin{center}
\(\eval_1 \left(
 \begin{tikzpicture}[baseline=0.5cm,->]
  \node (A) at (0,0) {\(X_1\)};
  \node (C) at (1.4,0) {\(X_2\)};
  \node (B) at (0,1.4) {\(S\)};
  \node (D) at (1.4,1.4) {\(T\)};

  \path
   (A) edge node[below] {\(d\)} (C)
       edge node[left] {\(b\)} (B)
   (B) edge node[above] {\(a\)} (D)
   (D) edge node[right] {\(c\)} (C);
 \end{tikzpicture} \right) = d + c a b\),\quad
\(\feed_1 \left(
 \begin{tikzpicture}[baseline=0.5cm,->]
  \node (A) at (0,0) {\(X_1\)};
  \node (C) at (1.4,0) {\(X_2\)};
  \node (B) at (0,1.4) {\(S\)};
  \node (D) at (1.4,1.4) {\(T\)};

  \path
   (A) edge node[below] {\(d\)} (C)
       edge node[left] {\(b\)} (B)
   (B) edge node[above] {\(a\)} (D)
   (D) edge node[right] {\(c\)} (C);
 \end{tikzpicture} \right) = d\)
\\and
\(\G_1(d) =\)
 \begin{tikzpicture}[baseline=0.5cm,->]
  \node (A) at (0,0) {\(X_1\)};
  \node (C) at (1.4,0) {\(X_2\)};
  \node (B) at (0,1.4) {\(0\)};
  \node (D) at (1.4,1.4) {\(0\)};

  \path
   (A) edge node[below] {\(d\)} (C)
       edge (B)
   (B) edge (D)
   (D) edge (C);
 \end{tikzpicture}.
\end{center}

\begin{theorem} \label{boxjoint}
\(\eval\), \(\feed\) and \(\G\) are monoidal functors, and
\begin{itemize}
\item \(\eval\), \(\feed\) and \(\G\) are essentially surjective
\item \(\eval\) and \(\feed\) are full, but not faithful
\item \(\G\) is faithful, but not full
\item \(\G\) has no adjoint.
\item If \(\catC\) is a symmetric monoidal category (\emph{resp.} a braided monoidal category),
  \(\eval\), \(\feed\) and \(\G\) are symmetric (\emph{resp.} braided) monoidal functors.  In
  particular, \(\boxful\) will also be a symmetric (\emph{resp.} braided) monoidal category.
\end{itemize}
\end{theorem}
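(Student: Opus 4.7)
The plan is to follow the blueprint laid out in the proof of Theorem~\ref{boxvectfunctors}, generalizing each step to the present setting. First I would verify functoriality. Identity preservation is immediate: $\G(1_X) = (1_X,!,0,0)$ is the identity in $\boxful$ constructed in Theorem~\ref{boxcat}, and $\eval$ and $\feed$ both send this morphism back to $1_X$. Composition preservation is trivial for $\G$ and for $\feed$, since the $d$-component of $(d',c',a',b') \of (d,c,a,b)$ is $d'd$. For $\eval$, I would compute both sides of
\[
\eval((d',c',a',b') \of (d,c,a,b)) = \eval(d',c',a',b') \of \eval(d,c,a,b)
\]
directly from the composition formula, obtaining $d'd + d'cab + c'a'b'd + c'a'b'cab$ on both sides. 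This is the one place where the homomorphism conditions on $b$, $c$, $d$ really get used: they are precisely what lets the block-matrix manipulation (distributivity, associativity, and the interaction between $\Delta$ and $m$) go through in the abstract setting where $\catC$ itself need not have biproducts.

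Next I would establish essential surjectivity, fullness, and faithfulness in one stroke by exploiting the identities $\eval \of \G = \feed \of \G = 1_{\catC}$, which are immediate from the definitions. Since $\eval_0$, $\feed_0$, and $\G_0$ are bijections on objects, all three functors are essentially surjective. From $\eval \of \G = 1_{\catC}$ it follows that any $f \maps X \to Y$ in $\catC$ is hit by $\G(f)$ under $\eval$, so $\eval$ is full; the same argument works for $\feed$. The identity $\feed \of \G = 1_{\catC}$ also implies that $\G$ is injective on morphisms, hence faithful. To see $\G$ is not full and $\eval$, $\feed$ are not faithful, I would exhibit concrete counterexamples: any $(d,c,a,b)$ with nontrivial prestate is not of the form $(d',!,0,0)$, and the distinct morphisms $(d,!,0,0)$ and $(d,c,0,b)$ have the same $\eval$-image and same $\feed$-image.

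To show $\G$ has no adjoint, I would demonstrate that it fails to preserve either limits or colimits, and the cleanest target is the zero object. The object $0$ is initial and terminal in $\catC'$, hence (up to isomorphism) in $\catC$, but $\G_0(0)$ has too many endomorphisms in $\boxful$ to be terminal or initial: for each morphism $f \maps S \to T$ in $\catC$, the tuple $(0,0,f,0)$ is a valid element of $\hom_{\boxful}(\G_0(0),\G_0(0))$ distinct from the identity. Since $\G$ fails to preserve the terminal object, it cannot be a right adjoint, and dually it cannot be a left adjoint. The main obstacle here is to check carefully that the tuples $(0,0,f,0)$ actually satisfy the homomorphism side conditions on $b$, $c$, $d$ imposed in the definition of $\boxful$ --- but they do, trivially, because the relevant maps are zero.

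Finally, for the symmetric/braided case, I would note that the braiding isomorphisms of $\boxful$ can be taken to be the $\G_1$-images of those in $\catC$: the composite $(B_{X,Y},!,0,0) \of (B_{Y,X},!,0,0)$ equals $(B_{X,Y} \of B_{Y,X},!,0,0) = (1_{X \otimes Y},!,0,0)$ in $\boxful$, and more generally the subcategory of $\boxful$ consisting of morphisms of the form $(d,!,0,0)$ is isomorphic to $\catC$ via $\G$. Consequently every hexagon and symmetry coherence law in $\catC$ transports to $\boxful$, and $\eval_1(d,!,0,0) = \feed_1(d,!,0,0) = d$ shows $\eval$ and $\feed$ respect these braidings as well. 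I expect the genuinely delicate step throughout is the composition calculation for $\eval$: everything else is formal once that is in place.
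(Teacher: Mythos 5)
Most of your proposal tracks the paper's argument closely and is correct: the use of $\eval \of \G = \feed \of \G = 1_{\catC}$ to get fullness of $\eval,\feed$ and faithfulness of $\G$, the counterexamples for non-fullness/non-faithfulness, and the observation that the braiding in $\boxful$ is the $\G_1$-image of the braiding in $\catC$ so that coherence laws transport, are all exactly what the paper does.

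The genuine gap is in your argument that $\G$ has no adjoint. You assert that the zero object is ``initial and terminal in $\catC'$, hence (up to isomorphism) in $\catC$.'' That inference is false: $\catC'$ being an (essentially) wide subcategory only means it contains all the objects of $\catC$; the ambient category $\catC$ has strictly more morphisms, so uniqueness of maps into and out of $0$ can fail there. This is not a corner case --- it is precisely the situation the generalization is built for. In $\catC = \Relk$ with $\catC' = \Vectk$, the object $\{0\}$ is a zero object of $\Vectk$ but is neither initial nor terminal in $\Relk$: a linear relation $\{0\} \asrelto k$ is an arbitrary subspace of $k$, so there are two of them. Consequently ``$\G$ does not send the terminal object to a terminal object'' carries no force, because there may be no terminal object in $\catC$ to preserve, and your endomorphisms $(0,0,f,0)$ of $\G_0(0)$ then prove nothing about limit preservation. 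The paper explicitly flags that the zero-object argument from Theorem~\ref{boxvectfunctors} breaks here and replaces it with a direct argument: assuming an adjoint $R$ exists, the naturality square for the (co)unit forces the prestate of $\eta_A$ to decompose as $P'' \oplus P$ where $P$ is the prestate of an arbitrary morphism $(d,c,a,b) \maps A \to B$; since $P$ can vary freely while $P'$ and $P''$ are pinned down by $A$ and $B$ alone, this is a contradiction. You would need to substitute an argument of this kind (or otherwise exhibit a limit or colimit that genuinely exists in $\catC$ and is not preserved) for this bullet point to go through.
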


Note that since \(\boxful\) is strict whenever \(\catC\) is, this last item means \(\boxful\) is a
PROP whenever \(\catC\) is.

\begin{proof}
All three functors are bijective on objects, so it immediately follows they are essentially
surjective.  We note that \(\feed \of \G\) and \(\eval \of \G\) are both the identity functor on
\(\catC\), which implies \(\feed\) and \(\eval\) are surjective on \emph{all} morphisms, hence
full.  This also implies \(\G\) is injective on morphisms, so \(\G\) is faithful.  On the other
hand, a morphism in \(\boxful\) between \(\G_0 (V_1)\) and \(\G_0 (V_2)\) where the prestate or
state are not isomorphic to the zero object is not the \(\G_1\)-image of any morphism in \(\catC\),
so \(\G\) is not full.  Similarly, \(\feed\) and \(\eval\) cannot be faithful.

  \begin{figure}[h]
    \centering
    \begin{tikzpicture}
   \node (1203) at (180:3)  {\((V_1 \tensor V_2) \tensor V_3\)};
   \node (1023) at (135:2.75)  {\(V_1 \tensor (V_2 \tensor V_3)\)};
   \node (2301) at (45:2.75)   {\((V_2 \tensor V_3) \tensor V_1\)};
   \node (2031) at (0:3)    {\(V_2 \tensor (V_3 \tensor V_1)\)};
   \node (2013) at (315:2.75)  {\(V_2 \tensor (V_1 \tensor V_3)\)};
   \node (2103) at (225:2.75)  {\((V_2 \tensor V_1) \tensor V_3\)};

   \draw [->] (1203) to node [above, shift={(-1.25,0)}] {\((a_{V_1,V_2,V_3},!,0,0)\)} (1023);
   \draw [->] (1023) to node [above, shift={(0,0.2)}] {\((B_{V_1,V_2 \tensor V_3},!,0,0)\)} (2301);
   \draw [->] (2301) to node [above, shift={(1.25,0)}] {\((a_{V_2,V_3,V_1},!,0,0)\)} (2031);
   \draw [->] (1203) to node [below, shift={(-1.5,0)}] {\((B_{V_1,V_2} \tensor \mathrm{Id},!,0,0)\)} (2103);
   \draw [->] (2103) to node [below, shift={(0,-0.2)}] {\((a_{V_2,V_1,V_3},!,0,0)\)} (2013);
   \draw [->] (2013) to node [below, shift={(1.5,0)}] {\((\mathrm{Id} \tensor B_{V_1,V_3},!,0,0)\)} (2031);
\end{tikzpicture}
    \caption[A hexagon law inside $\boxful$]{A hexagon law inside \(\boxful\).  This diagram commutes
      in \(\boxful\) when the analogous diagram in \(\catC\), where the morphisms are the first
      coordinates of the morphisms here, commutes.\label{hexagonlaw}}
  \end{figure}

In Theorem~\ref{boxvectfunctors} we saw \(\G\) has no adjoint, taking advantage of the fact that
\(\vectk\) has an initial object and a terminal object, neither of which is preserved by \(\G\).  In
this more general setting, \(\catC\) may not have initial or terminal objects, so it is necessary to
show from the definitions that \(\G\) has no adjoint.  It is a straightforward exercise sketched out
below, which we leave to the reader to fill in the details.

Suppose \(R \maps \boxful \to \catC\), \(d \in \hom_{\catC}(A,B)\), and \(R(d,c,a,b) = f \in
\hom_{\catC}(A,B)\).  Clearly \(\G \of R(d,c,a,b) = (f,!,0,0)\).  Further suppose \(a \in
\hom_{\catC}(P,S)\), so that the prestate of \((d,c,a,b)\) is \(P\) and the state is \(S\).  If
\(R\) is a right adjoint to \(\G\), there would be a natural transformation \(\eta \maps 1_{\boxful}
\To \G \of R\) such that the square 
\begin{center}
\begin{tikzpicture}[node distance=2cm,->]
  \node (A1) at (0,0) {\(A\)};
  \node (B1) [below of=A1] {\(A\)};
  \node (A2) [right of=A1] {\(B\)};
  \node (B2) [below of=A2] {\(B\)};

  \draw (A1) to node[above] {\((d,c,a,b)\)} (A2);
  \draw (B1) to node[below] {\((f,!,0,0)\)} (B2);
  \draw (A1) to node[left] {\(\eta_A\)} (B1);
  \draw (A2) to node[right] {\(\eta_B\)} (B2);
\end{tikzpicture}
\end{center}
commutes in \(\boxful\).  Taking \(P'\) and \(P''\) to be the prestates for \(\eta_A\) and
\(\eta_B\), respectively, this means \(P' \cong P'' \oplus P\).  However, \(P\) can vary without
affecting \(A\) or \(B\), while \(P'\) and \(P''\) are determined by \(A\) and \(B\).  This
contradiction means \(R\) cannot be a right adjoint to \(\G\).  Similarly \(\G\) has no left
adjoint, by considering the natural transformation \(\epsilon \maps \G \of L \To 1_{\boxful}\).

It is easy to check that the associator, unitor, and symmetry/braiding isomorphisms in \(\boxful\)
are the \(\G_1\)-images of the respective isomorphisms in \(\catC\).  In \(\boxful\) we have
\((d,!,0,0) \of (d',!,0,0) = (d \of d',!,0,0)\), so \(\G\) preserves the coherence laws that hold in
\(\catC\).  See Figure~\ref{hexagonlaw} for an example of one of these coherence laws in
\(\boxful\).  Thus \(\boxful\) is a symmetric (\emph{resp.} braided) monoidal category when
\(\catC\) is.  It is also easy to see \(\feed_1 (d,!,0,0) = \eval_1 (d,!,0,0) = d\), so \(\feed\)
and \(\eval\) will also preserve all the coherence laws.
\end{proof}

An alternate way to depict morphisms in \(\boxful\) is through string diagrams.  The morphism
\((d,c,a,b)\) is depicted:
\begin{center}
\scalebox{0.75}{
  \begin{tikzpicture}[thick]
   \node [delta] (split) at (0,0) {};
   \node [coordinate] (above) [above of=split] {};
   \node [multiply] (d) at (0.5,-2) {\(d\)};
   \node [multiply] (a) at (-0.5,-2) {\(a\)};
   \node [multiply] (c) [below of=a] {\(c\)};
   \node [multiply] (b) [above of=a] {\(b\)};
   \node [plus] (join) at (0,-4.1) {};
   \node [coordinate] (below) [below of=join] {};
  
   \draw (above) -- (split.io)
         (split.left out) .. controls +(240:0.3) and +(90:0.3) .. (b.90)
         (c.io) .. controls +(270:0.3) and +(120:0.3) .. (join.left in)
         (join.io) -- (below)
         (split.right out) .. controls +(300:0.7) and +(90:0.7) .. (d.90)
         (b.io) -- (a.90) (a.io) -- (c.90)
         (d.io) .. controls +(270:0.7) and +(60:0.7) .. (join.right in);
  \end{tikzpicture}},
\end{center}
and it is easy to see the monoidal product \((d,c,a,b) \tensor (d',c',a',b')\) is what it is
supposed to be:
\begin{center}
\scalebox{0.75}{
  \begin{tikzpicture}[thick]
   \node [delta] (split) at (0,0) {};
   \node [coordinate] (above) [above of=split] {};
   \node [multiply] (d) at (1,-3) {\(d\)};
   \node [multiply] (a) at (-1,-3) {\(a\)};
   \node [multiply] (c) [below of=a] {\(c\)};
   \node [multiply] (b) [above of=a] {\(b\)};
   \node [plus] (join) at (0,-6) {};
   \node [coordinate] (below) [below of=join] {};
  
   \draw (above) -- (split.io)
         (split.left out) .. controls +(240:0.5) and +(90:0.5) .. (b.90)
         (c.io) .. controls +(270:0.5) and +(120:0.5) .. (join.left in)
         (join.io) -- (below)
         (split.right out) .. controls +(300:1.5) and +(90:1) .. (d.90)
         (b.io) -- (a.90) (a.io) -- (c.90)
         (d.io) .. controls +(270:1.5) and +(60:1) .. (join.right in);

   \node [hole] at (0.5,-0.63) {};
   \node [hole] at (0.5,-5.43) {};

   \node [delta] (split) at (1,0) {};
   \node [coordinate] (above) [above of=split] {};
   \node [multiply] (d) at (2,-3) {\(d'\)};
   \node [multiply] (a) at (0,-3) {\(a'\)};
   \node [multiply] (c) [below of=a] {\(c'\)};
   \node [multiply] (b) [above of=a] {\(b'\)};
   \node [plus] (join) at (1,-6) {};
   \node [coordinate] (below) [below of=join] {};
  
   \draw (above) -- (split.io)
         (split.left out) .. controls +(240:0.5) and +(90:0.5) .. (b.90)
         (c.io) .. controls +(270:0.5) and +(120:0.5) .. (join.left in)
         (join.io) -- (below)
         (split.right out) .. controls +(300:1.5) and +(90:1) .. (d.90)
         (b.io) -- (a.90) (a.io) -- (c.90)
         (d.io) .. controls +(270:1.5) and +(60:1) .. (join.right in);
  \end{tikzpicture}}.
\end{center}
Composition is still tedious with string diagrams, but the reason for the technical conditions on
\(b\), \(c\), and \(d\) is illuminated somewhat:
\begin{center}
  \begin{tikzpicture}[thick]
   \node [delta] (split) at (0,0) {};
   \node [coordinate] (above) [above of=split] {};
   \node [multiply] (d) at (0.5,-2) {\(d\)};
   \node [multiply] (a) at (-0.5,-2) {\(a\)};
   \node [multiply] (c) [below of=a] {\(c\)};
   \node [multiply] (b) [above of=a] {\(b\)};
   \node [plus] (join) at (0,-4) {};
   \node [delta] (below) [below of=join] {};

   \draw (above) -- (split.io)
         (split.left out) .. controls +(240:0.25) and +(90:0.25) .. (b.90)
         (c.io) .. controls +(270:0.25) and +(120:0.25) .. (join.left in)
         (join.io) -- (below)
         (split.right out) .. controls +(300:0.5) and +(90:1) .. (d.90)
         (b.io) -- (a.90) (a.io) -- (c.90)
         (d.io) .. controls +(270:1) and +(60:0.5) .. (join.right in);

   \node [multiply] (d) at (0.5,-7) {\(d'\)};
   \node [multiply] (a) at (-0.4,-7) {\(a'\)};
   \node [multiply] (c) [below of=a] {\(c'\)};
   \node [multiply] (b) [above of=a] {\(b'\)};
   \node [plus] (join) at (0,-9) {};

   \draw (below.left out) .. controls +(240:0.25) and +(90:0.25) .. (b.90)
         (c.io) .. controls +(270:0.25) and +(120:0.25) .. (join.left in)
         (below.right out) .. controls +(300:0.5) and +(90:1) .. (d.90)
         (b.io) -- (a.90) (a.io) -- (c.90)
         (d.io) .. controls +(270:1) and +(60:0.5) .. (join.right in);
   \node [coordinate] (below) [below of=join] {};
   \draw (join.io) -- (below);
  \end{tikzpicture}
  \raisebox{5cm}{=} 
\hspace{1em}
  \begin{tikzpicture}[thick]
   \node [delta] (split) at (0,0) {};
   \node [coordinate] (above) [above of=split] {};
   \node [multiply] (d) at (0.5,-2) {\(d\)};
   \node [multiply] (a) at (-0.5,-2) {\(a\)};
   \node [multiply] (c) [below of=a] {\(c\)};
   \node [multiply] (b) [above of=a] {\(b\)};
   \node [delta] (join) at (-0.5,-4) {};
   \node [delta] (below) [right of=join] {};

   \draw (above) -- (split.io)
         (split.left out) .. controls +(240:0.25) and +(90:0.25) .. (b.90)
         (d.io) -- (below.io)
         (split.right out) .. controls +(300:0.5) and +(90:1) .. (d.90)
         (b.io) -- (a.90) (a.io) -- (c.90) (c.io) -- (join.io);

   \node [plus] (split) [below of=join] {};
   \node [plus] (above) at (0.4,-5) {};

   \draw (join.left out) .. controls +(240:0.3) and +(120:0.3) .. (split.left in)
         (below.right out) .. controls +(300:0.3) and +(60:0.3) .. (above.right in)
         (join.right out) .. controls +(300:0.3) and +(120:0.3) .. (above.left in);
   \node [hole] (hole) at (-0.05,-4.55) {};
   \draw (below.left out) .. controls +(240:0.3) and +(60:0.3) .. (split.right in);

   \node [multiply] (d) at (0.4,-7) {\(d'\)};
   \node [multiply] (a) at (-0.5,-7) {\(a'\)};
   \node [multiply] (c) [below of=a] {\(c'\)};
   \node [multiply] (b) [above of=a] {\(b'\)};
   \node [plus] (join) at (0,-9) {};
   \node [coordinate] (below) [below of=join] {};

   \draw (split.io) -- (b.90)
         (c.io) .. controls +(270:0.2) and +(120:0.2) .. (join.left in)
         (join.io) -- (below)
         (above.io) .. controls +(270:0.25) and +(90:0.25) .. (d.90)
         (b.io) -- (a.90) (a.io) -- (c.90)
         (d.io) .. controls +(270:1) and +(60:0.5) .. (join.right in);
  \end{tikzpicture}
\hspace{1em}
  \raisebox{5cm}{=}
\hspace{1em}
\scalebox{0.71}{
  \begin{tikzpicture}[thick]
   \node [delta] (split1) at (0,0) {};
   \node [delta] (split2) at (0.5,-1) {};
   \node [multiply] (d1) at (0,-2) {\(d\)};
   \node [multiply] (bp1) [below of=d1] {\(b'\)};
   \node [multiply] (b1) at (-1,-2.5) {\(b\)};
   \node [multiply] (a) at (-1,-4) {\(a\)};
   \node [delta] (split3) [below of=a] {};
   \node [multiply] (c1) at (-1.5,-6) {\(c\)};
   \node [multiply] (bp2) [below of=c1] {\(b'\)};
   \node [plus] (join1) at (-1,-8.25) {};
   \node [multiply] (ap) [below of=join1] {\(a'\)};
   \node [multiply] (cp) at (-1,-11) {\(c'\)};
   \node [multiply] (c2) at (0,-10.5) {\(c\)};
   \node [multiply] (dp2) [below of=c2] {\(d'\)};
   \node [multiply] (d2) at (1,-5.5) {\(d\)};
   \node [multiply] (dp1) [below of=d2] {\(d'\)};
   \node [plus] (join2) at (0.5,-13) {};
   \node [plus] (join3) at (0,-14) {};

   \draw (d1.io) -- (bp1.90) (d2.io) -- (dp1.90) (b1.io) -- (a.90)
         (c1.io) -- (bp2.90) (c2.io) -- (dp2.90) (ap.io) -- (cp.90)
         (a.io) -- (split3.io) (join1.io) -- (ap.90)
         (split1.io) -- +(0,0.5) (join3.io) -- +(0,-0.5)
         (join1.left in) .. controls +(120:0.3) and +(270:0.3) .. (bp2.io)
         (join2.left in) .. controls +(120:0.5) and +(270:0.5) .. (dp2.io)
         (join3.left in) .. controls +(120:1) and +(270:1) .. (cp.io)
         (join1.right in) .. controls +(60:1) and +(270:3) .. (bp1.io)
         (join2.right in) .. controls +(60:1) and +(270:1) .. (dp1.io)
         (join3.right in) .. controls +(60:0.3) and +(270:0.3) .. (join2.io);

   \node [hole] at (-0.25,-6.735) {};

   \draw (split1.left out) .. controls +(240:1) and +(90:1) .. (b1.90)
         (split2.left out) .. controls +(240:0.3) and +(90:0.3) .. (d1.90)
         (split3.left out) .. controls +(240:0.3) and +(90:0.3) .. (c1.90)
         (split1.right out) .. controls +(300:0.3) and +(90:0.3) .. (split2.io)
         (split2.right out) .. controls +(300:1) and +(90:3) .. (d2.90)
         (split3.right out) .. controls +(300:1) and +(90:3) .. (c2.90);
  \end{tikzpicture}
}
  \raisebox{5cm}{=}
\hspace{1em}
\scalebox{0.71}{
  \begin{tikzpicture}[thick]
   \node [delta] (split1) at (0,0) {};
   \node [delta] (split2) at (-0.5,-1) {};
   \node [multiply] (d1) at (0,-2) {\(d\)};
   \node [multiply] (bp1) [below of=d1] {\(b'\)};
   \node [multiply] (b1) at (-1,-2.5) {\(b\)};
   \node [multiply] (a) at (-1,-4) {\(a\)};
   \node [delta] (split3) [below of=a] {};
   \node [multiply] (c1) at (-1.5,-6) {\(c\)};
   \node [multiply] (bp2) [below of=c1] {\(b'\)};
   \node [plus] (join1) at (-1,-8.25) {};
   \node [multiply] (ap) [below of=join1] {\(a'\)};
   \node [multiply] (cp) at (-1,-11) {\(c'\)};
   \node [multiply] (c2) at (0,-10.5) {\(c\)};
   \node [multiply] (dp2) [below of=c2] {\(d'\)};
   \node [multiply] (d2) at (1,-5.5) {\(d\)};
   \node [multiply] (dp1) [below of=d2] {\(d'\)};
   \node [plus] (join2) at (-0.5,-13) {};
   \node [plus] (join3) at (0,-14) {};

   \draw (d1.io) -- (bp1.90) (d2.io) -- (dp1.90) (b1.io) -- (a.90)
         (c1.io) -- (bp2.90) (c2.io) -- (dp2.90) (ap.io) -- (cp.90)
         (a.io) -- (split3.io) (join1.io) -- (ap.90)
         (split1.io) -- +(0,0.5) (join3.io) -- +(0,-0.5)
         (join1.left in) .. controls +(120:0.3) and +(270:0.3) .. (bp2.io)
         (join2.left in) .. controls +(120:0.7) and +(270:0.7) .. (cp.io)
         (join3.left in) .. controls +(120:0.3) and +(270:0.3) .. (join2.io)
         (join1.right in) .. controls +(60:1) and +(270:3) .. (bp1.io)
         (join2.right in) .. controls +(60:0.5) and +(270:0.5) .. (dp2.io)
         (join3.right in) .. controls +(60:1.5) and +(270:3) .. (dp1.io);

   \node [hole] at (-0.25,-6.735) {};

   \draw (split2.left out) .. controls +(240:0.5) and +(90:0.5) .. (b1.90)
         (split2.right out) .. controls +(300:0.3) and +(90:0.3) .. (d1.90)
         (split3.left out) .. controls +(240:0.3) and +(90:0.3) .. (c1.90)
         (split1.left out) .. controls +(240:0.3) and +(90:0.3) .. (split2.io)
         (split1.right out) .. controls +(300:1.5) and +(90:3) .. (d2.90)
         (split3.right out) .. controls +(300:1) and +(90:3) .. (c2.90);

   \draw [red,dashed,semithick] (1.75,-5) rectangle (0.2,-7.2)  
                                (0.5,-1.5) rectangle (-1.5,-3.5)  
                                (0.1,-3.6) rectangle (-2.1,-9.9)  
                                (0.5,-10) rectangle (-1.5,-12.3); 

   \node [red] at (2,-6.1) {\(d''\)};
   \node [red] at (-1.8,-2.5) {\(b''\)};
   \node [red] at (-2.4,-6.75) {\(a''\)};
   \node [red] at (-1.8,-11.15) {\(c''\)};
  \end{tikzpicture}
}.
\end{center}
The first equality is because every object is a bimonoid.  The second equality is due to the
technical conditions and topology-preserving moves.  The last equality is due to every object being
bicommutative.  The dashed boxes indicate which portions of the string diagram correspond to the
components of the composite morphism: \((d',c',a',b') \of (d,c,a,b) = (d'',c'',a'',b'')\).

\end{document}